\newtheoremstyle{mystyle}
    {3pt}
    {3pt}
    {\normalfont}
    {}
    {\bf}
    {.}
    { }
    {}%
\theoremstyle{mystyle}
\newtheorem{thm}{Theorem}[section]
\newtheorem{defi}{Definition}[section]
\newtheorem{lem}{Lemma}[section]
\newtheorem{cor}{Corollary}[section]
\newtheorem{alg}{Algorithm}[section]
\newtheorem{prob}{Problem}[section]
\newtheorem{ope}{Operation}[section]
\newtheorem{claim}{Claim}
\begin{document}

\title{The Computational Complexity of Classical Knot Recognition}
\author{Kazuhiro Ichihara \and Yuya Nishimura \and Seiichi Tani}
\date{}
\maketitle

\abstract{
    The classical knot recognition problem is the problem of determining whether the virtual knot represented by a given diagram is classical.
    We prove that this problem is in NP, 
    and we give an exponential time algorithm for the problem.
}


\section{Introduction}
A disjoint union of closed loops embedded in the 3-sphere is called a {\it link}.
In particular, a one-component link is called a {\it knot}.
A knot $K$ is said to be {\it trivial} if there is a disk $D \subset \mathbb{S}^3$ which satisfies $\partial D = K$.
In this paper, we assume that all links are locally flat: for any point $x$ in a link $L$, there is an open neighborhood $\mathcal{O}_x \subset \mathbb{S}^3$ such that $(\mathcal{O}_x, \mathcal{O}_x \cap L)$ is homeomorphic to $(\mathbb{R}^3, \mathbb{R})$.

Kauffman defined a {\it virtual link} by extending the notion of a link (\cite{Kau}).
We call a conventional link a {\it classical link} to distinguish it from a virtual link.
A virtual link is defined by a virtual link diagram, which contains real and virtual crossings, up to certain moves.
Note that a classical link admits a virtual link diagram containing virtual crossings.
Given a virtual link diagram, a problem of determining whether the virtual link represented by the diagram is classical is called {\it classical link recognition}.
In particular, {\it classical knot recognition} is the problem restricting inputs of classical link recognition to virtual knot diagram.

The computational complexity of problems in classical link theory has been studied by many researchers.
In particular, Hass, Lagarias and Pippenger proved that some problems in classical link theory such as unknot regcognition is in NP (\cite{HLP}), where unknot recognition is a problem of determining whether a classical knot represented by a given classical knot diagram is trivial.
Furthermore, Lackenby showed that unknot recognition is in co-NP in \cite{Lack}, and Burton and Ozlen gave a fast algorithm for unknot recognition in \cite{B_unknot}.

On the other hand, little is known about the computational complexity of problems in virtual link theory.
We show that classical knot recognition is in NP.
\begin{thm}\label{thm:main}
    Classical knot recognition is in NP.
\end{thm}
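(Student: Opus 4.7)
My plan is to convert the diagrammatic problem into a topological problem about a knot in a thickened surface, and then to exhibit a polynomial-size destabilization certificate using normal surface theory.

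Given an input virtual knot diagram $D$, I would first invoke the standard construction (Kauffman; Carter--Kamada--Saito) to build a knot $K = K(D)$ inside a thickened closed orientable surface $\Sigma \times I$, where $g(\Sigma)$ is bounded linearly in the number of virtual crossings of $D$. By Kuperberg's theorem, the pair $(\Sigma, K)$ has a minimal-genus representative in its stable equivalence class which is unique up to homeomorphism, and any representative descends to the minimal one by destabilizations alone. Since the virtual knot represented by $D$ is classical precisely when it admits a representative diagram on $S^2$, this says exactly that $D$ is classical if and only if there is a sequence of at most $g = g(\Sigma)$ destabilizations of $(\Sigma, K)$ terminating in $(S^2, K')$ for some classical knot $K'$.

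Each destabilization is witnessed by an essential simple closed curve $c$ on the current surface together with a properly embedded compressing disk in the thickened surface with boundary $c$ and disjoint from $K$. The certificate I propose is therefore a disjoint system of $g$ such compressing disks whose boundaries are pairwise disjoint essential simple closed curves on $\Sigma$ jointly killing the genus (for instance, forming a Lagrangian half-basis of $H_1(\Sigma;\mathbb{Z})$). Each disk I would encode as a normal surface with respect to a fixed triangulation of the compact manifold $(\Sigma \times I) \setminus N(K)$ of polynomial size. Adapting the fundamental/vertex normal-surface complexity estimates of Hass--Lagarias--Pippenger \cite{HLP}, the coordinates of such disks admit a description of polynomial bit-length, so the whole certificate is polynomial in $|D|$.

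Verification consists of a handful of polynomial-time checks on normal coordinates: that each candidate surface is connected, properly embedded, and a disk (via Euler characteristic and boundary analysis); that the $g$ candidates are pairwise disjoint and disjoint from $K$; and that their boundary curves jointly reduce $\Sigma$ to $S^2$ (a linear-algebraic check in $H_1(\Sigma;\mathbb{Z}/2)$ or via Euler characteristic of the surgered surface).

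The step I expect to be the main obstacle is the complexity bound: showing that a disjoint system of compressing disks realizing the full destabilization exists \emph{and} can be represented simultaneously by fundamental normal surfaces of polynomial bit-size. Kuperberg's theorem gives existence topologically, but turning this into a polynomial normal-surface description requires a Haken-style finiteness argument that behaves well under successive destabilizations; one approach is to cut the original triangulation along the previously chosen normal disks and argue that the next disk appears as a fundamental solution in the cut-open manifold. Once this bound is in place, the remaining verification is routine.
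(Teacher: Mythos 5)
Your high-level plan (reduce to Kuperberg's destabilization criterion, certify with polynomial-size normal surface data, adapt Hass--Lagarias--Pippenger bounds) matches the paper's strategy. But there are several concrete gaps, two of which are fatal as stated.

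\textbf{The certificate surfaces are wrong.} You propose ``a properly embedded compressing disk in the thickened surface with boundary $c$ and disjoint from $K$.'' In $\Sigma \times I$ both boundary components are $\pi_1$-injective, so an essential $c \subset \Sigma \times \{0\}$ bounds no disk in $\Sigma \times I$ at all, and removing a neighborhood of $K$ only makes the fundamental group larger. The surface that actually witnesses a destabilization is a \emph{vertical essential annulus} $A$ in the exterior, with one boundary component on each of $\Sigma \times \{0\}$ and $\Sigma \times \{1\}$; cutting along $A$ and attaching $2$-handles reduces the genus. The paper's normal-surface certificate consists of vertex normal annuli, and it must also allow two further surface types you omit: essential normal $2$-spheres (to split a reducible exterior before any annulus is available), and ``classicalization annuli'' (annuli with both boundary circles on the \emph{same} side $\Sigma \times \{k\}$, separating $\partial N(K)$ from $\Sigma \times \{1-k\}$), which can arise as vertex solutions in place of a vertical annulus. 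Your verification checklist does not certify any of these, and without them the ``yes'' direction can fail: a vertex normal surface that looks like a boundary-parallel annulus from Euler-characteristic data alone may be a classicalization annulus and still legitimately imply classicality.

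\textbf{Your proposed fix for the main obstacle does not work.} You correctly identify the hard step -- exhibiting successive normal surfaces with polynomial coordinates -- and suggest cutting the triangulation along each previous normal disk and finding the next as a fundamental surface in the cut manifold. But a vertex normal surface can have exponentially many elementary pieces (Theorem~\ref{thm:HLP} gives coordinates up to $2^{7n-1}$), so cutting the triangulation along it produces an exponentially large triangulation, and the recursion explodes. The paper's central technical contribution is exactly how to avoid this: perform the destabilization via the Jaco--Rubinstein \emph{crushing procedure}, which runs in linear time and does not increase the tetrahedron count, and extend Burton's analysis of crushing from disks/spheres to \emph{vertical annuli} (Lemma~\ref{lem:crush_annulus}, Lemma~\ref{lem:V'_E'}, Lemma~\ref{lem:des}). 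Additional bookkeeping (reducing each boundary surface to a one-vertex triangulation via Lemma~\ref{lem:one_vertex}, and the $\mathcal{O}(c)$ bound in Lemma~\ref{lem:desNum}) keeps the triangulation size at $\mathcal{O}(c^2)$ across all $\mathcal{O}(c)$ iterations. Without this machinery, the iterative destabilization certificate is not polynomially bounded, and replacing it by a single ``disjoint system'' of $g$ surfaces in the original triangulation is not justified -- there is no reason a simultaneous disjoint family of normal vertex annuli (let alone disks) realizing the full destabilization must exist at polynomial weight. The iteration plus crushing is the point.

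A smaller omission: certifying that a candidate vertex annulus is \emph{essential} and \emph{vertical} is not covered by your Euler-characteristic and disjointness checks; the paper handles this by verifying the genus of the boundary drops under the crushing step, so essentiality of each individual annulus never needs to be tested directly.
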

Any virtual knot $K$ is represented as a knot $\hat{K}$ in a thickened orientable closed surface $\mathcal{S} \times I$.
Kuperberg showed that $K$ is classical if and only if the genus of $\mathcal{S}$ is reduced to zero by repeating cutting the exterior $E = \text{cl}(\mathcal{S} \times I - N(\hat{K}))$ along a vertical annulus in $E$ (\cite{K}).
In the proof of Theorem \ref{thm:main}, normal vertical annuli in the exterior $E$ are used as a witness of classical knot recognition.
The key of the proof of Theorem \ref{thm:main} is to reduce the running time of cutting a triangulation $\mathcal{T}$ of the exterior $E$ along a normal vertical annulus $A$ in $E$ to polynomial time.
It takes exponential time of the number of tetrahedra  in $\mathcal{T}$ to cut $\mathcal{T}$ along $A$ because the number of normal disks in A may grow exponentially.
Therefore, we use the crushing procedure along $A$ instead of cutting $\mathcal{T}$ along $A$.
Jaco and Rubinstein defined the crushing procedure on a triangulation along a normal surface 
and analyzed its effects on the underlying $3$-manifold in the case where the normal surface is a disk or a $2$-sphere in an orientable compact $3$-manifold.
In Section $4$, we generalize this result to the setting of a normal vertical annulus in the exterior of a link in a thickened orientable closed surface.


It is known that if a problem is in NP, then there is an exponential time algorithm for the problem.
In this paper, we also give a specific exponential time algorithm for classical knot recognition,
and we prove that the time complexity of the algorithm is $\phi^{\mathcal{O}(c^4)}$,
where $c$ is the number of real crossings of a given diagram and $\phi = \frac{1+\sqrt{5}}{2}$.
However, it is expected that this bound is not sharp.
Therefore, in order to estimate a better bound, we conduct a computer experiment to measure the running time of our algorithm.
As a result, we see that the average of the running times is $2^{2.250c - 0.887}$.
From this result, it is expected that the time complexity of our algorithm is bounded by $2^{\mathcal{O}(c)}$.

This paper is organized as follows.
In Section 2, we review the definition of a virtual link.
We also define the canonical exterior of a virtual link diagram, which allows us to study virtual links using 3-manifold theory.
Then in Section 3, we give a method of constructing a triangulation of the canonical exterior when a virtual link diagram is given.
We give a brief overview of normal surface theory in Section 4.
In Section 5, we prove Theorem \ref{thm:main}, and in Section 6, we give an exponential time algorithm for classical knot recognition.
Furthermore, we conducted a computational experiment to estimate a better bound of the running time of the algorithm.
The experimental results are shown in Section 7.
In section 8, we summarize the paper and state future work.

\section{Virtual links}
\subsection{The definition of virtual links}
A {\it virtual link diagram} is a 4-regular plane graph with over/under or virtual information at each vertex.
A real crossing is expressed by cutting the string passing under as depicted in Figure \ref{fig:crossings}\subref{fig:realcrossing}, 
and a virtual crossing is expressed by drawing a small circle as depicted in Figure \ref{fig:crossings}\subref{fig:virtualcrossing}.
Let $\mathcal{VD}$ be the entire set of virtual link diagrams.
The {\it virtual Reidemeister equivalence} $\simeq_{R}$ is an equivalence relation on $\mathcal{VD}$
generated by virtual Reidemeister moves depicted in Figure \ref{fig:VRmoves}.
A {\it virtual link} is an equivalence class obtained as the quotient of $\mathcal{VD}$ by the virtual Reidemeister equivalence.
A virtual link $L$ is {\it classical} if and only if there is a diagram of $L$ which has no virtual crossings.

\begin{figure}[htbp]
    \begin{minipage}{0.49\textwidth}
        \centering
        \tikzset{every picture/.style={line width=0.75pt}} 

\begin{tikzpicture}[x=0.75pt,y=0.75pt,yscale=-1,xscale=1]

\draw    (9,8) -- (97.5,98) ;
\draw    (8,98) -- (47.5,58) ;
\draw    (58,48) -- (97.5,8) ;

\end{tikzpicture}
        \subcaption{Real crossing}
        \label{fig:realcrossing}
    \end{minipage}
    \begin{minipage}{0.49\textwidth}
        \centering
        \tikzset{every picture/.style={line width=0.75pt}} 

\begin{tikzpicture}[x=0.75pt,y=0.75pt,yscale=-1,xscale=1]

\draw    (7,7) -- (96.5,97) ;
\draw    (97.5,7) -- (7.5,97) ;
\draw   (43.75,52) .. controls (43.75,47.58) and (47.33,44) .. (51.75,44) .. controls (56.17,44) and (59.75,47.58) .. (59.75,52) .. controls (59.75,56.42) and (56.17,60) .. (51.75,60) .. controls (47.33,60) and (43.75,56.42) .. (43.75,52) -- cycle ;

\end{tikzpicture}
        \subcaption{Virtual crossing}
        \label{fig:virtualcrossing}
    \end{minipage}
    \caption{Crossings in a virtual link diagram}
    \label{fig:crossings}
\end{figure}
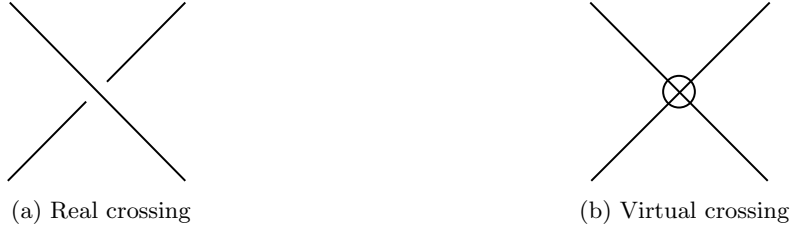

\begin{figure}[htbp]
    \centering
    \input{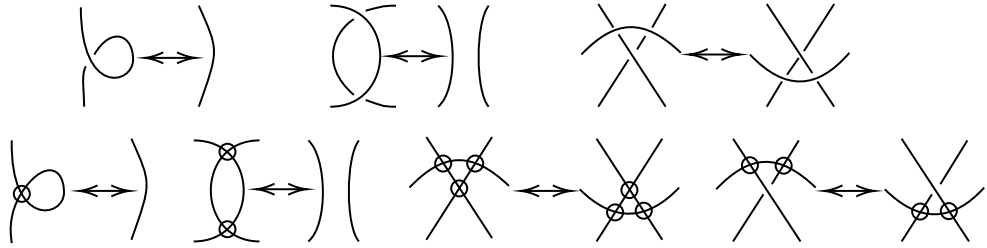}
    \caption{Virtual Reidemeister moves}
    \label{fig:VRmoves}
\end{figure}

Next, we encode a virtual link diagram to handle it on a computer.
There are several ways to encode virtual links.
Here we use oriented Gauss code.

\begin{defi}\label{def:gauss}
    Let $L$ be a virtual link and $D$ be a virtual link diagram of $L$.
    The character string obtained by the following operations is called an {\it oriented Gauss code} of $D$.
    \begin{enumerate}
        \item Assign natural numbers to all real crossings.
        \item For each component, choose an orientation and a starting point.
        \item Choose a component of $D$, then travel along the component from the starting point in the chosen orientation.
        \item Each time you come through a real crossing, write the following three symbols:
            \begin{itemize}
                \item ``$+$'' if you go over the crossing, otherwise ``$-$'',
                \item ``\verb|>|'' if you see the arc you intersect going from left to right, otherwise ``\verb|<|'',
                \item the index number of the crossing.
            \end{itemize}
        \item Write ``;'' as a components separator.
    \end{enumerate}
\end{defi}

\begin{figure}[htbp]
    \centering
    \tikzset{every picture/.style={line width=0.75pt}} 

\begin{tikzpicture}[x=0.75pt,y=0.75pt,yscale=-1,xscale=1]

\draw [color={rgb, 255:red, 0; green, 0; blue, 0 }  ,draw opacity=1 ][line width=1.5]    (55.93,99.96) -- (55.93,13) ;
\draw [shift={(55.93,10)}, rotate = 450] [color={rgb, 255:red, 0; green, 0; blue, 0 }  ,draw opacity=1 ][line width=1.5]    (14.21,-4.28) .. controls (9.04,-1.82) and (4.3,-0.39) .. (0,0) .. controls (4.3,0.39) and (9.04,1.82) .. (14.21,4.28)   ;
\draw    (62.3,54.53) -- (99.37,54.53) ;
\draw [shift={(101.37,54.53)}, rotate = 180] [color={rgb, 255:red, 0; green, 0; blue, 0 }  ][line width=0.75]    (10.93,-3.29) .. controls (6.95,-1.4) and (3.31,-0.3) .. (0,0) .. controls (3.31,0.3) and (6.95,1.4) .. (10.93,3.29)   ;
\draw    (10.5,54.53) -- (49.57,54.53) ;
\draw [color={rgb, 255:red, 0; green, 0; blue, 0 }  ,draw opacity=1 ][line width=1.5]    (174.07,99.96) -- (174.07,13) ;
\draw [shift={(174.07,10)}, rotate = 450] [color={rgb, 255:red, 0; green, 0; blue, 0 }  ,draw opacity=1 ][line width=1.5]    (14.21,-4.28) .. controls (9.04,-1.82) and (4.3,-0.39) .. (0,0) .. controls (4.3,0.39) and (9.04,1.82) .. (14.21,4.28)   ;
\draw    (180.43,54.53) -- (219.5,54.53) ;
\draw    (130.63,54.53) -- (167.7,54.53) ;
\draw [shift={(128.63,54.53)}, rotate = 0] [color={rgb, 255:red, 0; green, 0; blue, 0 }  ][line width=0.75]    (10.93,-3.29) .. controls (6.95,-1.4) and (3.31,-0.3) .. (0,0) .. controls (3.31,0.3) and (6.95,1.4) .. (10.93,3.29)   ;

\draw (61.29,35.11) node [anchor=north west][inner sep=0.75pt]    {$i$};
\draw (179.42,36.02) node [anchor=north west][inner sep=0.75pt]    {$i$};
\draw (35.48,108.72) node [anchor=north west][inner sep=0.75pt]    {$+>i$};
\draw (153.61,109.63) node [anchor=north west][inner sep=0.75pt]    {$+<i$};

\end{tikzpicture}
    \caption{The cording method of each crossing}
    \label{fig:Rmoves}
\end{figure}
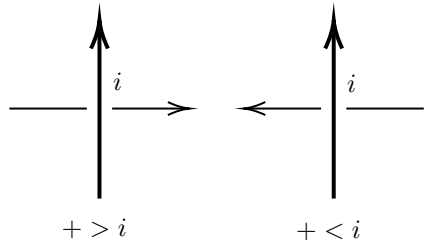

Let $D$ and $D'$ are virtual link diagrams.
$D$ and $D'$ can have the same oriented Gauss code even if $D$ and $D'$ are different virtual link diagrams.
However, $D$ and $D'$ represent the same virtual link if $D$ and $D'$ have the same oriented Gauss code (\cite{Kau}).
Therefore, we can encode virtual links with oriented Gauss codes.

When a virtual link is encoded with an oriented Gauss code, we meet each real crossing twice.
Thus, a virtual link diagram $D$ can be encoded with the length of $\mathcal{O}(c)$, where $c$ is the number of real crossings.
Therefore, the computational complexity of classical link recognition is measured by a function of the number of real crossings.

Here, we give a formal definition of classical link recognition.
\begin{prob}[Classical link recognition]
    Let $L$ be a virtual link, and let $D$ be a virtual link diagram of $L$.
    \begin{description}
        \item[Input] An oriented Gauss code of $D$
        \item[Output] $
                \left \{
                    \begin{array}{ll}
                        \mbox{yes} & \mbox{if } L \mbox{ is classical}\\
                        \mbox{no}  & \mbox{otherwise}
                    \end{array}
                \right .
                $
    \end{description}
\end{prob}



Let $L$ be a virtual link and $D$ be a virtual link diagram of $L$.
If $D$ is disconnected, then $L$ is classical if and only if each link represented by a component of $D$ is classical.
In addition, if $D$ has no real crossings, then the link represented by $D$ is a trivial link, i.e. a classical link.
For these reasons, we assume that an input of classical link recognition is a connected virtual link diagram which has at least one real crossing.


Kauffman and Manturov showed that this problem can be solved by using a problem of determining whether given two Haken manifolds are homeomorphic.
\begin{thm}[Kauffman and Manturov \cite{KM}]
        Classical link recognition is computable.
\end{thm}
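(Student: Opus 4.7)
The plan is to reduce classical link recognition to the homeomorphism problem for Haken $3$-manifolds, which is decidable by the work of Haken, Hemion, and Matveev. The bridge from virtual link theory to $3$-manifold topology is Kuperberg's theorem, already quoted in the introduction: a virtual link $L$ is classical iff the genus of the surface $\mathcal{S}$ in a representation $\hat{K} \subset \mathcal{S} \times I$ can be reduced to zero by successively cutting the exterior along vertical annuli. Thus classicality is really a statement about the $3$-manifold $E = \text{cl}(\mathcal{S} \times I \setminus N(\hat{K}))$ together with its peripheral structure, and the point is to extract that statement into a form the Haken-Hemion-Matveev machinery can decide.

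Concretely, from the oriented Gauss code of $D$, I would first construct the canonical exterior $E$ as an explicit triangulated compact orientable $3$-manifold with boundary, equipped with the peripheral data distinguishing $\partial N(\hat{K})$ from $\mathcal{S} \times \{0,1\}$; this is effective, and the construction is the subject of Section $3$ of the paper. The manifold $E$ is Haken since vertical annuli in $E$ are essential. Next, I would enumerate classical link diagrams $D'$ in increasing order of crossing number; for each $D'$ compute its canonical exterior $E'$ (now genus zero, i.e.\ sitting in $S^2 \times I \subset S^3$), and use the Haken-Hemion-Matveev algorithm to test whether $E$ is homeomorphic to $E'$ via a homeomorphism respecting the peripheral structure. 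Output \emph{yes} upon finding the first match.

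The main obstacle is termination on \emph{no} instances, since the naive enumeration is only a semi-decision procedure. I would resolve this by producing an a priori bound on the minimum crossing number of a classical representative of $L$, extracted from the triangulation of $E$ via normal surface theory: bounds on the weights of fundamental normal vertical annuli bound the number of destabilizations needed by Kuperberg's theorem, and hence bound the complexity of any classical representative. With such a bound in hand, the search space is a finite list of candidate classical diagrams, and the procedure reduces to finitely many instances of the decidable Haken $3$-manifold homeomorphism problem, yielding an algorithm that decides classical link recognition.
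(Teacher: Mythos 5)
You reduce to the Haken homeomorphism problem in the wrong place, and the comparison you propose can never succeed on the interesting inputs. The canonical exterior $E$ of the input diagram $D$ lies in $\mathcal{S}\times I$ where $\mathcal{S}$ has genus $sg(D)$, so $\partial E$ contains two copies of a genus-$sg(D)$ surface; the canonical exterior $E'$ of a genuinely classical diagram $D'$ has two $S^2$ boundary components. Whenever $sg(D)>0$, no homeomorphism $E\to E'$ respecting the peripheral structure exists, even when the underlying virtual link $L$ \emph{is} classical. Your algorithm therefore answers ``no'' on any classical link presented by a diagram of positive supporting genus, which is essentially every input that is not already drawn on the sphere. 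Bounding the crossing number of a hypothetical classical representative does not repair this, since the issue is not termination but the correctness of the yes/no criterion you are searching for.

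The step you allude to but omit is exactly where the real work lives: before any comparison you must pass from the canonical exterior $E$ to the \emph{minimal} exterior of $L$ by iteratively finding an essential $2$-sphere or a vertical essential annulus (this is where normal surface theory and Haken-type machinery actually enter) and performing Operation~\ref{ope:splitting_mani} or destabilization; Theorem~\ref{thm:K} guarantees the result is well defined and the process terminates because each destabilization strictly reduces the genus of $\mathcal{S}$. Once the minimal exterior is in hand, classicality is decided by checking whether the two residual copies of the supporting surface have genus zero, which is an Euler-characteristic computation rather than a homeomorphism test. In Kauffman--Manturov's reduction the Haken--Hemion--Matveev homeomorphism algorithm genuinely appears when deciding equivalence of virtual links via their minimal exteriors; for classicality alone the decidability hinges on algorithmically \emph{finding} the vertical annuli to cut along, not on enumerating and comparing candidate classical exteriors.
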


\subsection{Virtual links in thickened surfaces}
Because the definition of a virtual link is based on a diagram, it is easy to calculate polynomial invariants of virtual links.
By contrast, with this definition, it is not straightforward to discuss using 3-manifold theory like classical link theory.

The study of virtual links using 3-manifold theory was based on \cite{KK, CKS}.
The method is as follows.
Let $D$ be a virtual link diagram.
First, we place each real crossing on a surface as shown in the left of Figure \ref{fig:absCrossing} 
and place each virtual crossing on a surface as shown in the right of Figure \ref{fig:absCrossing}.
By gluing adjacent surfaces depicted in Figure \ref{fig:abstract}, we obtain an orientable surface $N(\tilde{D})$ and a diagram $\tilde{D}$ which has no virtual crossings on $N(\tilde{D})$.
The pair $(N(\tilde{D}), \tilde{D})$ is called an {\it abstract link diagram} of $D$.
Note that $N(\tilde{D})$ may be disconnected even though $D$ is connected.
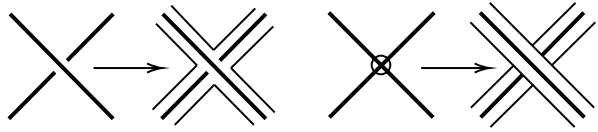
\begin{figure}[htbp]
    \centering
    \tikzset{every picture/.style={line width=0.75pt}} 

\begin{tikzpicture}[x=0.75pt,y=0.75pt,yscale=-0.7,xscale=0.7]

\draw [color={rgb, 255:red, 0; green, 0; blue, 0 }  ,draw opacity=1 ][line width=1.5]    (8.84,18) -- (83.15,93.57) ;
\draw [color={rgb, 255:red, 0; green, 0; blue, 0 }  ,draw opacity=1 ][line width=1.5]    (8,93.57) -- (41.17,59.98) ;
\draw [color={rgb, 255:red, 0; green, 0; blue, 0 }  ,draw opacity=1 ][line width=1.5]    (49.98,51.59) -- (83.15,18) ;
\draw [color={rgb, 255:red, 0; green, 0; blue, 0 }  ,draw opacity=1 ][line width=1.5]    (238.51,17) -- (313.66,92.57) ;
\draw [color={rgb, 255:red, 0; green, 0; blue, 0 }  ,draw opacity=1 ][line width=1.5]    (314.5,17) -- (238.93,92.57) ;
\draw  [color={rgb, 255:red, 0; green, 0; blue, 0 }  ,draw opacity=1 ] (269.37,54.79) .. controls (269.37,51.08) and (272.37,48.07) .. (276.08,48.07) .. controls (279.79,48.07) and (282.8,51.08) .. (282.8,54.79) .. controls (282.8,58.5) and (279.79,61.5) .. (276.08,61.5) .. controls (272.37,61.5) and (269.37,58.5) .. (269.37,54.79) -- cycle ;
\draw [color={rgb, 255:red, 0; green, 0; blue, 0 }  ,draw opacity=1 ][line width=1.5]    (118.84,18) -- (193.15,93.57) ;
\draw [color={rgb, 255:red, 0; green, 0; blue, 0 }  ,draw opacity=1 ][line width=1.5]    (118,93.57) -- (151.17,59.98) ;
\draw [color={rgb, 255:red, 0; green, 0; blue, 0 }  ,draw opacity=1 ][line width=1.5]    (159.98,51.59) -- (193.15,18) ;
\draw [color={rgb, 255:red, 0; green, 0; blue, 0 }  ,draw opacity=1 ]   (125,12) -- (155.5,44) ;
\draw [color={rgb, 255:red, 0; green, 0; blue, 0 }  ,draw opacity=1 ]   (114,25) -- (143.5,55) ;
\draw [color={rgb, 255:red, 0; green, 0; blue, 0 }  ,draw opacity=1 ]   (168,57) -- (199.5,89) ;
\draw [color={rgb, 255:red, 0; green, 0; blue, 0 }  ,draw opacity=1 ]   (155.5,44) -- (185.5,14) ;
\draw [color={rgb, 255:red, 0; green, 0; blue, 0 }  ,draw opacity=1 ]   (143.5,55) -- (111.5,87) ;
\draw [color={rgb, 255:red, 0; green, 0; blue, 0 }  ,draw opacity=1 ]   (168,57) -- (198.5,27) ;
\draw [color={rgb, 255:red, 0; green, 0; blue, 0 }  ,draw opacity=1 ]   (156,69) -- (184.5,98) ;
\draw [color={rgb, 255:red, 0; green, 0; blue, 0 }  ,draw opacity=1 ]   (156,69) -- (127.5,98) ;
\draw    (69,57) -- (116.5,57) ;
\draw [shift={(118.5,57)}, rotate = 180] [color={rgb, 255:red, 0; green, 0; blue, 0 }  ][line width=0.75]    (10.93,-3.29) .. controls (6.95,-1.4) and (3.31,-0.3) .. (0,0) .. controls (3.31,0.3) and (6.95,1.4) .. (10.93,3.29)   ;
\draw [color={rgb, 255:red, 0; green, 0; blue, 0 }  ,draw opacity=1 ][line width=1.5]    (347.51,17) -- (422.66,92.57) ;
\draw [color={rgb, 255:red, 0; green, 0; blue, 0 }  ,draw opacity=1 ][line width=1.5]    (378.08,61.79) -- (347.93,92.57) ;
\draw    (354.51,10) -- (429.66,85.57) ;
\draw    (340.51,24) -- (415.66,99.57) ;
\draw    (386.08,68.79) -- (354.93,99.57) ;
\draw    (372.08,54.79) -- (340.93,85.57) ;
\draw    (430.5,24) -- (399.34,54.79) ;
\draw    (416.24,10) -- (385.08,40.79) ;
\draw [color={rgb, 255:red, 0; green, 0; blue, 0 }  ,draw opacity=1 ][line width=1.5]    (422.24,17) -- (392.08,47.79) ;
\draw    (305,57) -- (352.5,57) ;
\draw [shift={(354.5,57)}, rotate = 180] [color={rgb, 255:red, 0; green, 0; blue, 0 }  ][line width=0.75]    (10.93,-3.29) .. controls (6.95,-1.4) and (3.31,-0.3) .. (0,0) .. controls (3.31,0.3) and (6.95,1.4) .. (10.93,3.29)   ;

\end{tikzpicture}
    \caption{Each crossing of an abstract link diagram}
    \label{fig:absCrossing}
\end{figure}
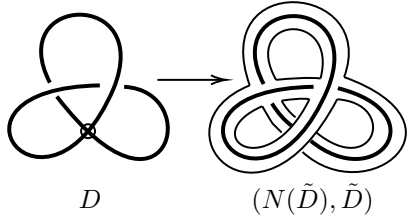
\begin{figure}[htbp]
    \centering
    \tikzset{every picture/.style={line width=0.75pt}} 

\begin{tikzpicture}[x=0.75pt,y=0.75pt,yscale=-0.7,xscale=0.7]

\draw [color={rgb, 255:red, 0; green, 0; blue, 0 }  ,draw opacity=1 ][line width=1.5]    (36.5,67) .. controls (3.5,3) and (111.5,-1) .. (82.5,71) .. controls (34.5,183) and (-54.5,71) .. (72.5,69) ;
\draw [color={rgb, 255:red, 0; green, 0; blue, 0 }  ,draw opacity=1 ][line width=1.5]    (90,70) .. controls (148.5,73) and (121.5,181) .. (41.5,77) ;
\draw  [color={rgb, 255:red, 0; green, 0; blue, 0 }  ,draw opacity=1 ] (59,102) .. controls (59,99.24) and (61.24,97) .. (64,97) .. controls (66.76,97) and (69,99.24) .. (69,102) .. controls (69,104.76) and (66.76,107) .. (64,107) .. controls (61.24,107) and (59,104.76) .. (59,102) -- cycle ;
\draw [color={rgb, 255:red, 0; green, 0; blue, 0 }  ,draw opacity=1 ][line width=1.5]    (190.5,68) .. controls (157.5,4) and (265.5,0) .. (236.5,72) .. controls (188.5,184) and (99.5,72) .. (226.5,70) ;
\draw [color={rgb, 255:red, 0; green, 0; blue, 0 }  ,draw opacity=1 ][line width=1.5]    (244,71) .. controls (315.5,89) and (272.5,148) .. (224.5,109) ;
\draw    (199.5,65) .. controls (173.5,19) and (244.5,14) .. (231.5,64) ;
\draw    (199.5,65) .. controls (209.5,64) and (223.5,63) .. (231.5,64) ;
\draw    (180.5,69) .. controls (150.5,-15) and (273.5,-5) .. (248.5,63) ;
\draw    (180.5,69) .. controls (108.5,98) and (185.5,189) .. (243.5,79) ;
\draw    (223.5,79) .. controls (192.5,142) and (141.5,105) .. (189.5,84) ;
\draw [color={rgb, 255:red, 0; green, 0; blue, 0 }  ,draw opacity=1 ][line width=1.5]    (210.5,98) .. controls (205.5,94) and (201.5,90) .. (196.5,82) ;
\draw    (203.5,103) .. controls (193.5,93) and (198.5,99) .. (189.5,84) ;
\draw    (213.5,92) .. controls (209.5,89) and (205.5,84) .. (203.5,80) ;
\draw    (223.5,79) .. controls (218.5,78) and (210.5,79) .. (203.5,80) ;
\draw    (248.5,63) .. controls (342.5,97) and (265.5,160) .. (218.5,117) ;
\draw    (243.5,79) .. controls (287.5,82) and (270.5,137) .. (228.5,103) ;
\draw    (114,65) -- (161.5,65) ;
\draw [shift={(163.5,65)}, rotate = 180] [color={rgb, 255:red, 0; green, 0; blue, 0 }  ][line width=0.75]    (10.93,-3.29) .. controls (6.95,-1.4) and (3.31,-0.3) .. (0,0) .. controls (3.31,0.3) and (6.95,1.4) .. (10.93,3.29)   ;

\draw (65.61,151) node    {$D$};
\draw (225.76,150.69) node    {$(N(\tilde{D}),  \tilde{D} )$};

\end{tikzpicture}
    \caption{Abstract link diagram}
    \label{fig:abstract}
\end{figure}

Next, we obtain an orientable closed surface $\mathcal{S}$ by attaching disks to each component of $\partial N(\tilde{D})$.
The pair $(\mathcal{S}, \tilde{D})$ is called the {\it canonical surface realization} of $D$ (Figure \ref{fig:canonical}), 
and $\mathcal{S}$ is called the {\it supporting surface} of $D$.
Note that the canonical surface realization of $D$ is unique for a virtual link diagram and $\mathcal{S}$ may also be disconnected even though $D$ is connected.
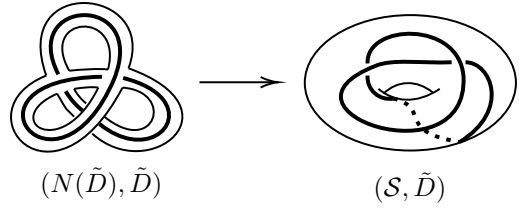
\begin{figure}[htbp]
    \centering
    \tikzset{every picture/.style={line width=0.75pt}} 

\begin{tikzpicture}[x=0.75pt,y=0.75pt,yscale=-0.8,xscale=0.8]

\draw [color={rgb, 255:red, 0; green, 0; blue, 0 }  ,draw opacity=1 ][line width=1.5]    (169.04,84.01) .. controls (143.86,35.18) and (226.25,32.13) .. (204.13,87.06) .. controls (167.51,172.5) and (99.61,87.06) .. (196.5,85.53) ;
\draw [color={rgb, 255:red, 0; green, 0; blue, 0 }  ,draw opacity=1 ][line width=1.5]    (209.85,86.3) .. controls (222.52,89.49) and (230.48,94.36) .. (234.77,99.6) .. controls (248.95,116.91) and (223.09,138.13) .. (194.97,115.29) ;
\draw    (175.9,81.72) .. controls (156.07,46.63) and (210.23,42.81) .. (200.31,80.96) ;
\draw    (175.9,81.72) .. controls (183.53,80.96) and (194.21,80.19) .. (200.31,80.96) ;
\draw    (161.41,84.77) .. controls (138.52,20.69) and (232.35,28.32) .. (213.28,80.19) ;
\draw    (161.41,84.77) .. controls (106.48,106.89) and (165.22,176.32) .. (209.47,92.4) ;
\draw    (194.21,92.4) .. controls (170.56,140.46) and (131.65,112.24) .. (168.27,96.21) ;
\draw [color={rgb, 255:red, 0; green, 0; blue, 0 }  ,draw opacity=1 ][line width=1.5]    (184.29,106.89) .. controls (180.48,103.84) and (177.43,100.79) .. (173.61,94.69) ;
\draw    (178.95,110.71) .. controls (171.32,103.08) and (175.14,107.66) .. (168.27,96.21) ;
\draw    (186.58,102.32) .. controls (183.53,100.03) and (180.48,96.21) .. (178.95,93.16) ;
\draw    (194.21,92.4) .. controls (190.4,91.64) and (184.29,92.4) .. (178.95,93.16) ;
\draw    (213.28,80.19) .. controls (284.99,106.13) and (226.25,154.19) .. (190.4,121.39) ;
\draw    (209.47,92.4) .. controls (243.04,94.69) and (230.07,136.65) .. (198.02,110.71) ;
\draw   (324.38,89.35) .. controls (324.38,65.75) and (354.18,46.63) .. (390.94,46.63) .. controls (427.7,46.63) and (457.5,65.75) .. (457.5,89.35) .. controls (457.5,112.94) and (427.7,132.07) .. (390.94,132.07) .. controls (354.18,132.07) and (324.38,112.94) .. (324.38,89.35) -- cycle ;
\draw    (370.83,93.16) .. controls (380.81,101.55) and (394.52,103.08) .. (409.49,93.16) ;
\draw    (373.95,96.21) .. controls (382.68,87.82) and (394.52,87.06) .. (405.12,95.45) ;
\draw [color={rgb, 255:red, 0; green, 0; blue, 0 }  ,draw opacity=1 ][line width=1.5]    (363.97,77.14) .. controls (362.72,51.97) and (421.96,49.68) .. (424.45,85.53) .. controls (427.57,151.14) and (296.63,106.89) .. (368.96,78.67) .. controls (379.56,74.09) and (397.64,77.14) .. (416.97,76.38) ;
\draw [color={rgb, 255:red, 0; green, 0; blue, 0 }  ,draw opacity=1 ][line width=1.5]    (425.7,76.38) .. controls (441.91,76.38) and (454.38,106.89) .. (421.96,126.73) ;
\draw [color={rgb, 255:red, 0; green, 0; blue, 0 }  ,draw opacity=1 ][line width=1.5]  [dash pattern={on 1.69pt off 2.76pt}]  (380.18,99.27) .. controls (404.5,105.37) and (380.81,119.86) .. (421.96,126.73) ;
\draw [color={rgb, 255:red, 0; green, 0; blue, 0 }  ,draw opacity=1 ][line width=1.5]    (363.97,84.77) .. controls (363.35,93.93) and (371.45,100.79) .. (380.18,99.27) ;
\draw    (258,89) -- (305.5,89) ;
\draw [shift={(307.5,89)}, rotate = 180] [color={rgb, 255:red, 0; green, 0; blue, 0 }  ][line width=0.75]    (10.93,-3.29) .. controls (6.95,-1.4) and (3.31,-0.3) .. (0,0) .. controls (3.31,0.3) and (6.95,1.4) .. (10.93,3.29)   ;

\draw (196.07,152.62) node    {$(N(\tilde{D}),\tilde{D})$};
\draw (390.71,155.38) node    {$(\mathcal{S}, \tilde{D})$};

\end{tikzpicture}
    \caption{Canonical surface realization}
    \label{fig:canonical}
\end{figure}

We can obtain a link $\hat{D}$ in $\mathcal{S} \times I$ from $(\mathcal{S}, \tilde{D})$, where $(\mathcal{S}, \tilde{D})$ is the canonical surface realization of a virtual link diagram $D$.
The pair $(\mathcal{S} \times I, \hat{D})$ is called a {\it canonical space realization}.
Additionally, we can obtain the exterior by removing an open regular neighborhood of $\hat{D}$ from $\mathcal{S} \times I$.
This exterior is called the {\it canonical exterior} of $D$.

The number of components of $\mathcal{S}$ is called the {\it splitting number} of $D$, and we denote this by $s(D)$.
Similarly, the sum of genera of components of $\mathcal{S}$ is called the {\it supporting genus} of $D$, and we denote this by $sg(D)$.
These are also defined for virtual link $L$.
\begin{itemize}
    \item $s(L) = \max \{ s(D) | D \mbox{ is a diagram of } L\}$
    \item $sg(L) = \min \{ sg(D) | D \mbox{ is a diagram of } L\}$
\end{itemize}
$s(L)$ is called the splitting number of $L$, and $sg(L)$ is called the supporting genus of $L$.
Lemma \ref{lem:classical} follows from the definition of the supporting genus of a virtual link.
\begin{lem}\label{lem:classical}
    A virtual link $L$ is classical if and only if $sg(L) = 0$.
\end{lem}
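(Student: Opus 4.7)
The plan is to prove the biconditional directly by unpacking the two definitions on either side. A virtual link $L$ is classical exactly when it admits a diagram with no virtual crossings, while $sg(L)=0$ says there is a diagram whose canonical surface realization $(\mathcal{S}, \tilde{D})$ has every component of $\mathcal{S}$ of genus zero. Since both conditions are existential statements over diagrams of $L$, I would exhibit, for each direction, a diagram of $L$ realizing the other condition.

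For the forward implication, suppose $L$ is classical and pick a diagram $D$ with no virtual crossings. Inspecting the construction from the previous subsection, $N(\tilde D)$ is built by gluing the squares that host the real crossings along the arcs of $D$, using only the real-crossing model in Figure \ref{fig:absCrossing}. Because $D$ already embeds in the plane, this gluing can be performed in a planar neighborhood of $D$, so $N(\tilde D)$ is a planar surface with boundary. Capping off each boundary component with a disk then yields a disjoint union of $2$-spheres, i.e.\ $\mathcal{S}$ has every component of genus $0$. Hence $sg(D)=0$, and since $sg(L)$ is a minimum, $sg(L)=0$.

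For the backward implication, assume $sg(L)=0$ and pick a diagram $D$ of $L$ realizing this minimum, so each component of its supporting surface $\mathcal{S}$ is a $2$-sphere. On each component $S \subset \mathcal{S}$, the restriction of $\tilde D$ is a $4$-valent graph with over/under data and no virtual crossings. Removing a point of $S$ disjoint from $\tilde D$ and applying stereographic projection sends this portion of $\tilde D$ to a classical knot/link diagram in the plane; doing this on every component and placing the resulting planar diagrams side by side in $\mathbb{R}^2$ gives a virtual link diagram $D'$ with no virtual crossings. By the correspondence between virtual link diagrams and link diagrams on their supporting surface (as recalled from \cite{KK, CKS}), $D'$ represents the same virtual link $L$, which is therefore classical.

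The main obstacle is really only the backward step, and inside it the single delicate point is the final sentence: one must justify that the classical diagram $D'$ obtained by stereographic projection represents exactly the same virtual link $L$ as the original diagram $D$. This is where the setup of the previous subsection does the work for us, since the canonical space realization $(\mathcal{S}\times I,\hat D)$ is a virtual-link invariant up to the appropriate moves, and when $\mathcal{S}$ is a union of spheres these moves reduce to planar isotopy and classical Reidemeister moves. Everything else is bookkeeping about the gluing model in Figures \ref{fig:absCrossing} and \ref{fig:abstract}.
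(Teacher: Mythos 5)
The paper does not actually give a proof of this lemma; it is merely asserted to ``follow from the definition of the supporting genus of a virtual link.'' Your proof is correct and fills that gap. The forward direction is routine exactly as you argue: a diagram with no virtual crossings has a planar abstract neighborhood $N(\tilde{D})$, so capping off yields a union of spheres, $sg(D)=0$, and therefore $sg(L)=0$.

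The backward direction is where the actual content lies, and you are right to flag it: a diagram $D$ attaining $sg(D)=0$ may still have virtual crossings, so one must produce a \emph{different} diagram $D'$ of $L$ with none. Projecting $\tilde{D}$ off a point of each sphere component and juxtaposing the resulting planar pictures is the right construction. The single place you wave a hand is in justifying $D'\simeq_{R} D$; the precise ingredient is the theorem, implicit in the paper's recollection of \cite{KK, CKS}, that two virtual link diagrams are $\simeq_{R}$-equivalent if and only if their abstract link diagrams are stably (abstractly) equivalent. Since $D$ and $D'$ have abstract link diagrams supported on unions of spheres carrying the same combinatorial link diagram, that equivalence is immediate and $L$ is classical. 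In short, your argument is strictly more careful than the paper's treatment, which glosses over the nontrivial backward implication entirely.
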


For any virtual link $L$,
there is a diagram which has the splitting number of $L$ and the supporting genus of $L$, simultaneously.
\begin{thm}[Kuperberg \cite{K}]
    For any virtual link $L$, there is a diagram $D$ that satisfies $s(D) = s(L)$ and $sg(D) = sg(L)$.
\end{thm}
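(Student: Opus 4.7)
The plan is to first identify the ``split components'' of $L$ and then build the desired diagram as a disjoint union of minimum‑supporting‑genus diagrams for each split component. Concretely, I would first argue that $L$ decomposes canonically into split components $L_1, \ldots, L_k$ with $k = s(L)$. That is, I want a diagram $D^{\max}$ of $L$ with exactly $s(L)$ connected components, and each connected component of $D^{\max}$ represents a well‑defined virtual sub‑link $L_i$. This should follow directly from the definition of $s(L)$: fix a diagram realizing the maximum splitting number, and declare the virtual link represented by each of its connected components to be $L_i$. A small check is needed to see that $L_1, \ldots, L_k$ are well defined independent of the choice of $D^{\max}$ (up to reordering), which can be done by a virtual Reidemeister move analysis, or more slickly by appealing to Kuperberg's uniqueness theorem for minimal genus thickened surface representatives applied to each component.

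Next, for each $L_i$, I would choose a diagram $D_i$ with $sg(D_i) = sg(L_i)$. Such a diagram exists by definition of $sg(L_i)$ as a minimum. I then form the diagram $D := D_1 \sqcup \cdots \sqcup D_k$, placed in disjoint regions of the plane so that no new crossings are introduced. Clearly $D$ is a diagram of $L$ and its canonical surface realization $(\mathcal{S}, \tilde D)$ is the disjoint union of the canonical surface realizations of the $D_i$. Hence $s(D) \geq k = s(L)$, and $sg(D) = \sum_i sg(L_i)$. Combined with the trivial inequality $s(D) \le s(L)$, this gives $s(D) = s(L)$.

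The main obstacle is verifying the identity $sg(L) = \sum_{i} sg(L_i)$, i.e.\ that supporting genus is additive over split components. The inequality $sg(L) \le \sum_i sg(L_i)$ is immediate from the construction of $D$ above. The reverse inequality is where the work lies: I need to show that an arbitrary diagram $D'$ of $L$ satisfies $sg(D') \ge \sum_i sg(L_i)$. For this I would invoke Kuperberg's theorem on the uniqueness (up to homeomorphism and stabilization) of the minimal genus thickened surface representative of a virtual link. Starting from the canonical space realization $(\mathcal{S}' \times I, \hat D')$ of $D'$, one can destabilize $\mathcal{S}'$ until each connected component of the resulting thickened surface carries a minimal genus representative of some virtual link; by uniqueness, these virtual links must be precisely the $L_i$'s, so each component contributes at least $sg(L_i)$ and the total genus is at least $\sum_i sg(L_i)$. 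Since destabilization does not increase the supporting genus, $sg(D') \ge \sum_i sg(L_i)$ follows.

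In summary, after establishing the well‑definedness of split components and the additivity of $sg$ over them, the theorem is obtained by simply placing minimum‑genus diagrams of each split component side by side. The delicate point is the additivity argument, which is where Kuperberg's uniqueness theorem for minimal genus representatives does the essential work.
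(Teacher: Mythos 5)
This theorem is cited from Kuperberg~\cite{K} and is not proved in the paper, so there is no internal proof to compare against. Your proposed reduction to additivity of the supporting genus over split components is a reasonable strategy, and the easy direction of the additivity claim (taking disjoint unions of minimum-genus diagrams) is correct. The delicate direction is, as you note, $sg(D')\ge\sum_i sg(L_i)$.

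There is, however, a genuine circularity hazard in how you invoke uniqueness. In this paper, Theorem~\ref{thm:uniqueness} (uniqueness of the minimal space realization and minimal exterior) is \emph{stated} in terms of a ``minimal diagram'', a notion that only makes sense once the very theorem you are proving has been established; using that theorem as a black box here would be circular. What you actually need is Kuperberg's standalone result in~\cite{K}: repeated destabilization (together with sphere-splitting, Operation~\ref{ope:splitting_mani}) applied to \emph{any} thickened-surface representative terminates at a unique irreducible representative, independent of the order of operations. That version does not presuppose the existence of a jointly optimal diagram and is the right input. Two further points to tighten: (a) you only mention destabilization, but to argue that the irreducible pieces are in bijection with the split components $L_i$ (and hence that each contributes exactly $sg(L_i)$ to the genus) one must also use splitting along essential $2$-spheres, since a destabilization-irreducible component may still carry a splittable sublink; it is worth observing explicitly that sphere-splitting preserves total genus, so the genus bound survives. (b) When you take ``the connected components of $D^{\max}$'', note that the splitting number counts components of the canonical supporting surface $\mathcal{S}$, not components of the plane graph $D^{\max}$; a connected diagram can have disconnected $\mathcal{S}$ when strands meet only at virtual crossings, so you should first normalize $D^{\max}$ via virtual detour moves so that the two notions of component agree. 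With those repairs the argument goes through.
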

Such a diagram $D$ is called a {\it minimal diagram} of $L$.
Suppose that $D$ is a minimal diagram of a virtual link $L$.
A {\it minimal surface realization} of $L$ is the canonical surface realization of $D$.
Similarly, we also define the {\it minimal space realization} and the {\it minimal exterior}.
It is known that the minimal space realization of $L$ and the minimal exterior of $L$ is unique.
\begin{thm}[Kuperberg \cite{K}]\label{thm:uniqueness}
    For any virtual link $L$, the minimal space realization of $L$ and the minimal exterior of $L$ are uniquely determined.
\end{thm}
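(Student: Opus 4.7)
The plan is to prove uniqueness by translating the virtual Reidemeister equivalence $D_1 \simeq_R D_2$ into a sequence of controlled moves on canonical space realizations, and then closing the resulting zig-zag using a Haken-style exchange argument for vertical annuli. First I would make precise what ``uniquely determined'' means: for any two minimal diagrams $D_1, D_2$ of $L$, the canonical space realizations $(\mathcal{S}_1 \times I, \hat{D}_1)$ and $(\mathcal{S}_2 \times I, \hat{D}_2)$ should be homeomorphic as pairs, and likewise for the exteriors obtained after removing $N(\hat{D}_i)$.

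The next step is to analyze each of the virtual Reidemeister moves in Figure~\ref{fig:VRmoves} as an operation on the canonical space realization. The purely classical Reidemeister moves act by an ambient isotopy supported in a $3$-ball inside $\mathcal{S} \times I$; the purely virtual moves take place inside a planar region of $\mathcal{S}$ bounded by the glued handles and leave $(\mathcal{S} \times I, \hat{D})$ unchanged up to pair-homeomorphism; and the mixed virtual-classical move corresponds exactly to adding or removing a $1$-handle of $\mathcal{S}$, i.e.\ a stabilization or its inverse destabilization along a vertical annulus. Combined with the preceding existence theorem, this shows that any two space realizations of $L$ are connected by a finite zig-zag of pair-homeomorphisms, stabilizations, and destabilizations, and that each destabilization strictly decreases $sg$ or strictly increases $s$.

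The main obstacle, and the heart of the argument, is collapsing this zig-zag when both endpoints are already minimal. The strategy is to establish an exchange lemma: given a realization obtained from a minimal one by two different stabilizations, the two corresponding destabilizing vertical annuli can be made disjoint in the exterior \emph{rel} $\hat{D}$. This follows by cutting-and-pasting along innermost disks and outermost bigons in the intersection, using that vertical annuli are properly embedded and $\pi_1$-injective in the (irreducible) exterior of a link in a thickened surface, and that they can be chosen disjoint from $\hat{D}$. Once the exchange lemma is in hand, an induction on the length of the zig-zag shows that any two paths from a common stabilization down to a minimal realization terminate at the same pair, yielding a direct homeomorphism $(\mathcal{S}_1 \times I, \hat{D}_1) \cong (\mathcal{S}_2 \times I, \hat{D}_2)$. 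The delicate point is carrying out the classical Waldhausen-type isotopy argument in the category of \emph{pairs} $(M, K)$ rather than bare $3$-manifolds, which forces all isotopies to be supported in the knot exterior and requires verifying that the destabilizing annuli can always be normalized away from $\hat{D}$.
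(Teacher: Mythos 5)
You are attempting to reconstruct Kuperberg's argument, which the paper cites but does not reprove; your overall strategy (translating virtual Reidemeister moves into stabilizations, destabilizations, and pair-homeomorphisms, then closing the zig-zag via a confluence argument) is the right shape of argument. However, there are two genuine gaps.

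First, your exchange lemma is stated in a form that is both stronger than needed and not obviously true. You claim that two destabilizing vertical annuli $A_1, A_2$ in the exterior can always be made disjoint after innermost-disk and outermost-bigon surgery. That surgery only removes \emph{inessential} intersections; if $A_1 \cap A_2$ contains a curve that is essential in both annuli (for instance, a core circle), incompressibility gives you no leverage to remove it, and irreducibility of the exterior does not immediately help either. The standard confluence argument in this kind of setting (as in Waldhausen-style uniqueness proofs) does not require disjointness: one only needs that destabilizing along $A_1$ and destabilizing along $A_2$ both admit a common further destabilization, and that can be arranged even when $A_1$ and $A_2$ intersect essentially, by compressing one along the other and tracking the Euler-characteristic bookkeeping. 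As written, your lemma has a hole precisely at the point you wave through.

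Second, and more structurally, your proposal only addresses the supporting genus $sg(L)$ and never touches the splitting number $s(L)$. Recall that a minimal diagram is required to realize \emph{both} $s(D) = s(L)$ and $sg(D) = sg(L)$ simultaneously, and the supporting surface of a minimal diagram may be disconnected. Destabilizations along vertical annuli only account for genus reduction; you also need to handle essential $2$-spheres and disks, which split the exterior (this is exactly the paper's Operation~\ref{ope:splitting_mani}) and can change the number of components of $\mathcal{S}$. Without incorporating the sphere/disk moves into your rewriting system and showing that they too commute (or terminate compatibly) with annulus destabilizations, the induction does not produce the claimed unique minimal realization -- it only controls one of the two extremal quantities. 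Theorem~\ref{thm:K} in the paper makes explicit that both operation types are needed and that the resulting minimal exterior is independent of order; any proof of uniqueness must establish confluence for the \emph{combined} system of moves, not for the annulus moves alone.
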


Furthermore, from the proof process of Theorem \ref{thm:uniqueness}, 
we see a construction method of the minimal exterior of $L$ from the canonical exterior of a virtual link diagram $D$.
Let $\hat{D}$ be a link in a thickened closed orienable surface $\mathcal{S} \times I$ and $M = \text{cl}(\mathcal{S} \times I - N(\hat{D}))$ be the exterior of $\hat{D}$.
Suppose that $A$ is a properly embedded annulus in $M$.
$A$ is said to be {\it vertical} if $\partial A = a_0 \cup a_1$ and $A \cap \mathcal{S} \times \{i\} = a_i (i = 0,1)$, and $A$ is said to be {\it essential} if $A$ is incompressible, $\partial$-incomporessible, and not $\partial$-parallel.
We define the following two operations for $M$.
\begin{ope}\label{ope:splitting_mani}
    Suppose that $\hat{D} = \hat{D}' \cup \hat{D}''$ (possibly $\hat{D}'' = \emptyset$) and $F$ is an embedded $2$-sphere or a properly embedded disk in $M$ such that there is a submanifold $M' = \text{cl}(\mathbb{B}^3 - N(\hat{D'})) \subset M$ and $F \subset \partial M' - \partial N(\hat{D})$.
    First, cut $M$ open along $F$.
    Then $M'$ and $M''$ denote $\text{cl}(\mathbb{B}^3 - N(\hat{D}'))$ and the remaining component, respectively.
    Next, add a component $\text{cl}(\mathbb{S}^2 \times I - B)$ to $M' \cup M''$, where $B$ is a $3$-ball in $\mathbb{S}^2 \times I$, and glue the copy of $F \subset \partial M'$ and $\partial B \subset \partial (\mathbb{S}^2 \times I - B)$.
    Next, shrink the copy of $F$ in $M''$ to a point.
    Finally, remove the components which contain no components of $\partial N(\hat{D})$.
\end{ope}

\begin{ope}[Destabilization]\label{ope:destabilization_mani}
    Suppose that $A$ is a vertical essential annulus in $M$. 
    Cut $M$ open along $A$, and fill each copy of $A$ with a $2$-handle $D^2 \times I$ as shown in Figure \ref{fig:des}.
    Then, remove the components which contain no components of $\partial N(\hat{D})$.
\end{ope}
We call Operation \ref{ope:destabilization_mani} {\it destabilization}.
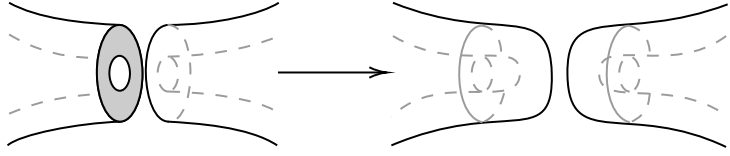
\begin{figure}[htbp]
    \centering
    \tikzset{every picture/.style={line width=0.75pt}} 

\begin{tikzpicture}[x=0.75pt,y=0.75pt,yscale=-0.8,xscale=0.8]

\draw [color={rgb, 255:red, 155; green, 155; blue, 155 }  ,draw opacity=1 ] [dash pattern={on 4.5pt off 4.5pt}]  (40.5,50) .. controls (65.5,65) and (88.5,66) .. (111.5,64) ;
\draw [color={rgb, 255:red, 155; green, 155; blue, 155 }  ,draw opacity=1 ] [dash pattern={on 4.5pt off 4.5pt}]  (40.5,100) .. controls (64.5,90) and (77.5,89) .. (112.5,86) ;
\draw    (40.5,30) .. controls (58.5,40) and (86.5,43) .. (110.5,44) ;
\draw    (140.5,44) .. controls (171.5,44) and (199.5,37) .. (210.5,30) ;
\draw    (39.5,120) .. controls (47.5,113) and (83.5,106) .. (110.5,105) ;
\draw    (140.5,105) .. controls (157.5,106) and (191.5,108) .. (209.5,120) ;
\draw   (110.5,105) .. controls (102.57,105.08) and (96.01,91.49) .. (95.84,74.64) .. controls (95.68,57.8) and (101.97,44.08) .. (109.9,44.01) .. controls (117.83,43.93) and (124.39,57.52) .. (124.56,74.36) .. controls (124.72,91.21) and (118.43,104.92) .. (110.5,105) -- cycle ;
\draw    (140.5,44) .. controls (120.5,46) and (123.5,105) .. (141.1,104.99) ;
\draw [color={rgb, 255:red, 155; green, 155; blue, 155 }  ,draw opacity=1 ] [dash pattern={on 4.5pt off 4.5pt}]  (140.5,44) .. controls (160.5,44) and (158.5,106) .. (141.1,104.99) ;
\draw  [color={rgb, 255:red, 155; green, 155; blue, 155 }  ,draw opacity=1 ][dash pattern={on 4.5pt off 4.5pt}] (140.72,86.01) .. controls (137.14,86.04) and (134.19,81.14) .. (134.13,75.07) .. controls (134.07,68.99) and (136.92,64.03) .. (140.5,64) .. controls (144.08,63.97) and (147.03,68.86) .. (147.09,74.94) .. controls (147.15,81.02) and (144.29,85.97) .. (140.72,86.01) -- cycle ;
\draw [color={rgb, 255:red, 155; green, 155; blue, 155 }  ,draw opacity=1 ] [dash pattern={on 4.5pt off 4.5pt}]  (140.5,64) .. controls (177.5,63) and (198.5,56) .. (210.5,51) ;
\draw [color={rgb, 255:red, 155; green, 155; blue, 155 }  ,draw opacity=1 ] [dash pattern={on 4.5pt off 4.5pt}]  (140.72,86.01) .. controls (179.5,87) and (204.5,94) .. (210.5,100) ;
\draw    (282.5,30) .. controls (349.5,57) and (381.5,27) .. (382.61,75) .. controls (383.72,123.01) and (352.5,90) .. (281.5,120) ;
\draw [color={rgb, 255:red, 155; green, 155; blue, 155 }  ,draw opacity=1 ] [dash pattern={on 4.5pt off 4.5pt}]  (282.5,51) .. controls (324.5,76) and (363.5,51) .. (362.5,75) .. controls (361.5,99) and (311.5,72) .. (281.5,102) ;
\draw  [color={rgb, 255:red, 155; green, 155; blue, 155 }  ,draw opacity=1 ][dash pattern={on 4.5pt off 4.5pt}] (338.72,86.01) .. controls (335.14,86.04) and (332.19,81.14) .. (332.13,75.07) .. controls (332.07,68.99) and (334.92,64.03) .. (338.5,64) .. controls (342.08,63.97) and (345.03,68.86) .. (345.09,74.94) .. controls (345.15,81.02) and (342.29,85.97) .. (338.72,86.01) -- cycle ;
\draw [color={rgb, 255:red, 155; green, 155; blue, 155 }  ,draw opacity=1 ]   (338.31,44.51) .. controls (321.5,45) and (317.5,100) .. (338.91,105.5) ;
\draw [color={rgb, 255:red, 155; green, 155; blue, 155 }  ,draw opacity=1 ] [dash pattern={on 4.5pt off 4.5pt}]  (352.5,87) .. controls (350.5,96) and (345.5,103) .. (338.91,105.5) ;
\draw [color={rgb, 255:red, 155; green, 155; blue, 155 }  ,draw opacity=1 ] [dash pattern={on 4.5pt off 4.5pt}]  (338.31,44.51) .. controls (348.5,45) and (351.5,58) .. (350.5,63) ;
\draw    (493.5,31) .. controls (426.5,56) and (391.39,28) .. (392.5,76) .. controls (393.61,124) and (431.5,92) .. (492.5,121) ;
\draw [color={rgb, 255:red, 155; green, 155; blue, 155 }  ,draw opacity=1 ] [dash pattern={on 4.5pt off 4.5pt}]  (492.5,51) .. controls (457.5,77) and (413.5,50) .. (412.5,74) .. controls (411.5,98) and (456.5,76) .. (492.5,101) ;
\draw  [color={rgb, 255:red, 155; green, 155; blue, 155 }  ,draw opacity=1 ][dash pattern={on 4.5pt off 4.5pt}] (431.72,86.01) .. controls (428.14,86.04) and (425.19,81.14) .. (425.13,75.07) .. controls (425.07,68.99) and (427.92,64.03) .. (431.5,64) .. controls (435.08,63.97) and (438.03,68.86) .. (438.09,74.94) .. controls (438.15,81.02) and (435.29,85.97) .. (431.72,86.01) -- cycle ;
\draw [color={rgb, 255:red, 155; green, 155; blue, 155 }  ,draw opacity=1 ]   (431.31,44.51) .. controls (414.5,45) and (410.5,100) .. (431.91,105.5) ;
\draw [color={rgb, 255:red, 155; green, 155; blue, 155 }  ,draw opacity=1 ] [dash pattern={on 4.5pt off 4.5pt}]  (444.5,87) .. controls (443.5,92) and (444.5,100) .. (431.91,105.5) ;
\draw [color={rgb, 255:red, 155; green, 155; blue, 155 }  ,draw opacity=1 ] [dash pattern={on 4.5pt off 4.5pt}]  (431.31,44.51) .. controls (438.5,46) and (444.5,52) .. (444.5,63) ;
\draw    (210,74) -- (276.5,74) ;
\draw [shift={(278.5,74)}, rotate = 180] [color={rgb, 255:red, 0; green, 0; blue, 0 }  ][line width=0.75]    (10.93,-3.29) .. controls (6.95,-1.4) and (3.31,-0.3) .. (0,0) .. controls (3.31,0.3) and (6.95,1.4) .. (10.93,3.29)   ;
\draw  [fill={rgb, 255:red, 204; green, 204; blue, 204 }  ,fill opacity=1 ] (95.84,74.64) .. controls (95.68,57.8) and (101.97,44.08) .. (109.9,44.01) .. controls (117.83,43.93) and (124.39,57.52) .. (124.56,74.36) .. controls (124.72,91.21) and (118.43,104.92) .. (110.5,105) .. controls (102.57,105.08) and (96.01,91.49) .. (95.84,74.64) -- cycle (110.31,85.51) .. controls (113.89,85.47) and (116.74,80.52) .. (116.68,74.44) .. controls (116.62,68.36) and (113.67,63.46) .. (110.09,63.5) .. controls (106.52,63.53) and (103.66,68.49) .. (103.72,74.57) .. controls (103.78,80.64) and (106.73,85.54) .. (110.31,85.51) -- cycle ;

\end{tikzpicture}
    \caption{Destabilization}
    \label{fig:des}
\end{figure}

\begin{thm}[Kuperberg \cite{K}] \label{thm:K}
    Let $D$ be a virtual link diagram of a virtual link $L$, and let $M$ be the canonical exterior of $D$.
    The minimal exterior of $L$ can be obtained by repeatedly performing Operation \ref{ope:splitting_mani} and destabilization to $M$.
    Furthermore, the minimal exterior obtained by performing Operation \ref{ope:splitting_mani} and destabilization in any order is unique.
\end{thm}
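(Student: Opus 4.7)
The statement splits naturally into an \emph{existence} part (the minimal exterior is reachable) and a \emph{uniqueness} part (every legal reduction sequence produces the same result). My plan is to reduce both parts to Theorem \ref{thm:uniqueness}, using the operations themselves as link-preserving complexity reducers.

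The first step is to verify that Operation \ref{ope:splitting_mani} and destabilization preserve the virtual link class. For destabilization, the vertical essential annulus $A$ lifts to a pair of vertical annuli in $\mathcal{S}\times I$ whose union bounds a $I$-bundle over an annulus in $\mathcal{S}$; capping off by $D^{2}\times I$ corresponds to compressing a handle of $\mathcal{S}$ and does not alter the isotopy class of $\hat{D}$ in the resulting thickened surface. For Operation \ref{ope:splitting_mani}, the $2$-sphere or disk bounds a ball in $\mathcal{S}\times I$ containing only a sub-link $\hat{D}'$, so the replacement realizes the virtual-link operation of separating out a summand. In both cases, the new manifold is still the canonical exterior of some diagram of the same virtual link $L$.

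Next I would establish a strict decrease in a well-ordered complexity. Let $\kappa(M)=(sg(M),-s(M))$ ordered lexicographically; here $sg(M)$ is the total genus of the supporting surface of the ambient thickened surface and $s(M)$ its number of components. Destabilization reduces $sg$ by exactly one, since cutting along the vertical essential $A$ and filling with $2$-handles compresses one handle of $\mathcal{S}$. Operation \ref{ope:splitting_mani} leaves $sg$ unchanged but strictly increases $s$ because the separating sphere or disk produces a new $\mathbb{S}^{2}\times I$-component (containing either the split-off link or the rest). Since $sg\ge 0$ and $s$ is bounded by the number of components of $\hat{D}$, the process must terminate.

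The heart of the argument, and the main obstacle, is the converse: at a terminal stage, the exterior coincides with the minimal exterior of $L$. Suppose the process has stopped, i.e., the current exterior $M'$ admits no Operation \ref{ope:splitting_mani} and no destabilizing vertical essential annulus. I would show that then $sg(M')=sg(L)$ and $s(M')=s(L)$, so $M'$ realizes a minimal diagram of $L$, whence by Theorem \ref{thm:uniqueness} it equals the minimal exterior. The contrapositive is the substantive topological input: if $sg(M')>sg(L)$, some essential simple closed curve on the supporting surface $\mathcal{S}'$ compresses in a lower-genus supporting surface of $L$; lifting such a curve to $\mathcal{S}'\times\{0\}\cup\mathcal{S}'\times\{1\}$ and tubing across $I$ yields a properly embedded vertical annulus in $M'$, which one then shows can be taken essential (incompressible, $\partial$-incompressible, non-$\partial$-parallel) by a standard surgery/innermost-disk argument on compressions and boundary compressions, contradicting the hypothesis. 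Similarly, if $s(M')<s(L)$ one finds a separating sphere or disk of the type required by Operation \ref{ope:splitting_mani}. I would expect to invoke Haken-style incompressible surface theory together with the characterization of the supporting genus via the Euler characteristic of a minimal supporting surface.

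Finally, uniqueness of the terminal object follows for free: every reduction sequence is finite, terminates at an exterior that is minimal in the sense above, and by Theorem \ref{thm:uniqueness} there is only one such minimal exterior up to homeomorphism. Thus the order of operations is immaterial, completing the proof.
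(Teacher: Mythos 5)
The paper does not prove this statement; it is cited from Kuperberg \cite{K}, with the remark that it follows from the proof of Theorem \ref{thm:uniqueness}. There is therefore no in-paper proof to compare against, so I assess your sketch on its own.

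Your three-part plan (termination via a complexity measure, minimality of the terminal object, uniqueness via Theorem \ref{thm:uniqueness}) is a sensible skeleton, but it is not a proof. A small yet real imprecision: destabilization need not reduce $sg$ by exactly one. When the core of $A$ is separating in $\mathcal{S}$, the supporting surface splits into two closed pieces of total genus $g(\mathcal{S})$; if both halves meet $\hat{D}$ then $sg$ is unchanged and $s$ increases, while if one half is empty and discarded then $sg$ drops by the genus of the discarded piece (which is $\geq 1$, since a genus-$0$ discarded piece would hand you a compression disk for $A$ disjoint from $\hat{D}$). The lexicographic pair $(sg,-s)$ still strictly decreases, so termination survives, but the stated claim should be corrected.

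The genuine gap is the step you yourself call the heart of the argument. You need: if $sg(M') > sg(L)$ and $M'$ admits no essential $2$-sphere, then $M'$ contains a vertical essential annulus. Your sketch --- pick an essential curve $\gamma$ on $\mathcal{S}'$ that ``compresses in a lower-genus supporting surface of $L$,'' form $\gamma \times I$, and surger it to essentiality --- presupposes the conclusion. The vertical annulus $\gamma \times I$ will in general intersect $\hat{D}$, and nothing in your argument produces an isotopy of $\hat{D}$ off such an annulus. Extracting a vertical essential annulus \emph{disjoint from $\hat{D}$} from the abstract inequality $sg(M') > sg(L)$ requires comparing two a priori unrelated thickened-surface realizations of the same virtual link; that is the real topological content of Kuperberg's theorem, and in his paper it rests on Haken-manifold machinery (normal surfaces, the characteristic submanifold), not on an innermost-disk surgery applied to a conveniently chosen vertical annulus. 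As written, your proposal has a hole exactly where the difficulty sits.
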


We denote the sum of genera of connected components of a surface $F$ by $g(F)$.
By Theorem \ref{thm:K}, we have the following algorithm for classical link recognition.
\begin{alg} \label{alg:1}
    Let $L$ be a virtual link and $D$ be a diagram of $L$.
\begin{enumerate}
    \item Construct the canonical exterior $M$ of $D$.
    \item Construct the minimal exterior $M_\text{min}$ of $L$ by repeatedly performing Operation \ref{ope:splitting_mani} and destabilization to $M$.
    \item Output ``yes'' if $g(\mathcal{S} \times \{0\}) = g(\mathcal{S} \times \{1\}) = 0$, otherwise output ``no'', where $\mathcal{S} \times \{i\} (i=0,1)$ are the two copies of the supporting surface of a minimal diagram of $L$ in the boundary of $M_\text{min}$.
\end{enumerate}
\end{alg}
In order to prove Theorem \ref{thm:main}, we use the operation defined bellow, {\it splitting} ,instead of Operation \ref{ope:splitting_mani}.
Let $\hat{D}$ be a link in a thickened closed orientable surface $\mathcal{S} \times I$ and $M = \text{cl}(\mathcal{S} \times I - N(\hat{D}))$ be the exterior of $\hat{D}$.
An properly embedded $2$-sphere $F$ in $M$ is said to be \textit{inessential} if there is a $3$-ball $B$ in $M$ such that $F = \partial B$, and a properly embedded disk $F'$ is said to be \textit{inessential} if there is a disk $S$ in $\partial M$ such that $\partial F' = \partial S$ and there is a $3$-ball $B$ in $M$ whose boundary is $F' \cup S$.
A properly embedded $2$-sphere and a properly embedded disk in $M$ are said to be \textit{essential} if they are not inessential.

\begin{defi}[splitting] \label{def:splitting}
    Suppose that $F$ is an essential $2$-sphere in $M$ or an essential disk in $M$ whose boundary is in $\mathcal{S} \times \{0\} \cup \mathcal{S} \times \{1\}$.
    Then, {\it splitting} is the operation cutting $M$ open along $F$, shrinking each copy of $F$ to a point, and removing the components which contain no components of $\partial N(D)$ as shown in Figure \ref{fig:splitting}.
\end{defi}

\begin{figure}[htbp]
    \centering
    \input{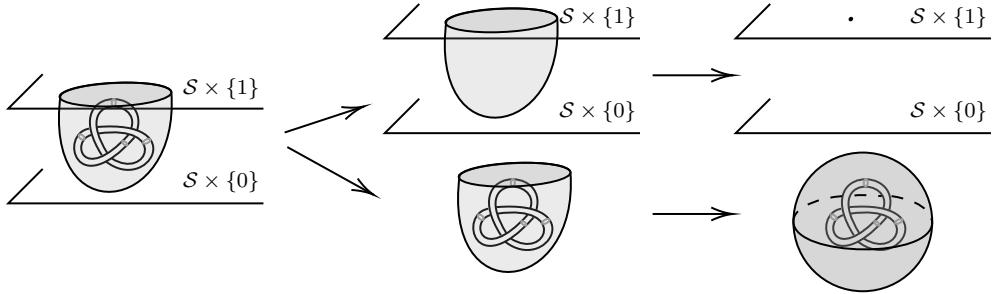}
    \caption{Splitting ($F$ is a disk)}
    \label{fig:splitting}
\end{figure}

There are the following two differences between Operation \ref{ope:splitting_mani} and splitting.

The first difference is that the surface used for cutting is changed to an essential 2-sphere or an essential disk whose boundary is in $\mathcal{S} \times \{0\} \cup \mathcal{S} \times \{1\}$.
This difference affects the operation if and only if there is a $2$-sphere component $\mathcal{S}'$ in the supporting surface $\mathcal{S}$.
Let $M'$ denotes the component of $M$ which contains $\mathcal{S}' \times \{0\}$ and $\mathcal{S}' \times \{1\}$ and $\hat{D}'$ denote the link in $\mathcal{S}' \times I$. 
Since a $2$-sphere $F \subset M'$ used for splitting is an essential $2$-sphere,
$F$ may separate $\mathcal{S}' \times \{0\}$ and $\mathcal{S}' \times \{1\}$.
In this case, we obtain two components $M'_1 = \mathbb{B}^3 - N(\hat{D}'_1)$ and $M'_2 = \mathbb{B}^3 - N(\hat{D}'_2)$ by shrinking the copies of $F$, where $\hat{D}' = \hat{D}'_1 \cup \hat{D}_2$.
$M'_1$ and $M'_2$ are not 3-manifolds obtained by removing an open regular neighborhood of a link from a thickened surface.
However, the boundaries obtained from $\mathcal{S}' \times \{0\}$ and $\mathcal{S}' \times \{1\}$ keep 2-spheres by this operation.
Therefore, even though this operation is performed, classical link recognition can be solved by determining whether $g(\mathcal{S}_k \times \{0\}) = g(\mathcal{S}_k \times \{1\})  = 0$, where $\mathcal{S}_k \times \{i\}$ is the boundary obtained from $\mathcal{S}_k \times \{i\}$.

The second difference is that both of copies of $F$ are shrunk to points.
By this change, we obtain $\mathbb{S}^3-N(\hat{D}')$ if $F$ is a 2-sphere and 
we obtain $\mathbb{B}^3-N(\hat{D}')$ if $F$ is a disk.
From the same discussion of the first change, we can solve classical link recognition.

Next, we change Algorithm \ref{alg:1} as follows.
\begin{alg} \label{alg:2}
    Let $L$ be a virtual link and let $D$ be a virtual link diagram of $L$.
\begin{enumerate}
    \item Construct the canonical exterior $M_0$ of $D$.
    $\mathcal{S}_0 \times \{i\} (i=0,1)$ denote the two copies of the supporting surface of $D$ in the boundary of $M_0$.
    \item Output ``yes'' if $g(\mathcal{S}_k \times \{0\}) = g(\mathcal{S}_k \times \{1\}) = 0$, otherwise do the following.
    \item Run one of the following steps.
    \begin{enumerate}
        \item Do splitting if there is an essential $2$-sphere in $M_k$.
        \item Do destabilization if there are no essential $2$-spheres and there is an essential vertical annulus in $M_k$.
        \item Output ``no'' if there are no essential $2$-spheres and essential vertical annuli.
    \end{enumerate}
    \item We define $M_{k+1}$ as the 3-manifold obtained from $M_k$ and $\mathcal{S}_{k+1} \times \{i\}$ as the boundary of $M_{k+1}$ obtained from $\mathcal{S}_k \times \{i\}$.
    \item Return step 2.
\end{enumerate}
\end{alg}

\subsection{Algorithm for classical knot recognition}
{\it Classical knot recognition} is the problem restricting inputs of classical link recognition to virtual knot diagrams.
Since a virtual knot is a one component virtual link, classical knot recognition can be solved by Algorithm \ref{alg:2}.
However, it is difficult to bound the computational complexity of Algorithm \ref{alg:2}.
In this subsection, we give an algorithm for classical knot recognition which is easy to bound its computational complexity by adding one step to Algorithm \ref{alg:2}.

Suppose that $D$ is a diagram of a virtual knot $K$ and $M = \text{cl}(\mathcal{S} \times I - N(\hat{D}))$ is a $3$-manifold obtained from the canonical exterior of $D$ by zero or more destabilizations.
Let $A$ be a properly embedded annulus in $M$.
We say that $A$ is a \textit{classicalization annulus} if $\partial A \subset \mathcal{S} \times \{k\}$ ($k = 0$ or $1$) and $A$ separates $\partial N(\hat{D})$ and $\mathcal{S} \times \{1-k\}$.
The existence of a classicalization annulus implies that $K$ is a classical knot.
\begin{lem}\label{lem:classicalization_annulus}
    Let $K$ be a virtual knot and $D$ be a virtual knot diagram of $K$.
    Let $M$ be a $3$-manifold obtained from the canonical exterior of $D$ by zero or more destabilizations.
    Suppose that $M \simeq \text{cl}(\mathcal{S} \times I -N(\hat{D}))$, where $\mathcal{S}$ is a closed orientable surface.
    If there is a classicalization annulus $A$, then $K$ is classical.
\end{lem}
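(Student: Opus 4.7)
The plan is to use the annulus $A$ to place $\hat{K}$ inside a $3$-ball sitting in $\mathcal{S}\times I$; once $\hat{K}$ lies in a ball it represents a classical knot, and by Lemma \ref{lem:classical} the virtual knot $K$ is then classical.

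Assume without loss of generality $k=0$. Let $V$ be the closure of the component of $(\mathcal{S}\times I)\setminus A$ that contains $\hat{K}$, so $\partial V = A\cup F$ for a compact subsurface $F\subset \mathcal{S}\times\{0\}$ with $\partial F = \partial A$; by hypothesis $V$ is disjoint from $\mathcal{S}\times\{1\}$. In the easy case where the two components of $\partial A$ are null-homotopic on $\mathcal{S}\times\{0\}$, one can take $F$ to be two disjoint disks; then $A\cup F$ is an embedded $2$-sphere in $\mathcal{S}\times I$ bounding a $3$-ball $B$, and $\hat{K}\subset V$ sits inside $B$, so $K$ is classical.

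For the general case I would proceed by induction on $g(\mathcal{S})$, the base case $g(\mathcal{S})=0$ being immediate. For the inductive step I would use the opposite side $V' = \mathrm{cl}((\mathcal{S}\times I)\setminus V)$, which contains all of $\mathcal{S}\times\{1\}$ and no part of $\hat{K}$, to produce an essential vertical annulus disjoint from $\hat{K}$: starting from any essential simple closed curve on $\mathcal{S}\times\{1\}$, one extends it through $V'$ down to $\mathcal{S}\times\{0\}$, and after a compression / innermost-disk argument this yields an essential vertical annulus in $M$, along which one destabilizes in the sense of Operation \ref{ope:destabilization_mani}. The supporting genus strictly decreases, and a classicalization annulus survives in the resulting exterior $M'$, so the induction closes. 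Alternatively, one may instead produce an essential $2$-sphere in $V'$ (when such exists) and apply splitting in the sense of Definition \ref{def:splitting}.

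The main obstacle is precisely the persistence step: verifying that after cutting $M$ along the chosen vertical annulus (or $2$-sphere) and attaching the appropriate $2$-handles, the image of $A$, or a suitable replacement obtained from $A$ by cut-and-paste along the destabilizing surface, is still a classicalization annulus in $M'$. This requires the destabilizing surface to be chosen disjoint from $\hat{K}$, which is guaranteed by placing it in $V'$, together with standard innermost-disk and outermost-arc arguments to control the intersection of $A$ with the destabilization locus and to discard extraneous components not containing $\hat{K}$.
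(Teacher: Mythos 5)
Your proposal misses the structural fact that makes the paper's proof work directly, and the inductive route you sketch instead has a real gap that you yourself flag but do not close.

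The paper's key observation is that \emph{every} properly embedded annulus in $\mathcal{S}\times I$ with both boundary components on $\mathcal{S}\times\{0\}$ is $\partial$-parallel. Consequently the region between $A$ and some annulus $A'\subset\mathcal{S}\times\{0\}$ is a solid torus, and since $A$ is a classicalization annulus the knot $\hat{D}$ lies inside it; so this region is $\text{cl}(D^2\times\mathbb{S}^1 - N(\hat{D}))$, and its complement in $M$ is again $\mathcal{S}\times I$. From there the paper's argument is a short direct construction: extend by a vertical annulus $B$ to build an embedded $\text{cl}(\mathbb{S}^1\times I\times I - N(\hat{D}))\subset M$, destabilize along the two side annuli $\mathbb{S}^1\times\partial I\times I$, and read off $\text{cl}(\mathbb{S}^2\times I - N(\hat{D}))$. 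No case distinction on whether $\partial A$ bounds disks and no induction on genus are needed; in fact $V$ is never a $3$-ball (it is always a solid torus), so your ``easy case'' framing is already off.

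Your proposed general-case induction, on the other hand, hinges on the ``persistence step'' — that after destabilizing along an annulus found in $V'$, the image of $A$ (or a cut-and-pasted replacement) remains a classicalization annulus in the resulting exterior. That step is exactly where the argument would have to do real work: destabilization can alter $A$ in ways that are not controlled by a generic innermost-disk argument, and you would also need to ensure such a destabilizing annulus always exists and strictly decreases $g(\mathcal{S})$. You correctly identify this as the main obstacle, but since it is not resolved, the proposal as written does not constitute a proof. Filling that gap would likely require an argument at least as involved as the one the paper gives, so it is cleaner to first invoke $\partial$-parallelism and argue directly as the paper does.
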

\begin{proof}
    Suppose that $A$ is a classicalization annulus in $M$.
    Without loss of generality, we may assume that $\partial A \subset \mathcal{S} \times \{0\}$.
    Since for any properly embedded annulus in $\mathcal{S} \times I$ whose boundary components are in $\mathcal{S} \times \{0\}$ is $\partial$-parallel, there is an annulus $A'$ in $\mathcal{S} \times \{0\} \subset \partial M$ such that $\partial A = \partial A'$ and $A \cup A' \subset \partial M'$, where $M'$ is a submanifold of $M$ which is $\text{cl}(D^2 \times \mathbb{S}^1 - N(\hat{D}))$.
    Since $\text{cl}(M - M') \simeq \mathcal{S} \times I$,
    there is an embedded annulus $B$ in $\text{cl}(M - M')$ such that $\partial B = \beta_1 \cup \beta_2$, $\beta_1 = A \cap B$ is essential in $A$, and $\beta_2 = B \cap \mathcal{S} \times \{1\}$.
    Let $B'$ denote the annulus $N(B) \cap \mathcal{S} \times \{1\}$.
    
    We condsider the submanifold $M'' = N(B) \cup M'$ of $M$.
    Then $M''$ is an embedded $3$-manifold $\text{cl}(\mathbb{S}^1 \times I \times I - N(\hat{D}))$ so that $\mathbb{S}^1 \times I \times \{0\} = A'$ and $\mathbb{S}^1 \times I \times \{1\} = B'$.
    Now $\mathbb{S}^1 \times \partial I \times I$ is the disjoint union of properly embedded vertical annuli in $M$, denoted by $V_1$ and $V_2$.
    Let $N$ be the $3$-manifold obtained from $M$ by destabilizations using $V_1$ and $V_2$ and $N'$ be the component of $N$ containing $\partial N(\hat{D})$.
    $N'$ is obtained from $M''$ by gluing two $2$-handles $D^2 \times I$ to $V_1$ and $V_2$, and hence $N' \simeq \text{cl}(\mathbb{S}^2 \times I - N(\hat{D}))$.
    Therefore, $K$ is classical since we have $g(K) = 0$.
\end{proof}

From Lemma \ref{lem:classicalization_annulus}, a virtual knot $K$ represected by a virtual knot diagram $D$ is classical if and only if 
(i) the genus of the supporting surface in the canonical exterior of $D$ is reduced to zero by repeating splitting and destabilization or
(ii) there is a classicalization annulus in the canonical exterior of $K$.
Thus, we have Algorithm \ref{alg:CKR}.
\begin{alg} \label{alg:CKR}
    Let $K$ be a virtual knot and let $D$ be a virtual knot diagram of $K$.
\begin{enumerate}
    \item Construct the canonical exterior $M_0$ of $D$.
    $\mathcal{S}_0 \times \{i\} (i=0,1)$ denote the two copies of the supporting surface of $D$ in the boundary of $M_0$.
    \item Output ``yes'' if $g(\mathcal{S}_k \times \{0\}) = g(\mathcal{S}_k \times \{1\}) = 0$, otherwise do the following.
    \item Run one of the following steps.
    \begin{enumerate}
        \item Do splitting if there is an essential $2$-sphere.
        \item Do destabilization if there are no essential $2$-spheres and there is an essential vertical annulus in $M_k$.
        \item Output ``yes'' if there are no essential $2$-spheres and essential vertical annuli and there is a classicalization annulus in $M_k$.
        \item Output ``no'' if there are no essential $2$-spheres, essential vertical annuli, and classicalization annuli in $M_k$.
    \end{enumerate}
    \item We define $M_{k+1}$ as the 3-manifold obtained from $M_k$ and $\mathcal{S}_{k+1} \times \{i\}$ as the boundary of $M_{k+1}$ obtained from $\mathcal{S}_k \times \{i\}$.
    \item Return step 2.
\end{enumerate}
\end{alg}
Theorem \ref{thm:K} and Lemma \ref{lem:classicalization_annulus} imply that the output of Algorithm \ref{alg:CKR} is correct.
We show that classical knot recognition is in NP by constructing a non-deterministic Turing machine based on Algorithm \ref{alg:CKR} in Section 5.
Furthermore, it is shown that Algorithm \ref{alg:CKR} runs in exponential time in Section 6.

From the next section, we prepare notations and lemmas for analyzing the time complexity of Algorithm \ref{alg:CKR}.

\section{Triangulations of the canonical exteriors of virtual links}
As mentioned above, a virtual link is represented as a link in a thickened orientable closed surface.
In this section, we give an algorithm to construct a triangulation of the canonical exterior of a virtual link diagram when the diagram is given.

Let $\Delta_i (i=1, \ldots, n)$ be tetrahedra and $T_{i_1}, \ldots, T_{i_4}$ be faces of $\Delta_i$.
$\phi_{i_j, k_l}$ denote gluing maps from $T_{i_j}$ to $T_{k_l}$.
We denote the pair of the set of the tetrahedra and the set of the gluing maps $(\{\Delta_i\}, \{\phi_{i_j, k_l}\})$ by $\mathcal{T}$.
$\mathcal{T}$ is called a {\it generalized triangulation} of a 3-manifold $M$ if $M$ is homeomorphic to the space obtained as the set $\{\Delta_i\}$ of the quotient by the gluing maps $\phi_{i_j, k_l}$.
We also define a generalized triangulation for $2$-manifold.
In this paper, we simply call a generalized triangulation a triangulation, and 
abusing the notation, we denote the quotient space by $\mathcal{T}$.

In this section, $D$ denotes a virtual link diagram, and $c$ denotes the number of real crossings of $D$.
We denote by $|\mathcal{T}|$ the number of $n$-simplices of a triangulation $\mathcal{T}$ of an $n$-manifold.

Let $(\mathcal{S}, \tilde{D})$ be the canonical surface realization of $D$.
A triangulation of $\mathcal{S}$ that includes $\tilde{D}$ in its 1-skeleton is called a  {\it good triangulation} of $(\mathcal{S}, \tilde{D})$.
Similarly, a triangulation of $\mathcal{S} \times I$ that includes $\hat{D}$ in its 1-skeleton is called a good triangulation of $(\mathcal{S} \times I, \hat{D})$,
where $(\mathcal{S} \times I, \hat{D})$ is the canonical space realization of $D$.

\begin{lem}\label{lem:goodTri}
    Let $(\mathcal{S}, \tilde{D})$ be the canonical surface realization of $D$.
    We can construct a good triangulation $\mathcal{T}$ of $(\mathcal{S}, \tilde{D})$ in time $\mathcal{O}(c)$.
    Moreover, $|\mathcal{T}| \in \mathcal{O}(c)$.
\end{lem}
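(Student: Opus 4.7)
The plan is to build the triangulation in the same piecewise manner that the canonical surface realization is itself defined, so the triangulation is read off directly from the local combinatorics of the Gauss code. First I would parse the oriented Gauss code in a single linear pass to recover the combinatorial structure of $(N(\tilde{D}),\tilde{D})$: the $c$ real crossings (each becoming a $4$-valent vertex of $\tilde{D}$) and the $2c$ arcs of $\tilde{D}$ between consecutive visits to real crossings, together with how the four arc-ends at each crossing are cyclically arranged.

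Next I would fix once and for all two small model triangulations. The \emph{crossing model} is a triangulated disk containing the crossing vertex at its center, with the two strands of the crossing realized as four fixed edges from the center to four marked points on the boundary; a constant number of triangles (say, four) suffices. The \emph{arc-band model} is a triangulated rectangle (two triangles) with one designated diagonal edge to be identified with an arc of $\tilde{D}$. Placing one copy of the crossing model at each real crossing and one copy of the arc-band model along each of the $2c$ arcs, and then identifying matching boundary edges according to the Gauss-code pairing, yields a triangulation of $N(\tilde{D})$ whose $1$-skeleton contains $\tilde{D}$ and uses $O(c)$ triangles, all assembled in $O(c)$ time.

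To pass from $N(\tilde D)$ to $\mathcal{S}$ I would traverse the boundary $\partial N(\tilde{D})$ to enumerate its connected components. Each boundary edge of $N(\tilde D)$ appears in exactly one edge of the crossing/arc-band models, so starting from any unvisited boundary edge and walking along the boundary using the local combinatorics at each crossing disk is a standard $O(\text{length})$ traversal; summed over components the total work is $O(c)$. For each boundary component, of length $\ell$, I would attach a triangulated disk by adding a single new interior vertex and fanning out $\ell$ triangles. Because each boundary edge of $N(\tilde D)$ is used by exactly one capping disk, the total number of capping triangles is bounded by the total length of $\partial N(\tilde D)$, which is $O(c)$.

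Correctness follows by construction: the glued complex is homeomorphic to $(\mathcal{S},\tilde D)$ since we have implemented exactly the definition, and $\tilde D$ sits in the $1$-skeleton because the crossing and band models were designed with the arcs of $\tilde D$ as edges. Both $|\mathcal T|$ and the construction time are $O(c)$ because each of the $O(c)$ structural pieces contributes $O(1)$ simplices and the boundary traversal plus disk-capping stage is linear in the total boundary length, which is itself $O(c)$. The only delicate point I anticipate is bookkeeping at the gluings: making sure the cyclic order of the four arc-ends at each crossing model matches the cyclic order coming from the Gauss code (using the ``$>$''/``$<$'' and ``$+$''/``$-$'' data), so that the resulting surface is oriented correctly and $\tilde D$ is embedded rather than self-touching. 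This is a constant-time check per crossing and hence does not affect the overall complexity.
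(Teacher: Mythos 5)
Your construction is correct and gives the right complexity, but it is organized somewhat differently from the paper's. The paper constructs $\mathcal{T}$ in one step: a single triangulated rectangle (eight triangles) is placed at each real crossing with the two strands along its mid-lines, and the four sides of the rectangle are glued directly to the sides of the rectangles at the adjacent crossings following the arcs of $\tilde{D}$. Since each rectangle has four sides and each crossing has four arc-ends, every side gets glued, and the corners of the rectangles converge to the face-regions of $\tilde{D}$; because the canonical surface realization is precisely the closed surface in which every complementary region of $\tilde{D}$ is a disk, this quotient already is $\mathcal{S}$, with no separate capping step. Your version instead materializes $N(\tilde{D})$ explicitly as crossing-disks plus arc-bands, then traverses $\partial N(\tilde{D})$ to enumerate boundary circles and cones each one off. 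Both give $|\mathcal{T}| \in \mathcal{O}(c)$ in $\mathcal{O}(c)$ time; the paper's approach is terser because capping is implicit in the corner identifications, while yours is closer to the literal definition of the canonical surface realization and makes the boundary-traversal step explicit. One small wrinkle in your description: a two-triangle rectangle whose \emph{diagonal} is the arc has the arc ending at opposite corners, whereas the crossing-disk model you describe places arc-ends at interior boundary points of the disk, so the two model pieces do not quite interface as stated — a band of four triangles with the arc running down the middle, or a small rearrangement of the crossing-disk so arc-ends sit at corners, fixes this without affecting the $\mathcal{O}(c)$ bounds.
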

\begin{proof}
  Place a triangulated rectangle at each real crossing as shown in Figure \ref{fig:canonicalSurfaceTri}.
  We obtain a good triangulation by gluing rectangles on adjacent real crossings.
\end{proof}

\begin{figure}[htbp]
    \centering
    \tikzset{every picture/.style={line width=0.75pt}} 

\begin{tikzpicture}[x=0.75pt,y=0.75pt,yscale=-1,xscale=1]

\draw  [color={rgb, 255:red, 74; green, 74; blue, 74 }  ,draw opacity=1 ] (228.5,69) -- (199.5,69) -- (199.5,40) -- cycle ;
\draw  [color={rgb, 255:red, 74; green, 74; blue, 74 }  ,draw opacity=1 ] (228.5,69) -- (228.5,40) -- (199.5,40) -- cycle ;
\draw  [color={rgb, 255:red, 74; green, 74; blue, 74 }  ,draw opacity=1 ] (257.5,98) -- (228.5,98) -- (228.5,69) -- cycle ;
\draw  [color={rgb, 255:red, 74; green, 74; blue, 74 }  ,draw opacity=1 ] (257.5,98) -- (257.5,69) -- (228.5,69) -- cycle ;
\draw  [color={rgb, 255:red, 74; green, 74; blue, 74 }  ,draw opacity=1 ] (228.5,69) -- (257.5,40) -- (228.5,40) -- cycle ;
\draw  [color={rgb, 255:red, 74; green, 74; blue, 74 }  ,draw opacity=1 ] (228.5,69) -- (257.5,40) -- (257.5,69) -- cycle ;
\draw  [color={rgb, 255:red, 74; green, 74; blue, 74 }  ,draw opacity=1 ] (199.5,98) -- (228.5,69) -- (228.5,98) -- cycle ;
\draw  [color={rgb, 255:red, 74; green, 74; blue, 74 }  ,draw opacity=1 ] (199.5,98) -- (228.5,69) -- (199.5,69) -- cycle ;
\draw [line width=1.5]    (228.5,40) -- (228.5,62) ;
\draw [line width=1.5]    (257.5,69) -- (199.5,69) ;
\draw [color={rgb, 255:red, 0; green, 0; blue, 0 }  ,draw opacity=1 ]   (54.55,67.25) .. controls (29.37,18.42) and (111.77,15.37) .. (89.64,70.3) .. controls (53.02,155.74) and (-14.87,70.3) .. (82.01,68.77) ;
\draw [color={rgb, 255:red, 0; green, 0; blue, 0 }  ,draw opacity=1 ]   (95.36,69.53) .. controls (139.99,71.82) and (119.4,154.21) .. (58.36,74.87) ;
\draw  [color={rgb, 255:red, 0; green, 0; blue, 0 }  ,draw opacity=1 ] (71.72,93.95) .. controls (71.72,91.84) and (73.42,90.13) .. (75.53,90.13) .. controls (77.64,90.13) and (79.34,91.84) .. (79.34,93.95) .. controls (79.34,96.05) and (77.64,97.76) .. (75.53,97.76) .. controls (73.42,97.76) and (71.72,96.05) .. (71.72,93.95) -- cycle ;
\draw  [color={rgb, 255:red, 74; green, 74; blue, 74 }  ,draw opacity=1 ] (329.5,69) -- (300.5,69) -- (300.5,40) -- cycle ;
\draw  [color={rgb, 255:red, 74; green, 74; blue, 74 }  ,draw opacity=1 ] (329.5,69) -- (329.5,40) -- (300.5,40) -- cycle ;
\draw  [color={rgb, 255:red, 74; green, 74; blue, 74 }  ,draw opacity=1 ] (358.5,98) -- (329.5,98) -- (329.5,69) -- cycle ;
\draw  [color={rgb, 255:red, 74; green, 74; blue, 74 }  ,draw opacity=1 ] (358.5,98) -- (358.5,69) -- (329.5,69) -- cycle ;
\draw  [color={rgb, 255:red, 74; green, 74; blue, 74 }  ,draw opacity=1 ] (329.5,69) -- (358.5,40) -- (329.5,40) -- cycle ;
\draw  [color={rgb, 255:red, 74; green, 74; blue, 74 }  ,draw opacity=1 ] (329.5,69) -- (358.5,40) -- (358.5,69) -- cycle ;
\draw  [color={rgb, 255:red, 74; green, 74; blue, 74 }  ,draw opacity=1 ] (300.5,98) -- (329.5,69) -- (329.5,98) -- cycle ;
\draw  [color={rgb, 255:red, 74; green, 74; blue, 74 }  ,draw opacity=1 ] (300.5,98) -- (329.5,69) -- (300.5,69) -- cycle ;
\draw [line width=1.5]    (329.5,40) -- (329.5,98) ;
\draw [line width=1.5]    (322.5,69) -- (300.5,69) ;
\draw [color={rgb, 255:red, 155; green, 155; blue, 155 }  ,draw opacity=1 ]   (259.59,69) .. controls (272.28,69) and (284.3,69) .. (298.69,69) ;
\draw [shift={(300.5,69)}, rotate = 180] [color={rgb, 255:red, 155; green, 155; blue, 155 }  ,draw opacity=1 ][line width=0.75]    (10.93,-3.29) .. controls (6.95,-1.4) and (3.31,-0.3) .. (0,0) .. controls (3.31,0.3) and (6.95,1.4) .. (10.93,3.29)   ;
\draw [shift={(257.5,69)}, rotate = 0] [color={rgb, 255:red, 155; green, 155; blue, 155 }  ,draw opacity=1 ][line width=0.75]    (10.93,-3.29) .. controls (6.95,-1.4) and (3.31,-0.3) .. (0,0) .. controls (3.31,0.3) and (6.95,1.4) .. (10.93,3.29)   ;
\draw [color={rgb, 255:red, 155; green, 155; blue, 155 }  ,draw opacity=1 ]   (229.91,37.8) .. controls (253.1,3.11) and (302.51,3.71) .. (328.34,38.39) ;
\draw [shift={(329.5,40)}, rotate = 234.9] [color={rgb, 255:red, 155; green, 155; blue, 155 }  ,draw opacity=1 ][line width=0.75]    (10.93,-3.29) .. controls (6.95,-1.4) and (3.31,-0.3) .. (0,0) .. controls (3.31,0.3) and (6.95,1.4) .. (10.93,3.29)   ;
\draw [shift={(228.5,40)}, rotate = 301.43] [color={rgb, 255:red, 155; green, 155; blue, 155 }  ,draw opacity=1 ][line width=0.75]    (10.93,-3.29) .. controls (6.95,-1.4) and (3.31,-0.3) .. (0,0) .. controls (3.31,0.3) and (6.95,1.4) .. (10.93,3.29)   ;
\draw [color={rgb, 255:red, 155; green, 155; blue, 155 }  ,draw opacity=1 ]   (197.35,70.2) .. controls (151.52,97.13) and (185.69,165.97) .. (329.5,98) ;
\draw [shift={(329.5,98)}, rotate = 514.7] [color={rgb, 255:red, 155; green, 155; blue, 155 }  ,draw opacity=1 ][line width=0.75]    (10.93,-3.29) .. controls (6.95,-1.4) and (3.31,-0.3) .. (0,0) .. controls (3.31,0.3) and (6.95,1.4) .. (10.93,3.29)   ;
\draw [shift={(199.5,69)}, rotate = 152.05] [color={rgb, 255:red, 155; green, 155; blue, 155 }  ,draw opacity=1 ][line width=0.75]    (10.93,-3.29) .. controls (6.95,-1.4) and (3.31,-0.3) .. (0,0) .. controls (3.31,0.3) and (6.95,1.4) .. (10.93,3.29)   ;
\draw [color={rgb, 255:red, 155; green, 155; blue, 155 }  ,draw opacity=1 ]   (231.79,99.77) .. controls (339.43,156.79) and (411.24,112.02) .. (359.29,69.64) ;
\draw [shift={(358.5,69)}, rotate = 398.53] [color={rgb, 255:red, 155; green, 155; blue, 155 }  ,draw opacity=1 ][line width=0.75]    (10.93,-3.29) .. controls (6.95,-1.4) and (3.31,-0.3) .. (0,0) .. controls (3.31,0.3) and (6.95,1.4) .. (10.93,3.29)   ;
\draw [shift={(228.5,98)}, rotate = 28.61] [color={rgb, 255:red, 155; green, 155; blue, 155 }  ,draw opacity=1 ][line width=0.75]    (10.93,-3.29) .. controls (6.95,-1.4) and (3.31,-0.3) .. (0,0) .. controls (3.31,0.3) and (6.95,1.4) .. (10.93,3.29)   ;
\draw [line width=1.5]    (228.5,76) -- (228.5,98) ;
\draw [line width=1.5]    (358.5,69) -- (336.5,69) ;
\draw    (126,68) -- (183.5,68) ;
\draw [shift={(185.5,68)}, rotate = 180] [color={rgb, 255:red, 0; green, 0; blue, 0 }  ][line width=0.75]    (10.93,-3.29) .. controls (6.95,-1.4) and (3.31,-0.3) .. (0,0) .. controls (3.31,0.3) and (6.95,1.4) .. (10.93,3.29)   ;

\end{tikzpicture}
    \vspace{-20pt}
    \caption{Construction of a good triangulation of the canonical surface realization}
    \label{fig:canonicalSurfaceTri}
\end{figure}
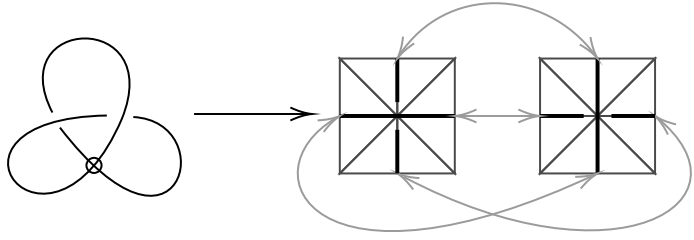

Similarly, We can obtain a good triangulation of $(\mathcal{S} \times I, \hat{D})$
by placing and gluing triangulated cubes (Figure \ref{fig:canonicalSpaceRealizationTri}).
\begin{lem}
    Let $(\mathcal{S} \times I, \hat{D})$ be the canonical surface realization of $D$.
    We can construct a good triangulation $\mathcal{T}$ of $(\mathcal{S} \times I, \hat{D})$ in time $\mathcal{O}(c)$.
    Moreover, $|\mathcal{T}| \in \mathcal{O}(c)$.
\end{lem}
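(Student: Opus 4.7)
The plan is to mimic the two-dimensional construction from Lemma \ref{lem:goodTri}, working with triangulated cubes $R \times I$ in place of the triangulated rectangles $R$ used there. Concretely, for each real crossing of $D$ I place a single triangulated cube whose bottom face $R \times \{0\}$ and top face $R \times \{1\}$ are exactly the triangulated rectangles (at that crossing) furnished by Lemma \ref{lem:goodTri}, so that the side walls of adjacent cubes can be glued verbatim along the gluings already used on the canonical surface realization.

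Inside each cube I build a fixed triangulation that has the two strands of the real crossing, realized at slightly different heights in $I$ so that they do not meet, as edges of the $1$-skeleton. Starting from the two over/under arcs and the vertical edges $v \times I$ for each vertex $v$ of the triangulated rectangle, one can extend this to a full triangulation of the cube by cones over the boundary (for instance, pick an interior vertex and cone). Because the cube is a fixed combinatorial object, this can be done once and for all with a constant number of tetrahedra per crossing, and with the cell structure on each side face agreeing with the induced triangulation of the corresponding edge of the rectangle.

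Having prepared these cubes, I glue them along matching side faces exactly as the rectangles of $(\mathcal{S}, \tilde{D})$ are glued at adjacent crossings; the top and bottom faces assemble to two parallel copies of $\mathcal{T}(\mathcal{S})$ and thereby realize $\mathcal{S} \times I$. By construction $\hat{D}$ lies in the $1$-skeleton, so the result is a good triangulation of $(\mathcal{S} \times I, \hat{D})$.

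For the complexity bound, the cube at each real crossing contains $\mathcal{O}(1)$ tetrahedra, and there are $c$ such cubes, so $|\mathcal{T}| \in \mathcal{O}(c)$. Each crossing is processed in constant time, and the gluing maps to neighboring crossings are read off in constant time from the oriented Gauss code (just as in the proof of Lemma \ref{lem:goodTri}), yielding total running time $\mathcal{O}(c)$. The only real obstacle is the combinatorial design of the cube: one must verify that a single fixed triangulation of $R \times I$ can simultaneously (i) contain both strands of a real crossing as edges, with the over/under information encoded by their relative vertical positions, and (ii) induce on its four side faces the same $1$-dimensional triangulations that the neighboring cubes will induce on theirs. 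This is a finite check and needs to be done only once.
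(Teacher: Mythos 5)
Your proposal is correct and takes essentially the same approach as the paper: the paper simply states that the good triangulation is obtained ``by placing and gluing triangulated cubes'' at each real crossing (cf.\ Figure~\ref{fig:canonicalSpaceRealizationTri}), which is precisely your construction. You flesh out the details (fixed constant-size cube, strands at different heights, matching side-face triangulations) that the paper leaves implicit, but the underlying argument is identical.
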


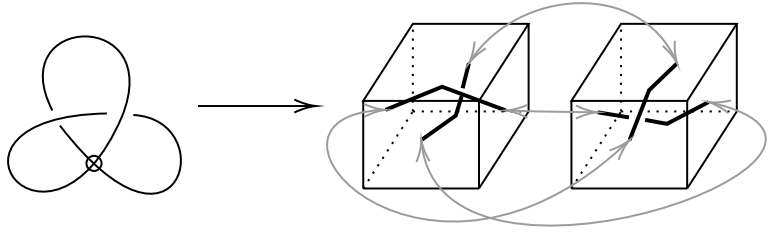
\begin{figure}[htbp]
    \centering
    \tikzset{every picture/.style={line width=0.75pt}} 

\begin{tikzpicture}[x=0.75pt,y=0.75pt,yscale=-1,xscale=1]

\draw    (223.32,110.62) -- (281.7,110.62) ;
\draw  [dash pattern={on 0.84pt off 2.51pt}]  (248.34,71.74) -- (306.72,71.74) ;
\draw    (281.7,110.62) -- (306.72,71.74) ;
\draw  [dash pattern={on 0.84pt off 2.51pt}]  (223.32,110.62) -- (248.34,71.74) ;
\draw    (281.7,110.62) -- (281.7,66.41) ;
\draw    (223.32,110.62) -- (223.32,66.41) ;
\draw    (306.72,71.74) -- (306.72,27.52) ;
\draw  [dash pattern={on 0.84pt off 2.51pt}]  (248.34,71.74) -- (248.34,27.52) ;
\draw [color={rgb, 255:red, 0; green, 0; blue, 0 }  ,draw opacity=1 ][line width=1.5]    (234.58,70.94) -- (263.09,59.28) -- (295.28,71.25) ;
\draw [color={rgb, 255:red, 0; green, 0; blue, 0 }  ,draw opacity=1 ][line width=1.5]    (252.65,86.17) -- (269.98,73.83) -- (273,63.94) ;
\draw [color={rgb, 255:red, 0; green, 0; blue, 0 }  ,draw opacity=1 ][line width=1.5]    (276.62,47.45) -- (273.44,60.01) ;
\draw   (248.34,27.52) -- (306.72,27.52) -- (281.7,66.41) -- (223.32,66.41) -- cycle ;
\draw    (328.32,110.62) -- (386.7,110.62) ;
\draw  [dash pattern={on 0.84pt off 2.51pt}]  (353.34,71.74) -- (411.72,71.74) ;
\draw    (386.7,110.62) -- (411.72,71.74) ;
\draw  [dash pattern={on 0.84pt off 2.51pt}]  (328.32,110.62) -- (353.34,71.74) ;
\draw    (328.32,110.62) -- (328.32,66.41) ;
\draw    (411.72,71.74) -- (411.72,27.52) ;
\draw  [dash pattern={on 0.84pt off 2.51pt}]  (353.34,71.74) -- (353.34,27.52) ;
\draw [color={rgb, 255:red, 0; green, 0; blue, 0 }  ,draw opacity=1 ][line width=1.5]    (357.65,86.17) -- (367.5,61) -- (381.62,47.45) ;
\draw [color={rgb, 255:red, 0; green, 0; blue, 0 }  ,draw opacity=1 ][line width=1.5]    (365.5,76) -- (376.5,78) -- (397.5,67) ;
\draw [color={rgb, 255:red, 0; green, 0; blue, 0 }  ,draw opacity=1 ][line width=1.5]    (341.52,72.18) -- (357.34,74.74) ;
\draw   (353.34,27.52) -- (411.72,27.52) -- (386.7,66.41) -- (328.32,66.41) -- cycle ;
\draw    (386.7,110.62) -- (386.7,66.41) ;
\draw [color={rgb, 255:red, 155; green, 155; blue, 155 }  ,draw opacity=1 ]   (232.21,71.04) .. controls (155.52,75.24) and (255.53,185) .. (357.65,86.17) ;
\draw [shift={(357.65,86.17)}, rotate = 495.94] [color={rgb, 255:red, 155; green, 155; blue, 155 }  ,draw opacity=1 ][line width=0.75]    (10.93,-3.29) .. controls (6.95,-1.4) and (3.31,-0.3) .. (0,0) .. controls (3.31,0.3) and (6.95,1.4) .. (10.93,3.29)   ;
\draw [shift={(234.58,70.94)}, rotate = 178.55] [color={rgb, 255:red, 155; green, 155; blue, 155 }  ,draw opacity=1 ][line width=0.75]    (10.93,-3.29) .. controls (6.95,-1.4) and (3.31,-0.3) .. (0,0) .. controls (3.31,0.3) and (6.95,1.4) .. (10.93,3.29)   ;
\draw [color={rgb, 255:red, 155; green, 155; blue, 155 }  ,draw opacity=1 ]   (252.93,88.99) .. controls (264.41,179.75) and (505.95,92.63) .. (399.14,67.38) ;
\draw [shift={(397.5,67)}, rotate = 372.58000000000004] [color={rgb, 255:red, 155; green, 155; blue, 155 }  ,draw opacity=1 ][line width=0.75]    (10.93,-3.29) .. controls (6.95,-1.4) and (3.31,-0.3) .. (0,0) .. controls (3.31,0.3) and (6.95,1.4) .. (10.93,3.29)   ;
\draw [shift={(252.65,86.17)}, rotate = 85.91] [color={rgb, 255:red, 155; green, 155; blue, 155 }  ,draw opacity=1 ][line width=0.75]    (10.93,-3.29) .. controls (6.95,-1.4) and (3.31,-0.3) .. (0,0) .. controls (3.31,0.3) and (6.95,1.4) .. (10.93,3.29)   ;
\draw [color={rgb, 255:red, 155; green, 155; blue, 155 }  ,draw opacity=1 ]   (297.48,71.32) .. controls (316.35,71.97) and (327.77,72) .. (339.64,72.16) ;
\draw [shift={(341.52,72.18)}, rotate = 180.8] [color={rgb, 255:red, 155; green, 155; blue, 155 }  ,draw opacity=1 ][line width=0.75]    (10.93,-3.29) .. controls (6.95,-1.4) and (3.31,-0.3) .. (0,0) .. controls (3.31,0.3) and (6.95,1.4) .. (10.93,3.29)   ;
\draw [shift={(295.28,71.25)}, rotate = 2.03] [color={rgb, 255:red, 155; green, 155; blue, 155 }  ,draw opacity=1 ][line width=0.75]    (10.93,-3.29) .. controls (6.95,-1.4) and (3.31,-0.3) .. (0,0) .. controls (3.31,0.3) and (6.95,1.4) .. (10.93,3.29)   ;
\draw [color={rgb, 255:red, 155; green, 155; blue, 155 }  ,draw opacity=1 ]   (278.04,45.24) .. controls (301.8,10.13) and (362.63,4.97) .. (381.07,46.19) ;
\draw [shift={(381.62,47.45)}, rotate = 247.37] [color={rgb, 255:red, 155; green, 155; blue, 155 }  ,draw opacity=1 ][line width=0.75]    (10.93,-3.29) .. controls (6.95,-1.4) and (3.31,-0.3) .. (0,0) .. controls (3.31,0.3) and (6.95,1.4) .. (10.93,3.29)   ;
\draw [shift={(276.62,47.45)}, rotate = 301.43] [color={rgb, 255:red, 155; green, 155; blue, 155 }  ,draw opacity=1 ][line width=0.75]    (10.93,-3.29) .. controls (6.95,-1.4) and (3.31,-0.3) .. (0,0) .. controls (3.31,0.3) and (6.95,1.4) .. (10.93,3.29)   ;
\draw [color={rgb, 255:red, 0; green, 0; blue, 0 }  ,draw opacity=1 ]   (66.55,71.25) .. controls (41.37,22.42) and (123.77,19.37) .. (101.64,74.3) .. controls (65.02,159.74) and (-2.87,74.3) .. (94.01,72.77) ;
\draw [color={rgb, 255:red, 0; green, 0; blue, 0 }  ,draw opacity=1 ]   (107.36,73.53) .. controls (151.99,75.82) and (131.4,158.21) .. (70.36,78.87) ;
\draw  [color={rgb, 255:red, 0; green, 0; blue, 0 }  ,draw opacity=1 ] (83.72,97.95) .. controls (83.72,95.84) and (85.42,94.13) .. (87.53,94.13) .. controls (89.64,94.13) and (91.34,95.84) .. (91.34,97.95) .. controls (91.34,100.05) and (89.64,101.76) .. (87.53,101.76) .. controls (85.42,101.76) and (83.72,100.05) .. (83.72,97.95) -- cycle ;
\draw    (140,69) -- (197.5,69) ;
\draw [shift={(199.5,69)}, rotate = 180] [color={rgb, 255:red, 0; green, 0; blue, 0 }  ][line width=0.75]    (10.93,-3.29) .. controls (6.95,-1.4) and (3.31,-0.3) .. (0,0) .. controls (3.31,0.3) and (6.95,1.4) .. (10.93,3.29)   ;

\end{tikzpicture}
    \vspace{-40pt}
    \caption{Construction of a good triangulation of the canonical space realization}
    \label{fig:canonicalSpaceRealizationTri}
\end{figure}

\begin{cor}\label{cor:comp}
    We can construct a triangulation $\mathcal{T}_E$ of the canonical exterior of $D$ in time $\mathcal{O}(c)$.
    Moreover, $|\mathcal{T}_E| \in \mathcal{O}(c)$.
\end{cor}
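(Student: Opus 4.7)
The plan is to deduce the corollary directly from the preceding lemma by drilling along the knot/link in the 1-skeleton. Recall that the preceding lemma gives, in time $\mathcal{O}(c)$, a good triangulation $\mathcal{T}$ of the canonical space realization $(\mathcal{S} \times I, \hat{D})$ with $|\mathcal{T}| \in \mathcal{O}(c)$ and with $\hat{D}$ sitting inside the 1-skeleton $\mathcal{T}^{(1)}$. Since the canonical exterior is obtained by removing an open regular neighborhood of $\hat{D}$ from $\mathcal{S} \times I$, it suffices to produce, starting from $\mathcal{T}$, a triangulation of the complement of a simplicial regular neighborhood of the subcomplex $\hat{D} \subset \mathcal{T}$ whose size grows only by a constant factor.

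The first step is to localize: because $\mathcal{T}$ is built by placing and gluing a constant-size triangulated cube at each real crossing of $D$ (as in Figure \ref{fig:canonicalSpaceRealizationTri}), every vertex and every edge of $\hat{D}$ lies in only $\mathcal{O}(1)$ tetrahedra of $\mathcal{T}$, and adjacent cubes share only a constant amount of combinatorial data. Next, I would perform a standard drilling subdivision tetrahedron-by-tetrahedron: within each tetrahedron $\Delta \in \mathcal{T}$ that meets $\hat{D}$, split off a small prismatic or pyramidal neighborhood of the portion of $\hat{D}$ inside $\Delta$ (either along an edge of $\Delta$, or around a vertex on $\hat{D}$), and retriangulate the remainder of $\Delta$ by a fixed pattern into a constant number of tetrahedra. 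Since there are finitely many combinatorial types of incidence between $\hat{D}$ and $\Delta$, these local retriangulations can be described by a finite lookup table. The tetrahedra that fall inside the regular neighborhood are then discarded, and the gluing maps on the remaining faces are inherited from $\mathcal{T}$ (with the new faces from the drilling becoming free boundary faces belonging to $\partial N(\hat{D})$).

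Equivalently, one can take a second barycentric subdivision $\mathcal{T}''$, whose simplicial neighborhood of the subcomplex $\hat{D}$ is an honest regular neighborhood; removing the open star of $\hat{D}$ in $\mathcal{T}''$ gives the desired triangulation $\mathcal{T}_E$. Since barycentric subdivision multiplies the number of tetrahedra by a fixed constant ($24$ in dimension $3$, twice), we still have $|\mathcal{T}''| \in \mathcal{O}(c)$ and hence $|\mathcal{T}_E| \in \mathcal{O}(c)$. In either formulation, each tetrahedron of $\mathcal{T}$ produces $\mathcal{O}(1)$ tetrahedra of $\mathcal{T}_E$ by an operation that can be read off in constant time from the local combinatorics, so the total construction runs in $\mathcal{O}(c)$ time, as required.

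The only delicate point — and the thing I would take the most care with — is verifying that the local drilling patterns glue consistently across shared faces of adjacent tetrahedra in $\mathcal{T}$. This is where the second-barycentric-subdivision formulation is cleanest, because the simplicial neighborhood of a subcomplex in a second derived subdivision is automatically a regular neighborhood (a classical PL fact), so consistency across faces is free. The resulting $\mathcal{T}_E$ is then a triangulation of $\text{cl}(\mathcal{S}\times I - N(\hat{D}))$ with $|\mathcal{T}_E|\in\mathcal{O}(c)$, constructed in time $\mathcal{O}(c)$.
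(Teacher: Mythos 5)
Your proposal is correct and, in its second formulation, coincides exactly with the paper's proof: the paper constructs $\mathcal{T}_E$ by taking the second barycentric subdivision of the good triangulation of $(\mathcal{S}\times I,\hat{D})$ and then removing the tetrahedra adjacent to $\hat{D}$. The local drilling-by-lookup-table variant you sketch first would also work, but the barycentric route you yourself favor is precisely what the paper does.
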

\begin{proof}
    We can construct the good triangulation $\mathcal{T}$ of the canonical space realization of $D$ in time $\mathcal{O}(c)$.
    We obtain $\mathcal{T}_E$ by doing the barycentric subdivision of $\mathcal{T}$ twice and then 
    removing the tetrahedra adjacent to $\hat{D}$.
\end{proof}

From Lemma \ref{lem:goodTri}, the following lemma holds.
\begin{cor}\label{cor:genus}
    The supporting genus $sg(D)$ of a virtual link diagram $D$ is $\mathcal{O}(c)$.
\end{cor}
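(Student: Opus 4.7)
The plan is to bound the genus via the Euler characteristic of the supporting surface, using the size bound from Lemma \ref{lem:goodTri}. By that lemma, there exists a good triangulation $\mathcal{T}$ of the canonical surface realization $(\mathcal{S}, \tilde{D})$ with $|\mathcal{T}| \in \mathcal{O}(c)$. In particular, the number of $2$-simplices of $\mathcal{T}$ is $\mathcal{O}(c)$, and since $\mathcal{S}$ is a closed surface we have $2E = 3F$ and $V \leq 3F$, so the number of vertices, edges, and faces of $\mathcal{T}$ are all $\mathcal{O}(c)$. Consequently $|\chi(\mathcal{S})| = |V - E + F| \in \mathcal{O}(c)$.

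Next I would relate this to $sg(D)$. Since $\mathcal{S}$ is a closed orientable surface (possibly with several components), writing its components as $\mathcal{S}_1, \ldots, \mathcal{S}_s$ with genera $g_1, \ldots, g_s$, we have
\[
\chi(\mathcal{S}) \;=\; \sum_{i=1}^{s} (2 - 2 g_i) \;=\; 2 s \,-\, 2\, sg(D),
\]
so $sg(D) = s - \chi(\mathcal{S})/2$. The number of components $s$ is bounded above by the number of $2$-simplices, hence $s \in \mathcal{O}(c)$. Combining this with $|\chi(\mathcal{S})| \in \mathcal{O}(c)$ yields $sg(D) \in \mathcal{O}(c)$.

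There is no substantial obstacle here: the proof is essentially a one-line Euler-characteristic count, and the only input needed beyond elementary surface topology is the size bound from Lemma \ref{lem:goodTri}, which has already been established. The only mild care required is to account correctly for the fact that $\mathcal{S}$ may be disconnected, which is handled by the formula above.
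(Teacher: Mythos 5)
Your proof takes essentially the same Euler-characteristic approach as the paper, bounding the genus via the size of the good triangulation from Lemma \ref{lem:goodTri}. You are in fact slightly more careful than the paper about disconnectedness: the paper writes $2-2g(\mathcal{S})=\chi(\mathcal{T}_{\mathcal{S}})$, which is exact only when $\mathcal{S}$ is connected, whereas you use $\chi(\mathcal{S})=2s-2\,sg(D)$ and additionally bound the component count $s$ by the number of triangles---both routes yield $\mathcal{O}(c)$.
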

\begin{proof}
    Let $\mathcal{S}$ be the supporting surface of $D$, and let $\mathcal{T}_{\mathcal{S}}$ be the triangulation of $\mathcal{S}$ obtained by Lemma \ref{lem:goodTri}.
    We denote the number of faces, edges and vertices of $\mathcal{T}_\mathcal{S}$ by $f, e$ and $v$, respectively.
    By the relationship of the genus of $\mathcal{S}$ and the Euler characteristic of $\mathcal{T}_\mathcal{S}$
    we have
    \begin{align*}
        2-2g(\mathcal{S}) = \chi(\mathcal{T}_{\mathcal{S}})
        \iff g(\mathcal{S}) &= 1 - \frac{\chi(\mathcal{T}_{\mathcal{S}})}{2}\\
        &= 1 - \frac{f-e+v}{2}.
    \end{align*}
    Since $e \leq 3|\mathcal{T}_{\mathcal{S}}|$ and $f,v > 0$, we see that
    \begin{align*}
        g(\mathcal{S}) &= 1 - \frac{f-e+v}{2}\\
        &\leq 1 + \frac{e}{2}\\
        &\leq 1 + \frac{3|\mathcal{T}_{\mathcal{S}}|}{2}.
    \end{align*}
    We have $g(\mathcal{S}) \in \mathcal{O}(c)$ because $|\mathcal{T}_{\mathcal{S}}| \in \mathcal{O}(c)$.
    Therefore, $sg(D) \in \mathcal{O}(c)$. 
\end{proof}

\setcounter{section}{3}
\section{Normal surface theory}

In the first half of this section, we give a brief overview of normal surface theory and 
show that one of normal surfaces which is a witness of classical knot recognition can be found as a {\it vertex surface}.
The Turing machine given in the proof of Theorem \ref{thm:main} makes use of the {\it crushing procedure} to reduce its running time.
Jaco and Rubinstein defined the crushing procedure on a triangulation $\mathcal{T}$ along a normal surface $F$ and 
analyzed its effect in the case where $F$ is a disk or a $2$-sphere in an orientable compact $3$-manifold (\cite{JR}).
In addition, Burton generalized this result to the setting of non-orientable $3$-manifolds (\cite{B_crush}).
In the last half of this section, we analyze the effect of the crushing procedure along a vertical normal annulus in the exterior of a link in a thickened orientable closed surface.

\subsection{The definition of a normal surface}
Suppose that $\mathcal{T}$ is a triangulation of a compact 3-manifold $M$ and $\Delta$ is a tetrahedron in $\mathcal{T}$.
\begin{defi}[Normal disk]
    A {\it normal disk} in $\Delta$ is a properly embedded disk $D$ in $\Delta$ if $D$ satisfies the following conditions:
    \begin{itemize}
        \item $D$ is a triangle or a quadrilateral,
        \item $D$ has no intersection with the vertices of $\Delta$,
        \item Each edge of $D$ connects different edges of $\Delta$.
    \end{itemize}
\end{defi}
There are seven types of normal disks in $\Delta$ as shown in Figure \ref{fig:normalDisk}.
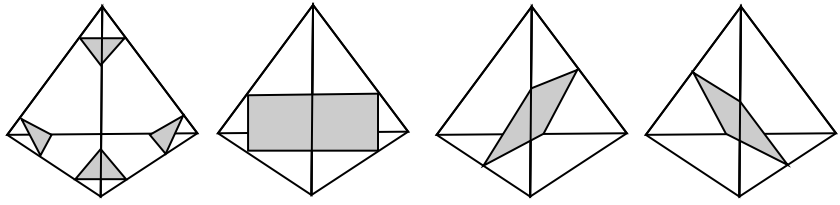
\begin{figure}[htbp]
    \centering
    \tikzset{every picture/.style={line width=0.75pt}} 

\begin{tikzpicture}[x=0.75pt,y=0.75pt,yscale=-1,xscale=1]

\draw   (88.45,28.8) -- (87.65,124.71) -- (40.5,93.54) -- cycle ;
\draw   (88.45,28.8) -- (136.41,92.74) -- (40.5,93.54) -- cycle ;
\draw  [fill={rgb, 255:red, 204; green, 204; blue, 204 }  ,fill opacity=1 ] (87.65,100.73) -- (100.44,115.92) -- (74.87,115.92) -- cycle ;
\draw  [fill={rgb, 255:red, 204; green, 204; blue, 204 }  ,fill opacity=1 ] (47.69,85.54) -- (62.88,93.54) -- (57.28,103.93) -- cycle ;
\draw  [fill={rgb, 255:red, 204; green, 204; blue, 204 }  ,fill opacity=1 ] (129.22,83.95) -- (120.42,103.13) -- (112.43,93.54) -- cycle ;
\draw  [fill={rgb, 255:red, 204; green, 204; blue, 204 }  ,fill opacity=1 ] (99.64,44.78) -- (77.26,44.78) -- (87.65,58.37) -- cycle ;
\draw   (194.75,28) -- (193.95,123.91) -- (146.8,92.74) -- cycle ;
\draw   (194.75,28) -- (242.71,91.94) -- (193.95,123.91) -- cycle ;
\draw   (194.75,28) -- (242.71,91.94) -- (146.8,92.74) -- cycle ;
\draw  [fill={rgb, 255:red, 204; green, 204; blue, 204 }  ,fill opacity=1 ] (161.98,73.56) -- (227.52,72.76) -- (227.52,101.53) -- (161.98,101.53) -- cycle ;
\draw   (305.05,28.8) -- (304.25,124.71) -- (257.09,93.54) -- cycle ;
\draw   (305.05,28.8) -- (353,92.74) -- (304.25,124.71) -- cycle ;
\draw   (305.05,28.8) -- (353,92.74) -- (257.09,93.54) -- cycle ;
\draw  [fill={rgb, 255:red, 204; green, 204; blue, 204 }  ,fill opacity=1 ] (305,70) -- (328.02,60.77) -- (311,93) -- (280.67,109.12) -- cycle ;
\draw   (410.55,28.8) -- (409.75,124.71) -- (362.59,93.54) -- cycle ;
\draw   (410.55,28.8) -- (458.5,92.74) -- (409.75,124.71) -- cycle ;
\draw   (410.55,28.8) -- (458.5,92.74) -- (362.59,93.54) -- cycle ;
\draw  [fill={rgb, 255:red, 204; green, 204; blue, 204 }  ,fill opacity=1 ] (386.57,62.17) -- (410.15,76.75) -- (434.12,108.72) -- (403,93) -- cycle ;
\draw    (194.75,28) -- (193.95,123.91) ;
\draw    (305.05,28.8) -- (304.25,124.71) ;
\draw    (410.55,28.8) -- (409.75,124.71) ;
\draw   (88.45,28.8) -- (136.41,92.74) -- (87.65,124.71) -- cycle ;

\end{tikzpicture}
    \caption{The seven types of normal disks}
    \label{fig:normalDisk}
\end{figure}

\begin{defi}[Normal surface]
    A {\it normal surface} in $M$ with respect to $\mathcal{T}$ is a properly embedded surface meeting each tetrahedron of $\mathcal{T}$ in a collection of disjoint normal disks.
\end{defi}
In particular, the boundary of a small regular neighborhood of a vertex of $\mathcal{T}$ is a normal surface.
This normal surface is called a {\it vertex link}.
Then the next lemma immediately follows.
\begin{lem}
    A normal surface $S$ is a disjoint union of vertex links if and only if any normal disk in $S$ is a triangle.
\end{lem}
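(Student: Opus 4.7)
The plan is to prove the two directions separately, with the forward direction being essentially immediate and the reverse direction requiring a tracing argument across face identifications.

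For the ($\Rightarrow$) direction, I would simply unpack the definition of a vertex link: the boundary of a small regular neighborhood of a vertex $v$ of $\mathcal{T}$ meets every tetrahedron $\Delta$ containing $v$ in a single corner triangle that cuts off $v$, and meets every other tetrahedron in the empty set. Hence a disjoint union of vertex links intersects each tetrahedron in a collection of triangular normal disks only.

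For the ($\Leftarrow$) direction, I would fix an arbitrary connected component $C$ of $S$ and show that $C$ is a vertex link. Pick any normal triangle $t \subset C$ sitting in some tetrahedron $\Delta$; by hypothesis $t$ exists, and by definition of a normal triangle it cuts off a unique vertex $v$ of $\Delta$, with its three edges lying on the three faces of $\Delta$ incident to $v$. For each such edge $e$, the face $F \supset e$ is glued to a face $F'$ of an adjacent tetrahedron $\Delta'$ via some $\phi$. Since $S$ is a properly embedded surface meeting each face transversely, $\phi(e)$ must be an edge of some normal disk $t' \subset C \cap \Delta'$; by hypothesis $t'$ is a triangle, and since $\phi(e)$ separates the endpoint $\phi(v)$ from the remaining vertices of $\Delta'$, the triangle $t'$ must cut off precisely the vertex $\phi(v)$.

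Iterating this tracing, every triangle of $C$ cuts off a vertex lying in the equivalence class $[v]$ of $v$ under the vertex identifications of $\mathcal{T}$, and connectedness of $C$ together with the fact that $C$ is closed (as a component of a properly embedded surface) forces $C$ to contain the corner triangle at the appropriate vertex in every tetrahedron incident to $[v]$. Hence $C$ coincides with the link of the vertex class $[v]$, and since $S$ is the disjoint union of its components, $S$ is a disjoint union of vertex links (with multiplicity, when several parallel triangles at the same vertex appear in distinct components). The main obstacle is simply the bookkeeping in the tracing step, namely verifying that gluings send the distinguished vertex of one triangle to the distinguished vertex of the matched triangle, which follows from the combinatorial definition of a normal triangle.
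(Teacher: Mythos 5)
Your proof is correct, and it fills in a proof that the paper omits entirely: the paper simply asserts the lemma as "immediately" following the definition of a vertex link, so there is no paper proof to compare against. Both directions of your argument are sound — the forward direction really is just unpacking the definition, and your reverse direction is the standard tracing argument: a triangle cuts off one vertex of its tetrahedron, face gluings send the distinguished vertex to the distinguished vertex of the matched triangle, and connectedness propagates this around the entire link of a single vertex class, so each connected component is one copy of a vertex link.

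Two small slips worth flagging, neither of which breaks the argument. First, you say "$C$ is closed (as a component of a properly embedded surface)," but this is not true in general: $M$ may have boundary, and the link of a boundary vertex is a properly embedded disk, not a closed surface. The tracing argument does not actually need closedness, so you should simply drop that clause. Second, in the tracing step you implicitly assume the face $F$ containing the edge $e$ is always glued to another face; when $F \subset \partial M$ the trace just stops there, which is exactly what happens for a boundary vertex link, but the case ought to be acknowledged. Finally, it is worth being explicit that a component $C$ can be \emph{at most} one parallel copy of a vertex link (parallel copies are disjoint, and the matching equations keep the trace within a single "layer" around the vertex), which is what justifies your parenthetical remark about multiplicity living in distinct components rather than within one component.
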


Let $M$ be a $3$-manifold with an $n$-tetrahedra triangulation $\mathcal{T}$.
A normal surface $F$ in $M$ with respect to $\mathcal{T}$ can be described by a vector, a tuple of $7n$ integers, that counts the number of normal disks of each type in each tetrahedron.
This vector uniquely identifies a normal surface with respect to $\mathcal{T}$ up to normal isotopy: an isotopy which is invariant on each simplex of $\mathcal{T}$.

More generally, an integer vector $\mathbf{x} \in \mathbb{R}^{7n}$ represents a normal surface $F$ with respect to $\mathcal{T}$ if and only if:
\begin{enumerate}
    \item $\mathbf{x} \geq \mathbf{0}$,
    \item $\mathbf{x}$ satisfies the \textit{matching equations} $A\mathbf{x} = \mathbf{0}$ of $\mathcal{T}$,
    \item $\mathbf{x}$ satisfies the \textit{quadrilateral condition} for each tetrahedron of $\mathcal{T}$.
\end{enumerate}
The \textit{matching equations} is the condition for gluing normal disks together across adjacent tetrahedra.
Two different type quadrilateral normal disks can not be embedded in the same tetrahedron without intersections.
Therefore, the quadrilateral normal disks in the collection of normal disks $F \cap \Delta$ must have the same type, where $\Delta$ is a tetrahedron of $\mathcal{T}$.
This condition is called the \textit{quadrilateral condition} for $\Delta$.

Let $A\mathbf{x} = \mathbf{0}$ be the matching equations of a triangulation $\mathcal{T}$.
The {\it projective solution space} of the matching equations $A\mathbf{x} = \mathbf{0}$ is the set $\mathcal{P}  = \{ \mathbf{x} \in \mathbb{R}^{7n} | A\mathbf{x}=\mathbf{0} ,  \sum x_i = 1\}$.
\begin{defi}[Vertex solution]
    A vector $\mathbf{x} \in \mathcal{P}$ is called a {\it vertex solution} 
    if there are no vectors $\mathbf{y},\mathbf{z} \in \mathcal{P}$ such that the projection of $\mathbf{y} + \mathbf{z}$ is $\mathbf{x}$.
\end{defi}

\begin{defi}[Vertex surface]
    Suppose that $F$ is a connected two-sided normal surface with respect to a triangulation $\mathcal{T}$ and $\mathbf{x}$ is the vector representation of $F$.
    $F$ is called a {\it vertex surface} if the projection of $\mathbf{x}$ onto the hyperplane $\sum x_i = 1$ is a vertex solution of $\mathcal{P}$, where $\mathcal{P}$ is the projective solution space of the matching equations of $\mathcal{T}$.
\end{defi}
Note that if there is a one-sided connected normal surface $F$ that the projection of the vector representation $\mathbf{x}$ of $F$ is a vertex solution, then the normal surface represented by $2\mathbf{x}$ is a vertex surface.

The following theorem is used to analyze the computational complexity of an algorithm using vertex surfaces.
\begin{thm}[Hass, Lagarias and Pippenger \cite{HLP}] \label{thm:HLP}
    Suppose that $F$ is a vertex surface with respect to an $n$-tetrahedra triangulation of a $3$-manifold and $\mathbf{x} \in \mathbb{R}^{7n}$ is the vector representation of $F$.
    Then, $x_i \leq 2^{7n-1}$ for each $i \in \{1, \ldots, 7n\}$.
\end{thm}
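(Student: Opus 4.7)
The plan is to translate the condition that $F$ is a vertex surface into a statement about vertices of a polyhedron and then bound its coordinates by determinants. Set $\mathcal{P} = \{\mathbf{x} \in \mathbb{R}^{7n} : A\mathbf{x} = \mathbf{0},\ \sum_i x_i = 1,\ \mathbf{x} \ge \mathbf{0}\}$ and note that by definition the projection of $\mathbf{x}$ is an extreme point of $\mathcal{P}$. A standard fact from polyhedral theory says that any extreme point of such a set is the unique solution of a square linear system obtained by taking a maximal linearly independent collection of rows of $A$, the normalization row $\mathbf{1}^\top$, and enough of the active coordinate inequalities $x_j = 0$ to bring the total number of equations up to $7n$.

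Let $M$ be the resulting $7n \times 7n$ coefficient matrix and $\mathbf{b}$ its right-hand side, which is a single standard basis vector coming from the normalization. Every entry of $M$ lies in $\{-1, 0, +1\}$: each matching equation equates counts of normal disks that meet along a pair of identified edges on two faces, contributing $+1$ on one side and $-1$ on the other, while the normalization and coordinate-zero rows have entries in $\{0, 1\}$. By Cramer's rule, the vertex has coordinates $v_i = \det(M^{(i)}) / \det(M)$, where $M^{(i)}$ is obtained from $M$ by replacing column $i$ with $\mathbf{b}$. The integer vector representation of the associated normal surface is a positive integer multiple of $(\det(M^{(1)}), \dots, \det(M^{(7n)}))$ up to a global gcd, together with the possible factor of $2$ that appears for one-sided surfaces. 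In particular, $x_i \le |\det(M^{(i)})|$.

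The problem is therefore reduced to bounding $|\det N|$ for a $7n \times 7n$ matrix $N$ whose entries are in $\{-1, 0, +1\}$ and whose rows coming from matching equations have the special two-plus-two structure described above (other rows are even sparser, containing just one $+1$ entry). I would finish by invoking the sharp bound $|\det N| \le 2^{m-1}$ for such an $m \times m$ matrix: each row allows a pivot that, after clearing the other nonzero entry in its column, at worst doubles the magnitude of one entry in the surviving $(m-1) \times (m-1)$ submatrix; iterating this $m - 1$ times and finishing with a single $\pm 1$ entry yields the bound. Specialising to $m = 7n$ gives $x_i \le 2^{7n - 1}$.

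The main obstacle is the final determinant estimate. The naive Hadamard bound $|\det N| \le m^{m/2}$ is exponentially too weak, so the argument must genuinely exploit the $\pm 1$ incidence pattern of the matching equations rather than just the dimension of $N$. If a direct pivoting argument proves tricky because rows can lose their two-plus-two shape after elimination, an alternative is to express the minors as signed counts in the face-pairing graph of the triangulation (spanning-subgraph-type formulas) and bound those combinatorially; either route should extract the factor-of-$2$-per-row growth and thereby produce the exponent $7n - 1$ rather than something like $7n \log(7n)/2$.
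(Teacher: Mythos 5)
Your overall framework is the right one and is essentially what Hass, Lagarias and Pippenger do: reduce to a vertex of the polytope, invoke Cramer's rule, and bound the resulting cofactor determinants. The reduction to $v_i = \det(M^{(i)})/\det(M)$, the expansion along the column containing the single $1$ of $\mathbf{b}$ to drop to a $(7n-1)\times(7n-1)$ minor $M'$, and the observation that the integer vector of the surface divides $(\det(M^{(1)}),\dots,\det(M^{(7n)}))$ are all sound. The gap is entirely in the "main obstacle" you flag at the end, and the two escape routes you sketch there do not close it.

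First, the pivoting argument as stated is not correct. Your claim that each elimination step "at worst doubles the magnitude of one entry" implicitly assumes that each column of $M'$ has at most two nonzero entries. It does not: a triangle coordinate appears in $3$ matching equations (one per face it meets) and a quadrilateral coordinate in $4$, in addition to a possible $x_j=0$ row, so columns can have up to five nonzero entries. After a pivot the eliminated row's nonzeros get added into several other rows, the "two-plus-two" shape is destroyed, and the magnitudes are not controlled by a factor of two per step. The spanning-subgraph alternative is not developed, so there is nothing to check there.

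Second, and more importantly, your dismissal of Hadamard is a mistake, and it is exactly Hadamard that finishes the proof. You quote the dense-$\pm1$ form $|\det N|\le m^{m/2}$, but the useful form is the row-norm version $|\det N|\le \prod_j \|r_j\|_2$. Every row of $M'$ is either a matching equation, with at most four nonzero entries in $\{-1,0,+1\}$ and hence $\|r_j\|_2\le 2$, or a coordinate-zero row with $\|r_j\|_2=1$. Since $M'$ has $7n-1$ rows, Hadamard gives $|\det M'|\le 2^{7n-1}$ immediately, and therefore $x_i\le |\det(M^{(i)})|\le 2^{7n-1}$. So the "$\pm1$ incidence pattern" you correctly identify as essential enters precisely through the row $\ell_2$-norms; no custom elimination or combinatorial minor formula is needed. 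With that observation inserted, your proof becomes the standard one.

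One small additional remark: you should be a little careful with the sentence about "the possible factor of $2$ that appears for one-sided surfaces." In the paper's definition a vertex surface is two-sided by fiat, with the convention that if a one-sided surface projects to a vertex solution then $2\mathbf{x}$ is the vertex surface. This is precisely why HLP phrase their lemma in terms of a primitive integer vector on the ray; as you have written it the extra factor of $2$ is only mentioned in passing and could, if one is not careful, push the bound to $2\cdot 2^{7n-1}$. The gcd division you allude to saves the day, but it is worth stating explicitly which vector the bound is being asserted for.
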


\subsection{Vertex surfaces in the exterior of a link in a thickened closed orientable surface}
First, we consider vertex $2$-spheres in the exterior of a link in a thickened closed orientable surface.
Jaco and Tollefson showed that there is an essential vertex $2$-sphere in a reducible closed $3$-manifold $M$ with respect to a triangulation of $M$ (\cite[Theorem 5.2]{JT}).
The exterior of a link in a thickened closed orientable surface has a boundary, however, 
the following theorem can be proved as the same argument of \cite[Theorem 5.2]{JT}.
\begin{thm}\label{thm:essS2}
    Let $\hat{D}$ be a link in a thickened closed orientable surface $\mathcal{S} \times I$.
    Let $M$ be the exterior of $\hat{D}$ with a triangulation $\mathcal{T}$.
    Suppose that $g(\mathcal{S}) \neq 0$.
    Then, if $M$ is reducible, then there is an essential vertex $2$-sphere with respect to $\mathcal{T}$.
\end{thm}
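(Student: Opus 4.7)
The plan is to adapt the Jaco--Tollefson proof of their Theorem 5.2 for closed reducible $3$-manifolds to the bounded setting at hand. The hypothesis $g(\mathcal{S})\neq 0$ is used crucially to guarantee that every boundary component of $M$ (the two copies of $\mathcal{S}$ and the tori of $\partial N(\hat{D})$) has nonpositive Euler characteristic, so no boundary $2$-sphere obstructions arise in the Haken sum bookkeeping that will follow.

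First I would normalize the reducibility witness: starting from a topologically essential $2$-sphere in $M$, a standard innermost--curve/disk isotopy with respect to the $2$-skeleton of $\mathcal{T}$ produces a normal essential $2$-sphere. Among all normal essential $2$-spheres I would choose one, $F$, of minimal weight $wt(F)$ (the total number of intersections with the $1$-skeleton of $\mathcal{T}$). Aiming for a contradiction, I assume $F$ is not a vertex surface; then its projective coordinate vector lies in the relative interior of a positive-dimensional face of $\mathcal{P}$, so $F$ admits a nontrivial Haken decomposition $F = F_1 + F_2$ with $F_1, F_2$ normal of strictly smaller weight and with $\chi(F_1)+\chi(F_2) = \chi(F) = 2$.

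The goal is then to extract an essential $2$-sphere component $S$ from $F_1$ or $F_2$; such an $S$ would satisfy $wt(S) < wt(F)$ and contradict minimality. Because $M$ is orientable, no summand component is a projective plane, and additivity of $\chi$ forces a $2$-sphere or a disk component to appear in at least one summand. The main obstacle is that this sphere component might bound a $3$-ball and thus be inessential. To overcome it I would invoke the Jaco--Tollefson regular-exchange technique: iteratively swap innermost trivial subdisks along $F_1\cap F_2$, each swap preserving normality and the sum $F$ while strictly reducing $|F_1\cap F_2|$. Since the process terminates and the original $F$ is essential, in the final configuration a $2$-sphere component of some summand must remain essential --- otherwise $F$ itself would decompose as a union of inessential spheres and therefore bound a $3$-ball, contradicting essentiality of $F$. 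This produces the desired lower-weight essential normal $2$-sphere, contradicting the minimality of $F$ and completing the argument.
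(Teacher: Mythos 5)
Your strategy matches the paper's: the paper simply cites \cite[Theorem 5.2]{JT} and asserts that the same argument carries over to the bounded setting, and you are attempting to spell that adaptation out. The least-weight normalization of the reducing sphere, the Haken decomposition $nF = F_1 + F_2$ when $F$ is not a vertex surface, and the Euler-characteristic additivity are the right ingredients.

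There are, however, two gaps. First, a minor one: since $F$ is closed and the count of boundary arcs on each triangle of $\partial\mathcal{T}$ is additive in normal coordinates, both summands $F_1, F_2$ are forced to be closed, so disk components can never arise --- you need not entertain that case, and once this is noted the role you assign to $g(\mathcal{S}) \neq 0$ in the Haken-sum bookkeeping evaporates. Second, and more seriously, the concluding inference --- ``otherwise $F$ itself would decompose as a union of inessential spheres and therefore bound a $3$-ball'' --- is not valid. A Haken sum is a cut-and-paste operation, not a disjoint union; the regular-exchange process need not terminate with $F_1 \cap F_2 = \emptyset$, and even if it did, the connectedness of $F$ would merely force one summand to become empty, which makes the decomposition trivial and yields no contradiction. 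The step where a lower-weight essential sphere is actually extracted is precisely the technical content of the Jaco--Tollefson proof (it involves an analysis of disk patches together with the least-weight property of $F$, and it must also handle the case $n>1$, which your writeup implicitly assumes away), and your sketch replaces it with a non sequitur.
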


Let $M$ be the exterior of a link $\hat{D}$ in a thickened closed orientable surface $\mathcal{S} \times I$, and suppose that $g(\mathcal{S}) \neq 0$.
By Theorem \ref{thm:essS2}, whenever $M$ is reducible, we can split $M$ by using a vertex $2$-sphere.

Next, we consider vertex annuli in the exterior $M$ of a link in a thickened closed orientable surface $\mathcal{S} \times I$ with respect to a triangulation $\mathcal{T}$ of $M$.
Even though there is a vertical essential annulus in $M$, there is not necessarily a vertex annulus which is vertical and essential.
However, if $M$ contains a vertical essential annulus, then there is a vertex annulus which is vertical and essential or a vertex annulus which is a classicalization annulus with respect to $\mathcal{T}$ as shown in Theorem \ref{thm:essAn}.

The {\it weight} of a normal surface $F$ with respect to a triangulation $\mathcal{T}$, denoted by $wt(F)$, is the number of intersections $F \cap \mathcal{T}^{(1)}$, where $\mathcal{T}^{(1)}$ is the $1$-skeleton of $\mathcal{T}$.
We say that a normal surface $F$ is {\it least weight} if $wt(F)$ takes the minimal value among all normal surfaces which are isotopic to $F$.

Suppose that $F_1$ and $F_2$ are normal surfaces with respect to a triangulation $\mathcal{T}$ such that there are no tetrahedra containing different types quadrilateral disks in $F_1 \cup F_2$.
Then the {\it sum} of $F_1$ and $F_2$, denoted by $F_1 + F_2$, is the normal surface $F$ represented by the vector $\mathbf{x}_F = \mathbf{x}_{F_1} + \mathbf{x}_{F_2}$, where $\mathbf{x}_{F_1}$ and $\mathbf{x}_{F_2}$ is the vector representation of $F_1$ and $F_2$, respectively.
For any integer $n > 0$, let the \textit{integer multiple} of $F$, denoted by $nF$, is the normal surface represented by the vector $n\mathbf{x}_F$, where $\mathbf{x}_F$ is the vector represetation of $F$.
Under the assumption that normal surfaces $F_1$ and $F_2$ intersect transversely, 
the sum $F = F_1 + F_2$ is said to be in {\it reduced form} if $F$ can not be written as $F = F'_1 + F'_2$, where $F'_i$ is a normal surface which is isotopic to $F_i$ for each $i = 1,2$ and $F'_1 \cap F'_2$ has fewer components than $F_1 \cap F_2$.
A {\it patch} of $F_1 + F_2$ is a subsurface of $F_1 \cup F_2$ whose boundary consists of $F_1 \cap F_2$.
In particular, a patch of $F_1 + F_2$ which is a disk is called a {\it disk patch}.

Jaco and Tollefson \cite{JT} showed Theorem \ref{thm:disk_patch} and Theorem \ref{thm:sum_incomp} by extending the results of Jaco and Oertel \cite{JO}.
\begin{thm}[Jaco and Tollefson {\cite[Lemma 6.6]{JT}}] \label{thm:disk_patch}
    Let $M$ be an irreducible, $\partial$-irreducible $3$-manifold and $\mathcal{T}$ be a triangulation of $M$.
    Suppose that $F$ is a least weight, incompressible, $\partial$-incomprerssible, two-sided normal surface in $M$ with respect to $\mathcal{T}$ and $F$ is a not disk.
    If there are an integer $n > 0$ and normal surfaces $F_1$ and $F_2$ such that $nF = F_1 + F_2$ is in reduced form and each intersection curve of $F_1 \cap F_2$ is two-sided in $F_1$ and $F_2$, then each patch of $F_1 + F_2$ is incompressible and $\partial$-incompressible and there are no disk patch.
\end{thm}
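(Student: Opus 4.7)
The plan is a proof by contradiction in the cut-and-paste style of Jaco--Oertel, adapted as in Jaco--Tollefson to patches rather than whole summands. I treat the two conclusions in turn, beginning with the absence of disk patches. Suppose $D$ is a disk patch of the reduced sum $nF = F_1 + F_2$, say $D \subset F_1$; then $\gamma := \partial D$ is a simple closed curve in $F_1 \cap F_2$, and in particular lies on $F_2$. The decisive dichotomy is whether $\gamma$ bounds a disk in $F_2$.

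If $\gamma$ is inessential in $F_2$, I pick an innermost such curve bounding a disk $D' \subset F_2$ whose interior is disjoint from $F_1 \cap F_2$. Then $D \cup D'$ is an embedded $2$-sphere in $M$, and the irreducibility of $M$ forces it to bound a $3$-ball $B$. A regular exchange along $\gamma$ --- isotoping $D$ through $B$ onto a small perturbation of $D'$ and vice versa --- replaces $F_1, F_2$ by normal surfaces $F_1', F_2'$ with $F_1' + F_2' = F_1 + F_2$ but with strictly fewer intersection components, contradicting the reduced-form hypothesis. The two-sidedness assumption on the curves of $F_1 \cap F_2$ is used here to ensure the exchange genuinely yields normal surfaces rather than introducing branching. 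If instead $\gamma$ is essential on $F_2$, then $D$ compresses $F_2$; but since $F$ is incompressible and $nF$ is an integer multiple of $F$, the curve $\gamma$ must be inessential on $nF$, so surgery of $nF$ along $D$ splits off a $2$-sphere. By irreducibility this sphere bounds a ball and is discarded, leaving a normal representative isotopic to $F$ whose weight is strictly smaller than $\mathrm{wt}(F)$ --- contradicting the least-weight hypothesis.

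For the claim that every patch $P$ is incompressible and $\partial$-incompressible, I suppose a compressing (resp.\ $\partial$-compressing) disk $\Delta$ for $P$ exists, place it in general position with $F_1 \cup F_2$, and pass to an innermost subdisk of $\Delta \setminus (F_1 \cup F_2)$ whose boundary lies on $P$. After a small isotopy this subdisk becomes a (half-)disk patch, which is ruled out by the first half of the argument; the $\partial$-compressible case uses $\partial$-irreducibility of $M$ in place of irreducibility and $\partial$-incompressibility of $F$ in place of incompressibility, with half-disks whose straight boundary arc lies in $\partial M$ replacing disks throughout.

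The main technical obstacle is the bookkeeping at the intersection curves: one must verify that each regular exchange or surgery strictly decreases one of three monotone quantities --- the number of components of $F_1 \cap F_2$, the weight $\mathrm{wt}(F)$, or the Euler characteristic defect measuring how far $nF$ is from being incompressible --- and that normality and the vector-sum identity $F_1' + F_2' = F_1 + F_2$ survive the modification. This is where the hypotheses of two-sided intersections, least weight, and irreducibility of $M$ are simultaneously indispensable, and where the core of the Jaco--Tollefson argument resides.
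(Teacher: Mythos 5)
The paper does not prove this statement: it is quoted with a citation to Jaco and Tollefson {\cite[Lemma 6.6]{JT}}, so there is no in-paper argument to compare against. Assessing your proposal on its own merits, the broad cut-and-paste strategy is in the right spirit of Jaco--Oertel and Jaco--Tollefson, but several steps do not go through as written.

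In the first case ($\gamma$ inessential in $F_2$), your choice of ``an innermost curve bounding a disk $D' \subset F_2$ whose interior is disjoint from $F_1 \cap F_2$'' already posits a disk patch of $F_2$, so you are assuming what you set out to refute; the correct move is simply to take the disk patch $D$ innermost from the start and compare it with the two curves of $F_1 \cap F_2$ adjacent to $\gamma$ on $F_2$. More seriously, the disk swap through the ball $B$ generally destroys normality of $F_1'$ and $F_2'$ and need not preserve the vector sum, so the conclusion of ``strictly fewer intersection components'' requires a normalization argument interacting with the least-weight hypothesis, which is exactly the hard part you have left out. In the second case ($\gamma$ essential in $F_2$), the reasoning breaks down: after the regular switch the patch $D$ \emph{is} a subdisk of $nF$, so $\gamma = \partial D$ bounds a disk on $nF$ by definition, not as a consequence of incompressibility of $F$; and ``surgery of $nF$ along $D$'' is not a defined operation, since $D$ lies on $nF$ rather than in its complement. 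The actual mechanism in [JT] here is the irregular switch at $\gamma$ and a weight comparison between $D$ and the disk it can be exchanged with on the $F_2$ side, which is what invokes the least-weight hypothesis; your outline never identifies it. Finally, for incompressibility and $\partial$-incompressibility of patches, the claim that an innermost subdisk of $\Delta \setminus (F_1 \cup F_2)$ ``becomes a (half-)disk patch'' is incorrect: that subdisk is a disk in $M$ with boundary on a patch, i.e.\ a \emph{compressing} disk for a patch, not a subsurface of $F_1 \cup F_2$, so the absence of disk patches does not rule it out and this half of the proof is genuinely missing.
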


Let $M$ be an irreducible, $\partial$-irreducible $3$-manifold with a triangulation $\mathcal{T}$ and $F$ be a normal surface which is neither a $2$-sphere nor a disk in $M$ with respect to $\mathcal{T}$.
Suppose that $F$ is least weight, incompressible, $\partial$-incomprerssible and two-sided, $nF = F_1 + F_2$, and each intersection curve in $F_1 \cap F_2$ is two-sided.
If $F_1 + F_2$ is not in reduced form, then let $F'_1$ and $F'_2$ be normal surfaces such that $nF = F'_1 + F'_2$ is in reduced form and each $F'_i$ is isotopic to $F_i$.
If $F'_i$ is a normal $2$-sphere, then there is a disk patch in $F'_i$, and this contradicts Theorem \ref{thm:disk_patch}.
Therefore, each $F'_i$ is not a $2$-sphere, and so $F_i$ is not a $2$-sphere.
Similarly, each $F_i$ is not a disk.

\begin{thm}[Jaco and Tollefson {\cite[Theorem 6.5]{JT}}] \label{thm:sum_incomp}
    Let $M$ be an irreducible, $\partial$-irreducible $3$-manifold and $\mathcal{T}$ be a triangulation of $M$.
    Suppose that $F$ is a least weight normal surface in $M$ with respect to $\mathcal{T}$, there are an integer $n > 0$ and normal surfaces $F_1$ and $F_2$ such that $nF = F_1 + F_2$, and $F$ is neither a disk nor a $2$-sphere, where $F_1$ and $F_2$ are normal surfaces in $\mathcal{T}$.
    If $F$ is two-sided, incompressible, and $\partial$-incompressible, then each $F_i$ is incompressible and $\partial$-incompressible and not a disk.
\end{thm}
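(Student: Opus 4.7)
The plan is to bootstrap from Theorem \ref{thm:disk_patch} via innermost-disk arguments on the patch structure of $F_1+F_2$. First I would replace $F_1$ and $F_2$ with normally isotopic copies so that $F_1\cap F_2$ is transverse and has the fewest components possible, putting $nF=F_1+F_2$ into reduced form without altering isotopy types. Two-sidedness of $F$ (and hence of $nF$) propagates to each component of $F_1\cap F_2$, so every intersection curve is two-sided in both $F_i$. The hypotheses of Theorem \ref{thm:disk_patch} are now met, and I conclude that every patch of $F_1+F_2$ is incompressible, $\partial$-incompressible, and in particular no patch is a disk.

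Next I would rule out that $F_i$ is a disk. If $F_1\cap F_2=\emptyset$, then $F_i$ is itself a single patch, so a disk $F_i$ would be a forbidden disk patch. Otherwise, the arcs and circles of $F_1\cap F_2$ partition the disk $F_i$ into smaller regions, and an innermost such region is a disk patch---again contradicting the conclusion of Theorem \ref{thm:disk_patch}.

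To establish incompressibility of $F_i$, I argue by contradiction: given a compressing disk $D$ for $F_i$, I put $D$ in general position with $F_{3-i}$ and seek to remove the intersections. Innermost closed curves of $D\cap F_{3-i}$ bound subdisks of $D$; these are absorbed using irreducibility of $M$ (capping with a subdisk of $F_{3-i}$ and isotoping), while any surviving essential curve on $F_{3-i}$ would cut a patch into a disk patch, violating the conclusion of Theorem \ref{thm:disk_patch}. Outermost arcs of $D\cap F_{3-i}$ are eliminated analogously, invoking $\partial$-irreducibility of $M$. Once $D\cap F_{3-i}=\emptyset$, the essential curve $\partial D\subset F_i$ avoids $F_1\cap F_2$ and therefore lies in a single patch $P$; patch incompressibility forces $\partial D$ to bound a disk in $P\subset F_i$, contradicting that $D$ is a compression. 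The $\partial$-incompressibility of $F_i$ follows by the same template applied to a hypothetical $\partial$-compressing disk, using patch $\partial$-incompressibility for the final contradiction.

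The main difficulty lies in making the innermost-disk reduction robust: each time a subdisk is used to push $D$ across $F_{3-i}$, I must verify that either the resulting isotopy reduces $|F_1\cap F_2|$ (contradicting the reduced form chosen at the outset) or it produces a compressing disk for a patch (contradicting Theorem \ref{thm:disk_patch}). Tracking these cases, together with the two-sidedness of intersection curves---needed so that the local picture around $F_1\cap F_2$ admits a consistent side-choice when the Haken sum is resolved---is where the argument becomes technical.
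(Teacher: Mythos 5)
The paper does not prove this statement; it is cited verbatim as Theorem~6.5 of Jaco--Tollefson \cite{JT}, with no proof supplied. So there is no in-paper argument to compare against. Your outline does capture the general shape of the Jaco--Tollefson argument: pass to reduced form, invoke Theorem~\ref{thm:disk_patch} to kill disk patches, then do innermost-circle/outermost-arc surgery on a hypothetical (boundary-)compressing disk for $F_i$ until its boundary lies in a single patch, and derive a contradiction from patch incompressibility. The innermost-region argument for ``$F_i$ is not a disk'' is also correct and is in fact sketched in the paper immediately before the theorem statement, though only for the ``not a sphere or disk'' conclusion.

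The step that does not follow as written is the assertion that ``two-sidedness of $F$ (and hence of $nF$) propagates to each component of $F_1\cap F_2$, so every intersection curve is two-sided in both $F_i$.'' This is not a propagation that comes for free. Theorem~\ref{thm:disk_patch} explicitly carries the hypothesis that every intersection curve is two-sided in $F_1$ and $F_2$, precisely because it is an extra condition and not a consequence of $F$ being two-sided. Even in an orientable ambient $M$, the summands $F_1$ and $F_2$ may individually be one-sided (non-orientable) surfaces, and a component of $F_1\cap F_2$ may be a one-sided curve in such a summand; the regular exchange defining the Haken sum does not preclude this, and cut-and-paste along a one-sided curve can perfectly well produce an orientable, two-sided $nF$ from non-orientable pieces. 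In Jaco--Tollefson this point requires a separate argument (roughly, that in reduced form a one-sided intersection curve can be eliminated, or that the one-sided case can be reduced to the two-sided one by doubling); without supplying that argument, your application of Theorem~\ref{thm:disk_patch} is not yet licensed, and the whole chain---no disk patches, patch incompressibility, and hence incompressibility of each $F_i$---rests on a premise you have asserted but not established.
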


\begin{thm}\label{thm:essAn}
    Let $M$ be the exterior of a link in a thickened closed orientable surface $\mathcal{S} \times I$.
    Suppose that $g(\mathcal{S}) \neq 0$ and $M$ is irreducible and $\partial$-irreducible.
    Let $\mathcal{T}$ be a triangulation of $M$.
    If there is a vertical essential annulus in $M$, then there is a vertex annulus which is vertical and essential or a vertex annulus which is a classicalization annulus with respect to $\mathcal{T}$.
\end{thm}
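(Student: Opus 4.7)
The plan is to adapt the Haken-sum technique of Jaco--Tollefson that underpins Theorems \ref{thm:disk_patch}, \ref{thm:sum_incomp}, and \ref{thm:essS2} to the annulus setting. Starting from any vertical essential annulus $A$ in $M$, I would isotope it to a normal surface of minimum weight in its isotopy class, and then, by Haken finiteness, pick a positive integer $n$ for which $nA$ admits a reduced Haken-sum expression
\[
    nA \;=\; \sum_{i=1}^{r} k_i V_i
\]
with each $V_i$ a vertex surface and each $k_i \geq 1$.

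Each summand can then be constrained as follows. Theorem \ref{thm:sum_incomp} forces every $V_i$ to be incompressible, $\partial$-incompressible, and not a disk; irreducibility of $M$ combined with Theorem \ref{thm:essS2} and the discussion following Theorem \ref{thm:disk_patch} rules out $V_i$ being a $2$-sphere. The Euler-characteristic identity $0 = \chi(nA) = \sum_i k_i \chi(V_i)$, together with $\chi(V_i) \leq 0$ for any such essential non-disk non-sphere $V_i$, then forces $\chi(V_i) = 0$ for every $i$. Since vertex surfaces are two-sided in the orientable manifold $M$, each $V_i$ is orientable, hence a torus or an annulus. Moreover, since $\partial(nA)$ meets only $\mathcal{S} \times \{0\} \cup \mathcal{S} \times \{1\}$ and normal-arc counts on each face of $\partial M$ add non-negatively in a Haken sum, no $V_i$ can carry a boundary component on $\partial N(\hat{D})$.

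I now look for a vertex annulus of the desired type among the $V_i$. Each annulus summand has its two boundary circles lying in $\mathcal{S} \times \{0\} \cup \mathcal{S} \times \{1\}$, so it is either vertical (one boundary circle on each level) or has both boundary circles on a single level $\mathcal{S} \times \{k\}$. If some $V_i$ is of the first type, then it is automatically vertical and essential by the previous paragraph, and we are done. Otherwise, because $\partial(nA)$ contributes $n$ essential curves to each of $\mathcal{S} \times \{0\}$ and $\mathcal{S} \times \{1\}$, the decomposition must contain at least one essential annulus summand of the second type, and the job reduces to showing that any such summand is a classicalization annulus.

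The main obstacle is this last step. Concretely, I need to verify that an essential annulus $V_i \subset M$ with both boundary circles on $\mathcal{S} \times \{k\}$ must separate $M$ so that $\partial N(\hat{D})$ and $\mathcal{S} \times \{1-k\}$ lie on opposite sides. I would attack this by capping $V_i$ off with the sub-annulus $B \subset \mathcal{S} \times \{k\}$ cobounded by $\partial V_i$ to form a torus $T = V_i \cup B$ in $\mathcal{S} \times I$, and then analyzing $T$: irreducibility and $\partial$-irreducibility of $M$, together with the product structure of $\mathcal{S} \times I$, should force $T$ to bound a region containing all of $\hat{D}$ and none of $\mathcal{S} \times \{1-k\}$, which is exactly the classicalization condition. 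Ruling out non-separating essential annuli, as well as separating annuli that leave $\partial N(\hat{D})$ on the same side as $\mathcal{S} \times \{1-k\}$, is the delicate topological step of the argument.
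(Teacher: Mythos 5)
Your proposal follows the paper's proof almost step for step: pass to a least-weight normal representative of $A$, decompose $nA$ as a Haken sum of vertex surfaces, invoke Theorems~\ref{thm:sum_incomp} and~\ref{thm:disk_patch} to rule out disk and sphere summands, and use $\chi(nA)=0$ together with $\chi(V_i)\le 0$ to conclude every summand is an essential torus or annulus whose boundary avoids $\partial N(\hat{D})$. The observation that at least one summand is an annulus (since $A$ has boundary while tori are closed), together with the case split between a vertical summand and a summand with both boundary circles on one level, is exactly the structure of the paper's argument.

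The gap is precisely where you say it is: you do not actually prove that a summand $V_i$ with both boundary circles on a single level $\mathcal{S}\times\{k\}$ separates $\partial N(\hat{D})$ from $\mathcal{S}\times\{1-k\}$. The torus-capping plan presupposes that the two boundary curves of $V_i$ cobound an annulus $B\subset\mathcal{S}\times\{k\}$, i.e.\ that they are isotopic in $\mathcal{S}\times\{k\}$, and even granting that, $T=V_i\cup B$ divides $\mathcal{S}\times I$ into two regions and you must show $\hat{D}$ lies on the side away from $\mathcal{S}\times\{1-k\}$; neither point follows just from $V_i$ being incompressible and $\partial$-incompressible. Also note that the phrase ``any such summand is a classicalization annulus'' overreaches: the theorem only requires \emph{some} such summand to qualify, and proving the universal statement may be harder or false. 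So as submitted the argument is incomplete at the final step — although it is fair to observe that the paper's own proof ends by asserting this same disjunction without spelling out the argument, so your instinct that this is the one delicate topological point is well placed.
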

\begin{proof}
    Suppose that $A$ is a vertical essential annulus in $M$.
    Since $M$ is irreducible and $\partial$-irreducible and $A$ is incompressible in $M$, if $A$ is not a normal surface with respect to $\mathcal{T}$, then there is a normal surface which is isotopic to $A$, which we again denote by $A$ (See \cite{MR2341532} for details).
    Let us assume that $A$ is least weight in its isotopy class.
    
    If $A$ is not a vertex surface, then there are an integer $n>0$ and vertex surfaces $F_1, \dots , F_k$ such that $nA = F_1 + F_2 + \dots + F_k$.
    As we have already observed, each $F_i$ is neither a disk nor a $2$-sphere, and hence $\chi(F_i) \leq 0$ for each $F_i$, where $\chi(F_i)$ is the Euler charactaristic of $F_i$.
    On the other hand, by computing the Euler charactaristic of $nA$ and $F_1 + \dots + F_k$, we have $\chi(nA) = \sum_{i=1}^k \chi(F_i) = 0$.
    Thus, $\chi(F_i) = 0$ for each $i = 1, \dots, k$, and hence each $F_i$ is a torus or an annulus.
    Since $A$ is not closed, there is at least one vertex surface $F_j$ which is not closed, i.e., $F_j$ is an annulus.
    By Theorem \ref{thm:sum_incomp}, $F_j$ is incompressible and $\partial$-incompressible.
    Because $A$ does not intersect $\partial \mathcal{T}_{N(\hat{D})}$, $F_j \cap \partial \mathcal{T}_{N(\hat{D})} = \emptyset$, and so $\partial F_j \subset \mathcal{S} \times \{0\} \cup \mathcal{S} \times \{1\}$.
    Therefore, $F_j$ is a vertex annulus which is vertical and essential or a vertex annulus which is a classicalization annulus with respect to $\mathcal{T}$.
\end{proof}

\subsection{The crushing procedure}
Let $M$ be a compact $3$-manifold with an $n$-tetrahedra triangulation $\mathcal{T}$ and $F$ be a vertex surface in $M$ with respect to $\mathcal{T}$.
Since $F$ may contains $2^{7n-1}$ normal disks, the operation cutting $\mathcal{T}$ along $F$ takes exponential time, 
and so splitting and destabilization take exponential time and add exponential cells if we run these operations simply.
In Section 5, we show that there are algorithms to run splitting and destabilization in polynomial time.
In order to reduce the running time of splitting and destabilization, we use the {\it crushing procedure} along $F$ instead of cutting $\mathcal{T}$ along $F$.
The crushing procedure is defined by Jaco and Rubinstein (\cite{JR}), runs in polynomial time of $n$, and does not increase the number of tetrahedra.

\begin{defi}[Jaco and Rubinstein, \cite{JR}] \label{def:crush}
    Suppose that $\mathcal{T}$ is a triangulation of a compact 3-manifold $M$ and $F$ is a connected normal surface in $M$ with respect to $\mathcal{T}$.
    The following operation is called the {\it crushing procedure} along $F$.
    \begin{enumerate}
        \item Cut $M$ open along $F$.
                $M'$ denotes the resulting $3$-manifold, and we obtain the new cell-decomposition $\mathcal{C}'$ of $M'$ from $\mathcal{T}$.
                If $F$ is two-sided in $M$, we obtain two copies of $F$ in $\partial M'$,
                otherwise, $F$ is one-sided, we obtain the double cover of $F$ in $\partial M'$.
        \item Shrink each copy of $F$ or the double cover of $F$ to a point.
                Let $M$ denote the resulting topological space and $\mathcal{C}$ denote the cell decomposition obtained from $\mathcal{C}'$.
                Note that $M$ may be not a $3$-manifold.
                Now, each cell of $\mathcal{C}$ is one of the following cells:
        \begin{itemize}
            \item 3-sided foot ball:\\
                    a cell obtained from a region between two parallel triangular normal disks or a vertex of the tetrahedron and a triangular normal disk.
            \item 4-sided foot ball:\\
                    a cell obtained from an region between two parallel quadrilateral normal disks.
            \item triangular purse:\\
                    a cell obtained from an region between a triangular normal disk and a  quadrilateral normal disk.
            \item tetrahedron:\\
                    a cell obtained from a tetrahedron which has no quadrilateral normal surfaces.
        \end{itemize}

        \item Flatten each football to an edge and each triangular purse to a face as shown in Figure \ref{fig:flat}.
        \item Remove edges and faces that do not belong to any tetrahedron and break apart tetrahedra which are connected only by vertices or edges.
    \end{enumerate}
\end{defi}

\begin{figure}[htbp]
    \centering
    \tikzset{every picture/.style={line width=0.75pt}} 

\begin{tikzpicture}[x=0.75pt,y=0.75pt,yscale=-1,xscale=1]

\draw    (54.7,109.11) .. controls (51.8,58.49) and (80.77,16.6) .. (119.88,20.09) ;
\draw    (54.7,109.11) .. controls (115.54,116.09) and (124.23,56.74) .. (119.88,20.09) ;
\draw    (54.7,109.11) .. controls (98.92,86.61) and (116,46.46) .. (119.88,20.09) ;
\draw [color={rgb, 255:red, 155; green, 155; blue, 155 }  ,draw opacity=1 ]   (70.73,53.3) .. controls (70.73,60.98) and (75,72.09) .. (86.1,72.94) ;
\draw [color={rgb, 255:red, 155; green, 155; blue, 155 }  ,draw opacity=1 ]   (99.77,78.07) .. controls (101.48,81.48) and (109.17,78.92) .. (111.73,72.94) ;
\draw [color={rgb, 255:red, 155; green, 155; blue, 155 }  ,draw opacity=1 ] [dash pattern={on 0.84pt off 2.51pt}]  (80.98,50.73) .. controls (93.79,52.44) and (104.9,57.57) .. (108.31,68.67) ;
\draw    (204.37,109.11) .. controls (201.47,58.49) and (230.44,16.6) .. (269.55,20.09) ;
\draw    (204.37,109.11) .. controls (265.2,116.09) and (273.89,56.74) .. (269.55,20.09) ;
\draw    (204.37,109.11) .. controls (248.58,86.61) and (265.67,46.46) .. (269.55,20.09) ;
\draw [color={rgb, 255:red, 155; green, 155; blue, 155 }  ,draw opacity=1 ]   (220.4,53.3) .. controls (220.4,60.98) and (224.67,72.09) .. (235.77,72.94) ;
\draw [color={rgb, 255:red, 155; green, 155; blue, 155 }  ,draw opacity=1 ]   (249.44,78.07) .. controls (251.15,81.48) and (258.83,78.92) .. (261.4,72.94) ;
\draw [color={rgb, 255:red, 155; green, 155; blue, 155 }  ,draw opacity=1 ] [dash pattern={on 0.84pt off 2.51pt}]  (223.81,49.88) .. controls (228.94,48.17) and (233.21,49.88) .. (237.48,52.44) ;
\draw [color={rgb, 255:red, 0; green, 0; blue, 0 }  ,draw opacity=1 ] [dash pattern={on 0.84pt off 2.51pt}]  (204.37,109.11) .. controls (222.11,78.92) and (246.02,37.07) .. (269.55,20.09) ;
\draw [color={rgb, 255:red, 155; green, 155; blue, 155 }  ,draw opacity=1 ] [dash pattern={on 0.84pt off 2.51pt}]  (246.02,55.86) .. controls (252,59.28) and (257.98,64.4) .. (258.83,67.82) ;
\draw    (325,110) .. controls (328.5,66) and (354.5,31) .. (383.5,22) ;
\draw    (325,110) -- (433.5,110) ;
\draw  [dash pattern={on 0.84pt off 2.51pt}]  (325,110) .. controls (362.5,80) and (375.5,51) .. (383.5,22) ;
\draw    (433.5,110) .. controls (413.5,92) and (391.5,57) .. (383.5,22) ;
\draw    (433.5,110) .. controls (435.5,86) and (414.5,30) .. (383.5,22) ;
\draw [color={rgb, 255:red, 155; green, 155; blue, 155 }  ,draw opacity=1 ]   (367.5,79) .. controls (374.5,86) and (386.5,88) .. (396.5,83) ;
\draw [color={rgb, 255:red, 155; green, 155; blue, 155 }  ,draw opacity=1 ] [dash pattern={on 0.84pt off 2.51pt}]  (372.5,69) .. controls (382.5,65) and (392.5,68) .. (397.5,75) ;
\draw [color={rgb, 255:red, 155; green, 155; blue, 155 }  ,draw opacity=1 ]   (220.4,53.3) .. controls (220.4,60.98) and (224.67,72.09) .. (235.77,72.94) ;

\draw (85.87,127.5) node   [align=left] {3-sided football};
\draw (236.87,127.5) node   [align=left] {4-sided football};
\draw (380.87,127.5) node   [align=left] {triangular purse};

\end{tikzpicture}
    \caption{Cells in the cell-decomposition}
    \label{fig:cell}
\end{figure}
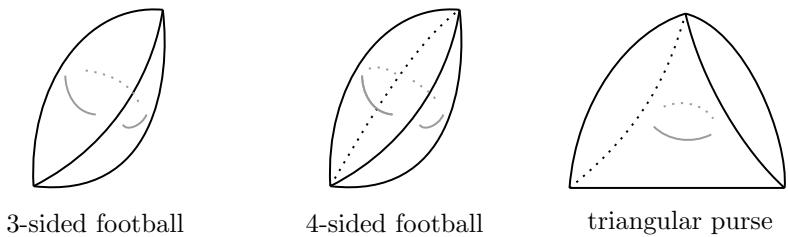

\begin{figure}[htbp]
    \centering
    \tikzset{every picture/.style={line width=0.75pt}} 

\begin{tikzpicture}[x=0.75pt,y=0.75pt,yscale=-0.7,xscale=0.7]

\draw    (10.7,110.11) .. controls (7.8,59.49) and (36.77,17.6) .. (75.88,21.09) ;
\draw    (10.7,110.11) .. controls (71.54,117.09) and (80.23,57.74) .. (75.88,21.09) ;
\draw    (10.7,110.11) .. controls (54.92,87.61) and (72,47.46) .. (75.88,21.09) ;
\draw [color={rgb, 255:red, 155; green, 155; blue, 155 }  ,draw opacity=1 ]   (26.73,54.3) .. controls (26.73,61.98) and (31,73.09) .. (42.1,73.94) ;
\draw [color={rgb, 255:red, 155; green, 155; blue, 155 }  ,draw opacity=1 ]   (55.77,79.07) .. controls (57.48,82.48) and (65.17,79.92) .. (67.73,73.94) ;
\draw [color={rgb, 255:red, 155; green, 155; blue, 155 }  ,draw opacity=1 ] [dash pattern={on 0.84pt off 2.51pt}]  (36.98,51.73) .. controls (49.79,53.44) and (60.9,58.57) .. (64.31,69.67) ;
\draw    (197.37,112.11) .. controls (194.47,61.49) and (223.44,19.6) .. (262.55,23.09) ;
\draw    (197.37,112.11) .. controls (258.2,119.09) and (266.89,59.74) .. (262.55,23.09) ;
\draw    (197.37,112.11) .. controls (241.58,89.61) and (258.67,49.46) .. (262.55,23.09) ;
\draw [color={rgb, 255:red, 155; green, 155; blue, 155 }  ,draw opacity=1 ]   (213.4,56.3) .. controls (213.4,63.98) and (217.67,75.09) .. (228.77,75.94) ;
\draw [color={rgb, 255:red, 155; green, 155; blue, 155 }  ,draw opacity=1 ]   (242.44,81.07) .. controls (244.15,84.48) and (251.83,81.92) .. (254.4,75.94) ;
\draw [color={rgb, 255:red, 155; green, 155; blue, 155 }  ,draw opacity=1 ] [dash pattern={on 0.84pt off 2.51pt}]  (216.81,52.88) .. controls (221.94,51.17) and (226.21,52.88) .. (230.48,55.44) ;
\draw [color={rgb, 255:red, 0; green, 0; blue, 0 }  ,draw opacity=1 ] [dash pattern={on 0.84pt off 2.51pt}]  (197.37,112.11) .. controls (215.11,81.92) and (239.02,40.07) .. (262.55,23.09) ;
\draw [color={rgb, 255:red, 155; green, 155; blue, 155 }  ,draw opacity=1 ] [dash pattern={on 0.84pt off 2.51pt}]  (239.02,58.86) .. controls (245,62.28) and (250.98,67.4) .. (251.83,70.82) ;
\draw    (374,112) .. controls (377.5,68) and (403.5,33) .. (432.5,24) ;
\draw    (374,112) -- (482.5,112) ;
\draw  [dash pattern={on 0.84pt off 2.51pt}]  (374,112) .. controls (411.5,82) and (424.5,53) .. (432.5,24) ;
\draw    (482.5,112) .. controls (462.5,94) and (440.5,59) .. (432.5,24) ;
\draw    (482.5,112) .. controls (484.5,88) and (463.5,32) .. (432.5,24) ;
\draw [color={rgb, 255:red, 155; green, 155; blue, 155 }  ,draw opacity=1 ]   (416.5,81) .. controls (423.5,88) and (435.5,90) .. (445.5,85) ;
\draw [color={rgb, 255:red, 155; green, 155; blue, 155 }  ,draw opacity=1 ] [dash pattern={on 0.84pt off 2.51pt}]  (421.5,71) .. controls (431.5,67) and (441.5,70) .. (446.5,77) ;
\draw [color={rgb, 255:red, 155; green, 155; blue, 155 }  ,draw opacity=1 ]   (213.4,56.3) .. controls (213.4,63.98) and (217.67,75.09) .. (228.77,75.94) ;
\draw    (81,61) -- (138.5,61) ;
\draw [shift={(140.5,61)}, rotate = 180] [color={rgb, 255:red, 0; green, 0; blue, 0 }  ][line width=0.75]    (10.93,-3.29) .. controls (6.95,-1.4) and (3.31,-0.3) .. (0,0) .. controls (3.31,0.3) and (6.95,1.4) .. (10.93,3.29)   ;
\draw    (117.7,109.11) -- (182.88,20.09) ;
\draw    (269,62) -- (326.5,62) ;
\draw [shift={(328.5,62)}, rotate = 180] [color={rgb, 255:red, 0; green, 0; blue, 0 }  ][line width=0.75]    (10.93,-3.29) .. controls (6.95,-1.4) and (3.31,-0.3) .. (0,0) .. controls (3.31,0.3) and (6.95,1.4) .. (10.93,3.29)   ;
\draw    (305.7,110.11) -- (370.88,21.09) ;
\draw    (483,60) -- (540.5,60) ;
\draw [shift={(542.5,60)}, rotate = 180] [color={rgb, 255:red, 0; green, 0; blue, 0 }  ][line width=0.75]    (10.93,-3.29) .. controls (6.95,-1.4) and (3.31,-0.3) .. (0,0) .. controls (3.31,0.3) and (6.95,1.4) .. (10.93,3.29)   ;
\draw  [fill={rgb, 255:red, 204; green, 204; blue, 204 }  ,fill opacity=1 ] (582.5,22) -- (632.5,110) -- (524,110) -- cycle ;

\end{tikzpicture}
    \caption{Flattening footballs and triangular purse}
    \label{fig:flat}
\end{figure}
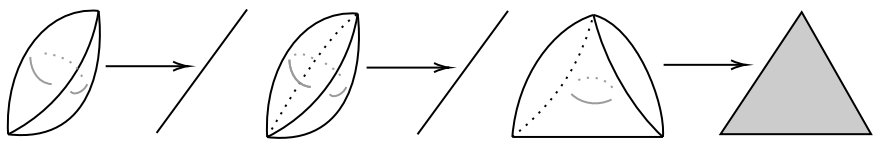

Furthermore, we call the operations up to the second step of Definition \ref{def:crush} the \textit {non-destructive crushing procedure}.

The crushing procedure may change the underlying 3-manifold of a triangulation.
Furthermore, the underlying space after the crushing procedure may not be a 3-manifold.
Jaco and Rubinstein described the effect of the crushing procedure only when the underlying 3-manifold is orientable.
Then, Burton described the effect in general case by using a sequential combination of the atomic moves.
\begin{defi}[Burton \cite{B_crush}]
    Let $\mathcal{T}$ be a triangulation of a compact $3$-manifold $M$ and $S$ be a normal surface in $M$ with respect to $\mathcal{T}$.  
    Let $\mathcal{C}$ denote the cell-decomposition obtained by the non-destructive crushing procedure using $S$.
    We call the following three moves on $\mathcal{C}$ the {\it atomic moves}:
    \begin{itemize}
        \item flatting a triangular pillow to a triangle as shown in Figure \ref{fig:atomic}\subref{fig:atomic1},
        \item flatting a bigonal pillow to a bigon as shown in Figure \ref{fig:atomic}\subref{fig:atomic2},
        \item flatting a bigon to an edge as shown in Figure \ref{fig:atomic}\subref{fig:atomic3}.
    \end{itemize}
    After each atomic move, we remove any 2-faces, edges, or vertices which does not belong to a 3-cell, and we break apart 3-cells which are connected only by vertices or edges.
\end{defi}

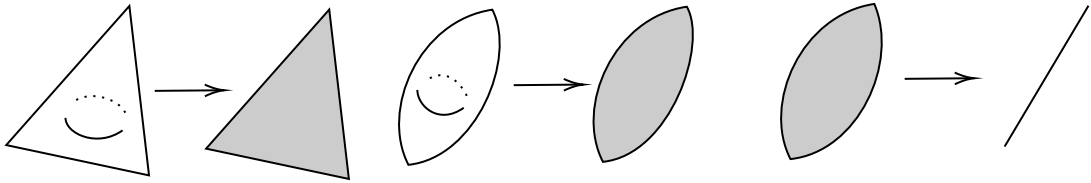
\begin{figure}[htbp]
    \begin{minipage}{0.30\textwidth}
        \centering
        \tikzset{every picture/.style={line width=0.75pt}} 

\begin{tikzpicture}[x=0.75pt,y=0.75pt,yscale=-0.9,xscale=0.9]

\draw   (70.8,1.4) -- (81.8,96.4) -- (1.8,79.4) -- cycle ;
\draw  [fill={rgb, 255:red, 204; green, 204; blue, 204 }  ,fill opacity=1 ] (182.8,3.4) -- (193.8,98.4) -- (113.8,81.4) -- cycle ;
\draw    (35,64.14) .. controls (35,73.14) and (53,81.14) .. (67,71.14) ;
\draw  [dash pattern={on 0.84pt off 2.51pt}]  (41,54.14) .. controls (50,49.14) and (65,53.14) .. (69,62.14) ;
\draw    (85,49) -- (122,48.68) ;
\draw [shift={(124,48.67)}, rotate = 539.51] [color={rgb, 255:red, 0; green, 0; blue, 0 }  ][line width=0.75]    (10.93,-3.29) .. controls (6.95,-1.4) and (3.31,-0.3) .. (0,0) .. controls (3.31,0.3) and (6.95,1.4) .. (10.93,3.29)   ;

\end{tikzpicture}
        \subcaption{Flatting a triangular pillow to a triangle}
        \label{fig:atomic1}
    \end{minipage}
    \begin{minipage}{0.04\textwidth}
    \end{minipage}
    \begin{minipage}{0.31\textwidth}
        \centering
        \tikzset{every picture/.style={line width=0.75pt}} 

\begin{tikzpicture}[x=0.75pt,y=0.75pt,yscale=-0.9,xscale=0.9]

\draw   (69,7.67) .. controls (83,34.67) and (60,90.67) .. (22,94.67) .. controls (6,63.67) and (28,13.67) .. (69,7.67) -- cycle ;
\draw  [fill={rgb, 255:red, 204; green, 204; blue, 204 }  ,fill opacity=1 ] (178,6) .. controls (190,27) and (169,89) .. (131,93) .. controls (115,62) and (137,12) .. (178,6) -- cycle ;
\draw    (27,52.67) .. controls (27,61.67) and (39,72.67) .. (53,62.67) ;
\draw  [dash pattern={on 0.84pt off 2.51pt}]  (34,46.14) .. controls (43,41.14) and (51,47.67) .. (55,56.67) ;
\draw    (81,50) -- (118,49.68) ;
\draw [shift={(120,49.67)}, rotate = 539.51] [color={rgb, 255:red, 0; green, 0; blue, 0 }  ][line width=0.75]    (10.93,-3.29) .. controls (6.95,-1.4) and (3.31,-0.3) .. (0,0) .. controls (3.31,0.3) and (6.95,1.4) .. (10.93,3.29)   ;

\end{tikzpicture}
        \vspace{6mm}
        \subcaption{Flatting a bigonal pillow to a bigon}
        \label{fig:atomic2}
    \end{minipage}
    \begin{minipage}{0.04\textwidth}
    \end{minipage}
    \begin{minipage}{0.31\textwidth}
        \centering
        \vspace{-4mm}
        \tikzset{every picture/.style={line width=0.75pt}} 

\begin{tikzpicture}[x=0.75pt,y=0.75pt,yscale=-0.9,xscale=0.9]

\draw  [fill={rgb, 255:red, 204; green, 204; blue, 204 }  ,fill opacity=1 ] (64,8) .. controls (78,41) and (55,91) .. (17,95) .. controls (1,64) and (23,14) .. (64,8) -- cycle ;
\draw    (81,50) -- (118,49.68) ;
\draw [shift={(120,49.67)}, rotate = 539.51] [color={rgb, 255:red, 0; green, 0; blue, 0 }  ][line width=0.75]    (10.93,-3.29) .. controls (6.95,-1.4) and (3.31,-0.3) .. (0,0) .. controls (3.31,0.3) and (6.95,1.4) .. (10.93,3.29)   ;
\draw    (137,88) -- (184,9) ;

\end{tikzpicture}
        \vspace{7mm}
        \subcaption{Flatting a bigon to an edge}
        \label{fig:atomic3}
    \end{minipage}
    \caption{The atomic moves}
    \label{fig:atomic}
\end{figure}

\begin{lem}[Burton \cite{B_crush}]\label{lem:crushing_lemma}
    Let $\mathcal{T}$ be a triangulation of a compact $3$-manifold $M$ and $S$ be a normal surface in $M$ with respect to $\mathcal{T}$. 
    Let $\mathcal{C}_0$ denote the cell-decomposition obtained from $T$ by the non-destructive crushing procedure using $S$ and 
    $\mathcal{T}'$ denote the triangulation obtained from $\mathcal{T}$ by the crushing procedure using $S$.
    Then, there is a sequence of cell-decompositions $\mathcal{C}_0 \to \mathcal{C}_1 \to \cdots \to \mathcal{C}_n = \mathcal{T}'$, and $\mathcal{C}_{i+1}$ is obtained from $\mathcal{C}_i$ by one of the atomic moves.
\end{lem}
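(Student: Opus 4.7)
The plan is to prove the lemma by a case analysis on the non-tetrahedral cells of $\mathcal{C}_0$, since the crushing procedure only modifies $3$-sided footballs, $4$-sided footballs, and triangular purses, while leaving ordinary tetrahedra alone. For each such cell type, I would exhibit an explicit short sequence of atomic moves whose net effect is exactly the flattening prescribed in step 3 of Definition \ref{def:crush}, and then concatenate these local sequences (in any order) to produce the desired global sequence $\mathcal{C}_0 \to \mathcal{C}_1 \to \cdots \to \mathcal{C}_n = \mathcal{T}'$.

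In more detail, for a triangular purse, whose boundary consists of two triangular faces and one bigonal face sharing a common edge, I would first perform a \emph{bigon to edge} move on the bigonal face; the cell then becomes a triangular pillow, and a single \emph{triangular pillow to triangle} move completes the flattening to a triangle, matching Figure \ref{fig:flat}. For a $3$-sided football, whose boundary has three bigonal faces meeting at the two cone vertices, I would apply a \emph{bigon to edge} move to one of the three bigons; the resulting cell is a bigonal pillow, which I then flatten to a bigon by the \emph{bigonal pillow to bigon} move and finally to an edge by the \emph{bigon to edge} move. For a $4$-sided football the same idea works: two \emph{bigon to edge} moves applied to opposite bigonal faces reduce the cell to a bigonal pillow, after which the same two-step sequence as in the $3$-sided case collapses it to an edge. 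In every case the final configuration (edge or triangle) and the incidence relations with neighboring cells agree with those produced by the crushing procedure, and each intermediate object is a legitimate cell complex once one applies the post-processing (removing orphan faces/edges/vertices and separating cells joined only along lower-dimensional strata) built into the definition of an atomic move.

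The main obstacle will be verifying that these local sequences can indeed be interleaved into a single well-defined global sequence. Two issues must be addressed. First, because a bigonal face of a football might be identified with a bigonal face of a different cell in $\mathcal{C}_0$ (via the gluing maps inherited from $\mathcal{T}$), applying a \emph{bigon to edge} move locally has a nonlocal effect: it simultaneously modifies the neighboring cell's boundary description. I would handle this by checking, case by case, that after the local move the neighbor is still one of the allowed cell types (possibly of a simpler kind) and that the planned atomic sequence for the neighbor remains valid. Second, the post-processing after each atomic move can trigger cascading simplifications, so I would argue by induction on the number of non-tetrahedral cells remaining, using the fact that each atomic move strictly decreases a suitable complexity (e.g.\ the lexicographic pair consisting of the number of footballs and purses, and then the number of bigonal faces) to guarantee termination at $\mathcal{T}'$.
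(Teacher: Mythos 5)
The paper does not prove this lemma at all: it is stated with the attribution to Burton \cite{B_crush} and imported as a black box, so there is no ``paper's proof'' to compare against other than Burton's original argument. Your outline does capture the general strategy of that argument --- exhibit a short sequence of atomic moves whose composite realizes the flattening of each football and purse, then argue that these local sequences can be stitched together --- but as written it is a plan rather than a proof, and it has concrete gaps in both the local and the global parts.

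Locally, your stated face structures are not justified and at least one looks wrong. A triangular purse is the region of a tetrahedron between a triangular normal disk $t$ and a quadrilateral normal disk $q$; $t$ meets three tetrahedron edges, $q$ meets four, and they share exactly two. Tracing the pieces of the four original $2$-faces that lie between the $t$-arc and the $q$-arc (and remembering that after the non-destructive crushing, $t$ and $q$ are each collapsed to a single cone point) one gets more than ``two triangular faces and one bigonal face'': the lateral boundary includes bigons coming from the two $2$-faces shared by $t$ and $q$ as well as the pieces near the tetrahedron vertex that lies between them. So the claim that one bigon-to-edge move turns a purse into a triangular pillow is unverified and likely incorrect; the actual sequence is longer and has to account for which of those lateral faces are bigons. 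A similar caveat applies to the $4$-sided football: flattening one bigon identifies its two edges, which changes the combinatorics of the two \emph{adjacent} bigons, and it is not immediate that what remains is ``a $3$-sided football'' rather than something whose adjacent bigons have become identified or degenerate. Each of these sequences needs an explicit check of the intermediate cell, including the possibility that two of the bigon's edges or its two vertices are already identified (in which case the move has a different effect, which is exactly the subtlety Theorem \ref{thm:B_crush} and Lemma \ref{lem:crush_annulus} later have to grapple with).

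Globally, you correctly name the two obstructions --- that a bigon is shared between two cells, so a local move has a nonlocal effect, and that the post-move cleanup can cascade --- and then defer both of them. But those two points \emph{are} the proof: the whole content of the lemma is that the cleanup after each atomic move always leaves you in a state where another atomic move is available, until the crushed triangulation is reached, with no illegal intermediate cell types. Saying ``I would check case by case that the neighbor remains an allowed cell type'' and ``I would induct on a complexity that each move decreases'' names the structure of the argument without supplying it; in particular your proposed complexity (number of footballs and purses, then number of bigonal faces) is not obviously monotone, because the cleanup step can \emph{split} a cell into several pieces after an atomic move, and you would need to verify that this never increases your measure. Until those verifications are carried out, the argument does not establish the lemma.
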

Burton proved the following theorem from Lemma \ref{lem:crushing_lemma}.
\begin{thm}[Burton \cite{B_crush}]\label{thm:B_crush}
    Suppose that $\mathcal{T}$ is a triangulation of a compact 3-manifold $M$ and $S$ is a normal 2-sphere or a normal disk in $M$ with respect to $\mathcal{T}$.
    We denote the triangulation which is obtained by the crushing procedure using $S$ by $\mathcal{T}'$.
    Let $\mathcal{C}_0$ be the cell-decomposition obtained by the non-destructive crushing procedure and $\mathcal{C}_0 \to \mathcal{C}_1 \to \cdots \to \mathcal{C}_n = \mathcal{T}'$ denotes a sequence of the cell-decompositions, where $\mathcal{C}_{i+1}$ is obtained from $\mathcal{C}_i$ by an atomic move.
    Suppose that $\mathcal{C}_0$ contains no two-sided projective planes.
    We denote the underlying $3$-manifolds of $\mathcal{C}_i$ by $M_i$.
    Then, $M_{i+1}$ is obtained from $M_{i}$ by one of the following operations:
    \begin{itemize}
        \item cutting open along a properly embedded disk $S$ in $M_i$,
        \item cutting open along an embedded 2-sphere $S \subset M_i$ and filling the resulting boundary spheres with 3-balls,
        \item removing a 3-ball, a 3-sphere, a lens space $L(3,1)$, a projective space $\mathbb{R}P^3$, $\mathbb{S}^2 \times \mathbb{S}^1$ or a twisted $\mathbb{S}^1$ bundle $\mathbb{S}^2 \tilde{\times} \mathbb{S}^1$ component,
        \item filling a boundary sphere in $\partial M_i$ with a 3-ball.
    \end{itemize}
\end{thm}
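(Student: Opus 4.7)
The plan is to exploit Lemma \ref{lem:crushing_lemma}, which writes the crushing procedure as a composition of atomic moves, and to prove the theorem by analyzing the effect of a single atomic move on the underlying 3-manifold. Once this local analysis is in hand, applying it to each step of the sequence $\mathcal{C}_0 \to \mathcal{C}_1 \to \cdots \to \mathcal{C}_n = \mathcal{T}'$ and composing the resulting topological changes yields the advertised list of operations relating $M_{i+1}$ to $M_i$.

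First I would fix a single atomic move $\mathcal{C}_i \to \mathcal{C}_{i+1}$, let $P$ be the cell being flattened (a triangular pillow, a bigonal pillow, or a bigon), and let $f_1,f_2$ denote the two faces of $P$ that get identified. The effect on $M_i$ splits by how $f_1$ and $f_2$ already sit inside $M_i$. If $P$ is embedded as a genuine $3$-ball, flattening $P$ is a homeomorphism and nothing happens. If $f_1$ and $f_2$ are already identified in $\partial P$'s image in $M_i$, then together they form an embedded closed surface in $M_i$, and flattening $P$ corresponds to surgery along that surface: for a sphere this is cutting and filling with $3$-balls, and for a disk with boundary on $\partial M_i$ this is a cut along the disk. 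When $f_1,f_2$ meet only along part of their boundaries, the quotient produced by $P$ is a small closed $3$-manifold component that is split off from the rest and removed, and when $f_1$ or $f_2$ already lies on $\partial M_i$, the flattening fills a boundary sphere with a $3$-ball.

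The classification of the closed components that can arise in the third case is the heart of the argument. By enumerating the admissible identifications of two triangles (for the triangular pillow) or two bigons (for the bigonal pillow) up to homeomorphism and running through the resulting face-pairings, one produces exactly the polyhedra whose identification spaces are $\mathbb{S}^3$, the $3$-ball, $L(3,1)$ (via a one-third twist on the triangular pillow), $\mathbb{R}P^3$ (via the antipodal identification on the bigonal pillow), $\mathbb{S}^2\times\mathbb{S}^1$, or $\mathbb{S}^2\tilde\times\mathbb{S}^1$, matching the list in the theorem. The hypothesis that $\mathcal{C}_0$ contains no two-sided projective planes is needed precisely to exclude an identification creating a two-sided $\mathbb{R}P^2$, whose regular neighbourhood is not one of the listed pieces and whose presence would take us outside the category of $3$-manifolds; I would verify that this hypothesis is preserved by each atomic move, so the local analysis can be iterated along the sequence.

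The main obstacle will be carrying out the case analysis exhaustively and correctly, in particular for the triangular pillow, where several cyclic and reflective gluings must be examined and each resulting quotient identified explicitly; distinguishing $L(3,1)$ from other potential small quotients, and keeping track of the non-orientable $\mathbb{S}^2\tilde\times\mathbb{S}^1$ case, is the most delicate step. A parallel book-keeping challenge is separating configurations of $P$ near $\partial M_i$ that produce a sphere-fill operation from those that produce the removal of an entire $3$-ball component, since these arise from superficially similar but topologically distinct positions of $f_1,f_2$ relative to $\partial M_i$.
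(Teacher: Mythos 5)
The paper does not prove this theorem; it is stated as Burton's result \cite{B_crush} and is used as a black box (the text says ``Burton proved the following theorem from Lemma~\ref{lem:crushing_lemma}'' and moves on). So there is no in-paper proof against which to compare your sketch, though the paper does prove a parallel result, Lemma~\ref{lem:crush_annulus}, by exactly the template you propose: reduce to a single atomic move via Lemma~\ref{lem:crushing_lemma}, then case-split on how the flattened cell sits inside $M_i$.

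The substantive issue with your sketch is that the case-split conflates the two pillow moves with the bigon move, and the three behave differently. For a triangular or bigonal pillow $P$, the two faces $f_1,f_2$ by definition share their entire boundary, so your case ``$f_1,f_2$ meet only along part of their boundaries'' never arises; the three genuine cases are that $f_1,f_2$ are glued to cells outside $P$ (flattening is a homeomorphism), both lie on $\partial M_i$ (a $3$-ball component is removed), or $f_1$ is glued to $f_2$ (the pillow itself is a closed component, and the face-pairing enumeration gives $\mathbb{S}^3$, $L(3,1)$, $\mathbb{R}P^3$, $\mathbb{S}^2\times\mathbb{S}^1$, or $\mathbb{S}^2\tilde\times\mathbb{S}^1$). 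Your ``flattening $P$ corresponds to surgery along that surface'' is not what happens for pillows; the cut-along-a-disk, cut-along-a-sphere-and-fill, and fill-a-boundary-sphere operations all come from the bigon-to-edge move, and there the dichotomies are whether the bigon's two edges are identified and whether the bigon lies in the interior or on $\partial M_i$ (compare the long bigon case-tree in the proof of Lemma~\ref{lem:crush_annulus}). Your proof would need to carry out two genuinely separate analyses rather than one unified one.

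The role of the no-two-sided-$\mathbb{R}P^2$ hypothesis is also slightly misplaced in your account: the worry is not that an atomic move might ``create'' a two-sided $\mathbb{R}P^2$, but that a bigon whose edges are identified might close up into an $\mathbb{R}P^2$ rather than an $\mathbb{S}^2$, in which case flattening it to an edge produces the cone on $\mathbb{R}P^2$ and leaves the category of $3$-manifolds. The hypothesis is there to guarantee that this never happens at any stage of the sequence.
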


Next, we consider that how the crushing procedure affects the number of tetrahedra.
The crushing procedure removes tetrahedra containing at least one quadrilateral normal disk 
and leaves tetrahedra without any quadrilateral normal disks.
Therefore, the crushing procedure does not increase the number of tetrahedra.

The crushing procedure includes the operation to cut open a triangulation along a normal surface.
Thus, it takes exponential time to execute simply if the normal surface contains exponential normal disks.
However, it is known that there is an algorithm runs the crushing procedure in linear time.
\begin{thm}[Burton \cite{B_unknot}] \label{thm:crush}
    Let $\mathcal{T}$ be an $n$-tetrahedra triangulation of a 3-manifold $M$ and $F$ be a normal surface in $M$ with respect to $\mathcal{T}$.
    We can run the crushing procedure using $F$ in time $\mathcal{O}(n)$, where $n$ is the number of tetrahedra in $\mathcal{T}$.
\end{thm}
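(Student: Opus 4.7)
The plan is to exploit the fact that, although the vector $\mathbf{x}$ representing $F$ can have entries as large as $2^{\mathcal{O}(n)}$, the crushing procedure never ``sees'' the individual normal disks: the combinatorial outcome inside each tetrahedron of $\mathcal{T}$ is determined only by a constant-size fingerprint, namely the (at most one, by the quadrilateral condition) quadrilateral type present in that tetrahedron together with the set of triangular types present. Once this is granted, the difficulty of representing $F$ with potentially exponential coordinates disappears: the entire algorithm operates on $\mathcal{O}(n)$ bits of data.

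First I would read $\mathbf{x}$ once, extracting in $\mathcal{O}(n)$ time the fingerprint of each of the $n$ tetrahedra by checking whether each of its seven coordinates is zero. Next I would classify each tetrahedron $\Delta$ as \emph{surviving} (no quadrilateral disk) or \emph{consumed} (one quadrilateral disk). Since the crushed triangulation $\mathcal{T}'$ contains at most one tetrahedron per surviving input tetrahedron, we automatically recover $|\mathcal{T}'| \leq |\mathcal{T}|$, and the classification is $\mathcal{O}(n)$.

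Then I would assemble the face gluings of $\mathcal{T}'$. For each face $f$ of a surviving tetrahedron I would look at the neighbour across $f$ in $\mathcal{T}$: if it is also surviving, the gluing carries over up to a relabeling induced by the collapse of the vertex links at the interface; if it is consumed, I follow $f$ through the chain of triangular purses and footballs on its side of the quadrilateral and emerge at a face of the next surviving tetrahedron. The traversal is controlled by the fingerprints of the consumed tetrahedra only, so each consumed tetrahedron is entered $\mathcal{O}(1)$ times and the tracing phase runs in amortized $\mathcal{O}(n)$. A final linear-time pass identifies and discards components that collapse to one of the pieces listed in Theorem \ref{thm:B_crush} (a $3$-ball, $\mathbb{S}^3$, $L(3,1)$, $\mathbb{R}P^3$, $\mathbb{S}^2 \times \mathbb{S}^1$, or $\mathbb{S}^2 \tilde{\times} \mathbb{S}^1$), since each such component is detected from the local data at its constituent consumed tetrahedra.

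The main obstacle is the bookkeeping in the tracing phase: one must check that the face identifications produced by chaining the atomic moves of Lemma \ref{lem:crushing_lemma} across a run of consumed tetrahedra are precisely the ones computed by the local fingerprint-based rule, and that chains are always traversed in amortized constant time per crushed tetrahedron despite the fact that flattening triangular purses and bigonal pillows can fuse faces in several inequivalent ways. This requires a finite but tedious case analysis indexed by the three quadrilateral types of $\Delta$ together with the quadrilateral types of its up to four neighbours. The remaining ingredients -- extracting fingerprints, classifying tetrahedra, reading off surviving face gluings, and detecting discardable components -- are each direct $\mathcal{O}(n)$ sweeps over the combinatorial data, so the overall running time is $\mathcal{O}(n)$.
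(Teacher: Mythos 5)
This theorem is cited from Burton's paper \cite{B_unknot} and the present paper does not reproduce a proof, so there is no in-paper argument to compare against; your proposal must stand on its own. You have correctly identified the engine of the result: the crushing procedure depends only on the $\mathcal{O}(n)$-bit pattern of which disk types are present in each tetrahedron (the zero/nonzero support of $\mathbf{x}$), never on the actual coordinate magnitudes, and the surviving tetrahedra of $\mathcal{T}'$ are exactly those of $\mathcal{T}$ with no quadrilateral. That is the right starting point.

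However, your closing step misconstrues the procedure. You propose a ``final linear-time pass'' that \emph{topologically identifies and discards} components collapsing to a $3$-ball, $\mathbb{S}^3$, $L(3,1)$, $\mathbb{R}P^3$, $\mathbb{S}^2 \times \mathbb{S}^1$, or $\mathbb{S}^2 \tilde{\times} \mathbb{S}^1$. But the crushing procedure of Definition \ref{def:crush} is a purely combinatorial flattening: after the atomic flattenings, step 4 simply discards faces, edges, and vertices that no longer bound a $3$-cell and breaks apart pieces joined only along low-dimensional cells. No topological recognition is ever performed. The list in Theorem \ref{thm:B_crush} is a \emph{description of the topological effect} of the atomic moves on the underlying manifold, established after the fact by a case analysis; it is not a list of subcomplexes the algorithm must locate. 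A consumed region whose topology happens to be, say, $L(3,1)$ simply dissolves because none of its tetrahedra survive, and the algorithm never knows (nor needs to know) that this was a lens space. Recognizing $L(3,1)$ or $\mathbb{R}P^3$ directly would in fact be a harder problem than the one being solved, so conflating the topological consequence with an algorithmic step is a genuine gap. A second, smaller issue is that you flag but do not carry out the case analysis showing that the chained traversal through consumed tetrahedra runs in amortized $\mathcal{O}(1)$ per tetrahedron and reproduces the gluings dictated by Lemma \ref{lem:crushing_lemma}; this is precisely where the content of Burton's proof lies, so as written the sketch asserts the hard part rather than proving it.
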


\subsection{The effect of the crushing procedure using an annulus}
In Section 5, we give an algorithm to destabilize a triangulation of a $3$-manifold.
In the algorithm, we run the crushing procedure using a normal annulus.
In order to analyze the effect of the crushing procedure, we fix and use the following notation in the rest of this subsection.
Let $\mathcal{T}$ be a triangulation of the exterior $M$ of a link in a thickened closed orientable surface $\mathcal{S} \times I$ and $A$ be a normal annulus in $M$ with respect to $\mathcal{T}$.
Let $\mathcal{C}$ denote the cell-decomposition obtained from $\mathcal{T}$ by the non-destructive crushing procedure using $A$ and  
$\mathcal{T}'$ denote the triangulation obtained from $\mathcal{T}$ by the crushing procedure using $A$.
Note that the underlying space $M'$ of $\mathcal{T}'$ may not be a $3$-manifold, i.e., a point in $M'$ may not have an open neighborhood which is homeomorphic to  $\mathbb{R}^3$ or $\{(x_1, x_2, x_3)  \in \mathbb{R}^3| x_3 \geq 0\}$.
Such a point is called a \textit{singular point}, and a topological space $X$ which has singular points is called a {\it singular 3-manifold} if the topological space obtained by removing singular points from $X$ is a $3$-manifold. 
We define the boundary of a singular 3-manifold $X$ by the union of the set of singular points and the set of points which has an open neighborhood which is homeomorphic to $\{(x_1, x_2, x_3)  \in \mathbb{R}^3| x_3 \geq 0\}$, and denote it by $\partial X$.

From Lemma \ref{lem:crushing_lemma}, there is a sequence of the atomic moves $a_0 \rightarrow \cdots \rightarrow a_{n-1}$ and the cell-decompositions $\mathcal{C}_i$, where $\mathcal{C}_0 = \mathcal{C}$, $\mathcal{C}_i$ is the cell-decomposition obtained from $\mathcal{C}_{i-1}$ by the atomic move $a_{i-1}$, and $\mathcal{C}_n = \mathcal{T}'$.
There are two triangulations of $\mathcal{S}$ in the boundary of $\mathcal{T}$.
We denote these triangulations by $\mathcal{S}^0$ and $\mathcal{S}^1$.
Let $\mathcal{S}^k_0 \subset \mathcal{C}_0$ denote the union of $2$-cells obtained from the triangles in $\mathcal{S}^k$, and for any $i$ other than $0$, let $\mathcal{S}_i^k \subset \mathcal{C}_i$ denote the union of $2$-cells obtained from the $2$-cells in $\mathcal{S}_{i-1}^k$, where $k = 0, 1$.
Note that $\mathcal{S}_i^{k}$ is the subset of $\partial \mathcal{C}_i$ whose points were in $\mathcal{S}^k$ before the crushing procedure.
Hereinafter, if we write $\mathcal{S}^k_i$, then it means $\mathcal{S}^0_i$ or $\mathcal{S}^1_i$ for any integer $i$.  

\begin{lem}\label{lem:crush_annulus}
    Let $M$ be the exterior of a link in a thickened closed orientable surface $\mathcal{S} \times I$ with a triangulation $\mathcal{T}$.
    Suppose that there is a normal vertical annulus $A$ in $M$ with respect to $\mathcal{T}$.
    We denote the cell-decomposition obtained from $\mathcal{T}$ by the non-destructive crushing procedure using $A$ by $\mathcal{C}_0$ and the triangulation obtained from $\mathcal{T}$ by the crushing procedure using $A$ by $\mathcal{T}'$.
    For any $i\ (1 \leq i \leq n)$, let $M_i$ be the underlying singular $3$-manifold of $\mathcal{C}_i$.
    Then, $M_{i+1}$ is homeomorphic to $M_{i}$ or $M_{i+1}$ is obtained from $M_{i}$ by one of the following operations:
        \begin{enumerate}
            \renewcommand{\labelenumi}{(\alph{enumi})}
            \item removing a $3$-ball or a $3$-sphere component,
            \item filling a boundary sphere in $\partial M_i$ with a $3$-ball,
            \item cutting open $M_i$ along a properly embedded disk in $M_i$,
            \item cutting open $M_i$ along an embedded $2$-sphere in $M_i$ and filling the resulting boundaries with $3$-balls,
            \item cutting open $M_i$ along a bigon $B$ which satisfies the following conditions:
            \begin{itemize}
                \item $\partial B \cap \mathcal{S}_i^{0} \neq \emptyset$,
                \item $\partial B \cap \mathcal{S}_i^{1} \neq \emptyset$,
                \item $v_1$ and $v_2$ are identified with a singular point, where $v_1$ and $v_2$ are the vertices of $B$.
            \end{itemize}
        \end{enumerate}
\end{lem}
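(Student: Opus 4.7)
The plan is to follow the case analysis developed by Burton for Theorem \ref{thm:B_crush}, carefully tracking how the vertical-annulus geometry modifies the argument. By Lemma \ref{lem:crushing_lemma} it suffices to analyze a single atomic move $\mathcal{C}_i \to \mathcal{C}_{i+1}$ and determine the corresponding change in the underlying singular $3$-manifold $M_i \to M_{i+1}$.

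First, I would record the effect of the non-destructive crushing on the boundary. Because $A$ is vertical, each of its two boundary circles lies in exactly one of $\mathcal{S}^0$ or $\mathcal{S}^1$. After cutting along $A$ we get two copies of $A$ in $\partial M'$, each having one boundary circle on (the image of) $\mathcal{S}^0$ and one on $\mathcal{S}^1$; shrinking each copy to a point identifies a pair of points, one from $\mathcal{S}^0_0$ and one from $\mathcal{S}^1_0$, producing (at most) two singular points in $M_0$. The subcomplexes $\mathcal{S}^0_i, \mathcal{S}^1_i \subset \partial\mathcal{C}_i$ continue to record which stratum of $\partial M_i$ descends from which side of the thickened surface.

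Next, I would split the analysis of the atomic move $a_i$ according to whether the pillow or bigon being flattened is incident to a singular point. If it is not, then the cell lies in a genuine $3$-manifold region, and Burton's argument applies verbatim: $M_{i+1}$ is either homeomorphic to $M_i$ or obtained by one of the operations (a)--(d). The remaining case is when the cell being flattened touches a singular point. Here the only possibility compatible with the combinatorics inherited from collapsing $A$ is a bigon whose two vertices coincide at a singular point, and one of whose boundary edges lies in $\mathcal{S}^0_i$ while the other lies in $\mathcal{S}^1_i$. Flattening this bigon corresponds topologically to cutting $M_i$ open along the embedded bigon, which is exactly operation (e).

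The main obstacle is the bookkeeping in this last case: I must rule out the appearance of other exotic bigons or pillows, and verify that the bigon described in (e) is genuinely embedded (not a singular $2$-sphere or an immersed disk) so that cutting is well-defined. This reduces to showing that the $2$-cells of $\mathcal{C}_0$ incident to the singular points come only from triangular and quadrilateral normal disks meeting $\partial A$, and that the atomic-move sequence preserves the invariant that such incident $2$-cells are bigons with one edge in $\mathcal{S}^0_i$ and one in $\mathcal{S}^1_i$. Once this invariant is established, the enumeration (a)--(e) is exhaustive and the lemma follows.
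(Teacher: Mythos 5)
Your proposal collapses the case analysis too aggressively, and the central claim is false: you assert that when the flattened cell touches a singular point, ``the only possibility compatible with the combinatorics inherited from collapsing $A$ is a bigon whose two vertices coincide at a singular point, and one of whose boundary edges lies in $\mathcal{S}^0_i$ while the other lies in $\mathcal{S}^1_i$.'' That is not the only possibility, and the other possibilities are not to be ruled out --- they are to be \emph{classified}, because they account for most of operations (a)--(d). Concretely: a bigon $B$ with $e_1 = e_2$ lying entirely in $\partial\mathcal{C}_i$ and incident to one singular point is a boundary $2$-sphere pinched to another boundary component at that point; flattening it realizes operation (b), not (e). A bigon that is properly embedded with $v_1 \neq v_2$, each vertex a singular point, and $e_1 \subset \mathcal{S}^0_i$, $e_2 \subset \mathcal{S}^1_i$, realizes operation (c). A bigon with $e_1 = e_2$ in the interior, incident to one or two singular points, realizes operation (d). Bigons incident to singular points but with $e_1 \neq e_2$ and $v_1 \neq v_2$ are disks and produce a homeomorphism, not an operation from the list. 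Operation (e) is carved out by the single remaining configuration $\partial B \subset \partial\mathcal{C}_i$, $v_1 = v_2$ at a singular point, $e_1$ and $e_2$ in opposite strata.

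A second gap: you state the ``invariant that such incident $2$-cells are bigons with one edge in $\mathcal{S}^0_i$ and one in $\mathcal{S}^1_i$,'' but this invariant simply does not hold along the atomic-move sequence. After collapsing $A$ and applying several atomic moves, one can produce incident bigons both of whose edges lie in the same stratum $\mathcal{S}^k_i$ (this is Figure~\ref{fig:bigon_to_circle3} in the paper), and these do not yield operation (e). Your plan to reduce to Burton's argument plus one exceptional case underestimates the combinatorial complexity: the correct proof is a complete enumeration over (i) whether the cell is a bigon, triangular pillow, or bigonal pillow; (ii) which faces/edges/vertices lie in $\partial\mathcal{C}_{i-1}$ versus the interior; (iii) which pairs of edges and vertices are identified; and (iv) which of them coincide with singular points. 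Each branch needs its own topological interpretation. Without carrying out that enumeration, one cannot conclude that the list (a)--(e) is exhaustive.
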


\begin{proof}
    
    The cell-decomposition $\mathcal{C}_{i}$ is obtained from $\mathcal{C}_{i-1}$ by the atomic move $a_{i-1}$.
    First, suppose that $a_{i-1}$ is the atomic move flatting a bigon $B \subset \mathcal{C}_{i-1}$ to an edge.
    Let $v_1$ and $v_2$ denote the vertices of $B$, and let $e_1$ and $e_2$ denote the edges of $B$.
    If $B$ contains no singular points, then we can prove that $a_{i-1}$ has the effect (b), (c), or (d) as the same argument of the proof of Theorem \ref{thm:B_crush}.
    Assume that $B$ contains at least one singular point.
    Note that the number of singular points in $B$ is one or two.
    We consider the following cases:
    \begin{itemize}
        \item The entire of $B$ lies in $\partial \mathcal{C}_{i-1}$:
        \begin{itemize}
            \item The edges $e_1$ and $e_2$ are not identified:
            \begin{itemize}
                \item If $v_1$ and $v_2$ are not identified, then $B$ forms a disk with one or two singular points in $\partial \mathcal{C}_{i-1}$.
                The atomic move $a_{i-1}$ is the move flatting the disk to an edge, so $a_{i-1}$ does not change the underlying singular $3$-manifold $M_{i-1}$.
                \item If $v_1$ and $v_2$ are identified, then $B$ forms a disk whose two boundary points are identified at a singular point.
                The atomic move $a_{i-1}$ is the move flatting the surface to a circle, so $a_{i-1}$ does not change the underlying singular $3$-manifold $M_{i-1}$.
            \end{itemize}
            \item The edges $e_1$ and $e_2$ are identified:
            \begin{itemize}
                \item The vertices $v_1$ and $v_2$ are not identified:
                \begin{itemize}
                    \item If either $v_1$ or $v_2$ is identified to a singular point $s$, then $B$ is a $2$-sphere boundary which touches another boundary at the point $s$ as shown in Figure \ref{fig:bigon_sphere_boundary}.
                    In this case, $a_{i-1}$ has the effect filling a $2$-sphere boundary with a $3$-ball.
                    \begin{figure}[htbp]
                        \centering
                        \tikzset{every picture/.style={line width=0.75pt}} 

\begin{tikzpicture}[x=0.75pt,y=0.75pt,yscale=-1,xscale=1]

\draw  [fill={rgb, 255:red, 0; green, 0; blue, 0 }  ,fill opacity=0.1 ] (80,49.67) .. controls (111,79) and (111,120) .. (80,137.67) .. controls (50,110) and (51,79) .. (80,49.67) -- cycle ;
\draw    (17,65) -- (147,64.8) ;
\draw    (17,65) -- (57,24.8) ;

\draw [color={rgb, 255:red, 155; green, 155; blue, 155 }  ,draw opacity=1 ]   (58,95) .. controls (58,110) and (103,109) .. (103,95) ;
\draw [color={rgb, 255:red, 155; green, 155; blue, 155 }  ,draw opacity=1 ] [dash pattern={on 4.5pt off 4.5pt}]  (58,95) .. controls (57,87) and (104,85) .. (103,95) ;
\draw [color={rgb, 255:red, 155; green, 155; blue, 155 }  ,draw opacity=1 ]   (80,137.67) .. controls (51,110) and (50,80) .. (80,49.67) ;
\draw    (80,49.67) .. controls (80,43) and (82.2,26.8) .. (97.2,26.8) ;
\draw  [fill={rgb, 255:red, 0; green, 0; blue, 0 }  ,fill opacity=1 ] (77.6,52.07) .. controls (77.6,50.74) and (78.67,49.67) .. (80,49.67) .. controls (81.33,49.67) and (82.4,50.74) .. (82.4,52.07) .. controls (82.4,53.39) and (81.33,54.47) .. (80,54.47) .. controls (78.67,54.47) and (77.6,53.39) .. (77.6,52.07) -- cycle ;
\draw    (100,81) .. controls (112.2,80.8) and (129.2,86.8) .. (129.2,110.8) ;
\draw    (209,65) -- (307,64.57) ;
\draw    (209,65) -- (248.69,24.8) ;
\draw    (268,48.9) -- (268,134.5) ;
\draw    (162,87) -- (212,86.59) ;
\draw [shift={(214,86.57)}, rotate = 539.53] [color={rgb, 255:red, 0; green, 0; blue, 0 }  ][line width=0.75]    (10.93,-3.29) .. controls (6.95,-1.4) and (3.31,-0.3) .. (0,0) .. controls (3.31,0.3) and (6.95,1.4) .. (10.93,3.29)   ;

\draw (155.61,26.5) node  [font=\normalsize] [align=left] {singulart point $\displaystyle s$};
\draw (123,113.4) node [anchor=north west][inner sep=0.75pt]    {$B$};
\draw (174,67.4) node [anchor=north west][inner sep=0.75pt]    {$a_{i-1}$};
\draw (109,45.4) node [anchor=north west][inner sep=0.75pt]    {$\partial \mathcal{C}_{i-1}$};
\draw (278,44.4) node [anchor=north west][inner sep=0.75pt]    {$\partial \mathcal{C}_{i}$};

\end{tikzpicture}
                        \caption{The case where $B$ forms a $2$-sphere in the boundary with a singular point}
                        \label{fig:bigon_sphere_boundary}
                    \end{figure}
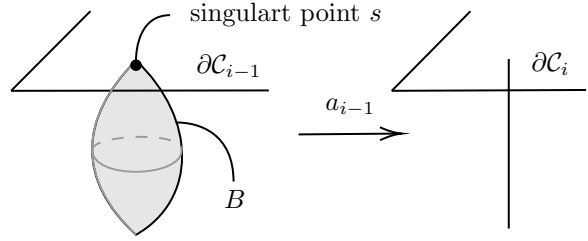
                    \item Similarly, if both of $v_1$ and $v_2$ are identified to singular points, then the atomic move $a_{i-1}$ has the effect filling a $2$-sphere boundary with a $3$-ball.
                \end{itemize}
                \item The case where $v_1$ and $v_2$ are identified does not occur.
                Singular points occur only when we shrink copies of the annulus $A$ to points.
                However, $A$ is a surface obtained by removing two open disks from a disk in this case as shown Figure \ref{fig:A_does_not_anulus}.
                This contradicts that $A$ is an annulus.
                \begin{figure}
                    \centering
                    \tikzset{every picture/.style={line width=0.75pt}} 

\begin{tikzpicture}[x=0.75pt,y=0.75pt,yscale=-1,xscale=1]

\draw   (60.98,127.24) .. controls (56.74,127.22) and (53.33,120.58) .. (53.37,112.42) .. controls (53.41,104.25) and (56.88,97.65) .. (61.12,97.67) .. controls (65.36,97.69) and (68.76,104.32) .. (68.73,112.49) .. controls (68.69,120.65) and (65.22,127.26) .. (60.98,127.24) -- cycle ;
\draw   (61.29,61.17) .. controls (57.05,61.15) and (53.64,54.52) .. (53.68,46.35) .. controls (53.72,38.18) and (57.19,31.58) .. (61.43,31.6) .. controls (65.67,31.62) and (69.08,38.26) .. (69.04,46.42) .. controls (69,54.59) and (65.53,61.19) .. (61.29,61.17) -- cycle ;
\draw [color={rgb, 255:red, 0; green, 0; blue, 0 }  ,draw opacity=0.3 ]   (61.12,97.67) .. controls (109.48,97.89) and (109.02,61.4) .. (61.29,61.17) ;
\draw [color={rgb, 255:red, 0; green, 0; blue, 0 }  ,draw opacity=0.3 ]   (60.98,127.24) .. controls (147.72,128.28) and (146.91,32) .. (61.43,31.6) ;
\draw [color={rgb, 255:red, 0; green, 0; blue, 0 }  ,draw opacity=0.3 ]   (125.92,79.72) .. controls (126.13,87.9) and (97.19,87.14) .. (96.94,79.59) ;
\draw [color={rgb, 255:red, 0; green, 0; blue, 0 }  ,draw opacity=0.3 ] [dash pattern={on 4.5pt off 4.5pt}]  (125.92,79.72) .. controls (125.57,72.8) and (96.63,73.29) .. (96.94,79.59) ;
\draw  [fill={rgb, 255:red, 0; green, 0; blue, 0 }  ,fill opacity=0.1 ] (36.94,79.3) .. controls (37.08,49.07) and (47.48,24.61) .. (60.16,24.67) .. controls (72.84,24.73) and (83.01,49.29) .. (82.87,79.52) .. controls (82.72,109.75) and (72.33,134.21) .. (59.64,134.15) .. controls (46.96,134.09) and (36.79,109.54) .. (36.94,79.3) -- cycle (60.98,127.24) .. controls (65.22,127.26) and (68.69,120.65) .. (68.73,112.49) .. controls (68.76,104.32) and (65.36,97.69) .. (61.12,97.67) .. controls (56.88,97.65) and (53.41,104.25) .. (53.37,112.42) .. controls (53.33,120.58) and (56.74,127.22) .. (60.98,127.24) -- cycle (61.29,61.17) .. controls (65.53,61.19) and (69,54.59) .. (69.04,46.42) .. controls (69.08,38.26) and (65.67,31.62) .. (61.43,31.6) .. controls (57.19,31.58) and (53.72,38.18) .. (53.68,46.35) .. controls (53.64,54.52) and (57.05,61.15) .. (61.29,61.17) -- cycle ;
\draw    (60.16,24.67) .. controls (107.5,24.1) and (135.67,25.7) .. (160,10.06) ;
\draw    (59.64,134.15) .. controls (106.98,133.58) and (140.02,135.29) .. (160,150.06) ;
\draw [color={rgb, 255:red, 0; green, 0; blue, 0 }  ,draw opacity=0.3 ]   (215.24,77.44) .. controls (223.93,82.65) and (250.87,94.3) .. (250.74,77.85) .. controls (250.61,61.4) and (223.93,73.97) .. (215.24,77.44) -- cycle ;
\draw [color={rgb, 255:red, 0; green, 0; blue, 0 }  ,draw opacity=0.3 ]   (215.24,77.44) .. controls (225.67,91) and (280.41,117.94) .. (279.72,77.99) .. controls (279.03,38.03) and (227.41,64.93) .. (215.24,77.44) -- cycle ;
\draw [color={rgb, 255:red, 0; green, 0; blue, 0 }  ,draw opacity=0.3 ]   (279.72,77.99) .. controls (279.93,86.17) and (250.99,85.4) .. (250.74,77.85) ;
\draw [color={rgb, 255:red, 0; green, 0; blue, 0 }  ,draw opacity=0.3 ] [dash pattern={on 4.5pt off 4.5pt}]  (279.72,77.99) .. controls (279.37,71.06) and (250.42,71.56) .. (250.74,77.85) ;
\draw    (215.24,77.44) .. controls (235.23,32.79) and (295.67,26.7) .. (320,11.06) ;
\draw    (215.24,77.44) .. controls (227.41,116.2) and (300.02,134.29) .. (320,149.06) ;
\draw    (24.95,61.4) .. controls (24.08,71.76) and (29.3,79.58) .. (46.67,77.84) ;
\draw    (154.42,77.91) -- (196.73,77.55) ;
\draw [shift={(198.73,77.53)}, rotate = 539.51] [color={rgb, 255:red, 0; green, 0; blue, 0 }  ][line width=0.75]    (10.93,-3.29) .. controls (6.95,-1.4) and (3.31,-0.3) .. (0,0) .. controls (3.31,0.3) and (6.95,1.4) .. (10.93,3.29)   ;
\draw    (140,19) .. controls (152,19.06) and (151,140.06) .. (140,140.06) ;
\draw  [dash pattern={on 4.5pt off 4.5pt}]  (140,19) .. controls (129,20.06) and (130,138.06) .. (140,140.06) ;
\draw    (303,18.94) .. controls (315,19) and (314,140) .. (303,140) ;
\draw  [dash pattern={on 4.5pt off 4.5pt}]  (303,18.94) .. controls (292,20) and (293,138) .. (303,140) ;

\draw (275.21,92.91) node [anchor=north west][inner sep=0.75pt]    {$B$};
\draw (17.89,45.12) node [anchor=north west][inner sep=0.75pt]    {$A$};

\end{tikzpicture}
                    \caption{The case where $B$ lies in $\partial \mathcal{C}_{i-1}$, the edges $e_1$ and $e_2$ are identified, and the vertices $v_1$ and $v_2$ are identified.}
                    \label{fig:A_does_not_anulus}
                \end{figure}
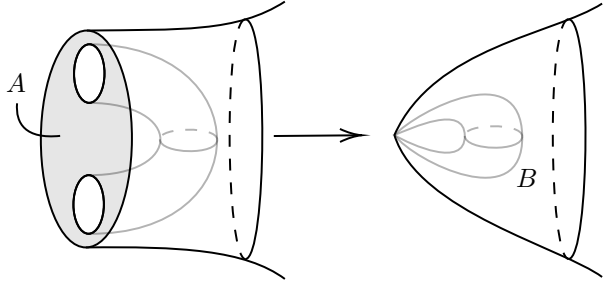
            \end{itemize}
        \end{itemize}
        
        \item Either $v_1$ or $v_2$ is in $\partial \mathcal{C}_{i-1}$ and the other points of $B$ is in the interior of $\mathcal{C}_{i-1}$:
        \begin{itemize}
            \item The edges $e_1$ and $e_2$ are not identified, then $B$ forms a disk in $\mathcal{C}_{i-1}$ with a singular point.
            The atomic move $a_{i-1}$ is the move flatting the disk to an edge, so $a_{i-1}$ does not change the underlying singular $3$-manifold $M_{i-1}$.
            \item the edges $e_1$ and $e_2$ are identified, then $B$ forms a $2$-sphere in $\mathcal{C}_{i-1}$ with a singular point.
            In this case, the atomic move $a_{i-1}$ has the effect cutting the underlying singular $3$-manifold $M_{i-1}$ along an embedded $2$-sphere and filling the resulting boundaries with $3$-balls.
        \end{itemize}
        
        \item Both $v_1$ and $v_2$ are in $\partial \mathcal{C}_{i-1}$ and the other points of $B$ is in the interior of $\mathcal{C}_{i-1}$:
        \begin{itemize}
            \item The edges $e_1$ and $e_2$ are not identified:
            \begin{itemize}
                \item The vertices $v_1$ and $v_2$ are not identified, then $B$ forms a disk with one or two singular points.
                The atomic move $a_{i-1}$ is the move flatting the disk to an edge, so $a_{i-1}$ does not change the underlying singular $3$-manifold $M_{i-1}$.
                \item The vertices $v_1$ and $v_2$ are identified, then $B$ forms a disk whose two boundary points are identified at a singular point.
                The atomic move $a_{i-1}$ is the move flatting the surface to a circle, so $a_{i-1}$ does not change the underlying singular $3$-manifold $M_{i-1}$.
            \end{itemize}
            \item Both $e_1$ and $e_2$ are identified:
            \begin{itemize}
                \item The vertices $v_1$ and $v_2$ are not identified, then $B$ forms a $2$-sphere with one or two  singular points.
                The atomic move $a_{i-1}$ is the move flatting the $2$-sphere to an edge, so $a_{i-1}$ has the effect cutting the underlying singular $3$-manifold $M_{i-1}$ along an embedded $2$-sphere and filling the resulting boundaries with $3$-balls.
                \item The vertices $v_1$ and $v_2$ are identified, then $B$ forms a $2$-sphere whose two boundary points are identified at a singular point.
                The atomic move $a_{i-1}$ is the move flatting the surface to a circle, so $a_{i-1}$ has the effect cutting the underlying singular $3$-manifold $M_{i-1}$ along an embedded $2$-sphere and filling the resulting boundaries with $3$-balls.
            \end{itemize}
        \end{itemize}
        
        \item Either $e_1$ or $e_2$ is in $\partial \mathcal{C}_{i-1}$ and the other points are in the interior of $\mathcal{C}_{i-1}$:
        \begin{itemize}
            \item The vertices $v_1$ and $v_2$ are not identified, then $B$ forms a disk with one or two singular points.
            Note that $e_1$ and $e_2$ are not identified from the assumption. 
            The atomic move $a_{i-1}$ is the move flatting the disk to an edge, so $a_{i-1}$ does not change the underlying singular $3$-manifold $M_{i-1}$.
            \item The vertices $v_1$ and $v_2$ are identified, then $B$ forms a disk whose two boundary points are identified at a singular point.
            The atomic move $a_{i-1}$ is the move flatting the surface to a circle, so $a_{i-1}$ does not change the underlying singular $3$-manifold $M_{i-1}$.
        \end{itemize}
        
        \item Both the edges $e_1$ and $e_2$ are in $\partial \mathcal{C}_{i-1}$ and the other points are in the interior of $\mathcal{C}_{i-1}$:
        \begin{itemize}
            \item The edges $e_1$ and $e_2$ are not identified:
            \begin{itemize}
                \item The vertices $v_1$ and $v_2$ are not identified:
                    \begin{itemize} 
                        \item Either $v_1$ or $v_2$ is identified to a singular point, then $B$ forms a properly embedded disk with a singular point.
                        The atomic move $a_{i-1}$ is the move flatting the disk to an edge, so $a_{i-1}$ has the effect cutting open the underlying singular $3$-manifold $M_{i-1}$ along a properly embedded disk.
                        \item The vertices $v_1$ and $v_2$ are identified to singular points, $e_1 \subset \mathcal{S}_{i-1}^{0}$, and $e_2 \subset \mathcal{S}_{i-1}^{1}$, then $B$ forms a properly embedded disk with the two singular points, and $a_{i-1}$ is the move flatting the disk to an edge.
                        Note that the edge obtained from the disk is divided into two edges because the $3$-cells containing the edge are connected only by the edge as shown in Figure \ref{fig:bigon_to_circle}.
                        Therefore, $a_{i-1}$ has the effect cutting the underlying singular $3$-manifold $M_{i-1}$ along a properly embedded disk.
                        In the case where $e_1 \subset \mathcal{S}_{i-1}^{1}$ and $e_2 \subset \mathcal{S}_{i-1}^{0}$, we can obtain the same results.
                        \begin{figure}
                            \centering
                            \input{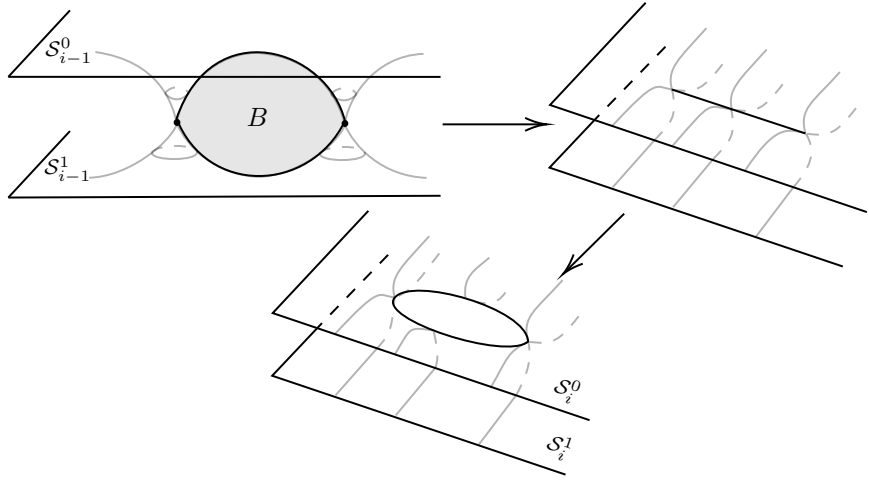}
                            \caption{The case where $v_1$ and $v_2$ are not identified and $e_1 \subset \mathcal{S}_{i-1}^{0}$, and $e_2 \subset \mathcal{S}_{i-1}^{1}$}
                            \label{fig:bigon_to_circle}
                        \end{figure}
                        \item The vertices $v_1$ and $v_2$ are identified to singular points and $e_1$ and $e_2$ are in $\mathcal{S}_{i-1}^{k}$, then $B$ forms a properly embedded disk, and so $a_{i-1}$ has the effect cutting the underlying singular $3$-manifold $M_{i-1}$ along a properly embedded disk.
                    \end{itemize}
                \item The vertices $v_1$ and $v_2$ are identified:
                \begin{itemize}
                    \item The edge $e_1$ lies in $\mathcal{S}_{i-1}^{0}$ and the edge $e_2$ lies in $\mathcal{S}_{i-1}^{1}$, then $B$ forms a disk whose two boundary points are identified at a singular point as shown in Figure \ref{fig:bigon_to_circle2}, and so $a_{i-1}$ is the move flatting the surface to a circle.
                    Therefore, $a_{i-1}$ has the effect cutting the underlying singular $3$-manifold $M_{i-1}$ along a bigon whose vertices are identified.
                    In the case where $e_1 \subset \mathcal{S}_{i-1}^{1}$ and $e_2 \subset \mathcal{S}_{i-1}^{0}$, we can obtain the same results.
                    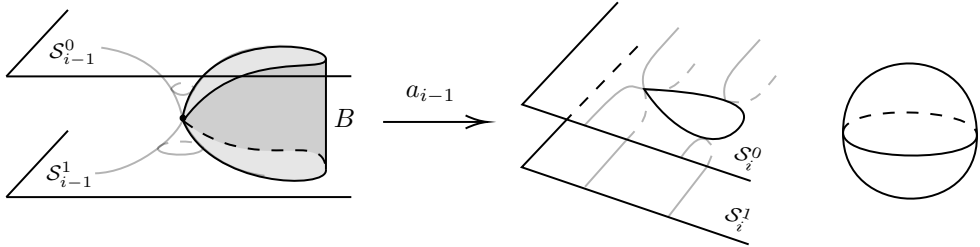
\begin{figure}
                        \centering
                        \tikzset{every picture/.style={line width=0.75pt}} 

\begin{tikzpicture}[x=0.75pt,y=0.75pt,yscale=-1,xscale=1]

\draw    (10,41.43) -- (184,41.33) ;
\draw    (10,41.43) -- (41.09,7.88) ;
\draw    (10,102.35) -- (184,102.33) ;
\draw    (10,102.35) -- (41.09,68.8) ;
\draw [color={rgb, 255:red, 0; green, 0; blue, 0 }  ,draw opacity=0.3 ]   (57.89,26.91) .. controls (89.76,31.5) and (96.75,52.36) .. (99.09,63.21) ;
\draw [color={rgb, 255:red, 0; green, 0; blue, 0 }  ,draw opacity=0.3 ]   (54.79,90.34) .. controls (74.22,89.92) and (95.2,73.23) .. (99.09,63.21) ;
\draw [color={rgb, 255:red, 0; green, 0; blue, 0 }  ,draw opacity=0.3 ]   (140.2,91.6) .. controls (117.2,88.6) and (103.75,74.06) .. (99.09,63.21) ;
\draw [color={rgb, 255:red, 0; green, 0; blue, 0 }  ,draw opacity=0.3 ]   (138.72,27.33) .. controls (113.08,28.16) and (99.86,51.53) .. (99.09,63.21) ;
\draw [color={rgb, 255:red, 0; green, 0; blue, 0 }  ,draw opacity=0.3 ]   (86.4,77.58) .. controls (85.92,82.13) and (110.53,82.13) .. (109.11,77.07) ;
\draw [color={rgb, 255:red, 0; green, 0; blue, 0 }  ,draw opacity=0.3 ] [dash pattern={on 4.5pt off 4.5pt}]  (86.4,77.58) .. controls (89.71,73.03) and (107.69,73.53) .. (109.11,77.07) ;
\draw [color={rgb, 255:red, 0; green, 0; blue, 0 }  ,draw opacity=0.3 ]   (93.02,47.73) .. controls (95.86,51.27) and (101.06,52.28) .. (104.85,47.73) ;
\draw [color={rgb, 255:red, 0; green, 0; blue, 0 }  ,draw opacity=0.3 ] [dash pattern={on 4.5pt off 4.5pt}]  (93.02,47.73) .. controls (96.33,43.17) and (103.43,44.19) .. (104.85,47.73) ;
\draw  [fill={rgb, 255:red, 0; green, 0; blue, 0 }  ,fill opacity=1 ] (97.9,62.45) .. controls (97.9,61.76) and (98.43,61.19) .. (99.09,61.19) .. controls (99.74,61.19) and (100.27,61.76) .. (100.27,62.45) .. controls (100.27,63.15) and (99.74,63.72) .. (99.09,63.72) .. controls (98.43,63.72) and (97.9,63.15) .. (97.9,62.45) -- cycle ;
\draw    (200.04,64.41) -- (250.56,64.41) ;
\draw [shift={(252.56,64.41)}, rotate = 180] [color={rgb, 255:red, 0; green, 0; blue, 0 }  ][line width=0.75]    (10.93,-3.29) .. controls (6.95,-1.4) and (3.31,-0.3) .. (0,0) .. controls (3.31,0.3) and (6.95,1.4) .. (10.93,3.29)   ;
\draw [color={rgb, 255:red, 0; green, 0; blue, 0 }  ,draw opacity=1 ]   (99.09,61.19) .. controls (108.2,16.6) and (171.2,24.6) .. (171.2,32.6) .. controls (171.2,40.6) and (127.2,26.6) .. (100.27,62.45) ;
\draw [color={rgb, 255:red, 0; green, 0; blue, 0 }  ,draw opacity=1 ]   (171.2,32.6) -- (171.2,88.6) ;
\draw    (99.09,63.72) .. controls (113.2,97.6) and (172.2,98.6) .. (171.2,88.6) ;
\draw  [dash pattern={on 4.5pt off 4.5pt}]  (99.09,63.72) .. controls (136.2,92.6) and (170.2,66.6) .. (171.2,88.6) ;
\draw  [draw opacity=0][fill={rgb, 255:red, 0; green, 0; blue, 0 }  ,fill opacity=0.1 ] (99.09,63.72) .. controls (134,93) and (169,67) .. (171.2,88.6) .. controls (171,49) and (171,66) .. (171.2,32.6) .. controls (173,38) and (127,28) .. (99.09,63.72) -- cycle ;
\draw  [draw opacity=0][fill={rgb, 255:red, 0; green, 0; blue, 0 }  ,fill opacity=0.1 ] (99.09,63.72) .. controls (112,95) and (170,100) .. (171.2,88.6) .. controls (171,49) and (171,66) .. (171.2,32.6) .. controls (172,25) and (109,12) .. (99.09,63.72) -- cycle ;
\draw    (318.45,4.02) -- (270.2,55.63) -- (385.4,94.2) ;
\draw    (295.03,60.7) -- (269.95,87.26) -- (384.4,126.2) ;
\draw  [dash pattern={on 4.5pt off 4.5pt}]  (328.41,26.21) -- (295.03,60.7) ;
\draw [color={rgb, 255:red, 0; green, 0; blue, 0 }  ,draw opacity=0.3 ]   (350.1,82.66) .. controls (359.4,69) and (362,74) .. (363,73) ;
\draw [color={rgb, 255:red, 0; green, 0; blue, 0 }  ,draw opacity=0.3 ]   (301.4,66.4) .. controls (324.8,41.12) and (323.45,46.52) .. (331.71,47.84) ;
\draw [color={rgb, 255:red, 0; green, 0; blue, 0 }  ,draw opacity=0.3 ]   (349.1,16.83) .. controls (335.21,32.34) and (328.85,37.08) .. (331.71,47.84) ;
\draw [color={rgb, 255:red, 0; green, 0; blue, 0 }  ,draw opacity=0.3 ] [dash pattern={on 4.5pt off 4.5pt}]  (322.78,73.52) .. controls (330.88,64.74) and (334.93,62.05) .. (331.71,47.84) ;
\draw [color={rgb, 255:red, 0; green, 0; blue, 0 }  ,draw opacity=0.3 ] [dash pattern={on 4.5pt off 4.5pt}]  (357.2,35.05) .. controls (347.75,47.2) and (344.37,49.9) .. (331.71,47.84) ;
\draw [color={rgb, 255:red, 0; green, 0; blue, 0 }  ,draw opacity=0.3 ]   (322.78,73.52) -- (301.86,97.14) ;
\draw [color={rgb, 255:red, 0; green, 0; blue, 0 }  ,draw opacity=0.3 ] [dash pattern={on 4.5pt off 4.5pt}]  (363.4,87) .. controls (368.4,75) and (367.4,79) .. (363,73) ;
\draw [color={rgb, 255:red, 0; green, 0; blue, 0 }  ,draw opacity=0.3 ]   (363.4,87) -- (343.91,112.21) ;
\draw   (331.71,47.84) .. controls (344,67) and (366,81) .. (379,68) .. controls (393,53) and (356,49) .. (331.71,47.84) -- cycle ;
\draw [color={rgb, 255:red, 0; green, 0; blue, 0 }  ,draw opacity=0.3 ]   (390.1,22.83) .. controls (376.21,38.34) and (369.85,43.08) .. (372.71,53.84) ;
\draw [color={rgb, 255:red, 0; green, 0; blue, 0 }  ,draw opacity=0.3 ] [dash pattern={on 4.5pt off 4.5pt}]  (398.2,41.05) .. controls (388.75,53.2) and (385.37,55.9) .. (372.71,53.84) ;
\draw    (432,70) .. controls (432.5,85) and (500.5,85) .. (499.5,71) ;
\draw  [dash pattern={on 4.5pt off 4.5pt}]  (432,70) .. controls (431.5,55) and (499.5,57) .. (499.5,71) ;
\draw    (432,70) .. controls (431.5,23) and (499.5,23) .. (499.5,71) ;
\draw    (432,70) .. controls (431.5,114) and (500.5,114) .. (499.5,71) ;

\draw (173.75,56.54) node [anchor=north west][inner sep=0.75pt]    {$B$};
\draw (42.05,91.5) node  [font=\footnotesize]  {$\mathcal{S}_{i-1}^{1}$};
\draw (43.63,29.71) node  [font=\footnotesize]  {$\mathcal{S}_{i-1}^{0}$};
\draw (379.18,113) node  [font=\footnotesize,rotate=-18.86]  {$\mathcal{S}_{i}^{1}$};
\draw (383.92,82.14) node  [font=\footnotesize,rotate=-19.27]  {$\mathcal{S}_{i}^{0}$};
\draw (210,44.4) node [anchor=north west][inner sep=0.75pt]    {$a_{i-1}$};

\end{tikzpicture}
                        \caption{The case where $v_1$ and $v_2$ are identified and $e_1 \subset \mathcal{S}_{i-1}^{0}$ and $e_2 \subset \mathcal{S}_{i-1}^{1}$}
                        \label{fig:bigon_to_circle2}
                    \end{figure}
                    \item The edges $e_1$ and $e_2$ are in $\mathcal{S}_{i-1}^{k}$, then $B$ forms a disk whose two boundary points are identified at a singular point as shown in Figure \ref{fig:bigon_to_circle3}, and so $a_{i-1}$ is the move flatting the surface to a circle.
                    \begin{figure}
                        \centering
                        \tikzset{every picture/.style={line width=0.75pt}} 

\begin{tikzpicture}[x=0.75pt,y=0.75pt,yscale=-1,xscale=1]

\draw   (203.25,90.68) -- (2,90.68) -- (86.94,30) ;
\draw   (127.39,28.97) .. controls (201.12,62.88) and (53.66,69.76) .. (127.39,28.97) .. controls (-98.72,98.27) and (342.69,103.19) .. (127.39,28.97) -- cycle ;
\draw [color={rgb, 255:red, 0; green, 0; blue, 0 }  ,draw opacity=0.3 ]   (85.56,78.69) .. controls (96.75,88.57) and (118.49,108.33) .. (116.51,55.65) ;
\draw [color={rgb, 255:red, 0; green, 0; blue, 0 }  ,draw opacity=0.3 ]   (158.66,80.01) .. controls (155.36,84.62) and (133.63,113.6) .. (136.27,56.3) ;
\draw  [draw opacity=0][fill={rgb, 255:red, 0; green, 0; blue, 0 }  ,fill opacity=0.1 ] (188.29,66.84) .. controls (162.61,109.64) and (75.68,92.52) .. (63.17,69.47) .. controls (50,49.72) and (114.22,35.55) .. (127.39,28.97) .. controls (145.48,34.57) and (195.53,55.65) .. (188.29,66.84) -- cycle (127.39,28.97) .. controls (114.22,35.55) and (86.22,54.33) .. (126.39,56.96) .. controls (163.92,54.33) and (145.48,34.57) .. (127.39,28.97) -- cycle ;
\draw  [draw opacity=0][fill={rgb, 255:red, 0; green, 0; blue, 0 }  ,fill opacity=0.1 ] (188.29,66.84) .. controls (161.29,108.99) and (75.68,92.52) .. (63.17,69.47) .. controls (96.75,93.25) and (190.92,79.35) .. (188.29,66.84) -- cycle ;
\draw [color={rgb, 255:red, 0; green, 0; blue, 0 }  ,draw opacity=0.3 ]   (72.39,51.46) .. controls (69.55,67.24) and (99.27,69.74) .. (105.97,50.8) ;
\draw [color={rgb, 255:red, 0; green, 0; blue, 0 }  ,draw opacity=0.3 ]   (147.46,50.8) .. controls (151.87,69.88) and (175.12,67.33) .. (177.75,52.12) ;
\draw    (214,59) -- (281,59) ;
\draw [shift={(283,59)}, rotate = 180] [color={rgb, 255:red, 0; green, 0; blue, 0 }  ][line width=0.75]    (10.93,-3.29) .. controls (6.95,-1.4) and (3.31,-0.3) .. (0,0) .. controls (3.31,0.3) and (6.95,1.4) .. (10.93,3.29)   ;
\draw   (390.4,85.67) -- (272.87,85.67) -- (331.2,44) ;
\draw    (410.3,73.56) -- (519,73.56) ;
\draw    (410.3,73.56) .. controls (433.54,32.58) and (504.76,38.14) .. (519,73.56) ;
\draw    (464.65,73.56) .. controls (454.53,68.7) and (456.03,46.47) .. (465.03,45.08) ;
\draw [color={rgb, 255:red, 0; green, 0; blue, 0 }  ,draw opacity=0.3 ] [dash pattern={on 4.5pt off 4.5pt}]  (465.03,45.08) .. controls (471.02,44.39) and (474.77,68.7) .. (464.65,73.56) ;
\draw    (435.55,73.56) .. controls (425.96,68.48) and (426.79,54.11) .. (432.04,52.72) ;
\draw [color={rgb, 255:red, 0; green, 0; blue, 0 }  ,draw opacity=0.3 ] [dash pattern={on 4.5pt off 4.5pt}]  (432.04,52.72) .. controls (440.27,52.21) and (441.04,68) .. (435.55,73.56) ;
\draw    (496.51,73.56) .. controls (490.51,70.78) and (489.76,52.03) .. (497.26,52.03) ;
\draw [color={rgb, 255:red, 0; green, 0; blue, 0 }  ,draw opacity=0.3 ] [dash pattern={on 4.5pt off 4.5pt}]  (497.26,52.03) .. controls (503.26,51.33) and (504.01,70.08) .. (496.51,73.56) ;
\draw  [fill={rgb, 255:red, 0; green, 0; blue, 0 }  ,fill opacity=1 ] (124.99,28.97) .. controls (124.99,27.64) and (126.07,26.57) .. (127.39,26.57) .. controls (128.72,26.57) and (129.79,27.64) .. (129.79,28.97) .. controls (129.79,30.29) and (128.72,31.37) .. (127.39,31.37) .. controls (126.07,31.37) and (124.99,30.29) .. (124.99,28.97) -- cycle ;
\draw   (362,48.8) .. controls (377,62.8) and (382,75.8) .. (355,76.8) .. controls (320,75.8) and (341,57.8) .. (362,48.8) -- cycle ;

\draw (162,93.4) node [anchor=north west][inner sep=0.75pt]    {$\partial \mathcal{C}_{i-1}$};
\draw (239,36.4) node [anchor=north west][inner sep=0.75pt]    {$a_{i-1}$};
\draw (80,5) node [anchor=north west][inner sep=0.75pt]   [align=left] {singular point};
\draw (344,89.4) node [anchor=north west][inner sep=0.75pt]    {$\partial \mathcal{C}_{i}$};

\end{tikzpicture}
                        \caption{The case where $v_1$ and $v_2$ are identified and $e_1$ and $e_2$ are in $\mathcal{S}_{i-1}^{k}$}
                        \label{fig:bigon_to_circle3}
                    \end{figure}
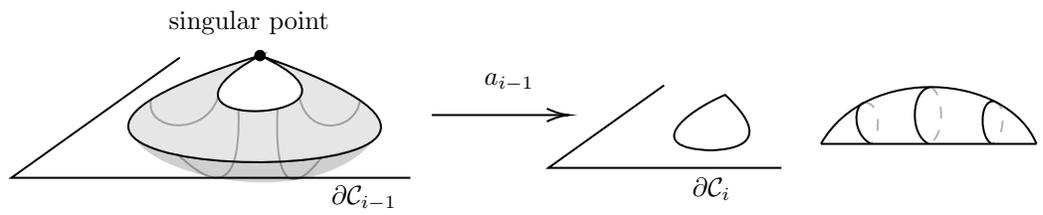
                \end{itemize}
            \end{itemize}
        \end{itemize}
        \item The edges $e_1$ and $e_2$ are identified:
        \begin{itemize}
            \item The vertices $v_1$ and $v_2$ are not identified:
            \begin{itemize}
                \item Either $v_1$ or $v_2$ is identified to a singular points, then $B$ forms a $2$-sphere with a singular point.
                In this case, the atomic move $a_{i-1}$ is the move flatting the $2$-sphere to an edge, and so $a_{i-1}$ has the effect cutting the underlying singular $3$-manifold $M_{i-1}$ along an embedded $2$-sphere and filling the resulting boundaries with $3$-balls.
                \item The vertices $v_1$ and $v_2$ are identified to singular points, then $B$ forms a $2$-sphere with two singular points.
                In this case, the atomic move $a_{i-1}$ is the move flatting the $2$-sphere to an edge, and so $a_{i-1}$ has the effect cutting the underlying singular $3$-manifold $M_{i-1}$ along an embedded $2$-sphere and filling the resulting boundaries with $3$-balls.
            \end{itemize}
            \item The vertices $v_1$ and $v_2$ are identified, then $B$ forms a $2$-sphere whose two points are identified at a singular point, and so the atomic move $a_{i-1}$ is the move flatting the surface to a circle.
            Therefore, $a_{i-1}$ has the effect cutting the underlying singular $3$-manifold $M_{i-1}$ along an embedded $2$-sphere and filling the resulting boundaries with $3$-balls.
        \end{itemize}
    \end{itemize}
    
    Next, we assume that $a_{i-1}$ flats a triangular pillow to a triangle.
    Let $P$ denote the triangular pillow flattened by $a_{i-1}$, and $t_1$ and $t_2$ denote the triangles of $P$.
    \begin{itemize}
        \item If $t_1$ and $t_2$ are in $\partial \mathcal{C}_{i-1}$, then $P$ is a $3$-ball component.
        Thus, $a_{i-1}$ has the effect removing a $3$-ball component from $M_{i-1}$.
        \item  We consider the case where $t_1$ and $t_2$ are identified.
        If $t_1$ and $t_2$ are identified without a twist, then $P$ is a $3$-sphere component.
        If $t_1$ and $t_2$ are identified with a twist, then $P$ is a lens space $L(3,1)$ component.
        A $3$-sphere component and a $L(3,1)$ component in $M_i$ occur by the operation (d) because (d) is the only  operation which may add a component with no boundaries.
        If there is a $L(3,1)$ component, then there is an embedded $2$-sphere bounding a $3$-manifold cl$(L(3,1) - \mathbb{B}^3)$ in the canonical exterior cl$(\mathcal{S} \times I - N(\hat{D}))$.
        This contradicts that $\mathcal{S} \times I$ is irreducible, where $\mathcal{S}$ is the supporting surface of the virtual link diagram $D$.
        Therefore, $P$ forms a $3$-sphere, and so $a_{i-1}$ has the effect removing a $3$-sphere component.
        \item In the case where at least one of the faces of $P$ identifies to a face of cells other than $P$, then $P$ is an embedded $3$-ball in $\mathcal{C}_{i-1}$.
        The atomic move $a_{i-1}$ is the move flatting this $3$-ball to a disk, hence $a_{i-1}$ does not change the topology of the underlying singular $3$-manifold $M_{i-1}$.
    \end{itemize}
    
    Finally, we assume that $a_{i-1}$ is the move flatting a bigonal pillow to a bigon.
    Let $P$ denote the bigonal pillow flattened by $a_{i-1}$, and $b_1$ and $b_2$ denote the bigons of $P$.
    \begin{itemize}
        \item If $b_1$ and $b_2$ are in $\partial \mathcal{C}_{i-1}$, then $P$ is a $3$-ball component.
        Thus, $a_{i-1}$ has the effect removing a $3$-ball component.
        \item 
        If $t_1$ and $t_2$ are identified without a twist, then $P$ is a $3$-sphere component.
        If $t_1$ and $t_2$ are identified with a twist, then $P$ is a projection space component.
        In this case, $P$ does not forms a projection space for the same reason that a $L(3,1)$ component does not exist in $M_i$, and so $P$ forms a $3$-sphere.
        Therefore, $a_{i-1}$ has the effect removing a $3$-sphere component.
        \item In the case where at least one of the faces of $P$ identifies to a face of cells ther than $P$, $a_{i-1}$ does not change the underlying $3$-manifold because $P$ is an embedded $3$-ball in $\mathcal{C}_i$.
    \end{itemize}
\end{proof}

\begin{defi}\label{def:VandE}
    Let $M$ be the exterior of a link in a thickened closed orientable surface $\mathcal{S} \times I$ and $\mathcal{T}$ be a triangulation of $M$.
    There are two triangulations of $\mathcal{S}$ in the boundary of $\mathcal{T}$, and we denote these by $\mathcal{S}^0$ and $\mathcal{S}^1$.
    Suppose that there is a normal vertical annulus $A$ in $\mathcal{T}$.
    Let $\mathcal{C}_0$ be the cell-decomposition obtained from $\mathcal{T}$ by the non-destructive crushing procedure using $A$ and $\mathcal{T}$ be the triangulation obtained from $\mathcal{T}'$ by the crushing procedure using $A$.
    From Lemma \ref{lem:crushing_lemma}, there is a sequence of cell-decompositions $\mathcal{C}_0 \to \mathcal{C}_1 \to \cdots \to \mathcal{C}_n = \mathcal{T}'$, and $\mathcal{C}_i$ is obtained from $\mathcal{C}_{i-1}$ by an atomic move if $1 \leq i \leq n$.
    Let $\mathcal{S}_i^{k}$ denote the union of the subset obtained from $S^k$ in $\partial \mathcal{C}_i$.
    Then, we define the subsets $V_i$ and $E_i$ of $\partial \mathcal{C}_i$ as follows:
    \begin{itemize}
        \item $\displaystyle E_i = \bigcup_{e_j\ \subset\ \mathcal{S}_i^{0}\ \cap\ \mathcal{S}_i^{1}} e_j$, where $e_j$ are edges of $\mathcal{C}_i$. 
        \item $\displaystyle V_i = \left( \bigcup_{v_j\ \in \ \mathcal{S}_i^{0}\ \cap\ \mathcal{S}_i^{1}} v_j \right) - \left( \bigcup_{v_j\ \in\ E_i} v_j \right)$, where $v_j$ are verticies of $\mathcal{C}_i$.
    \end{itemize}
\end{defi}
Note that the $2$-cells in $\mathcal{S}_i^{0} \cup \mathcal{S}_i^{1}$ are removed from $\mathcal{C}_i$ because the $2$-cells does not belong to any $3$-cells in $\mathcal{C}_i$, thus, there are no $2$-cells in $\mathcal{S}_i^{0} \cap \mathcal{S}_i^{1}$.
Therefore, $V_i$ is the set of all singular points in $\mathcal{C}_i$, and $\mathcal{S}_i^{0}\cap \mathcal{S}_i^{1} = V_i \cup E_i$. 
We define $V' \subset \mathcal{T}'$ by $V_n$ and $E'\subset \mathcal{T}'$ by $E_n$.

\begin{lem}\label{lem:V'_E'}
    Suppose the same situation of Definition \ref{def:VandE}.
    Then, $V'$ and $E'$ satisfies one of the following conditions:
    \begin{itemize}
        \item $|V'| = 2$ and $|E'| = 0$,
        \item $|V'| = 1$ and $|E'| = 0$,
        \item $|V'| = 1$, $|E'| = 2$, and each component of $E'$ is a circle constructed by one edge,
        \item $|V'| = 0$, $|E'| = 2$, and each component of $E'$ is a circle constructed by one edge,
        \item $|V'| = 0$, $|E'| = 4$, and each component of $E'$ is a circle constructed by one edge,
        \item $|V'| = 0$, $|E'| = 1$, and the component of $E'$ is a circle constructed by two edges,
        \item $|V'| = 0$ and $|E'| = 0$.
    \end{itemize}
\end{lem}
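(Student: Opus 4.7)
The plan is to argue by induction on $i$, maintaining the invariant that the pair $(V_i, E_i)$ lies in the listed family of seven configurations throughout the sequence $\mathcal{C}_0 \to \mathcal{C}_1 \to \cdots \to \mathcal{C}_n = \mathcal{T}'$ guaranteed by Lemma \ref{lem:crushing_lemma}. The overarching idea is to keep track of how the intersection $\mathcal{S}_i^{0} \cap \mathcal{S}_i^{1}$ evolves under each atomic move; since the only source of points in this intersection is the pinching that occurs when each copy of $A$ is shrunk to a point, this intersection is combinatorially very constrained and can only be simplified (not enriched) by subsequent moves.

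For the base case $i=0$, I use that $A$ is a vertical annulus, so $\partial A = a_0 \cup a_1$ with $a_k \subset \mathcal{S} \times \{k\}$. When the non-destructive crushing procedure cuts $\mathcal{T}$ open along $A$ and collapses each of the two resulting copies of $A$ to a single point, each collapse identifies a circle of $\mathcal{S}_0^{0}$ with a circle of $\mathcal{S}_0^{1}$ at one common point. Consequently $\mathcal{S}_0^{0} \cap \mathcal{S}_0^{1}$ is exactly two isolated singular points, giving $|V_0|=2$ and $|E_0|=0$, which is the first listed configuration.

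For the inductive step, I would go through the three atomic moves one by one, reusing the detailed case analysis already carried out in the proof of Lemma \ref{lem:crush_annulus}. Atomic moves supported on interior cells, or on boundary components that avoid $\mathcal{S}_i^{0} \cap \mathcal{S}_i^{1}$, leave $(V_i, E_i)$ unchanged. The only moves that affect the pair are: (a) bigon-to-edge flattenings whose two edges lie one in $\mathcal{S}_i^{0}$ and the other in $\mathcal{S}_i^{1}$, which can add one edge to $E_i$ and possibly convert one or two singular vertices of $V_i$ into endpoints (or identified endpoints) of that new edge; (b) bigon, bigonal-pillow, or triangular-pillow flattenings that kill an entire sphere (such as a $3$-ball, $3$-sphere, or absorbed boundary sphere component), which can delete an isolated singular vertex or a whole loop component of $E_i$ at once; and (c) flattenings of a bigon with both edges in the same $\mathcal{S}_i^{k}$ whose effect on $\mathcal{S}_i^{0} \cap \mathcal{S}_i^{1}$ is to collapse a pinched disk to a circle, possibly eliminating a singular vertex. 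A direct enumeration of the permitted outcomes, starting from the base $(|V|,|E|)=(2,0)$, shows that every resulting pair still belongs to the listed seven configurations.

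The main obstacle is the combinatorial bookkeeping: there are many sub-subcases for how a bigon can sit relative to $\mathcal{S}_i^{0} \cup \mathcal{S}_i^{1}$ and to the singular set $V_i$, and one must verify in each situation that the resulting intersection is again of one of the seven prescribed shapes. The key simplifying observation is that $\mathcal{S}_i^{0} \cap \mathcal{S}_i^{1}$ never acquires a new connected component: new edges can only appear by identifying two existing edges along the boundary, and any closed loop of $E_i$ built from more than two edges could only arise if several bigon flattenings cooperated to create a long cycle, which a careful examination of the admissible moves rules out. Once this "monotone simplification" is established, the seven configurations are simply the possible terminal states of the process initiated at $(|V|,|E|)=(2,0)$.
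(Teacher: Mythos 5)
Your proposal takes essentially the same approach as the paper: track $(V_i,E_i)$ along the sequence $\mathcal{C}_0\to\cdots\to\mathcal{C}_n$ of Lemma \ref{lem:crushing_lemma}, observe $|V_0|=2$, $|E_0|=0$, note that only bigon-to-edge flattenings whose bigon contains singular points can change $(V_i,E_i)$, and then enumerate the permitted transitions. Where it falls short is that the "direct enumeration" is asserted rather than carried out, and one step that the paper proves is genuinely non-trivial. Specifically, when a bigon $B$ with $v_1\neq v_2$ and edges split between $\mathcal{S}_i^{0}$ and $\mathcal{S}_i^{1}$ is flattened (the paper's type (III)), the result is a circle built from two edges $\bar e_1, \bar e_2$, and one must rule out that a later atomic move identifies $\bar e_1$ with $\bar e_2$ (which would change the component type and break your seven-fold classification). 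The paper closes this gap by observing that if $\bar e_1$ and $\bar e_2$ were identified by a subsequent flattening of a bigon $\bar B$ with those two edges, then $\bar B$ would constitute an entire component of some $\mathcal{S}_j^{k}$; but every component of $\mathcal{S}_j^{k}$ must contain at least one triangle inherited from the original triangulation of $\mathcal{S}\times\{k\}$, so no component can consist of a single bigon. Your proposal's claim that "new edges can only appear by identifying two existing edges" and that "long cycles are ruled out" points in a related direction but does not supply this argument; without it the $|V'|=0$, $|E'|=1$ entry of the list is not justified. A secondary omission is that you do not draw the distinction $v_1=v_2$ versus $v_1\neq v_2$ explicitly, which is what decides whether a flattening produces two circles of one edge each or a single circle of two edges, and hence which branches of the seven-fold classification are reachable after a given move.
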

\begin{proof}
    $\mathcal{C}_0$ is obtained from $\mathcal{T}$ by cutting $\mathcal{T}$ along the annulus $A$ and then each copies of $A$ to a point.
    Therefore, $|V_0| = 2$ and $|E_0| = 0$.
    
    $\mathcal{C}_{i+1}$ is obtained from $\mathcal{C}_i$ by an atomic move $a_i$.
    From the proof of Lemma \ref{lem:crush_annulus}, if $|V_i| \neq |V_{i+1}|$ or $|E_i| \neq |E_{i+1}|$, then the atomic move $a_i$ is the move flatting a bigon $B$ to an edge and $B$ contains singular points.
    Furthermore, by the proof of Lemma \ref{lem:crush_annulus}, $B$ satisfies one of the following conditions:
    \begin{enumerate}
        \renewcommand{\labelenumi}{(\Roman{enumi})}
        \item $B \subset \partial \mathcal{C}_i$, $e_1 = e_2$, and $B$ contains one singular point (Figure \ref{fig:types}\subref{fig:type1}),
        \item $B \subset \partial \mathcal{C}_i$, $e_1 = e_2$, and $B$ contains two singular points (Figure \ref{fig:types}\subref{fig:type2}), 
        \item $\partial B \subset \partial \mathcal{C}_i$, $v_1 \neq v_2$, and $e_1 \subset \mathcal{S}_i^k$ and $e_2 \subset \mathcal{S}_i^l$ (Figure \ref{fig:types}\subref{fig:type3}),
        \item  $\partial B \subset \partial \mathcal{C}_i$, $v_1 = v_2$, and $e_1 \subset \mathcal{S}_i^k$ and $e_2 \subset \mathcal{S}_i^l$ (Figure \ref{fig:types}\subref{fig:type4}),
    \end{enumerate}
    where $e_1$ and $e_2$ are the edges of $B$ and $k \neq l$.
    We say that a bigon $B$ has type (I), (II), (III), or (IV) if $B$ has singular points and $B$ satisfies the condition (I), (II), (III), or (IV), respectively. 

    \begin{figure}
        \begin{tabular}{cc}
            \begin{minipage}{0.49\textwidth}
                \centering
                \tikzset{every picture/.style={line width=0.75pt}} 

\begin{tikzpicture}[x=0.75pt,y=0.75pt,yscale=-1,xscale=1]

\draw  [fill={rgb, 255:red, 0; green, 0; blue, 0 }  ,fill opacity=0.1 ] (74,29.27) .. controls (105,52.24) and (108,84.74) .. (75,85.41) .. controls (44,82.74) and (45,52.24) .. (74,29.27) -- cycle ;
\draw    (11,41.28) -- (141,41.12) ;
\draw    (11,41.28) -- (51,9.8) ;

\draw [color={rgb, 255:red, 155; green, 155; blue, 155 }  ,draw opacity=1 ]   (52,64.77) .. controls (52,76.52) and (97,75.74) .. (97,64.77) ;
\draw [color={rgb, 255:red, 155; green, 155; blue, 155 }  ,draw opacity=1 ] [dash pattern={on 4.5pt off 4.5pt}]  (52,64.77) .. controls (51,58.51) and (98,56.94) .. (97,64.77) ;
\draw [color={rgb, 255:red, 155; green, 155; blue, 155 }  ,draw opacity=1 ]   (75,85.41) .. controls (45,82.41) and (44,53.03) .. (74,29.27) ;
\draw    (11,106.28) -- (141,106.12) ;
\draw    (11,106.28) -- (51,74.8) ;

\draw  [fill={rgb, 255:red, 0; green, 0; blue, 0 }  ,fill opacity=1 ] (72.6,29.38) .. controls (72.6,28.86) and (73.14,28.44) .. (73.8,28.44) .. controls (74.46,28.44) and (75,28.86) .. (75,29.38) .. controls (75,29.9) and (74.46,30.32) .. (73.8,30.32) .. controls (73.14,30.32) and (72.6,29.9) .. (72.6,29.38) -- cycle ;

\draw (99,65.79) node [anchor=north west][inner sep=0.75pt]    {$B$};

\end{tikzpicture}
                \subcaption{type (I)}
                \label{fig:type1}
            \end{minipage} &
            
            \begin{minipage}{0.49\textwidth}
                \centering
                \tikzset{every picture/.style={line width=0.75pt}} 

\begin{tikzpicture}[x=0.75pt,y=0.75pt,yscale=-1,xscale=1]

\draw  [fill={rgb, 255:red, 0; green, 0; blue, 0 }  ,fill opacity=0.1 ] (74,29.27) .. controls (105,52.24) and (105,84.35) .. (74,98.18) .. controls (44,76.52) and (45,52.24) .. (74,29.27) -- cycle ;
\draw    (11,41.28) -- (141,41.12) ;
\draw    (11,41.28) -- (51,9.8) ;

\draw [color={rgb, 255:red, 155; green, 155; blue, 155 }  ,draw opacity=1 ]   (52,64.77) .. controls (52,76.52) and (97,75.74) .. (97,64.77) ;
\draw [color={rgb, 255:red, 155; green, 155; blue, 155 }  ,draw opacity=1 ] [dash pattern={on 4.5pt off 4.5pt}]  (52,64.77) .. controls (51,58.51) and (98,56.94) .. (97,64.77) ;
\draw [color={rgb, 255:red, 155; green, 155; blue, 155 }  ,draw opacity=1 ]   (74,98.18) .. controls (45,76.52) and (44,53.03) .. (74,29.27) ;
\draw    (11,106.28) -- (141,106.12) ;
\draw    (11,106.28) -- (51,74.8) ;

\draw  [fill={rgb, 255:red, 0; green, 0; blue, 0 }  ,fill opacity=1 ] (73.6,98.03) .. controls (73.6,97.51) and (74.14,97.09) .. (74.8,97.09) .. controls (75.46,97.09) and (76,97.51) .. (76,98.03) .. controls (76,98.55) and (75.46,98.97) .. (74.8,98.97) .. controls (74.14,98.97) and (73.6,98.55) .. (73.6,98.03) -- cycle ;
\draw  [fill={rgb, 255:red, 0; green, 0; blue, 0 }  ,fill opacity=1 ] (72.6,29.38) .. controls (72.6,28.86) and (73.14,28.44) .. (73.8,28.44) .. controls (74.46,28.44) and (75,28.86) .. (75,29.38) .. controls (75,29.9) and (74.46,30.32) .. (73.8,30.32) .. controls (73.14,30.32) and (72.6,29.9) .. (72.6,29.38) -- cycle ;

\draw (99,65.79) node [anchor=north west][inner sep=0.75pt]    {$B$};

\end{tikzpicture}
                \subcaption{type (II)}
                \label{fig:type2}
            \end{minipage}\\
            
            \begin{minipage}{0.49\textwidth}
                \centering
                \tikzset{every picture/.style={line width=0.75pt}} 

\begin{tikzpicture}[x=0.75pt,y=0.75pt,yscale=-1,xscale=1]

\draw    (6,38.45) -- (174.57,38.77) ;
\draw    (6,38.45) -- (31.53,4) ;
\draw    (6,101) -- (175.23,100.17) ;
\draw    (6,101) -- (31.53,66.55) ;
\draw [color={rgb, 255:red, 0; green, 0; blue, 0 }  ,draw opacity=0.3 ]   (28.91,25.59) .. controls (55.08,30.31) and (60.83,51.73) .. (62.74,62.87) ;
\draw [color={rgb, 255:red, 0; green, 0; blue, 0 }  ,draw opacity=0.3 ]   (26.36,90.72) .. controls (42.32,90.29) and (59.55,73.15) .. (62.74,62.87) ;
\draw [color={rgb, 255:red, 0; green, 0; blue, 0 }  ,draw opacity=0.3 ]   (96.58,90.29) .. controls (78.07,89.43) and (66.57,74.01) .. (62.74,62.87) ;
\draw [color={rgb, 255:red, 0; green, 0; blue, 0 }  ,draw opacity=0.3 ]   (95.3,26.02) .. controls (74.24,26.88) and (63.38,50.87) .. (62.74,62.87) ;
\draw [color={rgb, 255:red, 0; green, 0; blue, 0 }  ,draw opacity=0.3 ]   (52.32,77.62) .. controls (51.93,82.3) and (72.14,82.3) .. (70.98,77.1) ;
\draw [color={rgb, 255:red, 0; green, 0; blue, 0 }  ,draw opacity=0.3 ] [dash pattern={on 4.5pt off 4.5pt}]  (52.32,77.62) .. controls (55.04,72.94) and (69.81,73.46) .. (70.98,77.1) ;
\draw [color={rgb, 255:red, 0; green, 0; blue, 0 }  ,draw opacity=0.3 ]   (57.76,46.97) .. controls (60.09,50.6) and (64.37,51.64) .. (67.48,46.97) ;
\draw [color={rgb, 255:red, 0; green, 0; blue, 0 }  ,draw opacity=0.3 ] [dash pattern={on 4.5pt off 4.5pt}]  (57.76,46.97) .. controls (60.48,42.29) and (66.31,43.33) .. (67.48,46.97) ;
\draw  [fill={rgb, 255:red, 0; green, 0; blue, 0 }  ,fill opacity=1 ] (61.77,62.09) .. controls (61.77,61.37) and (62.21,60.79) .. (62.74,60.79) .. controls (63.28,60.79) and (63.72,61.37) .. (63.72,62.09) .. controls (63.72,62.81) and (63.28,63.39) .. (62.74,63.39) .. controls (62.21,63.39) and (61.77,62.81) .. (61.77,62.09) -- cycle ;
\draw [color={rgb, 255:red, 0; green, 0; blue, 0 }  ,draw opacity=0.3 ]   (95.3,26.02) .. controls (121.11,27.22) and (130.4,52.25) .. (132.31,63.39) ;
\draw [color={rgb, 255:red, 0; green, 0; blue, 0 }  ,draw opacity=0.3 ]   (96.58,90.29) .. controls (112.54,89.86) and (129.12,73.67) .. (132.31,63.39) ;
\draw [color={rgb, 255:red, 0; green, 0; blue, 0 }  ,draw opacity=0.3 ]   (166.15,90.81) .. controls (147.63,89.95) and (136.14,74.53) .. (132.31,63.39) ;
\draw [color={rgb, 255:red, 0; green, 0; blue, 0 }  ,draw opacity=0.3 ]   (164.87,26.54) .. controls (143.8,27.4) and (132.95,51.39) .. (132.31,63.39) ;
\draw [color={rgb, 255:red, 0; green, 0; blue, 0 }  ,draw opacity=0.3 ]   (126.55,47.49) .. controls (128.88,51.12) and (133.94,52.16) .. (137.04,47.49) ;
\draw  [fill={rgb, 255:red, 0; green, 0; blue, 0 }  ,fill opacity=1 ] (131.34,62.61) .. controls (131.34,61.89) and (131.78,61.31) .. (132.31,61.31) .. controls (132.85,61.31) and (133.28,61.89) .. (133.28,62.61) .. controls (133.28,63.33) and (132.85,63.91) .. (132.31,63.91) .. controls (131.78,63.91) and (131.34,63.33) .. (131.34,62.61) -- cycle ;
\draw [color={rgb, 255:red, 0; green, 0; blue, 0 }  ,draw opacity=0.3 ] [dash pattern={on 4.5pt off 4.5pt}]  (126.55,47.49) .. controls (129.27,42.81) and (135.88,43.85) .. (137.04,47.49) ;
\draw [color={rgb, 255:red, 0; green, 0; blue, 0 }  ,draw opacity=0.3 ]   (121.89,78.14) .. controls (121.5,82.82) and (141.71,82.82) .. (140.54,77.62) ;
\draw [color={rgb, 255:red, 0; green, 0; blue, 0 }  ,draw opacity=0.3 ] [dash pattern={on 4.5pt off 4.5pt}]  (121.89,78.14) .. controls (124.61,73.46) and (139.38,73.98) .. (140.54,77.62) ;
\draw  [fill={rgb, 255:red, 0; green, 0; blue, 0 }  ,fill opacity=0.1 ] (132.51,61.06) .. controls (117.69,98.92) and (75.22,100.77) .. (62.74,60.79) .. controls (77.87,6.19) and (124.99,23.42) .. (132.51,61.06) -- cycle ;

\draw (90.77,54.26) node [anchor=north west][inner sep=0.75pt]    {$B$};

\end{tikzpicture}
                \subcaption{type (III)}
                \label{fig:type3}
            \end{minipage} &
            \begin{minipage}{0.49\textwidth}
                \centering
                \tikzset{every picture/.style={line width=0.75pt}} 

\begin{tikzpicture}[x=0.75pt,y=0.75pt,yscale=-1,xscale=1]

\draw    (6,42.72) -- (164,42.63) ;
\draw    (6,42.72) -- (34.23,9.17) ;
\draw    (6,103.64) -- (164,103.63) ;
\draw    (6,103.64) -- (34.23,70.09) ;
\draw [color={rgb, 255:red, 0; green, 0; blue, 0 }  ,draw opacity=0.3 ]   (31.33,30.2) .. controls (60.26,34.79) and (66.62,55.66) .. (68.73,66.51) ;
\draw [color={rgb, 255:red, 0; green, 0; blue, 0 }  ,draw opacity=0.3 ]   (28.51,93.63) .. controls (46.15,93.21) and (65.2,76.52) .. (68.73,66.51) ;
\draw [color={rgb, 255:red, 0; green, 0; blue, 0 }  ,draw opacity=0.3 ]   (106.07,94.89) .. controls (85.18,91.89) and (72.97,77.35) .. (68.73,66.51) ;
\draw [color={rgb, 255:red, 0; green, 0; blue, 0 }  ,draw opacity=0.3 ]   (104.73,30.62) .. controls (81.44,31.46) and (69.44,54.82) .. (68.73,66.51) ;
\draw [color={rgb, 255:red, 0; green, 0; blue, 0 }  ,draw opacity=0.3 ]   (57.21,80.87) .. controls (56.78,85.43) and (79.12,85.43) .. (77.83,80.37) ;
\draw [color={rgb, 255:red, 0; green, 0; blue, 0 }  ,draw opacity=0.3 ] [dash pattern={on 4.5pt off 4.5pt}]  (57.21,80.87) .. controls (60.22,76.32) and (76.54,76.82) .. (77.83,80.37) ;
\draw [color={rgb, 255:red, 0; green, 0; blue, 0 }  ,draw opacity=0.3 ]   (63.22,51.02) .. controls (65.8,54.56) and (70.53,55.57) .. (73.97,51.02) ;
\draw [color={rgb, 255:red, 0; green, 0; blue, 0 }  ,draw opacity=0.3 ] [dash pattern={on 4.5pt off 4.5pt}]  (63.22,51.02) .. controls (66.23,46.47) and (72.68,47.48) .. (73.97,51.02) ;
\draw  [fill={rgb, 255:red, 0; green, 0; blue, 0 }  ,fill opacity=1 ] (67.66,65.75) .. controls (67.66,65.05) and (68.14,64.48) .. (68.73,64.48) .. controls (69.33,64.48) and (69.81,65.05) .. (69.81,65.75) .. controls (69.81,66.45) and (69.33,67.01) .. (68.73,67.01) .. controls (68.14,67.01) and (67.66,66.45) .. (67.66,65.75) -- cycle ;
\draw [color={rgb, 255:red, 0; green, 0; blue, 0 }  ,draw opacity=1 ]   (68.73,64.48) .. controls (77.01,19.89) and (134.22,27.89) .. (134.22,35.89) .. controls (134.22,43.89) and (94.26,29.89) .. (69.81,65.75) ;
\draw [color={rgb, 255:red, 0; green, 0; blue, 0 }  ,draw opacity=1 ]   (134.22,35.89) -- (134.22,91.89) ;
\draw    (68.73,67.01) .. controls (81.55,100.89) and (135.12,101.89) .. (134.22,91.89) ;
\draw  [dash pattern={on 4.5pt off 4.5pt}]  (68.73,67.01) .. controls (102.43,95.89) and (133.31,69.89) .. (134.22,91.89) ;
\draw  [draw opacity=0][fill={rgb, 255:red, 0; green, 0; blue, 0 }  ,fill opacity=0.1 ] (68.73,67.01) .. controls (100.44,96.29) and (132.22,70.29) .. (134.22,91.89) .. controls (134.03,52.29) and (134.03,69.29) .. (134.22,35.89) .. controls (135.85,41.29) and (94.08,31.29) .. (68.73,67.01) -- cycle ;
\draw  [draw opacity=0][fill={rgb, 255:red, 0; green, 0; blue, 0 }  ,fill opacity=0.1 ] (68.73,67.01) .. controls (80.46,98.29) and (133.13,103.29) .. (134.22,91.89) .. controls (134.03,52.29) and (134.03,69.29) .. (134.22,35.89) .. controls (134.94,28.29) and (77.74,15.29) .. (68.73,67.01) -- cycle ;

\draw (135.98,59.83) node [anchor=north west][inner sep=0.75pt]    {$B$};

\end{tikzpicture}
                \subcaption{type (IV)}
                \label{fig:type4}
            \end{minipage}
        \end{tabular}
        \caption{The types of a bigon $B$}
        \label{fig:types}
    \end{figure}
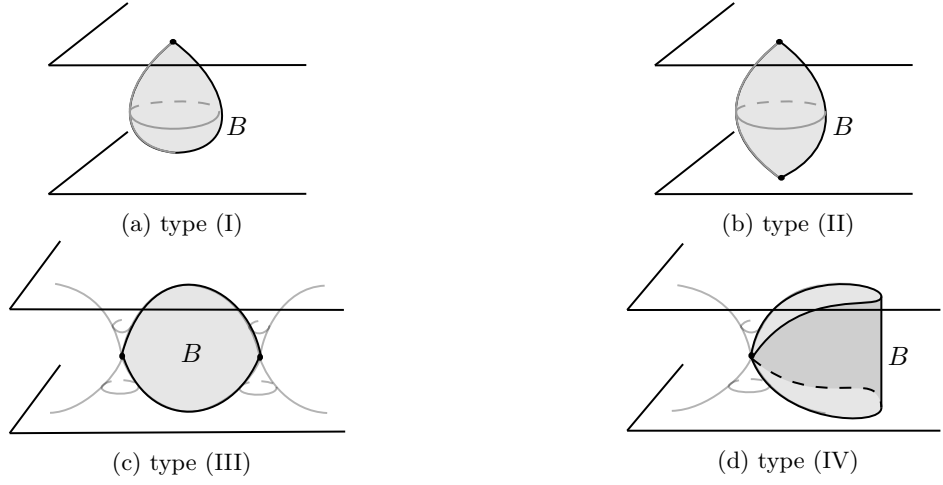
    \begin{itemize}
        \item If there are no bigons whose type is (I), (II), (III), or (IV) in $\mathcal{C}_i$ for any $i\ (0 \leq i \leq n)$, then $\mathcal{T}'$ satisfies $|V'| = 2$ and $|E'| = 0$.
        
        \item If there is a bigon $B$ whose type is (I) in $\mathcal{C}_i$ and $a_i$ is the move flatting $B$ to an edge, then $a_i$ removes the singular point contained in $B$.
        Note that there are no bigons whose type is (II) or (III) in any cell-decompositions $\mathcal{C}_j$ $(j > i)$ since the number of singular points in $\mathcal{C}_j$ is one or less.
        \begin{itemize}
            \item If there are no bigons whose type is (I) or (IV) in any cell-decompositions $\mathcal{C}_j$ $(j > i)$, then $|V'| = 1$ and $|E'| = 0$.
            \item If there is a bigon $B_j$ whose type is (I) in a cell-decomposition $\mathcal{C}_j$ $(j > i)$ and $a_j$ is the move flatting $B_j$, then $|V'| = 0$ and $|E'| = 0$.
            \item If there is a bigon whose type is (IV) in a cell-decomposition $\mathcal{C}_j$ $(j > i)$ and $a_j$ is the move flatting $B_j$, then $|V'| = 0$, $|E'| = 2$, and each component of $E'$ is a circle constructed by one edge.
        \end{itemize}

        \item Suppose that there is a bigon $B$ whose type is (II) in $\mathcal{C}_i$ and $a_i$ is the move flatting $B$ to an edge.
        In this case, $\mathcal{C}_{i+1}$ has no singular points, and so there are no bigons whose type are (I)--(IV) in any cell-decompositions $\mathcal{C}_j$ $(j > i)$.
        Therefore, $|V'| = 0$ and $|E'| = 0$.
        
        
        \item If there is a bigon $B$ whose type is (III) in $\mathcal{C}_i$ and $a_i$ is the move flatting $B$ to an edge, then the atomic move $a_i$ flats $B$ to a circle constructed by two edges as shown Figure \ref{fig:bigon_to_circle}.
        Therefore, $\mathcal{C}_{i+1}$ satisfies $|V_{i+1}| = 0$, $|E_{i+1}| = 1$, and the component of $E_{i+1}$ is a circle constructed by two edge.
        Let $\bar{e}_1$ and $\bar{e}_2$ denote the edges in $E_i$.
        We prove that $\bar{e}_1$ and $\bar{e}_2$ are not identified in $\mathcal{T}'$.
        Suppose that $\bar{e}_1$ and $\bar{e}_2$ are not identified in $\mathcal{C}_j$ and $\bar{e}_1$ and $\bar{e}_2$ are identified in $\mathcal{C}_{j+1}$, where $i < j \leq n-1$.
        In this case, there is a bigon $\bar{B}$ whose edges are $\bar{e}_1$ and $\bar{e}_2$ in $\mathcal{C}_j$, and $a_j$ is the move flatting $\bar{B}$ to an edge.
        The edges $\bar{e}_1$ and $\bar{e}_2$ are in $E_j \subset \mathcal{S}_j^0 \cap \mathcal{S}_j^1$, and so $\bar{B}$ is a component of $\mathcal{S}_j^k$, where $k = 0 \text{ or } 1$.
        However, a component of $\mathcal{S}_j^k$ has at least one triangle because a component of $\mathcal{S} \times \{k\} \subset \partial \mathcal{T}$ has triangles and these triangles are not removed by the atomic moves and the move shrinking the anunlus $A$.
        This contradicts that a component constructed by $B$ does not contain triangles, therefore, $\bar{e}_1$ and $\bar{e}_2$ are not identified in $\mathcal{T}'$.
        Thus, $|V'| = 0$, $|E'| = 1$, and the component of $E'$ is a circle constructed by two edges.
        
        \item Suppose that there is a bigon $B$ whose type is (IV) in $\mathcal{C}_i$ and $a_i$ is the move flatting $B$ to an edge.
        Note that the atomic moves do not eliminate a circle constructed by one edge.
        \begin{itemize}
            \item If there are no bigons whose type is (I) or (IV) in any cell-decompositions $\mathcal{C}_j$ $(j > i)$, then $|V'| = 1$ and $|E'| = 2$.
            \item If there is a bigon $B_j$ whose type is (I) in a cell-decomposition $\mathcal{C}_j$ $(j > i)$ and $a_j$ is the move flatting $B_j$, then $|V'| = 0$ and $|E'| = 2$.
            \item If there is a bigon whose type is (IV) in a cell-decomposition $\mathcal{C}_j$ $(j > i)$ and $a_j$ is the move flatting $B_j$, then $|V'| = 0$, $|E'| = 4$, and each component of $E'$ is a circle constructed by one edge.
        \end{itemize}
    \end{itemize}
\end{proof}

\section{The proof that classical knot recognition is in NP}


Let $D$ be a diagram of a virtual link $L$.
We can regard $L$ as a link $\hat{D}$ in $\mathcal{S} \times I$, where $\mathcal{S}$ is a closed orientable surface.
Let $M$ be the exterior $\text{cl}(\mathcal{S} \times I - N(\hat{D}))$.
By Theorem \ref{thm:K}, $L$ is classical if and only if $g(\mathcal{S})$ is reduced to zero by repeating splitting and destabilization, where $g(\mathcal{S})$ denotes the sum of the genera of connected components of $\mathcal{S}$.
In this section, we give an algorithm to split and destabilize on a triangulation of $M$.

\subsection{Splitting on a triangulation}
In this subsection, we give an algorithm to split on a triangulation of the exterior of a link in a thickened closed orientable surface $\mathcal{S} \times I$.

\begin{ope}\label{ope:splitting}
    Suppose that $\hat{D}$ is a link in a thickened closed orientable surface $\mathcal{S} \times I$, $M$ is the exterior of $\hat{D}$, and $\mathcal{T}$ is a triangulation of $M$.
    Given $\mathcal{T}$ and a normal 2-sphere $F$ in $M$ with respect to $\mathcal{T}$,
    do the following operations:
    \begin{enumerate}
        \item perform the crushing procedure on $\mathcal{T}$ using $F$,
        \item remove the components which contains no torus boundaries other than the boundary components obtained from the copies of $\mathcal{S}$.
    \end{enumerate}
\end{ope}

The splitting operation is performed at least once by Operation \ref{ope:splitting}.
\begin{lem}\label{lem:splitting}
    Suppose that $\hat{D}$ is a link in a thickened closed orientable surface $\mathcal{S} \times I$, $\mathcal{T}$ is a triangulation of the exterior $M = \text{cl}(\mathcal{S} \times I - N(\hat{D}))$ of $\hat{D}$, and $F$ is a normal $2$-sphere in $M$ with respect to $\mathcal{T}$.
    Let $\mathcal{T}'$ be the triangulation obtained by the step 1 of Operation \ref{ope:splitting} 
    and $M'$ be the underlying 3-manifold of $\mathcal{T}'$.
    Then, there is a sequence of 3-manifolds $M \to M_0 \rightarrow M_1 \rightarrow \cdots \rightarrow M_{n-1} \rightarrow M_n = M'$, and
    $M_{i+1}$ is obtained from $M_{i}$ by one of the following operations:
    \begin{itemize}
        \item splitting $M_i$ by an essential $2$-sphere,
        \item compressing the boundary of $M_i$,
        \item adding a 3-ball or a 3-sphere component,
        \item filling a boundary 2-sphere with a 3-ball,
        \item removing a $3$-ball or a $3$-sphere component.
    \end{itemize}
    Moreover, if the given normal surface $F$ is essential in $M$, then splitting is performed at least once.
\end{lem}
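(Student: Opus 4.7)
The plan is to apply Theorem \ref{thm:B_crush} to the crushing procedure using the normal $2$-sphere $F$ and then match each of its possible outcomes on the underlying $3$-manifold to one of the five operations allowed by the lemma. First I would verify the hypothesis of Theorem \ref{thm:B_crush}: the cell-decomposition $\mathcal{C}_0$ obtained by the non-destructive crushing must contain no two-sided projective planes. Since $M \subset \mathcal{S} \times I$ is orientable and cutting along a normal $2$-sphere preserves orientability, this is immediate.

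Next I would describe the initial transition $M \to M_0$. Because $M$ is orientable, $F$ is two-sided, so the non-destructive crushing cuts $M$ along $F$ to produce two $2$-sphere boundary components and then shrinks each of them to a point, which is topologically the same as capping each new boundary $2$-sphere by a $3$-ball. If $F$ is essential, this is precisely splitting by an essential $2$-sphere; if $F$ bounds a $3$-ball in $M$, then the effect is to leave the original component unchanged and add a $3$-sphere component. Either way, the transition is one of the allowed operations. Theorem \ref{thm:B_crush} then produces the sequence $\mathcal{C}_0 \to \mathcal{C}_1 \to \cdots \to \mathcal{C}_n = \mathcal{T}'$ of atomic moves whose effects on the underlying $3$-manifold fall into four families: cutting along a properly embedded disk (a boundary compression, or adding a $3$-ball component if the disk is boundary-parallel), cutting along an embedded $2$-sphere and capping with $3$-balls (splitting by an essential sphere, or adding a $3$-sphere otherwise), filling a boundary $2$-sphere with a $3$-ball, and removing one of six types of closed component.

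The main obstacle is ruling out the removals of $L(3,1)$, $\mathbb{R}P^3$, $\mathbb{S}^2 \times \mathbb{S}^1$, and $\mathbb{S}^2 \tilde{\times} \mathbb{S}^1$ components. The plan is an induction on $i$: at the base, $M$ embeds in $\mathcal{S} \times I$, whose prime pieces are $\mathbb{S}^3$ together with $\Sigma \times I$ for each component $\Sigma$ of $\mathcal{S}$, none of which admits any of these four manifolds as a summand; at the inductive step, a case check confirms that each operation already on the allowed list preserves the absence of such summands, in the spirit of the $L(3,1)$-exclusion argument appearing in the proof of Lemma \ref{lem:crush_annulus}. Finally, the \emph{moreover} clause is covered by the second paragraph: if $F$ is essential in $M$, the transition $M \to M_0$ is itself a splitting by an essential $2$-sphere, so splitting is performed at least once in the full sequence.
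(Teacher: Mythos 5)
Your proposal matches the paper's proof: verify that $\mathcal{C}_0$ contains no two-sided projective planes (orientability of $M$), identify the transition $M \to M_0$ as a splitting when $F$ is essential and as adding a $3$-sphere component when $F$ bounds a ball, then invoke Theorem \ref{thm:B_crush} and sort each of its four outcomes into the five allowed operations, with the moreover clause read off the first transition. The one place you elaborate beyond the paper is the exclusion of removals of $L(3,1)$, $\mathbb{R}P^3$, $\mathbb{S}^2 \times \mathbb{S}^1$, and $\mathbb{S}^2 \tilde{\times} \mathbb{S}^1$ components: the paper disposes of this directly by noting that $M$ (a link exterior with nonempty boundary) has no closed components to begin with and the other atomic-move effects only ever add $3$-ball or $3$-sphere components, so nothing else can be removed; your prime-summand induction reaches the same conclusion but is heavier machinery than needed.
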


\begin{proof}
    Consider performing the crushing procedure using $F$ on $\mathcal{T}$.
    Let $\mathcal{C}_0$ be the cell-decomposition which is obtained by the non-destructive crushing procedure and $M_0$ be the underlying 3-manifold of $\mathcal{C}_0$.
    The $3$-manifold $M_0$ is obtained from $M$ by splitting if $F$ is essential, otherwise, $M_0$ is obtained by adding a $3$-sphere component to $M$.
    In either case, $M_0$ is obtained from $M$ by one of the above four operations.
    Since $M$ contains no two-sided projective planes, $M_0$ also contains no two-sided projective planes.
    By Theorem \ref{thm:B_crush}, there is a sequence of 3-manifold $M_0 \to M_1 \rightarrow \cdots \rightarrow M_{n-1} \rightarrow M_n = M'$ and, 
    $M_{i+1}$ is obtained from $M_{i}$ by one of the following operations:
    \begin{itemize}
        \item cutting open along a properly embedded disk $S$ in $M_i$,
        \item cutting open along a 2-sphere $S$ in $M_i$ and filling the resulting boundary spheres with 3-balls,
        \item removing a 3-ball, a 3-sphere, a lens space $L(3,1)$, a projective space $\mathbb{R}P^3$, $\mathbb{S}^2 \times \mathbb{S}^1$ or a twisted $\mathbb{S}^1$ bundle $\mathbb{S}^2 \tilde{\times} \mathbb{S}^1$ component,
        \item filling a boundary sphere in $\partial M_i$ with a 3-ball.
    \end{itemize}
    We can see that the following by observing these operations in detail for $M_i$.
    \begin{enumerate}
        \item The operation cutting $M_i$ open along a properly embedded disk $S$ in $M_i$:
        \begin{itemize}
            \item In the case where $\partial S$ is essential in $\partial M_i$:
            \begin{itemize}
                \item This case implies that there is a component $\hat{D}'$ of $\hat{D}$ and $\partial S$ is in $\partial N(\hat{D}')$, i.e., $\hat{D}'$ is the trivial knot.
                Thus, $M_{i+1}$ is obtained from $M_i$ by compressing the boundary $\partial N(\hat{D}')$
            \end{itemize}
            \item In the case where $\partial S$ is inessential in $\partial M_i$:
            \begin{itemize}
                \item If $S$ is essential in $M_i$, this operation is the splitting operation.
                \item If $S$ is inessential in $M_i$, this operation is the operation to add a 3-ball.
            \end{itemize}
        \end{itemize}

        \item The operation cutting open $M_i$ along an embedded 2-sphere $S \subset M_i$ and filling the resulting boundary spheres with 3-balls:
        \begin{itemize}
            \item If $S$ is essential in $M_i$, this operation is the splitting operation.
            \item If $S$ is inessential in $M_i$, this operation is the operation to add a 3-sphere.
        \end{itemize}

        \item The operation removing a 3-ball, a 3-sphere, a lens space $L(3,1)$, a projective space $\mathbb{R}P^3$, $\mathbb{S}^2 \times \mathbb{S}^1$ or a twisted $\mathbb{S}^1$ bundle $\mathbb{S}^2 \tilde{\times} \mathbb{S}^1$ component:
        \begin{itemize}
            \item A 3-ball component or a 3-sphere component may be added to $M_i$ by the above operations.
            This operation removes one of these components.
        \end{itemize}
        
        \item The operation filling a boundary sphere with a 3-ball:
        \begin{itemize}
            \item A boundary component in $\partial M_i$ which is a 2-sphere is one of the follows:
            \begin{itemize}
                \item a component of a copy of the supporting surface whose genus is $0$ ,
                \item a boundary component obtained by compressing the boundary of $M_j$ $(j<i)$,
                \item the boundary of a 3-ball component.
            \end{itemize}
            This operation deletes one of these boundaries.
        \end{itemize}
    \end{enumerate}

    From these observation, we see that $M_{i+1}$ is obtained from $M_i$ by one of the above four operations.
    Furthermore, $M_0$ is obtained by cutting $M$ open along the given normal $2$-sphere and shrinking each copy of $F$ to a point.
    Therefore, the splitting is performed at least once if $F$ is essential.
\end{proof}

By the proof of Lemma \ref{lem:splitting}, we see the following corollary.
\begin{cor}\label{cor:splitting_comp}
    Suppose the same situation of Operation \ref{lem:splitting}.
    Let $\hat{D'}$ and $\hat{D''}$ be sublinks of $\hat{D}$.
    A component of the 3-manifold $M'$ obtained from $M$ by Operation \ref{ope:splitting} is one of the following:
    \begin{itemize}
        \item $\text{cl}(\mathcal{S} \times I - N(\hat{D'}))$,
        \item $\mathbb{B}^3 - N(\hat{D''})$,
        \item $\mathbb{S}^3 - N(\hat{D''})$,
        \item a component removed some 3-balls from one of the above components.
    \end{itemize}
\end{cor}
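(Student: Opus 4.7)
The plan is to induct on the sequence $M \to M_0 \to M_1 \to \cdots \to M_n = M'$ produced by Lemma \ref{lem:splitting}, maintaining the invariant that every connected component of $M_i$ is one of the three listed model types, possibly with finitely many open $3$-balls removed. The base case is immediate: each component of $M$ has the form $\mathrm{cl}(\mathcal{S}_j \times I - N(\hat{D}_j))$, where $\mathcal{S}_j$ is a connected component of $\mathcal{S}$ and $\hat{D}_j$ is the sublink of $\hat{D}$ lying in $\mathcal{S}_j \times I$, so the invariant holds.

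Next I would check, one by one, that the five elementary moves of Lemma \ref{lem:splitting} preserve the invariant. Adding a $3$-ball or $3$-sphere component, and removing a $3$-ball or $3$-sphere component, trivially preserves the class, since $\mathbb{B}^3$ and $\mathbb{S}^3$ already appear on the list (take $\hat{D''} = \emptyset$) and the ``minus some $3$-balls'' clause absorbs any temporary ball attachments. Filling a boundary $2$-sphere with a $3$-ball is inverse to removing a $3$-ball and therefore either caps off a boundary sphere created earlier by boundary compression or reduces the number of removed balls by one. Compressing $\partial N(K)$ when $K \subset \hat{D'}$ is an unknot converts the torus boundary $\partial N(K)$ into a $2$-sphere boundary while deleting $K$ from the sublink; in terms of the model this replaces $\mathrm{cl}(\mathcal{S} \times I - N(\hat{D'}))$ by $\mathrm{cl}(\mathcal{S} \times I - N(\hat{D'} \setminus K))$ with one additional open $3$-ball removed.

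The main obstacle will be the splitting move along an essential $2$-sphere $S$, and this is where I would use irreducibility arguments case by case on the three model types. If the ambient piece is $\mathrm{cl}(\mathcal{S} \times I - N(\hat{D'}))$ with $g(\mathcal{S}) > 0$, then $\mathcal{S} \times I$ is irreducible, so $S$ bounds a $3$-ball $B \subset \mathcal{S} \times I$; this ball must contain a non-empty sublink $\hat{D}_1 \subseteq \hat{D'}$ (otherwise $S$ would be inessential in the exterior), and splitting along $S$ and shrinking the two copies of $S$ to points produces $\mathbb{B}^3 - N(\hat{D}_1)$ together with $\mathrm{cl}(\mathcal{S} \times I - N(\hat{D'} \setminus \hat{D}_1))$, both on the list. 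Analogously, $\mathbb{B}^3$ and $\mathbb{S}^3$ are irreducible, so the splitting move on $\mathbb{B}^3 - N(\hat{D''})$ and $\mathbb{S}^3 - N(\hat{D''})$ yields pieces of the same two types by partitioning $\hat{D''}$ according to the sides of $S$. The remaining subtlety is the $2$-sphere component case of $\mathcal{S}$, in which an essential $S$ may separate $\mathcal{S}_k \times \{0\}$ from $\mathcal{S}_k \times \{1\}$; here shrinking the copies of $S$ turns the piece into two balls each containing a sublink, which again lies in the list. Combining these with the observation that removing or reattaching open $3$-balls commutes with splitting, the invariant is preserved at every step, and specialising at $i = n$ gives the corollary.
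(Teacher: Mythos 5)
Your approach — induct over the sequence $M \to M_0 \to \cdots \to M_n = M'$ from Lemma~\ref{lem:splitting} and verify that each of its five elementary moves preserves the invariant ``every component is one of the listed model types, possibly with finitely many open $3$-balls removed'' — is precisely the argument the paper invokes with the phrase ``by the proof of Lemma~\ref{lem:splitting}.'' Your case analysis is sound, but there is one small slip worth correcting: when $g(\mathcal{S}) > 0$ and $S = \partial B$ for a ball $B \subset \mathcal{S} \times I$ containing the non-empty sublink $\hat{D}_1$, shrinking the copy of $S$ on the $B$ side to a point compactifies $B$ to a $3$-sphere, so the inner piece becomes $\mathbb{S}^3 - N(\hat{D}_1)$, not $\mathbb{B}^3 - N(\hat{D}_1)$; the outer piece, as you say, recovers $\text{cl}(\mathcal{S} \times I - N(\hat{D}' \setminus \hat{D}_1))$. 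The genuine $\mathbb{B}^3$-type pieces arise only in the separating case you correctly treat afterward, where a $2$-sphere component $\mathcal{S}'$ of $\mathcal{S}$ has $\mathcal{S}'\times\{0\}$ separated from $\mathcal{S}'\times\{1\}$ by $S$. Since both $\mathbb{B}^3 - N(\hat{D}'')$ and $\mathbb{S}^3 - N(\hat{D}'')$ appear on the corollary's list, the slip does not affect the conclusion, but the bookkeeping matters for the later Corollary~\ref{cor:splitting_comp_knot}, which distinguishes the two.
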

Note that there is a $2$-sphere boundary other than the boundary components obtained from the copies of $\mathcal{S}$ in the resulting $3$-manifold $M'$ if and only if there is a split component $\hat{D''}$ of $\hat{D}$ which is the trivial knot and the boundary $\partial N(\hat{D''})$ is compressed.

In particular, we consider the case where $\hat{D} \subset \mathcal{S} \times I$ is a knot, $\mathcal{S}$ is connected, $g(\mathcal{S})$ is not zero, and $F$ is an essential $2$-sphere in the exterior $M$ of $\hat{D}$.
Since $g(\mathcal{S})$ is not zero, then there is an embedded $\text{cl}(\mathbb{B}^3 - N(\hat{D}))$ whose $2$-sphere boundary is $F$ in the exterior $M$, and the splitting operation using $F$ is performed by Operation \ref{ope:splitting}.
Therefore, the underlying $3$-manifold $M'$ of the resulting triangulation of Operation \ref{ope:splitting} is $\text{cl}(\mathbb{S}^3 - N(D))$ or the empty set.
Note that if $\hat{D}$ is the trivial knot, then $\partial N(\hat{D})$ may be compressed by an essential disk whose boundary is essential in $\partial N(\hat{D})$.
In this case, all components are removed in the step 2 of Operation \ref{ope:splitting}, i.e., $M'$ is the empty set.
Conversely, if $\hat{D}$ is not the trivial knot, then there are no essential disk whose boundary is essential in $\partial N(\hat{D})$, and so $\partial N(\hat{D})$ keeps its topology as the torus and the component containing $\partial N(\hat{D})$ is not removed.
Thus, if the underlying $3$-manifold of the resulting triangulation of Operation \ref{ope:splitting} is the empty set, then $\hat{D}$ is the trivial knot.
\begin{cor}\label{cor:splitting_comp_knot}
    Suppose that $\mathcal{S}$ is a closed orientable surface whose genus is not zero, $\hat{D}$ is a knot in $\mathcal{S} \times I$, and $\mathcal{T}$ is a triangulation of the exterior $M = \text{cl}(\mathcal{S} \times I - N(\hat{D}))$.
    If $F$ is an essential normal $2$-sphere in $M$ with respect to $\mathcal{T}$, then the underlying $3$-manifold $M'$ of the resulting triangulation of Operation \ref{ope:splitting} is $\text{cl}(\mathbb{S}^3 - N(\hat{D}))$ or the empty set.
    Furthermore, if $M'$ is the empty set, then $\hat{D}$ is the trivial knot.
\end{cor}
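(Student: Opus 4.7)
The plan is to combine irreducibility of $\mathcal{S} \times I$ with the component classification from Corollary \ref{cor:splitting_comp}. Since $\mathcal{S}$ is a closed orientable surface, $\mathcal{S} \times I$ is irreducible, so the essential $2$-sphere $F \subset M$ must bound a $3$-ball $B$ in $\mathcal{S} \times I$; because $F$ does not bound a ball inside $M$ itself, we must have $N(\hat{D}) \subset B$. Thus $F$ separates $M$ into $\text{cl}(B - N(\hat{D}))$ and $\text{cl}(\mathcal{S} \times I - B)$, and by Lemma \ref{lem:splitting} Operation \ref{ope:splitting} performs a genuine splitting along $F$ at least once.

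I would then read off the possible components. Because $\hat{D}$ is a single knot, the sublinks $\hat{D}', \hat{D}''$ appearing in Corollary \ref{cor:splitting_comp} equal either $\hat{D}$ or $\emptyset$. The ball side yields a component of type $\text{cl}(\mathbb{S}^3 - N(\hat{D}))$ (after the sphere boundary is capped off and the extra $3$-ball removed), while the other side closes up to a copy of $\mathcal{S} \times I$ whose only boundary components are the two copies of $\mathcal{S}$; by step 2 of Operation \ref{ope:splitting} this piece is discarded. All additional components created during the crushing procedure are $3$-balls or $3$-spheres and are likewise discarded, so only $\text{cl}(\mathbb{S}^3 - N(\hat{D}))$ can survive.

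To complete the dichotomy I would analyze the behavior of $\partial N(\hat{D})$ inside $\text{cl}(\mathbb{S}^3 - N(\hat{D}))$. If $\hat{D}$ is nontrivial in $\mathbb{S}^3$, this torus is incompressible, so none of the boundary-compression, splitting, or $3$-ball/$3$-sphere-removal moves permitted by Lemma \ref{lem:splitting} can destroy it; the component therefore retains a torus boundary, survives step 2, and $M' = \text{cl}(\mathbb{S}^3 - N(\hat{D}))$. If $\hat{D}$ is trivial, then $\text{cl}(\mathbb{S}^3 - N(\hat{D}))$ is a solid torus whose boundary is compressible, and a boundary-compression step of Lemma \ref{lem:splitting} may replace $\partial N(\hat{D})$ by two spheres which are then filled with $3$-balls; the resulting component has no torus boundary and is removed in step 2, giving $M' = \emptyset$. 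The ``furthermore'' clause is the contrapositive: if $\hat{D}$ were nontrivial we have already shown $M' \neq \emptyset$, hence $M' = \emptyset$ forces $\hat{D}$ to be the trivial knot.

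The main technical point to verify carefully is that under the operations of Lemma \ref{lem:splitting} an incompressible torus boundary cannot be eliminated: compressions require a compressing disk, splittings along $2$-spheres preserve every torus boundary setwise, filling a $2$-sphere boundary with a $3$-ball does not interact with a disjoint torus, and only $3$-ball/$3$-sphere components can be removed. Once this invariance is established, the rest of the argument is straightforward bookkeeping on the component types listed in Corollary \ref{cor:splitting_comp}.
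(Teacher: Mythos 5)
Your argument follows the same route the paper sketches in the discussion immediately preceding the corollary: use irreducibility of $\mathcal{S} \times I$ to locate a $3$-ball $B$ with $F = \partial B$ and $N(\hat{D}) \subset B$, appeal to Lemma~\ref{lem:splitting} / Corollary~\ref{cor:splitting_comp} to pin down the surviving component as $\text{cl}(\mathbb{S}^3 - N(\hat{D}))$, and then split on (in)compressibility of $\partial N(\hat{D})$ to settle the ``furthermore'' clause by contrapositive. One slip to correct: you justify irreducibility of $\mathcal{S} \times I$ solely from $\mathcal{S}$ being closed and orientable, but $\mathbb{S}^2 \times I$ is \emph{reducible}; the hypothesis $g(\mathcal{S}) \neq 0$ in the corollary is exactly what is needed and should be cited there, just as the paper does.
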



Next, we consider the number of tetrahedra in the triangulation obtained by Operation \ref{ope:splitting}.
If a normal surface $F$ is not a vertex link, then $F$ contains at least one quadrilateral normal disk, and
the crushing procedure removes the tetrahedra which contain quadrilateral normal disks.
Thus, Corollary \ref{cor:splittingNum} holds.
\begin{cor}\label{cor:splittingNum}
    Operation \ref{ope:splitting} reduces the number of tetrahedra if a normal surface $F$ is not a vertex link.
\end{cor}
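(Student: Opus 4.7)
The plan is to exhibit at least one tetrahedron of $\mathcal{T}$ that fails to survive as a tetrahedron of the triangulation produced by Operation \ref{ope:splitting}, and then observe that neither phase of Operation \ref{ope:splitting} can create new tetrahedra. This will immediately give the strict inequality $|\mathcal{T}'|<|\mathcal{T}|$.

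First I would invoke the characterisation of vertex links noted after Definition 4.2: a normal surface is a disjoint union of vertex links if and only if every normal disk in it is a triangle. Since by hypothesis $F$ is not a vertex link (equivalently, not a disjoint union of vertex links), there exists at least one tetrahedron $\Delta\in\mathcal{T}$ which meets $F$ in a collection of normal disks containing at least one quadrilateral. Because two distinct quadrilateral types in the same tetrahedron cannot coexist in an embedded normal surface, the intersection $F\cap\Delta$ consists of parallel triangles together with a (nonzero) number of parallel quadrilaterals of a single type.

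Next I would trace $\Delta$ through the crushing procedure of Definition \ref{def:crush}. After cutting $\mathcal{T}$ open along $F$ and collapsing each copy of $F$, the pieces into which $\Delta$ is subdivided are exactly the cells listed in Definition \ref{def:crush}: $3$-sided footballs, $4$-sided footballs, and triangular purses; the only surviving tetrahedra in the resulting cell-decomposition $\mathcal{C}$ are those obtained from tetrahedra of $\mathcal{T}$ that meet $F$ only in triangles (and hence contain no quadrilateral disks of $F$). Since $\Delta$ contains at least one quadrilateral disk, none of the $3$-cells coming from $\Delta$ is a tetrahedron of $\mathcal{C}$; these cells are then flattened to lower-dimensional cells in the third step of the crushing procedure. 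Consequently the quantity $|\mathcal{T}'|$ counts only those tetrahedra of $\mathcal{T}$ that meet $F$ in triangles only, plus whatever is produced by atomic moves — and by Lemma \ref{lem:crushing_lemma} atomic moves only flatten cells, never creating new tetrahedra.

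Finally, step $2$ of Operation \ref{ope:splitting} only discards whole components, which can only decrease the tetrahedron count. Combining these observations yields $|\mathcal{T}'|\leq |\mathcal{T}|-1<|\mathcal{T}|$, as required. There is no genuine obstacle here; the only point needing care is the verification that the atomic-move phase of the crushing procedure cannot reintroduce tetrahedra, which is immediate from the list of atomic moves (each of them flattens a cell of positive dimension to a cell of strictly smaller dimension).
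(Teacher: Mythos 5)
Your proof is correct and takes essentially the same route as the paper: the paper's justification, given in the sentence immediately preceding the corollary, is that a non-vertex-link normal surface must contain a quadrilateral normal disk, that the crushing procedure removes every tetrahedron containing a quadrilateral normal disk of $F$, and that it never adds tetrahedra. You have spelled out the last point (tracking the cells through the footballs/purses dichotomy and the atomic moves) more carefully than the paper does, but the underlying argument is identical.

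One small remark worth keeping in mind: the equivalence you invoke (``$F$ is not a vertex link $\iff$ $F$ is not a disjoint union of vertex links'') is only valid for \emph{connected} $F$. Here this is harmless because the $F$ appearing in Operation \ref{ope:splitting} is a normal $2$-sphere, hence connected; but if the corollary is read literally as a statement about arbitrary normal surfaces, that step would need the connectedness hypothesis made explicit.
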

In particular, Operation \ref{ope:splitting} using an essential normal $2$-sphere reduces the number of tetrahedra since any essential normal $2$-sphere is not a vertex link.

Next, we analyze the running time of Operation \ref{ope:splitting}.
\begin{lem}\label{lem:splittingTime}
    If a normal $2$-sphere $F$ is a vertex surface, then Operation \ref{ope:splitting} is carried out in time $\mathcal{O}(n^2)$, where $n$ is the number of tetrahedra of $\mathcal{T}$.
\end{lem}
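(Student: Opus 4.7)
The plan is to bound the two steps of Operation \ref{ope:splitting} separately. The first step invokes the crushing procedure using $F$, for which Theorem \ref{thm:crush} (Burton) provides a linear-time algorithm in the number of tetrahedra. The key point is that Burton's bound counts elementary operations on the triangulation, while the coordinates of $F$, which drive those operations (determining for each tetrahedron how many and which types of normal disks must be accounted for), are bounded by $2^{7n-1}$ via Theorem \ref{thm:HLP} since $F$ is a vertex surface. Hence each coordinate has bit-length $\mathcal{O}(n)$, and each arithmetic manipulation of a coordinate costs $\mathcal{O}(n)$. Multiplying Burton's $\mathcal{O}(n)$ elementary operations by the $\mathcal{O}(n)$ cost per operation gives $\mathcal{O}(n^2)$ for the first step. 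This cost is already necessary just to read the vector representation of $F$, so no improvement is possible.

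For the second step, the triangulation obtained from the crushing has at most $n$ tetrahedra, so the underlying combinatorial structure has size $\mathcal{O}(n)$. To execute the removal I will first compute the connected components of this triangulation by a standard traversal of the tetrahedron/face-adjacency graph in time $\mathcal{O}(n)$. For each component I then extract its boundary components by tracing across unmatched triangular faces, and compute the Euler characteristic of each boundary surface (and hence its genus) in time linear in its size. Whether a boundary component was inherited from a copy of the supporting surface $\mathcal{S}$ is recorded by a simple label maintained on boundary faces throughout the crushing, which can be propagated at no asymptotic overhead. Summed over all components and boundary surfaces this amounts to $\mathcal{O}(n)$ time, so step two is negligible in comparison.

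The main obstacle is the first step, where the bit-complexity of the vertex surface's coordinates is the bottleneck; once that is handled correctly, everything else is routine bookkeeping. Combining the two steps yields the total $\mathcal{O}(n^2)$ bound asserted by the lemma.
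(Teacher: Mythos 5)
Your proposal is correct and follows essentially the same approach as the paper: both rely on Theorem \ref{thm:HLP} to bound each of the $7n$ coordinates of the vertex surface by $2^{7n-1}$ (hence bit-length $\mathcal{O}(n)$, giving $\mathcal{O}(n^2)$ total to process the vector), invoke Theorem \ref{thm:crush} for the crushing step, and dispatch the component-removal step in $\mathcal{O}(n)$ time. The only cosmetic difference is bookkeeping: the paper charges the $\mathcal{O}(n^2)$ to reading $\mathbf{x}$ and treats the crush itself as $\mathcal{O}(n)$, whereas you fold the bit-cost into the crushing step; both accountings yield the same $\mathcal{O}(n^2)$ bound.
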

\begin{proof}
    Let $\mathbf{x} = (x_1, \ldots, x_{7n})$ denote the vector representation of $F$.
    By Theorem \ref{thm:HLP}, for each integer $i = 1, \ldots, 7n$, $x_i$ is $2^{7n-1}$ or less.
    Thus, $x_i$ can be encoded with $\mathcal{O}(\log 2^{7n-1}) = \mathcal{O}(n)$ bits, and so $\mathbf{x}$ can be encoded with $\mathcal{O}(n^2)$ bits because there are $7n$ integers in $\mathbf{x}$, and so we can read $\mathbf{x}$ in time $\mathcal{O}(n^2)$.
    Consider the running time of each step of Operation \ref{ope:splitting}.
    The step 1 runs in $\mathcal{O}(n)$ since the crushing procedure can be carried out in time $\mathcal{O}(n)$ from Theorem \ref{thm:crush}.
    Suppose that $\mathcal{T}'$ is the triangulation obtained from $\mathcal{T}$ by the step 1 and $M'$ is the underlying $3$-manifold of $\mathcal{T}'$.
    Let $n'$ be the number of tetrahedra in $\mathcal{T}'$. 
    Since we can determine whether each component of $M'$ contains components of $\partial N(\hat{D})$ in $\mathcal{O}(n')$ time and $n' < n$ by Corollary \ref{cor:splittingNum}, the step 2 runs in $\mathcal{O}(n)$ time.
    Hence, Operation \ref{ope:splitting} is carried out in time $\mathcal{O}(n^2)$.
\end{proof}

\subsection{Destabilization on a triangulation}
In this subsection, we give an algorithm to destabilize on a triangulation $\mathcal{T}$ of the exterior $M$ of a link $\hat{D}$ in a thickened closed orientable surface $\mathcal{S} \times I$.
Recall that destabilization is the operation cutting $M$ along a vertical essential annulus and filling each copy of the annulus with $D^2 \times I$.
Suppose that $A$ is a vertical normal annulus in $M$ with respect to $\mathcal{T}$.
Let $\mathcal{T}'$ denote the triangulation obtained by crushing using $A$ on $\mathcal{T}$, and let $M'$ denote the underlying singular $3$-manifold.
Suppose that $V'$ and $E'$ is the subset of $\mathcal{T}'$ defined by Definition \ref{def:VandE}.
We define three operations removing components in $V'$ and $E'$.

\begin{ope}\label{ope:desingularization}
    Let $s \in V'$ be a singular point in $\mathcal{T}'$.
    Then, we call the following operation {\it desingularization}:
    \begin{itemize}
        \item stretching the singular point $s$ to an edge as depicted in Figure \ref{fig:desing}.
    \end{itemize}
\end{ope}
\begin{figure}[htbp]
    \centering
    \tikzset{every picture/.style={line width=0.75pt}} 

\begin{tikzpicture}[x=0.75pt,y=0.75pt,yscale=-0.7,xscale=0.7]

\draw   (25.94,57.87) -- (80.56,57.87) -- (67.28,40.79) -- cycle ;
\draw   (67.28,40.79) -- (80.56,57.87) -- (113.55,34.72) -- cycle ;
\draw   (80.56,57.87) -- (142.42,42.31) -- (113.55,34.72) -- cycle ;
\draw   (80.56,57.87) -- (102.95,71.91) -- (142.42,42.31) -- cycle ;
\draw   (47.58,81.02) -- (102.95,71.91) -- (80.56,57.87) -- cycle ;
\draw   (47.58,81.02) -- (80.56,57.87) -- (25.94,57.87) -- cycle ;
\draw [color={rgb, 255:red, 0; green, 0; blue, 0 }  ,draw opacity=1 ]   (28.57,24.09) .. controls (30.09,46.1) and (62.73,48.38) .. (80.56,57.87) ;
\draw  [color={rgb, 255:red, 0; green, 0; blue, 0 }  ,draw opacity=1 ][fill={rgb, 255:red, 0; green, 0; blue, 0 }  ,fill opacity=1 ] (77.91,57.87) .. controls (77.91,56.4) and (79.1,55.21) .. (80.56,55.21) .. controls (82.03,55.21) and (83.22,56.4) .. (83.22,57.87) .. controls (83.22,59.34) and (82.03,60.52) .. (80.56,60.52) .. controls (79.1,60.52) and (77.91,59.34) .. (77.91,57.87) -- cycle ;
\draw    (2.76,52.18) -- (25.94,57.87) ;
\draw    (3.52,66.6) -- (25.94,57.87) ;
\draw    (25.16,89.75) -- (47.58,81.02) ;
\draw    (102.95,71.91) -- (121.93,81.02) ;
\draw    (98.4,84.05) -- (102.95,71.91) ;
\draw    (51.34,93.16) -- (47.58,81.02) ;
\draw   (272.05,58.63) -- (326.67,58.63) -- (313.38,41.55) -- cycle ;
\draw   (313.38,41.55) -- (326.67,58.63) -- (359.65,35.48) -- cycle ;
\draw   (326.67,58.63) -- (388.52,43.07) -- (359.65,35.48) -- cycle ;
\draw   (326.67,58.63) -- (349.06,72.67) -- (388.52,43.07) -- cycle ;
\draw   (293.68,81.78) -- (349.06,72.67) -- (326.67,58.63) -- cycle ;
\draw   (293.68,81.78) -- (326.67,58.63) -- (272.05,58.63) -- cycle ;
\draw    (248.87,52.93) -- (272.05,58.63) ;
\draw    (249.63,67.36) -- (272.05,58.63) ;
\draw    (271.26,90.51) -- (293.68,81.78) ;
\draw    (349.06,72.67) -- (368.03,81.78) ;
\draw    (344.5,84.81) -- (349.06,72.67) ;
\draw    (297.44,93.92) -- (293.68,81.78) ;
\draw [color={rgb, 255:red, 0; green, 0; blue, 0 }  ,draw opacity=1 ]   (326.29,5.12) -- (326.67,58.63) ;
\draw    (272.05,58.63) -- (326.29,5.12) ;
\draw    (293.68,81.78) -- (326.29,5.12) ;
\draw    (326.29,5.12) -- (349.06,72.67) ;
\draw    (326.29,5.12) -- (388.52,43.07) ;
\draw    (326.29,5.12) -- (359.65,35.48) ;
\draw    (326.29,5.12) -- (313.38,41.55) ;
\draw    (142.42,42.31) -- (161.4,51.42) ;
\draw    (142.42,42.31) -- (160.64,38.51) ;
\draw    (388.52,43.07) -- (407.5,52.18) ;
\draw    (388.52,43.07) -- (406.74,39.27) ;
\draw    (173.7,47.09) -- (233,47.3) ;
\draw [shift={(235,47.31)}, rotate = 180.2] [color={rgb, 255:red, 0; green, 0; blue, 0 }  ][line width=0.75]    (10.93,-3.29) .. controls (6.95,-1.4) and (3.31,-0.3) .. (0,0) .. controls (3.31,0.3) and (6.95,1.4) .. (10.93,3.29)   ;

\draw (28.66,14.6) node  [color={rgb, 255:red, 0; green, 0; blue, 0 }  ,opacity=1 ] [align=left] {a singular point};

\end{tikzpicture}
    \caption{Desingularization}
    \label{fig:desing}
\end{figure}
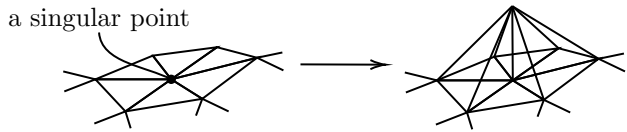
\begin{ope}\label{ope:eliminate_loop1}
    Let $c$ be a circle constructed by one edge in $E' \subset \mathcal{T}'$.
    \begin{enumerate}
        \item choosing a triangle $t$ in $\partial \mathcal{T}'$ containing the circle $c$.
        \item adding a cone which is obtained by gluing two faces of a tetrahedron.
        Let $c'$ be the circle in the bottom of the cone and $t'$ be the side of the cone.
        \item gluing $t$ and $t'$ so that $c$ and $c'$ are identified.
        \item desingularizing at the vertex in the circle $c = c'$.
    \end{enumerate}
    We illustrate Operation \ref{ope:eliminate_loop1} in Figure \ref{fig:eliminate_loop1}.
    \begin{figure}[htpb]
        \centering
        \input{tikz/eliminate_loop1.tex}
        \caption{Operation \ref{ope:eliminate_loop1}}
        \label{fig:eliminate_loop1}
    \end{figure}
\end{ope}

\begin{ope}\label{ope:eliminate_loop2}
   Let $c$ be a circle constructed by two edges in $E' \subset \mathcal{T}'$.
    \begin{enumerate}
        \item gluing the two edges of $c$.
        \item desingularizing at each end point of the edge identified the two edges. 
    \end{enumerate}
    We illustrate Operation \ref{ope:eliminate_loop2} in Figure \ref{fig:eliminate_loop2}.
    \begin{figure}[htpb]
        \centering
        \tikzset{every picture/.style={line width=0.75pt}} 

\begin{tikzpicture}[x=0.75pt,y=0.75pt,yscale=-1,xscale=1]

\draw    (215.75,5.02) -- (180.1,43.15) -- (298.17,83.09) ;
\draw    (198.45,46.9) -- (179.92,66.52) -- (289.21,103.39) ;
\draw  [dash pattern={on 4.5pt off 4.5pt}]  (223.11,21.42) -- (198.45,46.9) ;
\draw    (225.55,37.4) -- (275.28,53.76) ;
\draw [color={rgb, 255:red, 0; green, 0; blue, 0 }  ,draw opacity=0.3 ]   (225.59,58.59) .. controls (240.89,40.91) and (238.4,43.4) .. (247.99,44.87) ;
\draw [color={rgb, 255:red, 0; green, 0; blue, 0 }  ,draw opacity=0.3 ]   (252.89,67.48) .. controls (263.82,50.88) and (268.31,51.88) .. (275.28,53.76) ;
\draw [color={rgb, 255:red, 0; green, 0; blue, 0 }  ,draw opacity=0.3 ]   (203.16,51.11) .. controls (220.45,32.43) and (219.45,36.42) .. (225.55,37.4) ;
\draw [color={rgb, 255:red, 0; green, 0; blue, 0 }  ,draw opacity=0.3 ]   (238.4,14.48) .. controls (228.13,25.94) and (223.44,29.44) .. (225.55,37.4) ;
\draw [color={rgb, 255:red, 0; green, 0; blue, 0 }  ,draw opacity=0.3 ]   (260.83,22.46) .. controls (250.57,33.92) and (245.87,37.42) .. (247.99,45.37) ;
\draw [color={rgb, 255:red, 0; green, 0; blue, 0 }  ,draw opacity=0.3 ]   (288.12,30.85) .. controls (277.86,42.31) and (272.8,44.4) .. (275.28,53.76) ;
\draw [color={rgb, 255:red, 0; green, 0; blue, 0 }  ,draw opacity=0.3 ] [dash pattern={on 4.5pt off 4.5pt}]  (218.95,56.37) .. controls (224.93,49.89) and (227.92,47.89) .. (225.55,37.4) ;
\draw [color={rgb, 255:red, 0; green, 0; blue, 0 }  ,draw opacity=0.3 ] [dash pattern={on 4.5pt off 4.5pt}]  (244.38,27.95) .. controls (237.4,36.92) and (234.9,38.92) .. (225.55,37.4) ;
\draw [color={rgb, 255:red, 0; green, 0; blue, 0 }  ,draw opacity=0.3 ] [dash pattern={on 4.5pt off 4.5pt}]  (266.82,35.43) .. controls (259.84,44.4) and (257.34,46.4) .. (247.99,44.87) ;
\draw [color={rgb, 255:red, 0; green, 0; blue, 0 }  ,draw opacity=0.3 ] [dash pattern={on 4.5pt off 4.5pt}]  (294.11,44.32) .. controls (287.13,53.29) and (284.63,55.29) .. (275.28,53.76) ;
\draw [color={rgb, 255:red, 0; green, 0; blue, 0 }  ,draw opacity=0.3 ]   (218.95,56.37) -- (203.49,73.82) ;
\draw [color={rgb, 255:red, 0; green, 0; blue, 0 }  ,draw opacity=0.3 ] [dash pattern={on 4.5pt off 4.5pt}]  (240.89,63.35) .. controls (246.87,56.87) and (249.86,54.87) .. (247.49,44.38) ;
\draw [color={rgb, 255:red, 0; green, 0; blue, 0 }  ,draw opacity=0.3 ]   (240.89,63.35) -- (225.93,81.3) ;
\draw [color={rgb, 255:red, 0; green, 0; blue, 0 }  ,draw opacity=0.3 ] [dash pattern={on 4.5pt off 4.5pt}]  (270.8,73.82) .. controls (276.79,66.84) and (279.78,60.85) .. (275.41,53.85) ;
\draw [color={rgb, 255:red, 0; green, 0; blue, 0 }  ,draw opacity=0.3 ]   (270.8,73.82) -- (256.84,92.27) ;
\draw    (134.18,58.51) -- (155.42,58.9) ;
\draw [shift={(157.42,58.94)}, rotate = 181.07] [color={rgb, 255:red, 0; green, 0; blue, 0 }  ][line width=0.75]    (10.93,-3.29) .. controls (6.95,-1.4) and (3.31,-0.3) .. (0,0) .. controls (3.31,0.3) and (6.95,1.4) .. (10.93,3.29)   ;
\draw    (45.93,5.79) -- (10.28,43.92) -- (128.34,83.86) ;
\draw    (28.62,47.66) -- (10.09,67.29) -- (119.39,104.16) ;
\draw  [dash pattern={on 4.5pt off 4.5pt}]  (53.28,22.19) -- (28.62,47.66) ;
\draw [color={rgb, 255:red, 0; green, 0; blue, 0 }  ,draw opacity=0.3 ]   (55.77,59.36) .. controls (61.84,49.24) and (60.61,48.96) .. (70.2,50.43) ;
\draw [color={rgb, 255:red, 0; green, 0; blue, 0 }  ,draw opacity=0.3 ]   (91.45,71.8) .. controls (98.02,60.46) and (97.42,58.67) .. (105.45,54.53) ;
\draw [color={rgb, 255:red, 0; green, 0; blue, 0 }  ,draw opacity=0.3 ]   (33.33,51.88) .. controls (50.62,33.2) and (49.62,37.19) .. (55.73,38.16) ;
\draw [color={rgb, 255:red, 0; green, 0; blue, 0 }  ,draw opacity=0.3 ]   (68.57,15.25) .. controls (58.31,26.71) and (53.61,30.21) .. (55.73,38.16) ;
\draw [color={rgb, 255:red, 0; green, 0; blue, 0 }  ,draw opacity=0.3 ]   (91.01,23.23) .. controls (85.72,27.75) and (80.94,30.13) .. (82.14,38.49) ;
\draw [color={rgb, 255:red, 0; green, 0; blue, 0 }  ,draw opacity=0.3 ]   (118.3,31.62) .. controls (108.04,43.08) and (102.97,45.17) .. (105.45,54.53) ;
\draw [color={rgb, 255:red, 0; green, 0; blue, 0 }  ,draw opacity=0.3 ] [dash pattern={on 4.5pt off 4.5pt}]  (49.12,57.14) .. controls (55.11,50.65) and (54.44,48.52) .. (55.73,38.16) ;
\draw [color={rgb, 255:red, 0; green, 0; blue, 0 }  ,draw opacity=0.3 ] [dash pattern={on 4.5pt off 4.5pt}]  (72.35,21.66) .. controls (65.37,30.63) and (62.79,35.39) .. (55.73,38.16) ;
\draw [color={rgb, 255:red, 0; green, 0; blue, 0 }  ,draw opacity=0.3 ] [dash pattern={on 4.5pt off 4.5pt}]  (96.99,36.19) .. controls (93.48,41.48) and (88.7,40.88) .. (82.14,38.49) ;
\draw [color={rgb, 255:red, 0; green, 0; blue, 0 }  ,draw opacity=0.3 ] [dash pattern={on 4.5pt off 4.5pt}]  (124.28,45.08) .. controls (117.3,54.06) and (114.81,56.05) .. (105.45,54.53) ;
\draw [color={rgb, 255:red, 0; green, 0; blue, 0 }  ,draw opacity=0.3 ]   (49.12,57.14) -- (33.67,74.59) ;
\draw [color={rgb, 255:red, 0; green, 0; blue, 0 }  ,draw opacity=0.3 ] [dash pattern={on 4.5pt off 4.5pt}]  (71.06,64.12) .. controls (71.39,61.77) and (72.57,60.93) .. (70.2,50.43) ;
\draw [color={rgb, 255:red, 0; green, 0; blue, 0 }  ,draw opacity=0.3 ]   (71.06,64.12) -- (56.11,82.07) ;
\draw [color={rgb, 255:red, 0; green, 0; blue, 0 }  ,draw opacity=0.3 ] [dash pattern={on 4.5pt off 4.5pt}]  (100.98,74.59) .. controls (106.37,64.64) and (106.37,60.46) .. (105.59,54.62) ;
\draw [color={rgb, 255:red, 0; green, 0; blue, 0 }  ,draw opacity=0.3 ]   (100.98,74.59) -- (87.02,93.04) ;
\draw   (55.73,38.16) .. controls (49.01,46.19) and (97.06,61.77) .. (105.45,54.53) .. controls (104.82,42.07) and (62.44,30.13) .. (55.73,38.16) -- cycle ;
\draw    (359.08,5.02) -- (323.43,43.15) -- (441.5,83.09) ;
\draw    (341.78,46.9) -- (323.25,66.52) -- (432.55,103.39) ;
\draw  [dash pattern={on 4.5pt off 4.5pt}]  (366.44,21.42) -- (341.78,46.9) ;
\draw    (297.46,57.77) -- (318.7,58.16) ;
\draw [shift={(320.7,58.2)}, rotate = 181.07] [color={rgb, 255:red, 0; green, 0; blue, 0 }  ][line width=0.75]    (10.93,-3.29) .. controls (6.95,-1.4) and (3.31,-0.3) .. (0,0) .. controls (3.31,0.3) and (6.95,1.4) .. (10.93,3.29)   ;

\end{tikzpicture}
        \caption{Operation \ref{ope:eliminate_loop2}}
        \label{fig:eliminate_loop2}
    \end{figure}
\end{ope}

\begin{ope}\label{ope:des}
    Suppose that $\hat{D}$ is a link in a thickened closed orientable surface $\mathcal{S} \times I$ and $\mathcal{T}$ is a triangulation of the exterior $M = \text{cl}(\mathcal{S} \times I - N(\hat{D}))$ of $\hat{D}$.
    Given $\mathcal{T}$ and a vertical normal annulus $A$ in $M$ with respect to $\mathcal{T}$, carry out the following operations:
    \begin{enumerate}
        \item crush $\mathcal{T}$ using $A$, and let $\mathcal{T}'$ be the triangulation obtained from $\mathcal{T}$,
        \item if there are circles constructed by one edge in $E' \subset \mathcal{T}'$, then run Operation \ref{ope:eliminate_loop1} at each circle,
        \item if there are circles constructed by two edges in $E' \subset \mathcal{T}'$, then run Operation \ref{ope:eliminate_loop2} at each circle,
        \item if there are singular points in $\mathcal{T}'$, then desingularize at each singular point,
        \item remove the components which contains no torus boundaries other than the boundary components obtained from the copies of $\mathcal{S}$.
    \end{enumerate}
\end{ope}

\begin{lem}\label{lem:des}
    Suppose that $\hat{D}$ is a link in a thickened closed orientable surface $\mathcal{S} \times I$, $\mathcal{T}$ is a triangulation of the exterior $M = \text{cl}(\mathcal{S} \times I - N(\hat{D}))$ of $\hat{D}$, and $A$ is a vertical normal surface in $M$ with respect to $\mathcal{T}$.
    Consider running the steps 1--4 of Operation \ref{ope:des} on $\mathcal{T}$ using $A$.
    Let $\mathcal{T}''$ and $M''$ denote the resulting triangulation and the underlying $3$-manifold, respectively.
    Then, there is a sequence of $3$-manifolds $M \to N_0 \to N_1 \to \cdots \to N_n = M''$ such that $N_0$ is obtained from $M$ by destabilizing using $A$ and 
    for any $i\ (0 \leq i \leq n-1)$, $N_{i+1}$ is homeomorphic to $N_i$ or $N_{i+1}$ is obtained from $N_i$ by one of the following operations:
    \begin{enumerate}
        \renewcommand{\labelenumi}{\thelem.\arabic{enumi}}
        \item splitting using an essential $2$-sphere or an essential disk on $N_i$,
        \item compressing the boundary of $N_i$,
        \item adding a 3-ball or a 3-sphere component,
        \item filling a boundary 2-sphere in $\partial N_i$ with a 3-ball,
        \item removing a $3$-ball or a $3$-sphere component.
        \item destabilizing using a vertical essential annulus on $N_i$,
        \item cutting open $N_i$ along an embedded $2$-sphere and then filling one resulting boundary with a $3$-ball and gluing the other resulting boundary to the boundary obtained by removing a $3$-ball from $\mathbb{S}^2 \times I$.
    \end{enumerate}
\end{lem}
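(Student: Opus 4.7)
The plan is to decompose the passage $M \to M''$ by interleaving three sequences: the destabilization $M \to N_0$ itself, the crushing sequence analyzed by Lemma \ref{lem:crushing_lemma}, and the simplification sequence of steps~2--4 of Operation~\ref{ope:des}. First I would observe that the non-destructive crushing of $\mathcal{T}$ along $A$ produces a cell-decomposition $\mathcal{C}_0$ whose underlying singular $3$-manifold is obtained from $N_0$ by shrinking each of the two attached $2$-handles $D^2 \times I$ to a single singular point. This passage is captured by a pair of listed operations (each handle collapse reads as filling a $2$-sphere boundary with a $3$-ball and then removing that $3$-ball, namely operations~4 and~5), so the chain starts with destabilization followed by operations already in the target list.

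Next, Lemma \ref{lem:crushing_lemma} expresses step~1 as a sequence of atomic moves $\mathcal{C}_0 \to \cdots \to \mathcal{C}_n = \mathcal{T}'$, and Lemma \ref{lem:crush_annulus} identifies the effect of each atomic move on the underlying singular $3$-manifold as either trivial or one of operations (a)--(e) there. Cases (a), (b), (c), (d) correspond directly to operations~3 (or~5), 4, 1, and~1 of the target list. Only case~(e) --- cutting along a bigon both of whose boundary vertices are identified to a singular point and whose boundary edges meet both $\mathcal{S}^0_i$ and $\mathcal{S}^1_i$ --- is new, and I would defer its topological interpretation and combine it with the later desingularization.

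For steps~2--4, Lemma \ref{lem:V'_E'} restricts the surviving singular structure of $\mathcal{T}'$ to exactly seven configurations of $(V',E')$. In each configuration I would track the effect on the underlying $3$-manifold of the appropriate combination of Operations~\ref{ope:eliminate_loop1}, \ref{ope:eliminate_loop2}, and \ref{ope:desingularization}. Operation~\ref{ope:desingularization} is topologically trivial on the underlying $3$-manifold away from the singular vertex being resolved. Operation~\ref{ope:eliminate_loop1} attaches a cone across a boundary loop and then desingularizes, which registers as operation~4 (or occasionally operation~3). Operation~\ref{ope:eliminate_loop2} identifies the two edges of a bigon loop and then desingularizes; when combined with the deferred case-(e) bigon-cut from step~1 that produced the loop, the net effect is operation~7 (cutting along an embedded $2$-sphere, filling one side with a $3$-ball, and gluing the other side to $\mathrm{cl}(\mathbb{S}^2 \times I - B)$).

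The main obstacle will be the case analysis for Operation~\ref{ope:eliminate_loop2} and its interaction with the deferred case-(e) bigon-cuts. The bigon collapsed in step~3 is not embedded in the ordinary sense, because its two edges lie on singular loops, so recovering the effect on the underlying $3$-manifold requires carefully matching the edges in $E_i$ to the singular points on the two copies of $\mathcal{S}$ and ruling out --- using the verticality of $A$ and the irreducibility of $\mathcal{S} \times I$, exactly as in the final paragraphs of the proof of Lemma \ref{lem:crush_annulus} --- the accidental appearance of a non-orientable component or a lens-space summand. Once this bookkeeping is completed for each of the seven configurations in Lemma \ref{lem:V'_E'}, concatenating the destabilization $M \to N_0$ with the resulting chain of listed operations yields the desired sequence $N_0 \to N_1 \to \cdots \to N_n = M''$.
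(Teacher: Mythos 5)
Your plan is in the right spirit but has a fundamental confusion at the very first step, misses the technical engine that makes the paper's argument work, and gets several of the concrete correspondences wrong.

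The most serious problem is the claim that the passage from $N_0$ to the singular underlying space of $\mathcal{C}_0$ is ``captured by a pair of listed operations'' (filling a $2$-sphere boundary with a $3$-ball, then removing that $3$-ball). Filling and then immediately removing a $3$-ball is the identity, and neither move is the same as shrinking a $2$-handle $D^2 \times I$ to a point; that collapse produces a singular point, which none of the operations 5.4.1--5.4.7 ever does. More to the point, the lemma demands a chain of genuine $3$-manifolds $M \to N_0 \to \cdots \to N_n = M''$; the singular spaces $M_i$ cannot appear in this chain, so one cannot ``start the chain with destabilization followed by'' the atomic moves applied directly to the singular side. The paper never places $M_0$ in the chain: it defines $N_0$ as the desingularization of $M_0$ (which is precisely $M$ destabilized along $A$), then defines $N_{i+1}$ from $N_i$ by applying the abstract $3$-manifold operation $\overline{a_i}$ read off from $\mathcal{C}_i \to \mathcal{C}_{i+1}$, and separately proves that the final $N_n$ equals $M''$.

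What makes that work is the commutation argument (the paper's Claims B, C, and the analogue for Operation~\ref{ope:eliminate_loop1}$'$): for atomic moves that do not flatten a bigon of type (I)--(IV), desingularization commutes with $\overline{a_i}$, so the simplification can be ``pulled to the front'' and executed on $M_0$ before the atomic moves are replayed on genuine $3$-manifolds. Your proposal mentions interleaving and deferring, and you observe that desingularization is ``topologically trivial away from the singular vertex,'' but that observation by itself does not establish commutation; it is the commutation claim that is the crux, and you have not stated, let alone proved, it. The seven-way case analysis from Lemma~\ref{lem:V'_E'} is then exactly the bookkeeping of how many exceptional (type I--IV) flattenings occur and how the desingularization and Operations~\ref{ope:eliminate_loop1}$'$, \ref{ope:eliminate_loop2}$'$ absorb them; your plan gestures at this without supplying the argument.

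Finally, several of your concrete correspondences are off. Operation (a) of Lemma~\ref{lem:crush_annulus} (removing a $3$-ball/$3$-sphere component) is operation 5.4.5, not ``3 or 5'' (5.4.3 is \emph{adding} such a component). Operation (c) (cutting along a properly embedded disk) splits into 5.4.1, 5.4.2, or 5.4.3 depending on whether $\partial D$ is essential in $\partial M_i$ and whether $D$ is essential in $M_i$. And the combined effect of a type-(IV) bigon flattening with Operation~\ref{ope:eliminate_loop1}$'$ on the desingularized side is either 5.4.6 (a genuine destabilization along the resulting annulus) or 5.4.7 (the sphere surgery with the $\mathbb{S}^2 \times I$ collar), depending on whether the two boundary circles of the annulus bound disks in $\partial N_i$; neither is operation 3 or 4 as you suggest.
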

\begin{proof}
    Let $\mathcal{T}'$ be the triangulation obtained from $\mathcal{T}$ by crushing using $A$ and $M'$ be the underlying singular $3$-manifold of $\mathcal{T}'$.
    From Lemma \ref{lem:crushing_lemma}, there is a sequence of cell-decompositions $\mathcal{C}_0 \to \mathcal{C}_1 \to \cdots \to \mathcal{C}_n = \mathcal{T}'$, where $\mathcal{C}_0$ is the cell-decomposition obtained by the non-destructive crushing procedure using $A$ and $\mathcal{C}_{i+1}$ is obtained from $\mathcal{C}_i$ by an atomic move $a_i$.
    For any $i$, we denote the singular $3$-manifold of $\mathcal{C}_i$ by $M_i$.
    By Lemma \ref{lem:crush_annulus}, $M_{i+1}$ is homeomorphic to $M_{i}$ or $M_{i+1}$ is obtained from $M_i$ by one of the following operations:
    \begin{enumerate}
        \renewcommand{\labelenumi}{(\alph{enumi})}
        \item removing a $3$-ball or a $3$-sphere component,
        \item filling a boundary sphere in $\partial M_i$ with a $3$-ball,
        \item cutting open $M_i$ along a properly embedded disk in $M_i$,
        \item cutting open $M_i$ along an embedded $2$-sphere in $M_i$ and filling the resulting boundaries with $3$-balls,
        \item cutting open $M_i$ along a bigon $B$ which satisfies the following conditions:
        \begin{itemize}
            \item $\partial B \cap \mathcal{S}_i^{0} \neq \emptyset$,
            \item $\partial B \cap \mathcal{S}_i^{1} \neq \emptyset$,
            \item $v_1$ and $v_2$ are identified at a singular point, where $v_1$ and $v_2$ are the vertices of $B$,
        \end{itemize}
    \end{enumerate}
    where $\mathcal{S}_i^{k}$ is the subset of $\partial M_i$ obtained from $\mathcal{S} \times \{k\} \subset \partial M$.
    If $M_{i+1}$ is not homeomorphic to $M_i$, we define $\overline{a_i}$ as the operation to obtain $M_{i+1}$ from $M_{i}$, otherwise we define $\overline{a_i}$ as the operation doing nothing.
    
    \begin{claim}
        Each of the operations (a)--(d) corresponds to the operations \thelem.1--\thelem.5 in the statement of Lemma \ref{lem:des}.
    \end{claim}
    \noindent {\it Proof of Claim A.}
    We show Claim A by observing each of the operations (a)--(d).
    \begin{itemize}
        \item The operation (a) corresponds to the operation \thelem.5.
        \item The operation (b) corresponds to the operation \thelem.4.
        \item We consider the operation (c).
            Let $D$ denote the disk cutting open $M_i$.
            In the case where $\partial D$ is essential in $\partial M_i$, then the operation (c) is the move compressing the boundary of $M_i$, i.e., the operation (c) corresponds to the operation \thelem.2. 
            In the case where $\partial D$ is not essential in $\partial M_i$, if $D$ is essential in $M_i$, then the operation (c) is splitting using $D$, i.e., the operation (c) corresponds to the operation \thelem.1, otherwise, the operation (c) is the move adding a $3$-ball component, i.e., the operation (c) corresponds to the operation \thelem.3.
        \item We consider the operation (d).
            Let $S$ denote the $2$-sphere cutting open $M_i$ in the operation (d).
            If $S$ is essential in $M_i$, then the operation (d) is splitting using $S$, i.e., the operation (d) corresponds to the operation \thelem.1, otherwise, the operation (d) is the move adding a $3$-sphere component, i.e., the operation (d) corresponds to the operation \thelem.1.
    \end{itemize}
    Therefore, the operations (a)--(d) correspond to the operations \thelem.1--\thelem.5 in the statement of Lemma \ref{lem:des}. \qed
    
    As with the proof of Lemma \ref{lem:V'_E'}, we define the types of bigon $B$ in $\mathcal{C}_i$ containing singular points as follows:
    \begin{enumerate}
        \renewcommand{\labelenumi}{(\Roman{enumi})}
        \item $B \subset \partial \mathcal{C}_i$, $e_1 = e_2$, and $B$ contains one singular point (Figure \ref{fig:types}\subref{fig:type1}),
        \item $B \subset \partial \mathcal{C}_i$, $e_1 = e_2$, and $B$ contains two singular points (Figure \ref{fig:types}\subref{fig:type2}), 
        \item $\partial B \subset \partial \mathcal{C}_i$, $v_1 \neq v_2$, and $e_1 \subset \mathcal{S}_i^k$ and $e_2 \subset \mathcal{S}_i^l$ (Figure \ref{fig:types}\subref{fig:type3}),
        \item  $\partial B \subset \partial \mathcal{C}_i$, $v_1 = v_2$, and $e_1 \subset \mathcal{S}_i^k$ and $e_2 \subset \mathcal{S}_i^l$ (Figure \ref{fig:types}\subref{fig:type4}),
    \end{enumerate}
    where $e_1$ and $e_2$ are the edges of $B$ and $k \neq l$.
    \begin{claim}
    We consider applying the move $\overline{a_i}$ and designularization to $M_i$.
    Then, if $a_i$ is not an atomic move flatting a bigon of type (I), (II), (III), or (IV) in $\mathcal{C}_i$ to an edge, then the resulting singular $3$-manifold does not depend on the order of $\overline{a_i}$ and desingularization.
    \end{claim}
    \noindent {\it Proof of Claim B.}
    The operation $\overline{a_i}$ is the move doing nothing or one of the operations (a)--(e) by Lemma \ref{lem:crush_annulus}.
    We prove Claim B by dividing into these cases:
    \begin{itemize}
        \item If $\overline{a_i}$ is the move doing nothing, then it is clear that the resulting singular $3$-manifold does not depend on the order of $\overline{a_i}$ and desingularization.
        \item If $\overline{a_i}$ is the move removing a $3$-ball component or a $3$-sphere component, then the resulting singular $3$-manifold does not depend on the order of $\overline{a_i}$ and desingularization since a $3$-ball component and a $3$-sphere component in $M_i$ does not contain singular points.
        \item In the case where $\overline{a_i}$ is the move filling a $2$-sphere boundary $S$ in $\partial M_i$ with a $3$-ball, $S$ contains no singular points since an atomic move $a_i$ is not the move flatting a bigon of type (I) or (II) by the assumption.
        Therefore, in this case, the resulting singular $3$-manifold does not depend on the order of $\overline{a_i}$ and desingularization.
        \item We consider the case $\overline{a_i}$ is the move cutting open $M_i$ along a properly embedded disk $D$ in $M_i$.
        If $D$ contains no singular points, then the resulting singular $3$-manifold does not depend on the order of desingularization and $\overline{a_i}$.
        We suppose that $D$ contains singular points.
        Let $M'_i$ denote the $3$-manifold obtained from $M_i$ by desingularization at first, then the properly embedded disk $D'$ in $M'_i$ is obtained from $D$.
        The $3$-manifold obtained by cutting open $M'_i$ along $D'$ is homeomorphic to the $3$-manifold obtained from $M_i$ by cutting open $M_i$ along $D$ and then desingularization as shown Figure \ref{fig:cutting_desingularlization}.
        \begin{figure}
            \centering
            \input{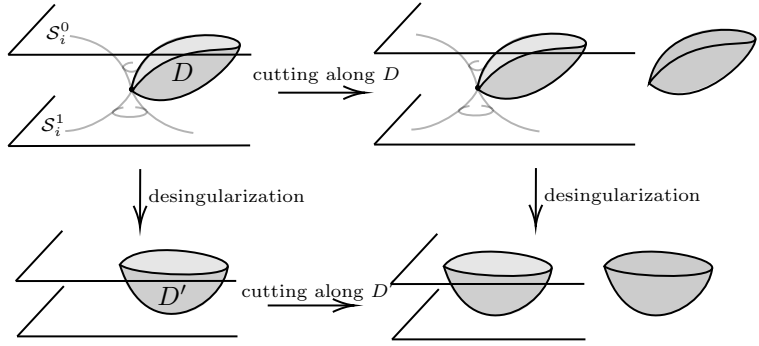}
            \caption{The operation cutting along a disk and desingularization}
            \label{fig:cutting_desingularlization}
        \end{figure}
        \item Suppose that $\overline{a_i}$ is the move cutting open $M_i$ along an embedded $2$-sphere in $M_i$ and filling the resulting boundaries with $3$-balls.
        In this case, we can prove the resulting singular $3$-manifold does not depend on the order of $\overline{a_i}$ and desingularization in the same way of the proof of the case where $\overline{a_i}$ is the move cutting open $M_i$ along an properly embedded disk.
        \item We consider the case where $\overline{a_i}$ is the move cutting open $M_i$ along a bigon $B$ which satisfies the following conditions:
        \begin{itemize}
            \item $\partial B \cap \mathcal{S}_i^{0} \neq \emptyset$,
            \item $\partial B \cap \mathcal{S}_i^{1} \neq \emptyset$,
            \item each of $v_1$ and $v_2$ is identified with a singular point, where $v_1$ and $v_2$ are the vertices of $B$.
        \end{itemize}
        In this case, the bigon in $\mathcal{C}_i$ corresponding to $B$ is a bigon of type (III) or (IV).
        This contradicts that $a_i$ is not an atomic move flatting a bigon of types (I), (II), (III), or (IV) to an edge.
    \end{itemize}
    \qed
    
    From Lemma \ref{lem:V'_E'}, $V'$ and $E'$ defined in Definition \ref{def:VandE} satisfy one of the following conditions:
    \begin{itemize}
        \item $|V'| = 2$ and $|E'| = 0$,
        \item $|V'| = 1$ and $|E'| = 0$,
        \item $|V'| = 1$, $|E'| = 2$, and each component of $E'$ is a circle constructed by one edge,
        \item $|V'| = 0$, $|E'| = 2$, and each component of $E'$ is a circle constructed by one edge,
        \item $|V'| = 0$, $|E'| = 4$, and each component of $E'$ is a circle constructed by one edge,
        \item $|V'| = 0$, $|E'| = 1$, and the component of $E'$ is a circle constructed by two edges,
        \item $|V'| = 0$ and $|E'| = 0$.
    \end{itemize}
    We prove Lemma \ref{lem:des} by dividing into these cases.
    
        \vspace{5pt}
        \noindent \textbf{The case where $|V'| = 2$ and $|E'| = 0$}\\
        \indent Suppose that $|V'| = 2$ and $|E'| = 0$.
        $M_0$ is obtained from $M$ by cutting open $M$ along $A$ and shrinking each copies of $A$ to a point.
        From the assumption, $M'$ has two singular points, and $M''$ is obtained from $M'$ by desingularization at each singular point.
        
        Let $N_0$ be the $3$-manifold obtained from $M_0$ by desingularizing at each singular point in $M_0$, and $N_0 \to N_1 \to \cdots \to N_n$ be the sequence of $3$-manifolds, where $N_{i+1}$ is obtained from $N_i$ by applying the operation $\overline{a_i}$.
        For each $\mathcal{C}_i$ $(0 \leq i \leq n)$, there are no bigons of types (I), (II), (III), or (IV) from the assumption, hence, we see that $\overline{a_i}$ is one of the operations (a)--(d) by the proof of Lemma \ref{lem:crush_annulus}, and so $\overline{a_i}$ is one of the operations \thelem.1--\thelem.5 by Claim A.
        
        Since we can swap the order of desingularization and each of the operations (a)--(d) if there are no bigons of types (I), (II), (III), or (IV) in each $\mathcal{C}_i$ by Claim B, $N_n$ is homeomorphic to $M''$.
        Now, $N_0$ is obtained from $M$ by cutting along $A$, shrinking each copy of $A$ to a point, and desingularizing at each singular point.
        This move is equal to destabilization using $A$.
        Therefore, $N_0$ is obtained from $M$ by destabilization using $A$, and $N_{i+1}$ is obtained from $N_i$ by the operations \thelem.1--\thelem.5 for any $i\ (0 \leq i < n)$.

        \vspace{5pt}
        \noindent \textbf{The case where $|V'| = 1$ and $|E'| = 0$}\\
        \indent We suppose that $|V'| = 1$ and $|E'| = 0$.
        This assumption implies that there is a type (I) bigon $B$ in a cell-decomposition $\mathcal{C}_i$ and $M_{i+1}$ is obtained from $M_i$ by filling the $2$-sphere boundary $S$ in $M_i$ corresponding to $B$ with a $3$-ball.
        By the proof of Lemma \ref{lem:V'_E'}, for each cell-decomposition $\mathcal{C}_j$ $(0 \leq j \leq n)$, there are no bigons of type (I), (II), (III), or (IV) other than $B$.
        Let $N_i$ be the $3$-manifold obtained from $M_i$ by desingularizing at the singular point contained in the $2$-sphere boundary $S$ in $M_i$.
        For each $j$ which is greater than $i$, let $N_{j}$ be the $3$-manifold obtained from $N_{j-1}$ by the operation $\overline{a_{j-1}}$.
        We see that $N_{i+1} \simeq M_{i+1}$ as shown in Figure \ref{fig:case_0_0}, and so $N_j \simeq M_j$ if $j \geq i$.
        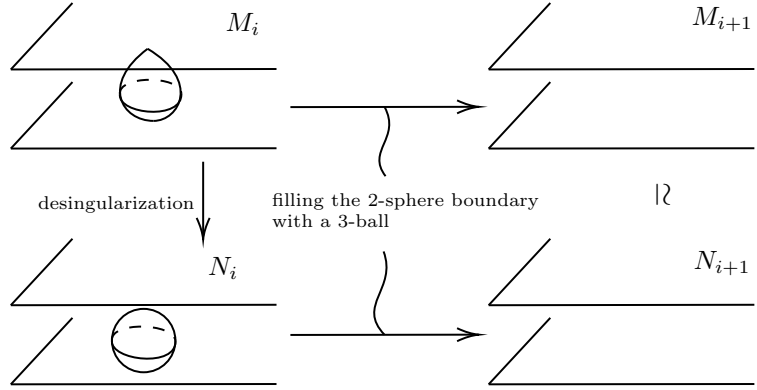
\begin{figure}
            \centering
            \tikzset{every picture/.style={line width=0.75pt}} 

\begin{tikzpicture}[x=0.75pt,y=0.75pt,yscale=-1,xscale=1]

\draw    (11,40.43) -- (145,40.2) ;
\draw    (11,40.43) -- (42.09,6.88) ;
\draw    (11,80.35) -- (145,80.2) ;
\draw    (11,80.35) -- (42.09,46.8) ;
\draw    (80,30) .. controls (56,48) and (67,66.39) .. (83,66.39) ;
\draw    (80,30) .. controls (107,46) and (96,66.39) .. (83,66.39) ;
\draw    (66,53) .. controls (65,63.2) and (98,66.39) .. (97,52.39) ;
\draw  [dash pattern={on 4.5pt off 4.5pt}]  (66,53) .. controls (65,43.2) and (96,43.39) .. (97,52.39) ;
\draw    (252,40.43) -- (386,40.2) ;
\draw    (252,40.43) -- (283.09,6.88) ;
\draw    (252,80.35) -- (386,80.2) ;
\draw    (252,80.35) -- (283.09,46.8) ;
\draw    (252,159.43) -- (386,159.2) ;
\draw    (252,159.43) -- (283.09,125.88) ;
\draw    (252,199.35) -- (386,199.2) ;
\draw    (252,199.35) -- (283.09,165.8) ;
\draw    (11,159.43) -- (145,159.2) ;
\draw    (11,159.43) -- (42.09,125.88) ;
\draw    (11,199.35) -- (145,199.2) ;
\draw    (11,199.35) -- (42.09,165.8) ;
\draw   (62,177.27) .. controls (62,168.44) and (69.16,161.27) .. (78,161.27) .. controls (86.84,161.27) and (94,168.44) .. (94,177.27) .. controls (94,186.11) and (86.84,193.27) .. (78,193.27) .. controls (69.16,193.27) and (62,186.11) .. (62,177.27) -- cycle ;
\draw    (62,177.27) .. controls (61,187.47) and (95,191.27) .. (94,177.27) ;
\draw  [dash pattern={on 4.5pt off 4.5pt}]  (62,177.27) .. controls (61,167.47) and (93,168.27) .. (94,177.27) ;
\draw    (108,87) -- (108,124.29) ;
\draw [shift={(108,126.29)}, rotate = 270] [color={rgb, 255:red, 0; green, 0; blue, 0 }  ][line width=0.75]    (10.93,-3.29) .. controls (6.95,-1.4) and (3.31,-0.3) .. (0,0) .. controls (3.31,0.3) and (6.95,1.4) .. (10.93,3.29)   ;
\draw    (152,59.43) -- (245,59.29) ;
\draw [shift={(247,59.29)}, rotate = 179.91] [color={rgb, 255:red, 0; green, 0; blue, 0 }  ][line width=0.75]    (10.93,-3.29) .. controls (6.95,-1.4) and (3.31,-0.3) .. (0,0) .. controls (3.31,0.3) and (6.95,1.4) .. (10.93,3.29)   ;
\draw    (152,174.43) -- (245,174.29) ;
\draw [shift={(247,174.29)}, rotate = 179.91] [color={rgb, 255:red, 0; green, 0; blue, 0 }  ][line width=0.75]    (10.93,-3.29) .. controls (6.95,-1.4) and (3.31,-0.3) .. (0,0) .. controls (3.31,0.3) and (6.95,1.4) .. (10.93,3.29)   ;
\draw    (199.5,59.36) .. controls (208,75.29) and (190,81.29) .. (200,94.29) ;
\draw    (199,132.29) .. controls (206,151.29) and (185,159.29) .. (199.5,174.36) ;

\draw (64.29,108.29) node  [font=\scriptsize] [align=left] {desingularization};
\draw (210.09,111.29) node  [font=\scriptsize] [align=left] {filling the $\displaystyle 2$-sphere boundary\\with a $\displaystyle 3$-ball};
\draw (118,10.4) node [anchor=north west][inner sep=0.75pt]    {$M_{i}$};
\draw (353,7.4) node [anchor=north west][inner sep=0.75pt]    {$M_{i+1}$};
\draw (109,133.4) node [anchor=north west][inner sep=0.75pt]    {$N_{i}$};
\draw (354,130.4) node [anchor=north west][inner sep=0.75pt]    {$N_{i+1}$};
\draw (345.1,95.5) node [anchor=north west][inner sep=0.75pt]  [font=\large,rotate=-90]  {$\simeq $};

\end{tikzpicture}
            \caption{The case where there is a boundary $2$-sphere containing two singular points}
            \label{fig:case_0_0}
        \end{figure}
        Now, we have the sequence of $3$-manifolds and singular $3$-manifolds
        \[
            M \to M_0 \to \cdots \to M_i \xrightarrow{\text{desingularization}} N_i \to N_{i+1} \to \cdots \to N_n \simeq M' \simeq M''.
        \]
        
        Let $N_0$ be the $3$-manifold obtained from $M_0$ by desingularizing at each singular point in $M_0$ and we inductively define $N_{j+1}$ as the $3$-manifold obtained from $N_{j}$ by the operation $\overline{a_{j-1}}$ for each $j \ (0 \leq j < i-1)$.
        $N_0$ is obtained from $M$ by cutting $M$ along $A$, shrinking each copy of $A$ to a point, and desingularizing at each singular point.
        Therefore, $N_0$ is obtained from $M$ by destabilizing using $A$.
        We can swap the order of applying desingularization and the operations other than $\overline{a_i}$ since there are no bigons of types (I), (II), (III), or (IV) by Claim B. 
        Thus, there is a sequence of $3$-manifolds $M \to N_0 \to \cdots \to N_n \simeq M_n$, and $N_{j+1}$ is obtained from $N_{j}$ by the operations \thelem.1--\thelem.6 in the statement of Lemma \ref{lem:des} by Claim A.

        \vspace{5pt}
        \noindent \textbf{The case where $|V'| = 0$ and $|E'| = 0$}\\
        \indent We suppose that $|V'| = 0$ and $|E'| = 0$.
        This assumption implies that either of the following situations holds:
        \begin{itemize}
            \item There are two cell-deconpositions $\mathcal{C}_i$ and $\mathcal{C}_j$ containing type (I) bigons $B_i \subset \mathcal{C}_i$ and $B_j \subset \mathcal{C}_j$, and each of $M_{i+1}$ and $M_{j+1}$ is obtained by filling a $2$-sphere boundary containing a singular point with a $3$-ball.
            \item There is a cell-decomposition $\mathcal{C}_i$ containing a type (II) bigon $B$ and $M_{i+1}$ is obtained from $M_i$ by filling a $2$-sphere boundary containing the two singular points with a $3$-ball.
        \end{itemize}
        In either case, we can show that Lemma \ref{lem:des} holds by the same argument of the case where $V' = 1$ and $E' = 0$.

        \vspace{5pt}
        \noindent \textbf{The case where $|V'| = 0$, $|E'| = 1$, and the component of $E'$ is a circle constructed by two edges}\\
        \indent Suppose that $|V'| = 0$, $|E'| = 1$ and the component of $E'$ is a circle constructed by two edges.
        In this case, there is a cell-decomposition $\mathcal{C}_i$ containing a bigon $B$ of type (III), and $M_{i+1}$ is obtained from $M_i$ by cutting $M_i$ along a properly embedded disk corresponding to $B$.
        In addition, for each cell-decomposition $\mathcal{C}_j$ $(0 \leq j \leq n)$, there are no bigons of types (I), (II), (III), or (IV) other than $B$ by the proof of Lemma \ref{lem:V'_E'}.
        
        From the assumption, $\mathcal{T}''$ is obtained from $\mathcal{T}'$ by Operation \ref{ope:eliminate_loop2}.
        We define a similar operation for the underlying $3$-manifold $M_j$ of a cell-decomposition $\mathcal{C}_{j}$ which satisfies $|V_j| = 0$, $|E_j| = 1$, and the component of $E_j$ is a circle constructed by two edges.
        Let $v_1$ and $v_2$ denote the vertices in the circle in $E_j$, and let $e_1$ and $e_2$ denote the edges in the circle in $E_j$.
        We denote the points and the arcs in $M_{j}$ corresponding to $v_1, v_2, e_1,$ and $e_2$ by $\overline{v_1}, \overline{v_2}, \overline{e_1},$ and $\overline{e_2}$, respectively.
        We define Operation \ref{ope:eliminate_loop2}$'$ as the operation gluing the two edges $\overline{e_1}$ and $\overline{e_2}$ and then desingularizing at the two points $\overline{v_1}$ and $\overline{v_2}$.
        Then, $M''$ is obtained from $M'$ by Operation \ref{ope:eliminate_loop2}$'$.

        \begin{claim}
            We consider applying the move $\overline{a_j}$ and Operation \ref{ope:eliminate_loop2}$'$ to $M_j$.
            If $a_j$ is not an atomic move flatting a bigon of type (I), (II), (III), or (IV) in $\mathcal{C}_j$, then the resulting $3$-manifold does not depend on the order of $\overline{a_j}$ and Operation \ref{ope:eliminate_loop2}$'$.
        \end{claim}
        \noindent {\it Proof of Claim C.}
        The operation $\overline{a_j}$ is one of operations (a)--(d) because $a_j$ is not an atomic move flatting a bigon of type (I), (II), (III), or (IV) in $\mathcal{C}_j$.
        If $\overline{a_j}$ is an operation (c) or (d), then we can prove that the resulting $3$-manifold does not depend on the order of $\overline{a_j}$ and Operation \ref{ope:eliminate_loop2}$'$ in the same way of the proof of Claim B.
        Furthermore, if $\overline{a_j}$ is the operation (a) removing a $3$-sphere component of $M_j$, then we can swap the order of $\overline{a_j}$ and Operation \ref{ope:eliminate_loop2}$'$ since the $3$-sphere component has no boundaries.
        We suppose that $\overline{a_j}$ is an operation (a) removing a $3$-ball or an operation (b), i.e., the atomic move $a_j$ flats a bigon $B \subset \partial \mathcal{C}_j$ whose edges are identified.
        Let $S$ denote the $2$-sphere boundary constructed by $B$.
        If $S$ does not contain the circle in $E_j$, then we can swap the operation $\overline{a_j}$ and Operation \ref{ope:eliminate_loop2}$'$.
        If $S$ contains the circle in $E_j$, then the circle divides $S$ into two disks.
        This contradicts that $S$ is constructed by just one bigon $B$.
        Therefore, the resulting $3$-manifold does not depend on the order of $\overline{a_j}$ and Operation \ref{ope:eliminate_loop2}$'$.
        \qed
        
        Let $N_{i+1}$ denote the $3$-manifold obtained from $M_{i+1}$ by Operation \ref{ope:eliminate_loop2}$'$, and for any $j$ which is greater than $i$, we inductively define $N_{j}$ as the $3$-manifold obtained from $N_{j-1}$ by the operation $\overline{a_{j-1}}$.
        Then, $N_n$ is homeomorphic to $M''$ because the order of Operation \ref{ope:eliminate_loop2}$'$ and the operation $\overline{a_j}$ can be swapped if $j \neq i$ by Claim C.
        Furthermore, $N_{i+1}$ is homeomorphic to the $3$-manifold obtained from $M_i$ by desingularizing at each singular point as shown Figure \ref{fig:case_0_1}, and so we apply desingularization to $M_i$ instead of the two operations, $\overline{a_{i}}$ and Operation \ref{ope:eliminate_loop2}$'$.
        \begin{figure}
            \centering
            \input{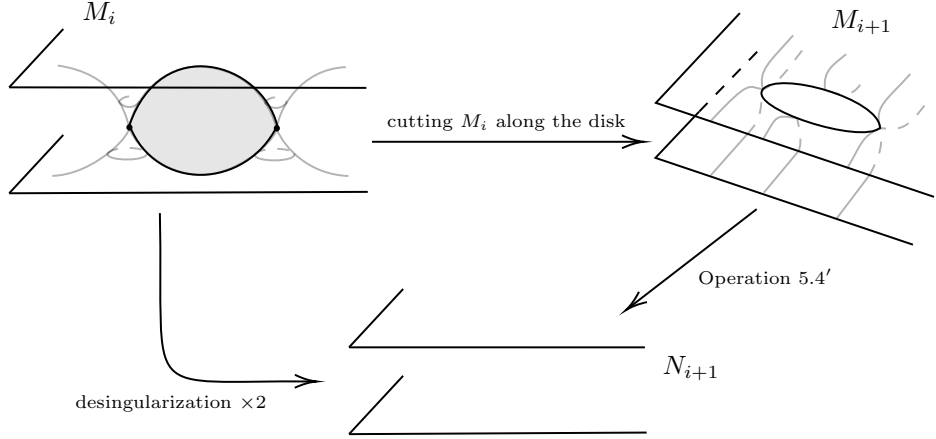}
            \caption{The case where there is a disk containing two singular points and which intersects $\mathcal{S}_i^0$ and $\mathcal{S}_i^1$}
            \label{fig:case_0_1}
        \end{figure}
        Now, we have the sequence of $3$-manifolds and singular $3$-manifolds
        \[
            M \to M_0 \to \cdots \to M_i \xrightarrow{\text{desingularizations}} N_{i+1} \to \cdots \to N_n \simeq M''.
        \]
        
        Let $N_0$ be the $3$-manifold obtained from $M_0$ by desingularizing at each point in $M_0$, and for any $j\ (j\leq i)$, we inductively define $N_{j}$ as the $3$-manifold obtained from $N_{j-1}$ by the operation $\overline{a_{j-1}}$.  
        For any $j$ other than $i$, the resulting $3$-manifold does not depend on the order of desingularization and the operation $\overline{a_j}$ by Claim B.
        Therefore, we have the following sequence of the $3$-manifolds
        \[
            M \to N_0 \to \cdots N_i \simeq N_{i+1} \to \cdots \to N_n \simeq M''.
        \]
        If $j \neq 0$, then $N_j$ is obtained from $N_{j-1}$ by one of the operations \thelem.1--\thelem.5 by Claim A.
        $N_0$ is obtained from $M$ by cutting along the annulus $A$ in $M$, shrinking each copy to a point, and desingularizing at each singular point.
        Thus, $N_0$ is obtained from $M$ by destabilization using $A$.
        Therefore, there is a sequence of $3$-manifolds $M \to N_0 \to \cdots \to N_n \simeq M''$, and $N_{j+1}$ is obtained from $N_{j}$ by one of the operations \thelem.1--\thelem.6.

        \vspace{5pt}
        \noindent \textbf{The case where $|V'| = 1$, $|E'| = 2$, and each component of $E'$ is a circle constructed by one edge}\\
        \indent Suppose that $|V'| = 1$, $|E'| = 2$, and each component of $E'$ is a circle constructed by one edge.
        In this case, $\mathcal{T}''$ is obtained from $\mathcal{T}'$ by Operation \ref{ope:eliminate_loop1} at each circle in $E'$ and then desingularizing at the singular point in $\mathcal{T}'$.
        We define a similar operation for the underlying singular $3$-manifold of a cell-decomposition $\mathcal{C}_j$ which has circles constructed by one edge in $E_j$.
        Let $c$ be a circle constructed by one edge in $\mathcal{C}_j$ and $\overline{c}$ be the circle in $M_j$ corresponding to $c$.
        We call the operation shrinking $c$ to a point and then desigularizing at the point Operation \ref{ope:eliminate_loop1}$'$.
        As with Claim C, the resulting $3$-manifold does not depend on the order of Operation \ref{ope:eliminate_loop1}$'$ and the operation $\overline{a_j}$.
        
        From the assumption, there is a cell-decomposition $\mathcal{C}_i$ containing a bigon $B$ of type (IV) and $\mathcal{C}_{i+1}$ is obtained from $\mathcal{C}_i$ by flatting $B$ to an edge.
        In addition, there are no bigons of types (I), (II), (III), or (IV) in for each $\mathcal{C}_j$ $(0 \leq j \leq n)$ other than $B$ by the proof of Lemma \ref{lem:V'_E'}.
        Let $N_{i+1}$ be the $3$-manifold obtained from $M_{i+1}$ by Operation \ref{ope:eliminate_loop1}$'$ and desingularization at the singular point in $M_{i+1}$, and for any $j$ which is greater than $i$, we define $N_{j+1}$ as the $3$-manifold obtained from $N_{j}$ by the operation $\overline{a_{j}}$.
        We can swap the order of the operation $\overline{a_j}$ and Operation \ref{ope:eliminate_loop1}$'$, and we can also swap the order of $\overline{a_j}$ and desingularization by Claim B.
        Therefore, we have $M'' \simeq N_n$.
        
        Let $N_i$ denote the $3$-manifold obtained from $M_i$ by desingularizing at each singular point in $M_i$.
        \begin{claim}
            The $3$-manifold $N_{i+1}$ is obtained from $N_i$ by the operation \thelem.6 or \thelem.7 in the statement of Lemma \ref{lem:des}.
        \end{claim}
        \noindent{\it Proof of Claim \Alph{claim}.}
        We obtain the properly embedded annulus $B'$ in $N_i$ from $B$ by desingularization.
        Let $b'_0$ and $b'_1$ denote the boundary component of $B'$.
        First, we prove that $b'_0$ bounds a disk in $\partial N_i$ if and only if $b'_1$ bounds a disk in $\partial N_i$.
        For any cell-decompositions $\mathcal{C}_j$ $(0 \leq j < i)$, there are no bigons of types (I), (II), (III), or (IV), and so $N_i$ is homeomorphic to the $3$-manifold obtained from $M_0$ by desingularizing at first and then applying the operations $\overline{a_j}$ $(0 \leq j <i)$ by Claim B.
        The destabilization using $A$ is the same as the operation cutting open $M$ along $A$, shrinking each copy of $A$ to a point, and desingularizing at each singular point.
        Now, we see that $N_i$ is obtained from $M$ by the operations \thelem.1--\thelem.6 by Claim A.
        $M$ is a $3$-manifold obtained from $\mathcal{S} \times I$ removing embedded open solid tori, and so we can obtain the component of $N_i$ containing $B'$ from $F \times I$ by removing embedded open solid tori and emnedded open $3$-balls, where $F$ is an orientable surface.
        Therefore, we can obtain $F \times I$ from the component of $N_i$ containing $B'$ by gluing solid tori and $3$-balls, and we obtain the properly embedded annulus $B''$ in $F \times I$.
        $\partial B''$ intersects with $F \times \{0\}$ and $F \times \{1\}$ because the bigon $B \subset \mathcal{C}_i$ is the type (IV), i.e., $\partial B$ intersects with $\mathcal{S}^0_i$ and $\mathcal{S}^1_i$.
        Let $f:\mathbb{S}^1 \times I \to F \times I$ be the embedding map of $B''$ and $p:F \times I \to F$ be the projection map.
        Then, $p \circ f : \mathbb{S}^1 \times I \to F$ is an isotopy from $p \circ f|_{\mathbb{S}^1 \times \{0\}}$ to $p \circ f|_{\mathbb{S}^1 \times \{1\}}$.
        Hence, $B'' \cap F \times \{0\}$ bounds a disk in the boundary of $F \times I$ if and only if $B'' \cap F \times \{1\}$ also bounds a disk in the boundary of $F \times I$.
        Thereby, $b'_0$ bounds a disk in $\partial N_i$ if and only if $b'_1$ bounds a disk in $\partial N_i$.
        
        We show that if $b'_0$ and $b'_1$ bound disks in $\partial N_i$, then $N_{i+1}$ is obtained from $N_i$ by the operation \thelem.7.
        Let $D^0$ and $D^1$ denote the disks in the boundary of $N_i$ bounded by $b'_0$ and $b'_1$.
        Then, $B' \cup D^0 \cup D^1$ is a $2$-sphere, and the boundary of the regular neighborhood $\partial N(B' \cup D^0 \cup D^1)$ is a disjoint union of a $2$-sphere and an annulus.
        We denote the $2$-sphere component of $\partial N(B' \cup D^0 \cup D^1)$ by $S$.
        $S$ splits the component of $N_i$ containing $S$ into two components.
        One of the components is a $3$-manifold obtained by removing an open $3$-ball and an open regular neighborhood of $\hat{D'_1}$ from $F \times I$, where $F$ is a closed orientable surface and $\hat{D'_1}$ is a sublink of $\hat{D}$ which may be the empty set, and we denote this component by $X_1$.
        The other component is a component obtained by removing an open regular neighborhood of $\hat{D'_2}$ from a $3$-ball, where $\hat{D'_2}$ is a sublink of $\hat{D}$ which may be the empty set, and we denote this component by $X_2$.
        
        $M_{i+1}$ has two components which has a circle corresponding to a circle in $E_{i+1}$.
        Let $Y_1$ and $Y_2$ be the components in $N_{i+1}$ which are obtained from the components corresponding to the components containing a circle in $E_{i+1}$.
        One of $Y_1$ and $Y_2$ is homeomorphic to the $3$-manifold obtained from $X_1$ by filling the copy of $S$ with a $3$-ball, 
        and the other component is homeomorphic to the $3$-manifold obtained by gluing the copy of $S$ to the boundary obtained by removing a $3$-ball from $\mathbb{S}^2 \times I$ as shown in Figure \ref{fig:boundary_bounds_disk}.
        Therefore, $N_{i+1}$ is obtained from $N_i$ by the operation \thelem.7.
        \begin{figure}
            \centering
            \input{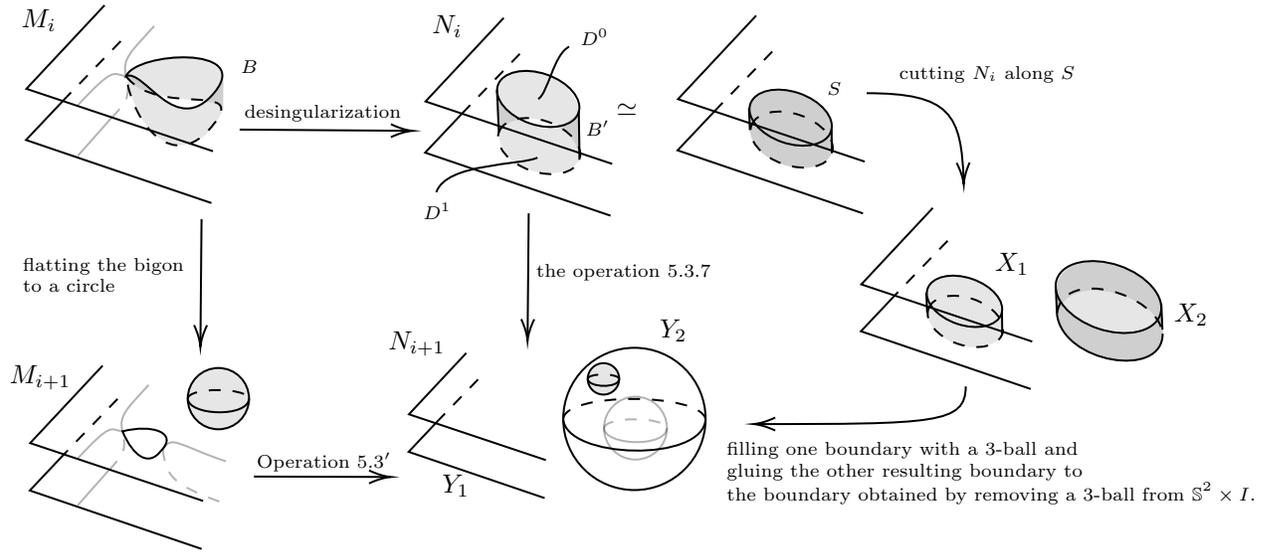}
            \caption{The case where each component of the boundary of $B'$ bounds a disk in the boundary of $N_i$}
            \label{fig:boundary_bounds_disk}
        \end{figure}
        
        Next, we prove that if $b'_0$ and $b'_1$ are not bound disks in $\partial N_i$, then $N_{i+1}$ is obtained from $N_i$ by the destabilization using $B'$, i.e., the operation \thelem.6. 
        $M_{i+1}$ is obtained from $M_i$ by flatting the bigon in $M_i$ corresponding to $B$, and $N_{i+1}$ is obtained from $M_{i+1}$ by shrinking the two circles in $M_i$ corresponding to $E_{i+1}$ to points and then desingularizing at the two singular points, i.e., Operation \ref{ope:eliminate_loop1}$'$.
        On the other hand, $M_{i+1}$ is homeomorphic to the $3$-manifold obtained from $N_i$ by flatting the annulus $B'$ to a circle, thus, $N_{i+1}$ is obtained from $N_i$ by the operation cutting $N_i$ along $B'$, shinking each copy of $B'$ to a point, and desingularizing at the two singular points.
        This operation is equal to destabilization using $B'$ as shown in Figure \ref{fig:boundary_does_not_bound_disk}.
        Therefore, $N_{i+1}$ is obtained from $N_i$ by destabilization using $B'$.
        \qed
        \begin{figure}
            \centering
            \input{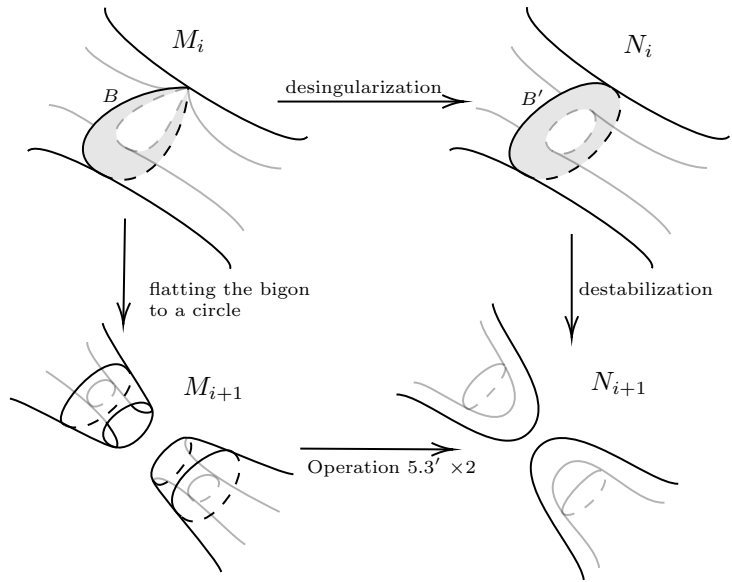}
            \caption{The case where each component of the boundary of $B'$ does not bound a disk in the boundary of $N_i$}
            \label{fig:boundary_does_not_bound_disk}
        \end{figure}

        Let $N_0$ be the $3$-manifold obtained from $M_0$ by desingularizing at the singular points, and for any $j\ (0 \leq j < i-1)$, let $N_{j+1}$ be the $3$-manifold obtained from $N_{j}$ by the operation $\overline{a_j}$.
        The resulting $3$-manifold does not depend on the order of desingularizing and the operations $\overline{a_j}$ $(0 \leq j < i-1)$ by Claim B, therefore, there is a sequence of $3$-manifolds $M \to N_0 \to \cdots \to N_n \simeq M''$ and for any $j$ $(0 \leq j < n)$, $N_{j+1}$ is obtained from $N_j$ by one of the operations \thelem.1--\thelem.7.

        \vspace{5pt}
        \noindent \textbf{The case where $|V'| = 0$, $|E'| = 2$, and each component of $E'$ is a circle constructed by one edge}\\
        \indent Suppose that $|V'| = 0$, $|E'| = 2$.
        This implies that there is a type (I) bigon $B_i \subset \mathcal{C}_i$ and a type (IV) bigon $B_j \subset \mathcal{C}_j$, and $\mathcal{C}_{i+1}$ and $\mathcal{C}_{j+1}$ is obtained from $\mathcal{C}_i$ and $\mathcal{C}_j$ by flatting the bigons in $M_i$ and $M_j$ corresponding to $B_i$ and $B_j$.
        In this case, we can show Lemma \ref{lem:des} by the same argument of the proof of the case where $V' = 1$ and $E' = 0$ and the case where $V' = 1$ and $E' = 2$.

        \vspace{5pt}
        \noindent \textbf{The case where $|V'| = 0$, $|E'| = 4$, and each component of $E'$ is a circle constructed by one edge}\\
        \indent We suppose that $|V'| = 0$, $|E'| = 4$, and each component of $E'$ is a circle constructed by one edge.
        In this case, $\mathcal{T}''$ is obtained from $\mathcal{T}'$ by Operation \ref{ope:eliminate_loop1} at each circle in $E'$.
        From the assumption, there are two cell-decompositions $\mathcal{C}_i$ and $\mathcal{C}_j$ containing a type (IV) bigon and the atomic moves $a_i$ and $a_j$ are the move flatting a type (IV) bigon to an edge.
        We suppose that $i < j$, and let $B_i$ and $B_j$ denote the bigons in $M_i$ and $M_j$ corresponding to the type (IV) bigons in $\mathcal{C}_i$ and $\mathcal{C}_j$, respectively.
        
        If there are no annuli in $\partial M_j$ whose boundary contains two points identified with a singular point and whose boundary is $\partial B_j$, then we can prove that there is a sequence of $3$-manifolds $M \to N_0 \to \cdots \to N_n \simeq M''$ and $N_{k+1}$ is obtained from $N_k$ by one of the operations \thelem.1--\thelem.7 as the same argument of the case where $|V'| = 1$, $|E'| = 2$.
        
        We suppose that there is an annulus $F$ in $M_j$ whose boundary contains two points identified with a singular point and whose boundary is $\partial B_j$.
        In this case, $F$ contains a circle in $E_j$.
        Let $c_1$ and $c_2$ denote the circles obtained from $B_i$ in $\mathcal{C}_j$, and suppose that $c_1$ is in $F$.
        If $B_j$ and $c_1$ has no intersections, then the resulting $3$-manifold does not depend on the order of the operation $\overline{a_j}$ and Operation \ref{ope:eliminate_loop1}$'$ for $c_1$ as shown Figure \ref{fig:case_0_4}.
        \begin{figure}
            \centering
            \input{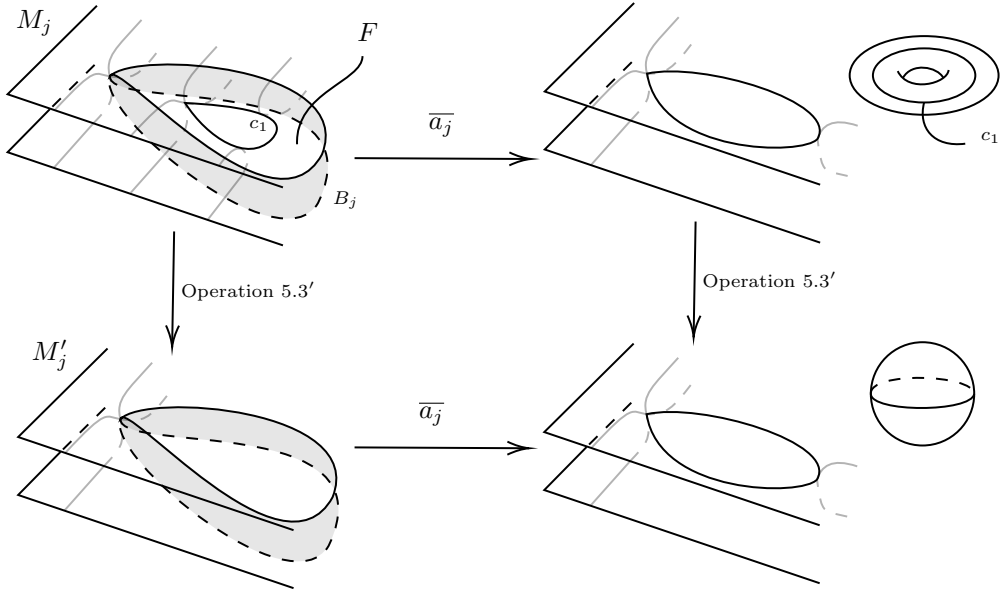}
            \caption{The $3$-manifold obtained from $M_j$ by the operation $\overline{a_j}$ and Operation 4.1'}
            \label{fig:case_0_4}
        \end{figure}
        
        We prove that $B_j$ and $c_1$ has no intersections.
        Let $s$ be the singular point contained in $B_j$.
        We see that $s \in V_j$ and $c_1 \in E_j$.
        From the definition of $V_j$ and $E_j$, $V_j \cap E_j = \emptyset$, thus, $s \cap c_1 = \emptyset$.
        If the edges of $B_j$ are identified to $c_1$, then $s \cap c_1 \neq \emptyset$, and so $B_j$ and $c_1$ has no intersections.
        Therefore, the resulting $3$-manifold does not depend on the order of the operation $\overline{a_j}$ and Operation \ref{ope:eliminate_loop1}$'$ for $c_1$.
        
        Let $M'_j$ be the singular $3$-manifold obtained from $M_j$ by applying Operation \ref{ope:eliminate_loop1}$'$ at $c_1$ and $c_2$.
        We denote the bigon obtained from $B_j$ in $M'_j$ by $B'_j$.
        Then, each boundary component of $B'_j$ bounds a disk in the boundry of $M'_j$.
        Therefore, as the same argument of the case where $|V'| = 1$, $|E'| = 2$, we can prove that there is a seaquence of $3$-manifolds $M \to N_0 \to \cdots \to N_n \simeq M''$ and for any $k$, $N_{k+1}$ is obtained from $N_k$ by one of the operations \thelem.1--\thelem.7.
\end{proof}

We can show the same result as Collorary \ref{cor:splitting_comp} for Operation \ref{ope:des}.
\begin{cor}\label{cor:des_comp}
    Suppose the same situation of Operation \ref{ope:des}.
    Let $\hat{D'}$ and $\hat{D''}$ be sublinks of $\hat{D}$.
    A component of the 3-manifold $M'$ obtained from $M$ by Operation \ref{ope:des} is one of the following:
    \begin{itemize}
        \item $\text{cl}(\mathcal{S}' \times I - N(\hat{D'})$, where $\mathcal{S}'$ is a closed orientable surface,
        \item $\mathbb{B}^3 - N(\hat{D''})$,
        \item $\mathbb{S}^3 - N(\hat{D''})$,
        \item a component removed some 3-balls from the above component.
    \end{itemize}
\end{cor}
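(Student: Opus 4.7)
The plan is to imitate the proof of Corollary \ref{cor:splitting_comp}, replacing Lemma \ref{lem:splitting} by Lemma \ref{lem:des}. Lemma \ref{lem:des} gives a sequence $M \to N_0 \to N_1 \to \cdots \to N_n \simeq M''$, where $N_0$ is obtained from $M$ by a single destabilization along $A$ and each subsequent arrow is one of the seven operations \thelem.1--\thelem.7 in that lemma's statement. Step 5 of Operation \ref{ope:des} merely discards some components, so it is enough to show by induction on this sequence that every component of $N_i$ has one of the four listed forms.

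For the base step, $M = \text{cl}(\mathcal{S} \times I - N(\hat{D}))$ splits into components of the first type, one per connected component of $\mathcal{S}$. For the move $M \to N_0$, since $A$ is vertical, cutting $\mathcal{S} \times I$ along $A$ and capping each copy with a $2$-handle $D^2 \times I$ yields a new thickened closed orientable surface (possibly disconnected, with smaller total genus); removing $N(\hat{D})$ keeps the result in the first listed form.

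For the inductive step I would run through the operations \thelem.1--\thelem.7 one by one. Operations \thelem.3, \thelem.4, \thelem.5 only manipulate $3$-balls and $3$-spheres and are immediate. Operation \thelem.1 repeats the analysis in Corollary \ref{cor:splitting_comp}: an essential $2$-sphere or essential disk in a component of any of the listed forms produces components of the same forms, possibly with a $3$-ball removed. Operation \thelem.2 can only occur along a compressing disk for a torus boundary $\partial N(\hat{D}_k)$ whose core is the unknot in the ambient piece; the resulting component is either of the ball/sphere type with a sublink removed, or a component of the first type with an extra $3$-ball removed. Operation \thelem.6 is destabilization again, handled exactly as in the base step.

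The main obstacle will be Operation \thelem.7, where an embedded $2$-sphere $S$ is cut, one side is capped by a $3$-ball, and the other side is glued to $\mathrm{cl}(\mathbb{S}^2\times I - B)$. The proof will split into cases according to where $S$ sits: when $S$ bounds an embedded $\mathrm{cl}(\mathbb{B}^3 - N(\hat{D}_1'))$ inside a component of the first form $\text{cl}(\mathcal{S}'\times I - N(\hat{D}'))$, capping that side gives a component $\mathbb{S}^3 - N(\hat{D}_1')$, while the other side, augmented by $\mathrm{cl}(\mathbb{S}^2\times I - B)$, is homeomorphic to $\text{cl}(\mathcal{S}'\times I - N(\hat{D}_2'))$ with one extra $3$-ball removed; when $S$ is inessential or sits inside a component of ball/sphere type, the same description applies with $\mathcal{S}'$ replaced by $\mathbb{S}^2$. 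Checking that these are the only configurations, and that in every case the output falls in the list, is the delicate part of the argument.
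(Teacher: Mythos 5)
Your plan is exactly the argument the paper intends: the paper itself writes no proof of Corollary~\ref{cor:des_comp}, stating only that it follows in the same way Corollary~\ref{cor:splitting_comp} follows from Lemma~\ref{lem:splitting}, and your induction along the sequence $M \to N_0 \to \cdots \to N_n$ furnished by Lemma~\ref{lem:des}, checking each of the seven operations in turn, is that argument.

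One count is off in your handling of operation~\thelem.7, though it does not harm the conclusion. Gluing a component $X$ to $\mathrm{cl}(\mathbb{S}^2\times I - B)$ along $\partial B$ brings in both boundary spheres $\mathbb{S}^2\times\{0\}$ and $\mathbb{S}^2\times\{1\}$ of the auxiliary $\mathbb{S}^2\times I$ piece, so if $X$ is the thickened-surface side the output has \emph{two} extra $3$-balls removed, not one. You also distribute the cap and the $\mathbb{S}^2\times I$-gluing opposite to what actually happens in the paper's proof of Lemma~\ref{lem:des} (Claim~D caps the thickened-surface side, producing $\mathrm{cl}(\mathcal{S}'\times I - N(\hat{D}'_1))$ directly, and glues the ball side, producing $\mathrm{cl}(\mathbb{S}^2\times I - N(\hat{D}'_2))$). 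Since the statement of Lemma~\ref{lem:des} leaves the assignment of the two sides ambiguous, considering both cases is a reasonable conservative move, and in every case the outputs land inside the corollary's four forms, so the corollary is established either way; but the count of removed balls should be two, and if you consult Claim~D you will find the configuration is in fact determined.
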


Consider the case where $\hat{D}$ is a knot in a thickened closed orientable surface.
As the same argument of Corollary \ref{cor:splitting_comp_knot}, if the underlying $3$-manifold of the resulting triangulation of Operation \ref{ope:des} is the empty set, then $\hat{D}$ is the trivial knot.
Thus, the following corollary holds.
\begin{cor}\label{cor:des_comp_knot}
    Suppose that $\mathcal{S}$ is a closed orientable surface whose genus is not zero, $\hat{D}$ is a knot in $\mathcal{S} \times I$, and $\mathcal{T}$ is a triangulation of the exterior $M = \text{cl}(\mathcal{S} \times I - N(\hat{D}))$.
    If $F$ is a vertical essential normal annulus in $M$ with respect to $\mathcal{T}$, then the underlying $3$-manifold $M'$ of the resulting triangulation of Operation \ref{ope:splitting} is the empty set or one of follows:
    \begin{itemize}
        \item $\text{cl}(\mathcal{S}' \times I - N(\hat{D})$, where $\mathcal{S}'$ is a closed orientable surface,
        \item $\mathbb{B}^3 - N(\hat{D})$,
        \item $\mathbb{S}^3 - N(\hat{D})$.
    \end{itemize}
    Furthermore, if $M'$ is the empty set, then $\hat{D}$ is the trivial knot.
\end{cor}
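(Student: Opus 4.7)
The proof will mirror the argument sketched in the paragraph preceding the statement, adapting the proof of Corollary~\ref{cor:splitting_comp_knot} to the destabilization setting. My plan is as follows. First, I will invoke Corollary~\ref{cor:des_comp}, which enumerates the possible components of the $3$-manifold obtained from $M$ by Operation~\ref{ope:des} before the removal step (step 5). Since $\hat{D}$ is a knot, every sublink $\hat{D}'$ or $\hat{D}''$ appearing in that corollary is forced to be either $\emptyset$ or $\hat{D}$ itself. Thus every component is a knot-free piece (one of $\text{cl}(\mathcal{S}' \times I)$, $\mathbb{B}^3$, $\mathbb{S}^3$, or such with some open $3$-balls removed) or a knot-containing piece whose only potential non-$\mathcal{S}$ torus boundary is $\partial N(\hat{D})$.

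Next, I will analyze which components survive step 5. Since $\hat{D}$ is a knot, $\partial N(\hat{D})$ is the unique candidate for a torus boundary other than those coming from the copies of $\mathcal{S}$. Therefore, a component survives step 5 if and only if it contains $\partial N(\hat{D})$ and this boundary has remained a torus throughout the operations of Lemma~\ref{lem:des}. When it does survive, there is exactly one such component, it has $\hat{D}$ (not $\emptyset$) as its removed sublink, and by Corollary~\ref{cor:des_comp} it is one of the three listed types, possibly with some $3$-balls removed.

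For the \emph{furthermore} clause I will argue that $M'$ is empty only if $\partial N(\hat{D})$ was compressed at some stage of the sequence in Lemma~\ref{lem:des}. Among the operations \thelem.1--\thelem.7 given there, only \thelem.2 can change the homeomorphism type of $\partial N(\hat{D})$, replacing it by $2$-spheres. For \thelem.2 to apply to $\partial N(\hat{D})$, an essential compressing disk in the knot exterior must exist, which by standard knot theory happens if and only if $\hat{D}$ is the trivial knot. In this case every component is removed in step 5 because no non-$\mathcal{S}$ torus boundary remains, so $M' = \emptyset$.

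The main obstacle I anticipate is ruling out the ``$3$-balls removed'' form of the surviving component, i.e.\ showing that no residual $2$-sphere boundary persists on the component containing $\hat{D}$. I would dispose of this by a case analysis over the operations in Lemma~\ref{lem:des}: operations \thelem.3 and \thelem.5 act only on disjoint $3$-ball or $3$-sphere components that step 5 ultimately discards; operation \thelem.4 fills any incidental $2$-sphere boundary with a $3$-ball; and the only operation that superficially produces extra $2$-sphere boundaries on the main component, namely \thelem.7, can be traced back to splits whose resulting $2$-spheres either get capped by a later \thelem.4 or separate off as $3$-ball components that are removed by step 5. This case analysis will be the most delicate part of the argument, but it parallels the bookkeeping already carried out in the proof of Lemma~\ref{lem:des}.
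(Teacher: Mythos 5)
Your overall architecture is sound and, as far as the paper reveals, matches the intended argument: read off the possible component types from Corollary~\ref{cor:des_comp}, observe that for a knot the sublinks collapse to $\emptyset$ or $\hat{D}$, keep only the component containing $\partial N(\hat{D})$ as a torus, and handle the \emph{furthermore} clause by noting that only operation~2 of Lemma~\ref{lem:des} can destroy that torus, which forces $\hat{D}$ to be trivial. (The paper itself gives essentially no proof -- it just cites ``the same argument as Corollary~\ref{cor:splitting_comp_knot}'' -- and the part of that argument which exhibits an embedded $\text{cl}(\mathbb{B}^3 - N(\hat{D}))$ cut off by the essential $2$-sphere has no direct analogue for a vertical annulus, so your route via Corollary~\ref{cor:des_comp} is the right one.)

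The gap is in the sub-argument you flag as ``most delicate,'' and it is an actual error, not just an anticipated difficulty. You claim operation~7 of Lemma~\ref{lem:des} ``superficially produces extra $2$-sphere boundaries'' that then ``get capped by a later operation~4 or separate off.'' Neither half of that is right. First, operation~7 fills one of the two $2$-spheres produced by the cut with a $3$-ball outright, and on the other side it glues in $\text{cl}(\mathbb{S}^2 \times I - B)$; the $\mathbb{S}^2 \times \{0,1\}$ boundaries of that glued piece are precisely the new copies of the supporting surface in the sense of Operation~\ref{ope:splitting_mani}, \emph{not} residual ``$3$-balls-removed'' spheres. So operation~7 never creates the kind of spurious boundary you are worried about. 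Second, even if it did, ``gets capped by a later operation~4'' is not an argument -- the sequence of atomic moves is fixed by the crushing procedure and gives no such guarantee. The correct observation, which is what the note after Corollary~\ref{cor:splitting_comp} records in the splitting case, is that the only operation in Lemma~\ref{lem:des} that can produce a $2$-sphere boundary \emph{not} arising from a copy of~$\mathcal{S}$ is operation~2 (compression of~$\partial N(\hat{D})$); that happens only when $\hat{D}$ is trivial, and then the component loses its torus boundary and is discarded in step~5. Thus no surviving component carries a residual $2$-sphere boundary, and your separate compression analysis already disposes of the $3$-ball case for free rather than requiring an additional case split.
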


In order to analyze the running time of Operation \ref{ope:des}, we consider the number of tetrahedra increased by Operation \ref{ope:des}.

\begin{lem}\label{lem:desing}
    Let $\mathcal{T}$ be a triangulation of the exterior $M$ of a link $\hat{D}$ in a thickened closed orientable surface and $A$ be a vertical essential normal surface in $M$ with respect to $\mathcal{T}$.
    Run Operation \ref{ope:des} using $A$ on $\mathcal{T}$, and let $\mathcal{T}'$ be the resulting triangulation.
    Suppose that $p$ is a singular point in $\mathcal{T}'$.
    $S$ denotes a boundary component of $\mathcal{T}'$ which contains $p$, and $|S|$ denotes the number of triangles in $S$.
    Then, the number of tetrahedra increased by the desingularization at $p$ is $\mathcal{O}(|S|)$.
\end{lem}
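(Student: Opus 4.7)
\bigskip

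\noindent\textbf{Proof plan for Lemma \ref{lem:desing}.}

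The plan is to reduce the bound on the number of new tetrahedra produced by desingularization at $p$ to a bound on the \emph{degree} of $p$ in $\mathcal{T}'$, i.e., the number of tetrahedra of $\mathcal{T}'$ having $p$ as a vertex. Because desingularization is a purely local move (Figure \ref{fig:desing}) that only modifies tetrahedra incident to $p$, the whole statement will follow once one shows (a) the degree of $p$ is $\mathcal{O}(|S|)$, and (b) each tetrahedron incident to $p$ contributes at most a bounded number of new tetrahedra when $p$ is stretched to an edge.

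First I would establish (b). The operation stretches the singular point $p$ into an edge $p_1 p_2$, where $p_1$ and $p_2$ are separated according to which ``side'' of the singular splitting each tetrahedral corner belongs to. Concretely, every tetrahedron $\Delta$ incident to $p$ falls into one of two cases. Either the corner of $\Delta$ at $p$ lies entirely on one side of the singularity (in which case $\Delta$ is simply relabeled, with its $p$-vertex reassigned to either $p_1$ or $p_2$ and at most one new tetrahedron added to fill the gap along the new edge), or $\Delta$ straddles both sides, in which case the model subdivision depicted in Figure \ref{fig:desing} replaces $\Delta$ by at most a constant number (at most three) of new tetrahedra. In either case the contribution of $\Delta$ to the increase is $\mathcal{O}(1)$, so the increase is $\mathcal{O}(\deg_{\mathcal{T}'}(p))$.

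Second, and this is the heart of the proof, I would bound $\deg_{\mathcal{T}'}(p)$ by $\mathcal{O}(|S|)$. The singular point $p$ lies on the boundary component $S$ of $\mathcal{T}'$, and by Lemma \ref{lem:V'_E'} the singular set is very limited: $p$ is one of at most two singular points and is created when a copy of $A$ is shrunk to a point in the crushing procedure. Consequently the local picture of $\mathcal{T}'$ at $p$ is obtained by taking the triangulated neighborhood of a copy of $A$ (after the non-destructive crushing and the subsequent atomic moves) and collapsing the copy of $A$ to $p$. Under this collapse, every tetrahedron of $\mathcal{T}'$ incident to $p$ inherits a triangular face lying on $S$ at $p$ (the triangle that used to abut the copy of $A$ on the $S$-side). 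This gives an injection from the set of tetrahedra of $\mathcal{T}'$ incident to $p$ into the set of triangles of $S$ having $p$ as a vertex, and the latter is bounded by $|S|$. Therefore $\deg_{\mathcal{T}'}(p) \leq |S|$, and combining with (b) gives the desired $\mathcal{O}(|S|)$ bound.

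The main obstacle is the injectivity claim in the last step: one has to verify that, after the atomic moves that turn $\mathcal{C}_0$ into $\mathcal{T}'$, no two distinct tetrahedra incident to $p$ share the same triangle of $S$ at $p$, and that a face of every tetrahedron incident to $p$ actually survives on $S$ (none of them are killed by the atomic moves flatting bigons/pillows near $p$). This requires a careful case analysis using the classification of atomic moves near $p$ from the proof of Lemma \ref{lem:crush_annulus} and Lemma \ref{lem:V'_E'}, in particular ruling out configurations where many tetrahedra accumulate at $p$ without any of them touching $S$ in a triangle. Once this injectivity is in hand, the counting is immediate and the lemma follows.
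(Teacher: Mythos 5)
The paper's proof is shorter and counts against a different object. It never bounds the three-dimensional degree of $p$. It simply observes that the new tetrahedra added by desingularization are cones over boundary triangles: if $N$ is the set of triangles of $S$ containing $p$ and $n_p(t)\in\{1,2,3\}$ is the number of vertices of $t$ identified with $p$, then stretching $p$ to an edge adds exactly $n_p(t)$ tetrahedra per $t\in N$ (Figure \ref{fig:desing2}), so the total increase is $\sum_{t\in N} n_p(t)\le 3|N|\le 3|S|=\mathcal{O}(|S|)$.

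Your plan instead tries to bound $\deg_{\mathcal{T}'}(p)$ by $|S|$ and then charge $\mathcal{O}(1)$ new tetrahedra to each tetrahedron incident to $p$. The degree bound is the gap, and it is not a technicality: in a triangulation, the number of tetrahedra incident to a boundary vertex is not controlled by the number of boundary triangles at that vertex (the boundary link of $p$ constrains only the $2$-dimensional degree, not the $3$-dimensional one). The justification offered --- ``every tetrahedron of $\mathcal{T}'$ incident to $p$ inherits a triangular face lying on $S$ at $p$'' --- does not hold: the tetrahedra at $p$ are those that were adjacent to the shrunk copy of $A$, and their faces abutting $A$ collapsed to the single point $p$; they are not faces of $S$. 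Most tetrahedra at $p$ have no face on $S$ at all, so there is no injection into the triangles of $S$. In addition, part (b) is modeling the wrong mechanism: the new tetrahedra arise as cones from the new edge over boundary triangles, not as subdivisions of existing tetrahedra. None of this apparatus is needed --- the lemma follows once you attribute the new tetrahedra to boundary triangles rather than to old tetrahedra, which is precisely the paper's count.
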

\begin{proof}
    Let $N$ denote the set of triangles which contain $p$ in $S$.
    For any triangle $t$ in $N$, $n_p(t)$ denotes the number of vertices of $t$ which are identified to $p$.
    Stretching $p$ to an edge, for any $2$-simplex $t$ in $N$,
    $n_p(t)$ tetrahedra are added to $\mathcal{T}'$ as shown in Figure \ref{fig:desing2}.
    $n_p(t)$ is at most three, so that this operation increases the number of tetrahedra by $\mathcal{O}(|S|)$.
\end{proof}
\begin{figure}[htbp]
    \centering
    \input{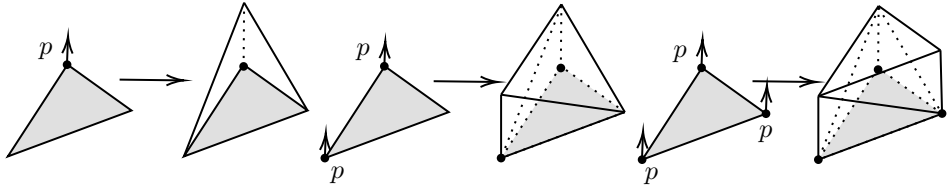}
    \caption{Desingularization at each $2$-simplex}
    \label{fig:desing2}
\end{figure}

\begin{cor}\label{cor:des_num}
    Let $\hat{D}$ be a link in a thickened closed orientable surface $\mathcal{S} \times I$ and $\mathcal{T}$ be a triangulation of the exterior $M = \text{cl}(\mathcal{S} \times I - N(\hat{D}))$.
    Let $\mathcal{S}^k$ $(k = 0 \text{ or } 1)$ denote the triangulation of $\mathcal{S}$ in the boundary of $\mathcal{T}$.
    Suppose that $|\mathcal{S}^k| \geq |\mathcal{S}^{1-k}|$, where $|\mathcal{S}^k|$ is the number of triangles in $\mathcal{S}^k$.  
    Then, the number of tetrahedra increased by Operation \ref{ope:des} is $\mathcal{O}(|\mathcal{S}^k|)$.
\end{cor}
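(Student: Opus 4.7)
The plan is to bound the tetrahedra added in each step of Operation~\ref{ope:des} separately and sum them up. Step~1 (the crushing procedure itself) is known not to increase the tetrahedron count, since crushing removes every tetrahedron that meets a quadrilateral disk and flattens the rest. Step~5 only removes components, so it is harmless. Thus the entire increase comes from Steps~2--4: applying Operation~\ref{ope:eliminate_loop1} at each circle in $E'$ built from one edge, Operation~\ref{ope:eliminate_loop2} at each circle built from two edges, and desingularizing at each singular point.

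Next, I would invoke Lemma~\ref{lem:V'_E'} to control the combinatorial complexity of $V'$ and $E'$: there are at most two singular points and at most four offending circles, independent of $|\mathcal{T}|$. Each invocation of Operation~\ref{ope:eliminate_loop1} inserts a single ``cone'' tetrahedron before triggering one desingularization, and each invocation of Operation~\ref{ope:eliminate_loop2} glues two edges and triggers two desingularizations; no other tetrahedra are created except through the desingularizations themselves. Consequently, the total count of new tetrahedra is $O(1)$ plus the cost of a bounded number (at most six) of desingularizations.

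The substantive step is bounding the cost of one desingularization. Lemma~\ref{lem:desing} already gives that desingularizing at a point $p$ lying on a boundary component $S$ of the crushed triangulation costs $O(|S|)$ tetrahedra, so it remains to bound $|S|$ in terms of $|\mathcal{S}^k|$. Here I would argue that every singular point of $\mathcal{T}'$ is created by shrinking a copy of $\partial A \cap (\mathcal{S}\times\{0,1\})$ to a point, so the boundary component containing any such point is built entirely from triangles inherited from $\mathcal{S}^0 \cup \mathcal{S}^1$. Since the crushing procedure and subsequent atomic moves only delete or identify $2$-cells on the boundary (never creating new triangles), we have $|S| \le |\mathcal{S}^0| + |\mathcal{S}^1| \le 2|\mathcal{S}^k|$.

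Combining these observations, the increase is $O(1) + O(1)\cdot O(|\mathcal{S}^k|) = O(|\mathcal{S}^k|)$, yielding the claimed bound. The main obstacle, as I see it, is the last step: verifying cleanly that no boundary component carrying a singular point can acquire triangles beyond those of $\mathcal{S}^0 \cup \mathcal{S}^1$ after the atomic-move sequence of Lemma~\ref{lem:crushing_lemma}. This requires reading off, from the case analysis already performed in the proofs of Lemma~\ref{lem:crush_annulus} and Lemma~\ref{lem:V'_E'}, that the atomic moves flattening bigons of types (I)--(IV) can only identify or remove boundary triangles, never introduce new ones; once this is recorded, the desired inequality on $|S|$ follows and the bookkeeping above completes the proof.
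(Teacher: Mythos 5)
Your proposal is correct and follows essentially the same line of argument as the paper: crushing and component-removal are free, the only increase comes from the desingularizations triggered directly and inside Operations~\ref{ope:eliminate_loop1} and~\ref{ope:eliminate_loop2}; Lemma~\ref{lem:V'_E'} bounds the number of such invocations by a constant, and Lemma~\ref{lem:desing} together with the observation that the relevant boundary components inherit their triangles from $\mathcal{S}^0 \cup \mathcal{S}^1$ (and that crushing never adds boundary triangles) gives the $O(|\mathcal{S}^k|)$ cost per desingularization. Your constant ``at most six'' overcounts slightly (by inspection of the cases in Lemma~\ref{lem:V'_E'} the maximum number of desingularizations is four), but this has no effect on the asymptotic bound.
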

\begin{proof}
    The crushing procedure does not increase the number of tetrahedra, therefore, $|\mathcal{T}'| \leq |\mathcal{T}|$.
    The number of triangles in a boundary component containing components in $V'$ or $E'$ is $\mathcal{O}(|\mathcal{S}^k|)$ because 
    such boundary component is obtained from $\mathcal{S}^0 \cup \mathcal{S}^1$ and the number of triangles in the boundary of $\mathcal{T}$ is not increased by the crushing procedure.
    Thus, the number of tetrahedra increased by desingularization is $\mathcal{O}(|\mathcal{S}^k|)$.
    
    Operation \ref{ope:eliminate_loop1} is the operation adding a tetrahedron and applying desingualrization once, and so Operation \ref{ope:eliminate_loop1} increases the number of tetrahedra by $\mathcal{O}(|\mathcal{S}^k|)$ by Lemma \ref{lem:desing}.
    Operation \ref{ope:eliminate_loop2} is the operation gluing two edges and applying desingualrization twice, and so Operation \ref{ope:eliminate_loop2} also increases the number of tetrahedra by $\mathcal{O}(|\mathcal{S}^k|)$ by \ref{lem:desing}.
    
    Designularization is carried out at most twice other than Operation \ref{ope:eliminate_loop1} and Operation \ref{ope:eliminate_loop2} since the number of singular points in $\mathcal{T}'$ is at most two by Lemma \ref{lem:V'_E'}.
    Similarly, Operation \ref{ope:eliminate_loop1} and Operation \ref{ope:eliminate_loop2} are carried out at most four times. 
    Therefore, the number of tetrahedra increased by Operation \ref{ope:des} is $\mathcal{O}(|\mathcal{S}^k|)$.
\end{proof}

Now, we have $\mathcal{O}(|\mathcal{S}^k|) = \mathcal{O}(n)$, where $n$ is the number of tetrahedra in a triangulation $\mathcal{T}$ of the exterior of a link in a thickened closed orientable surface $\mathcal{S} \times I$.
Thus, the number of tetrahedra is increased exponentially with the number of Operation \ref{ope:des}.
Therefore, it takes too much time to solve classical knot recognition if we simply perform Operation \ref{ope:des}.
For this reason, we consider reducing the number of triangles in the triangulations $\mathcal{S}^0$ and $\mathcal{S}^1$ before performing Operation \ref{ope:des}.
The following lemma is based on the method of constructing a one-vertex triangulation of a classical knot exterior in \cite{B_unknot}.

\begin{lem}\label{lem:one_vertex}
    Let $\mathcal{S}$ be a closed orientable connected surface whose genus is not zero, $\hat{D}$ be a link in $\mathcal{S} \times I$, and $\mathcal{T}$ be a triangulation of the exterior $M = \text{cl}(\mathcal{S} \times I - N(\hat{D}))$ of $\hat{D}$.
    Let $\mathcal{S}^k$ $(k = 0 \text{ or } 1)$ denote the triangulations of $\mathcal{S}$ in the boundary of $\mathcal{T}$.
    Suppose that $M$ is irreducible and $\partial$-irreducible.
    Then, we can reduce the vertices of $\mathcal{S}^k$ to one while keeping the topology of $M$.
    Furthermore, this operation can be carried out in time $\mathcal{O}(n^3)$, where $n$ is the number of tetrahedra in $\mathcal{T}$.
\end{lem}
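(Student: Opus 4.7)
The plan is to follow the strategy of Burton in \cite{B_unknot}, which treats the analogous reduction for torus boundary components in classical knot exteriors, and to extend it to a boundary component of arbitrary positive genus. The overall scheme is to iteratively eliminate the vertices of $\mathcal{S}^k$ one at a time via local modifications of $\mathcal{T}$ that do not change the homeomorphism type of $M$, repeating until exactly one vertex on $\mathcal{S}^k$ remains.

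At a single iteration, I would first locate two distinct vertices $u, v \in \mathcal{S}^k$ that are connected by an edge $e$ lying in $\mathcal{S}^k$. Such an edge exists as long as $\mathcal{S}^k$ has at least two vertices, because $\mathcal{S}^k$ is connected. The goal is then to "snap" $v$ onto $u$ along $e$ by an edge contraction, which identifies the two endpoints, removes $e$, and collapses every triangle and tetrahedron incident to $e$ accordingly. An edge contraction preserves the underlying $3$-manifold provided a small checklist of combinatorial conditions on the link of $e$ holds; concretely, no pair of distinct simplices in the link should be identified in such a way that a nontrivial sphere or disk in $M$ would get pinched to a point. When the local conditions on $e$ are not met, I would first apply a bounded sequence of Pachner $2$-$3$ and $3$-$2$ moves (possibly together with layering tetrahedra across $\mathcal{S}^k$) to rearrange the local combinatorics around $e$ into a configuration in which a safe contraction becomes available. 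This is where the hypotheses that $M$ is irreducible and $\partial$-irreducible enter: they exclude the only topological obstructions to safe contraction, ensuring that every blockage is purely combinatorial and can therefore be resolved by local moves.

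For the running time, $\mathcal{S}^k$ carries $\mathcal{O}(n)$ vertices initially (since every vertex of $\mathcal{S}^k$ appears in some tetrahedron of $\mathcal{T}$). Each vertex elimination requires searching for and evaluating at most $\mathcal{O}(n)$ candidate edges, performing $\mathcal{O}(n)$ auxiliary Pachner or layering moves, and updating incidence data in $\mathcal{O}(n)$ time per move. That gives $\mathcal{O}(n^2)$ time per vertex removed, and $\mathcal{O}(n)$ removals in total, yielding the claimed $\mathcal{O}(n^3)$ bound.

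The main obstacle, and the technically delicate step, will be proving that from any combinatorially obstructed configuration a polynomially bounded sequence of local moves suffices to reach a configuration admitting a safe edge contraction. The existence of such a sequence is guaranteed by the topological hypotheses, but bounding its length by $\mathcal{O}(n)$ (rather than something exponential) requires a careful inductive argument on the complexity of the link of $e$, mimicking Burton's torus-boundary argument and replacing his disk-compression step with the appropriate higher-genus analogue.
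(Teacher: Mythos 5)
Your proposal takes a genuinely different route from the paper. The paper does not use edge contraction or Pachner moves at all: it instead considers the normal surface $\partial N(e)$ (the boundary of a small regular neighborhood of an edge $e$ joining distinct vertices on $\mathcal{S}^k$) and applies the Jaco--Rubinstein crushing procedure along it. When $\partial N(e)$ fails to be a normal surface (because several edges of a tetrahedron are identified to $e$), the paper enlarges $e$ to a subcomplex $\mathcal{E}$ by repeatedly absorbing any $2$-simplex with two edges in $\mathcal{E}$ and any tetrahedron with all faces in $\mathcal{E}$, proves that $\partial N(\mathcal{E})$ is a normal surface consisting only of inessential disks and $2$-spheres, and crushes along an inessential disk component. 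This is what the irreducibility and $\partial$-irreducibility hypotheses are used for: they force $\partial N(\mathcal{E})$ to consist of inessential pieces so that crushing preserves the homeomorphism type of $M$. Crucially, crushing never increases the number of tetrahedra (it can only decrease it), so the $\mathcal{O}(n^3)$ bound follows cleanly from constructing $\mathcal{E}$ in $\mathcal{O}(n^2)$ time, repeated over $\mathcal{O}(n)$ vertices.

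Your approach has a genuine gap, which you yourself identify but do not close. You claim that when an edge contraction is combinatorially obstructed, a sequence of $\mathcal{O}(n)$ Pachner $2$--$3$/$3$--$2$ moves and layerings suffices to reach a configuration where a safe contraction becomes available, with the topological hypotheses ``excluding the only topological obstructions.'' This claim is not standard and is not established in your proposal: bounding sequences of Pachner moves between triangulations, even for simple tasks, is notoriously difficult and is in general far worse than polynomial. Absent such a bound, the argument does not prove the $\mathcal{O}(n^3)$ running time. There is a second, quieter problem with the time analysis: $2$--$3$ moves and layerings strictly increase the number of tetrahedra, so even if $\mathcal{O}(n)$ auxiliary moves were performed per vertex removal, the triangulation could grow to size $\Theta(n^2)$ over the $\mathcal{O}(n)$ iterations, which would break the $\mathcal{O}(n)$-per-move and $\mathcal{O}(n)$-candidates bookkeeping that you use to get $\mathcal{O}(n^3)$. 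The paper's crushing-based construction avoids both issues simultaneously by never increasing the tetrahedron count and by reducing the entire obstruction analysis to a concrete, checkable statement about $\partial N(\mathcal{E})$.
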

\begin{proof}
    We show that the number of vertices of $\mathcal{S}^k$ is reduced to one by crushing using normal disks.
    If there are two or more vertices in $\mathcal{S}^k$, then there is an edge $e$ which connects distinct vertices and lies on $\mathcal{S}^k$.
    $\partial N(e)$ denotes the boundary of a small regular neighborhood of $e$ in $\mathcal{T}$.
    $\partial N(e)$ is an inessential disk in $\mathcal{T}$.

    Suppose that $\partial N(e)$ is a normal disk, then we can reduce the vertices of $\mathcal{S}^k$ by the crushing procedure using $\partial N(e)$.
    Since $\mathcal{T}$ is irreducible and $\partial$-irreducible, 
    we can obtain the 3-manifold represented by the crushed triangulation by adding 3-balls and 3-spheres to $E$.
    Thus, the triangulation obtained by removing 3-balls and 3-spheres from the crushed triangulation is a triangulation of $M$.

    Consider the case where $\partial N(e)$ is not a normal surface.
    In this situation, two or more edges of a tetrahedron in $\mathcal{T}$ are identified to $e$ depicted as Figure \ref{fig:makeNormal}.
    We can obtain a normal surface which is an inessential disk as the boundary of a small neighborhood of a subcomplex constructed as follows.
    \begin{figure}[htbp]
        \centering
        \tikzset{every picture/.style={line width=0.75pt}} 

\begin{tikzpicture}[x=0.75pt,y=0.75pt,yscale=-0.7,xscale=0.7]

\draw  [color={rgb, 255:red, 155; green, 155; blue, 155 }  ,draw opacity=1 ] (156.46,10) -- (156.46,177.39) -- (79.5,140.84) -- cycle ;
\draw  [color={rgb, 255:red, 155; green, 155; blue, 155 }  ,draw opacity=1 ] (156.46,10) -- (156.46,177.39) -- (252.67,121.6) -- cycle ;
\draw [color={rgb, 255:red, 155; green, 155; blue, 155 }  ,draw opacity=1 ] [dash pattern={on 0.84pt off 2.51pt}]  (79.5,140.84) -- (252.67,121.6) ;
\draw [line width=1.5]    (156.46,10) -- (252.67,121.6) ;
\draw [color={rgb, 255:red, 155; green, 155; blue, 155 }  ,draw opacity=1 ]   (139.5,133) .. controls (154.5,119) and (165.5,129) .. (170.89,134.7) ;
\draw [color={rgb, 255:red, 155; green, 155; blue, 155 }  ,draw opacity=1 ]   (141.5,98) .. controls (156.5,84) and (165.11,93.3) .. (170.5,99) ;
\draw [color={rgb, 255:red, 155; green, 155; blue, 155 }  ,draw opacity=1 ]   (208.5,112) .. controls (213.5,122) and (224.5,117) .. (231.12,115.29) ;
\draw [color={rgb, 255:red, 155; green, 155; blue, 155 }  ,draw opacity=1 ]   (196.5,96) .. controls (201.5,106) and (211.5,101) .. (218.12,99.29) ;
\draw [color={rgb, 255:red, 155; green, 155; blue, 155 }  ,draw opacity=1 ]   (147.5,26) .. controls (157.5,54) and (163.5,80) .. (176.5,72) ;
\draw  [fill={rgb, 255:red, 155; green, 155; blue, 155 }  ,fill opacity=0.5 ] (170.89,167.7) .. controls (171.65,100.01) and (162.97,46.65) .. (198.67,79.02) .. controls (208.64,88.06) and (222.06,103.93) .. (240.12,128.29) .. controls (223.5,134) and (218.5,130) .. (217.5,125) .. controls (179.5,75) and (192.5,95) .. (147.5,26) .. controls (139.5,107) and (142.5,90) .. (138.5,168) .. controls (148.5,158) and (161.5,156) .. (170.89,167.7) -- cycle ;
\draw [line width=1.5]    (156.46,10) -- (156.46,177.39) ;
\draw    (63.5,69) .. controls (68.5,87) and (119.5,94) .. (142.5,87) ;

\draw (164.8,180.76) node [anchor=north west][inner sep=0.75pt]    {$e$};
\draw (218.77,44.73) node [anchor=north west][inner sep=0.75pt]    {$e$};
\draw (45,46.4) node [anchor=north west][inner sep=0.75pt]    {$\partial N( e)$};

\end{tikzpicture}
        \caption{The case where $\partial N(e)$ is {\it not} a normal surface}
        \label{fig:makeNormal}
    \end{figure}
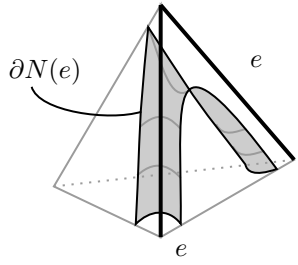

    First, we define $\mathcal{E} = \{e\}$.
    Then, we apply the following operations as much as possible:
    \begin{itemize}
        \item If there is a $2$-simplex whose two or more edges belonging to $\mathcal{E}$, then the entire of the $2$-simplex is added to $\mathcal{E}$.
        \item If there is an tetrahedron whose all faces belong to $\mathcal{E}$, then the entire of the tetrahedron is added to $\mathcal{E}$.
    \end{itemize}
    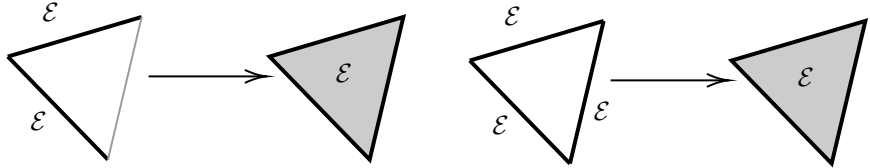
\begin{figure}[htbp]
        \centering
        \tikzset{every picture/.style={line width=0.75pt}} 

\begin{tikzpicture}[x=0.75pt,y=0.75pt,yscale=-1,xscale=1]

\draw [line width=1.5]    (21,40) -- (71.5,92) ;
\draw [line width=1.5]    (21,40) -- (88.5,20) ;
\draw [color={rgb, 255:red, 155; green, 155; blue, 155 }  ,draw opacity=1 ]   (71.5,92) -- (88.5,20) ;
\draw  [fill={rgb, 255:red, 204; green, 204; blue, 204 }  ,fill opacity=1 ][line width=1.5]  (203.5,92) -- (153,40) -- (220.5,20) -- cycle ;
\draw    (92,50) -- (149.5,50) ;
\draw [shift={(151.5,50)}, rotate = 180] [color={rgb, 255:red, 0; green, 0; blue, 0 }  ][line width=0.75]    (10.93,-3.29) .. controls (6.95,-1.4) and (3.31,-0.3) .. (0,0) .. controls (3.31,0.3) and (6.95,1.4) .. (10.93,3.29)   ;
\draw [line width=1.5]    (254,42) -- (304.5,94) ;
\draw [line width=1.5]    (254,42) -- (321.5,22) ;
\draw  [fill={rgb, 255:red, 204; green, 204; blue, 204 }  ,fill opacity=1 ][line width=1.5]  (436.5,94) -- (386,42) -- (453.5,22) -- cycle ;
\draw    (325,52) -- (382.5,52) ;
\draw [shift={(384.5,52)}, rotate = 180] [color={rgb, 255:red, 0; green, 0; blue, 0 }  ][line width=0.75]    (10.93,-3.29) .. controls (6.95,-1.4) and (3.31,-0.3) .. (0,0) .. controls (3.31,0.3) and (6.95,1.4) .. (10.93,3.29)   ;
\draw [line width=1.5]    (304.5,94) -- (321.5,22) ;

\draw (31,66.4) node [anchor=north west][inner sep=0.75pt]    {$\mathcal{E}$};
\draw (37,11.4) node [anchor=north west][inner sep=0.75pt]    {$\mathcal{E}$};
\draw (185,42.4) node [anchor=north west][inner sep=0.75pt]    {$\mathcal{E}$};
\draw (264,68.4) node [anchor=north west][inner sep=0.75pt]    {$\mathcal{E}$};
\draw (270,13.4) node [anchor=north west][inner sep=0.75pt]    {$\mathcal{E}$};
\draw (418,44.4) node [anchor=north west][inner sep=0.75pt]    {$\mathcal{E}$};
\draw (315,61.4) node [anchor=north west][inner sep=0.75pt]    {$\mathcal{E}$};

\end{tikzpicture}
        \caption{The construction of $\mathcal{E}$}
        \label{fig:makeNormal2}
    \end{figure}

    \setcounter{claim}{0}
    \begin{claim}
        The boundary of a small regular neighborhood of $\mathcal{E}$, denote $\partial N(\mathcal{E})$, consists of inessential disks and inessential $2$-spheres.
    \end{claim}
    \noindent
    {\it Proof of Claim A.} 
    If $\mathcal{E} = \{e\}$, $\partial N(\mathcal{E})$ is an inessential disk since $\mathcal{T}$ is $\partial$-irreducible and the entire of $e$ lies on $\mathcal{S}^k$.
    We observe the effect of the operations to extend $\mathcal{E}$.
    \begin{itemize}
        \item The operation to add an entire $2$-simplex if the two or more edges of the $2$-simplex belong to $\mathcal{E}$:
        \begin{itemize}
            \item If there is a $2$-simplex whose two edges belong to $\mathcal{E}$, 
                    $\partial N(\mathcal{E})$ is isotopic to the original surface (Figure \ref{fig:makeNormal3}\subref{fig:makeNormal3-1}).
            \item If there is a $2$-simplex whose three edges belong to $\mathcal{E}$, 
                    $\partial N(\mathcal{E})$ is obtained by cutting the original surface along an circle and 
                    filling the copies of the circles with disks parallel to the $2$-simplex (Figure \ref{fig:makeNormal3}\subref{fig:makeNormal3-1}).
                    This operation divides the original surface into two surfaces.
                    These surfaces are inessential because $\mathcal{T}$ is irreducible and $\partial$-irreducible.
        \end{itemize}
        \begin{figure}[htbp]
            \begin{minipage}{0.49\textwidth}
                \centering
                \tikzset{every picture/.style={line width=0.75pt}} 

\begin{tikzpicture}[x=0.75pt,y=0.75pt,yscale=-1,xscale=1]

\draw [line width=1.5]    (11,40) -- (61.5,92) ;
\draw [line width=1.5]    (11,40) -- (78.5,20) ;
\draw [color={rgb, 255:red, 155; green, 155; blue, 155 }  ,draw opacity=1 ]   (61.5,92) -- (78.5,20) ;
\draw    (82,50) -- (139.5,50) ;
\draw [shift={(141.5,50)}, rotate = 180] [color={rgb, 255:red, 0; green, 0; blue, 0 }  ][line width=0.75]    (10.93,-3.29) .. controls (6.95,-1.4) and (3.31,-0.3) .. (0,0) .. controls (3.31,0.3) and (6.95,1.4) .. (10.93,3.29)   ;
\draw    (78.5,8) .. controls (113.5,33) and (36.5,31) .. (32.5,46) .. controls (28.5,61) and (95.5,88) .. (57.5,104) ;
\draw    (7.5,48) .. controls (17.5,50) and (30.5,51) .. (32.5,46) ;
\draw  [dash pattern={on 0.84pt off 2.51pt}]  (10.5,34) .. controls (20.5,36) and (33.5,39) .. (32.5,46) ;
\draw    (42.5,95) .. controls (52.5,97) and (67.5,99) .. (68.5,94) ;
\draw  [dash pattern={on 0.84pt off 2.51pt}]  (47.5,84) .. controls (57.5,86) and (69.5,87) .. (68.5,94) ;
\draw    (49.5,21) .. controls (59.5,23) and (84.5,26) .. (86.5,21) ;
\draw  [dash pattern={on 0.84pt off 2.51pt}]  (55.5,8) .. controls (65.5,10) and (87.5,14) .. (86.5,21) ;
\draw    (224.5,8) -- (199.5,113) ;
\draw  [dash pattern={on 0.84pt off 2.51pt}]  (181,47.5) .. controls (191,49.5) and (213,53.5) .. (212,60.5) ;
\draw  [fill={rgb, 255:red, 204; green, 204; blue, 204 }  ,fill opacity=0.6 ][line width=1.5]  (193.5,91.5) -- (143,39.5) -- (210.5,19.5) -- cycle ;
\draw    (175,60.5) .. controls (185,62.5) and (210,65.5) .. (212,60.5) ;

\end{tikzpicture}
                \subcaption{The case where two edges in $\mathcal{E}$}
                \label{fig:makeNormal3-1}
            \end{minipage}
            \begin{minipage}{0.49\textwidth}
                \centering
                \tikzset{every picture/.style={line width=0.75pt}} 

\begin{tikzpicture}[x=0.75pt,y=0.75pt,yscale=-1,xscale=1]

\draw  [line width=1.5]  (60.68,87.88) -- (17.34,51.22) -- (108.59,46.39) -- cycle ;
\draw    (16.47,92) .. controls (18.1,77) and (27.07,73) .. (31.14,69) ;
\draw    (101.25,92) .. controls (100.44,80) and (92.28,77) .. (88.2,70.86) ;
\draw [color={rgb, 255:red, 155; green, 155; blue, 155 }  ,draw opacity=1 ]   (102.07,35) .. controls (102.07,42.37) and (18.1,42) .. (18.1,34.63) ;
\draw [color={rgb, 255:red, 155; green, 155; blue, 155 }  ,draw opacity=1 ] [dash pattern={on 0.84pt off 2.51pt}]  (102.07,35) .. controls (102.07,29.37) and (18.1,30) .. (18.1,34.63) ;
\draw [color={rgb, 255:red, 155; green, 155; blue, 155 }  ,draw opacity=1 ]   (37.13,56.94) .. controls (36.85,67) and (82.5,71) .. (82.5,56) ;
\draw [color={rgb, 255:red, 155; green, 155; blue, 155 }  ,draw opacity=1 ] [dash pattern={on 0.84pt off 2.51pt}]  (37.13,56.94) .. controls (37.66,54) and (81.69,50) .. (82.5,56) ;
\draw    (36.03,62) .. controls (42.55,46) and (21.36,44) .. (7.5,23) ;
\draw    (83.32,62) .. controls (75.92,45.24) and (101.25,47) .. (110.22,24) ;
\draw [color={rgb, 255:red, 155; green, 155; blue, 155 }  ,draw opacity=1 ]   (253.7,29) .. controls (249.62,36.37) and (190.92,36.37) .. (190.92,29) ;
\draw [color={rgb, 255:red, 155; green, 155; blue, 155 }  ,draw opacity=1 ] [dash pattern={on 0.84pt off 2.51pt}]  (253.7,29) .. controls (253.7,23.37) and (190.92,24.37) .. (190.92,29) ;
\draw    (262.66,17) .. controls (234.95,58) and (205.6,54) .. (182.77,18) ;
\draw [color={rgb, 255:red, 155; green, 155; blue, 155 }  ,draw opacity=1 ]   (254.51,89) .. controls (250.44,96.37) and (190.11,97.37) .. (190.11,90) ;
\draw [color={rgb, 255:red, 155; green, 155; blue, 155 }  ,draw opacity=1 ] [dash pattern={on 0.84pt off 2.51pt}]  (254.51,89) .. controls (254.51,83.37) and (190.11,85.37) .. (190.11,90) ;
\draw    (185.22,99) .. controls (195.82,70) and (243.1,68) .. (261.03,98) ;
\draw  [fill={rgb, 255:red, 204; green, 204; blue, 204 }  ,fill opacity=1 ][line width=1.5]  (222.09,88.88) -- (178.75,52.22) -- (270,47.39) -- cycle ;
\draw    (119.59,53) -- (166.1,53) ;
\draw [shift={(168.1,53)}, rotate = 180] [color={rgb, 255:red, 0; green, 0; blue, 0 }  ][line width=0.75]    (10.93,-3.29) .. controls (6.95,-1.4) and (3.31,-0.3) .. (0,0) .. controls (3.31,0.3) and (6.95,1.4) .. (10.93,3.29)   ;

\end{tikzpicture}
                \subcaption{The case where three edges in $\mathcal{E}$}
                \label{fig:makeNormal3-2}
            \end{minipage}
            \caption{The effect of extending $\mathcal{E}$}
            \label{fig:makeNormal3}
        \end{figure}
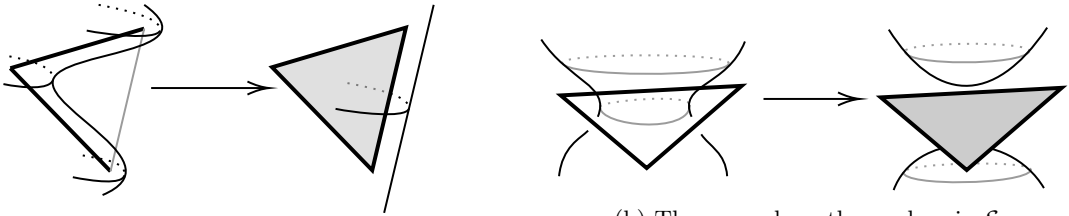

        \item The operation to add an entire tetrahedron if all faces of the tetrahedron belong to $\mathcal{E}$:
            \begin{itemize}
                \item $\partial N(\mathcal{E})$ is obtained by removing a $2$-sphere component from the original surface.
            \end{itemize}
    \end{itemize}
    
    From the above observation, the component of $\partial N(\mathcal{E})$ consists of inessential disks and inessential $2$-spheres.
    \qed
    
    \begin{claim}
        $\partial N(\mathcal{E})$ is a normal surface with respect to $\mathcal{T}$.
    \end{claim}
    \noindent
    {\it Proof of Claim B.}
    $\partial N(\mathcal{E})$ can be obtained as a normal surface by placing normal disks for each tetrahedron $\Delta$ as follows:
    \begin{itemize}
        \item If the entire of $\Delta$ belongs to $\mathcal{E}$, then no normal disks are placed.
        \item If the entire of $\Delta$ does not belong to $\mathcal{E}$ and a face of $\Delta$ belongs to $\mathcal{E}$,
                then we place the triangle normal disk which is parallel to the face as shown in the left of Figure \ref{fig:makeNormal4}.
        \item If no faces of $\Delta$ belongs to $\mathcal{E}$ and an edge of $\Delta$ belongs to $\mathcal{E}$, 
                then we place the quadrilateral normal disk which has no intersection with the edge as shown in the center of Figure \ref{fig:makeNormal4}.
        \item If a vertex of $\Delta$ belongs to $\mathcal{E}$ and any edges whose one of the end points is the vertex do not belong to $\mathcal{E}$,
                then we place the triangle normal disk surrounding the vertex as shown in the right of Figure \ref{fig:makeNormal4}.  
    \end{itemize}
    \qed
    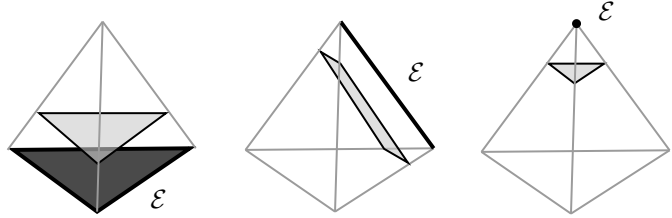
\begin{figure}[htbp]
        \centering
        \tikzset{every picture/.style={line width=0.75pt}} 

\begin{tikzpicture}[x=0.75pt,y=0.75pt,yscale=-1,xscale=1]

\draw  [fill={rgb, 255:red, 74; green, 74; blue, 74 }  ,fill opacity=1 ][line width=1.5]  (115.5,84.74) -- (68.75,116.71) -- (26.59,85.54) -- cycle ;
\draw [color={rgb, 255:red, 155; green, 155; blue, 155 }  ,draw opacity=1 ]   (71.55,20.8) -- (23.59,85.54) ;
\draw [color={rgb, 255:red, 155; green, 155; blue, 155 }  ,draw opacity=1 ]   (71.55,20.8) -- (118.5,84.74) ;
\draw  [fill={rgb, 255:red, 204; green, 204; blue, 204 }  ,fill opacity=0.6 ] (69,92.54) -- (103.5,67) -- (39.5,67) -- cycle ;
\draw [color={rgb, 255:red, 155; green, 155; blue, 155 }  ,draw opacity=1 ]   (71.55,20.8) -- (68.75,116.71) ;
\draw [color={rgb, 255:red, 155; green, 155; blue, 155 }  ,draw opacity=1 ]   (191.55,20.8) -- (238.5,84.74) ;
\draw [color={rgb, 255:red, 155; green, 155; blue, 155 }  ,draw opacity=1 ]   (143.59,85.54) -- (188.75,116.71) ;
\draw [color={rgb, 255:red, 155; green, 155; blue, 155 }  ,draw opacity=1 ]   (188.75,116.71) -- (238.5,84.74) ;
\draw [color={rgb, 255:red, 155; green, 155; blue, 155 }  ,draw opacity=1 ]   (143.59,85.54) -- (238.5,84.74) ;
\draw  [fill={rgb, 255:red, 204; green, 204; blue, 204 }  ,fill opacity=0.6 ] (191.5,42) -- (225.5,92) -- (213.5,85) -- (181.5,36) -- cycle ;
\draw [color={rgb, 255:red, 155; green, 155; blue, 155 }  ,draw opacity=1 ]   (191.55,20.8) -- (188.75,116.71) ;
\draw [color={rgb, 255:red, 0; green, 0; blue, 0 }  ,draw opacity=1 ][line width=1.5]    (191.55,20.8) -- (238.5,84.74) ;
\draw [color={rgb, 255:red, 155; green, 155; blue, 155 }  ,draw opacity=1 ]   (191.55,20.8) -- (143.59,85.54) ;
\draw [color={rgb, 255:red, 155; green, 155; blue, 155 }  ,draw opacity=1 ]   (262.59,86.54) -- (307.75,117.71) ;
\draw [color={rgb, 255:red, 155; green, 155; blue, 155 }  ,draw opacity=1 ]   (307.75,117.71) -- (357.5,85.74) ;
\draw [color={rgb, 255:red, 155; green, 155; blue, 155 }  ,draw opacity=1 ]   (262.59,86.54) -- (357.5,85.74) ;
\draw  [fill={rgb, 255:red, 204; green, 204; blue, 204 }  ,fill opacity=0.6 ] (309.5,52) -- (324.5,42) -- (296.5,42) -- cycle ;
\draw [color={rgb, 255:red, 155; green, 155; blue, 155 }  ,draw opacity=1 ]   (310.55,21.8) -- (307.75,117.71) ;
\draw [color={rgb, 255:red, 155; green, 155; blue, 155 }  ,draw opacity=1 ]   (310.55,21.8) -- (262.59,86.54) ;
\draw [color={rgb, 255:red, 155; green, 155; blue, 155 }  ,draw opacity=1 ]   (310.55,21.8) -- (357.5,85.74) ;
\draw  [fill={rgb, 255:red, 0; green, 0; blue, 0 }  ,fill opacity=1 ] (308.78,21.8) .. controls (308.78,20.83) and (309.57,20.04) .. (310.55,20.04) .. controls (311.52,20.04) and (312.31,20.83) .. (312.31,21.8) .. controls (312.31,22.77) and (311.52,23.56) .. (310.55,23.56) .. controls (309.57,23.56) and (308.78,22.77) .. (308.78,21.8) -- cycle ;

\draw (94.12,104.12) node [anchor=north west][inner sep=0.75pt]    {$\mathcal{E}$};
\draw (224.12,40.12) node [anchor=north west][inner sep=0.75pt]    {$\mathcal{E}$};
\draw (320.12,9.12) node [anchor=north west][inner sep=0.75pt]    {$\mathcal{E}$};

\end{tikzpicture}
        \caption{How to place normal disks}
        \label{fig:makeNormal4}
    \end{figure}
    

    The normal surface which is an inessential disk is obtained by removing the components which has no boundaries from $\partial N(\mathcal{E})$.
    This normal surface is inessential in $\mathcal{T}$. Therefore,
    we can reduce the number of vertices in $\mathcal{S}^k$ by the crushing procedure using this normal surface while keeping the topology of $\mathcal{S}^k$.
    We can reduce the number of vertices in $\mathcal{S}^k$ to one by repeatedly performing this operation.

    Next, we analyze the running time to construct $\mathcal{E}$.
    It takes $\mathcal{O}(n)$ time to perform the operation to check the conditions and add $i$-simplices ($i=0,1,2,3$) to $\mathcal{E}$ for each tetrahedron.
    The operation is run $\mathcal{O}(n)$ times to construct $\mathcal{E}$.
    Thus, $\mathcal{E}$ is constructed in time $\mathcal{O}(n^2)$.

    It takes $\mathcal{O}(n)$ time to obtain $\partial N(\mathcal{E})$, a small neighborhood of $\mathcal{E}$, as a normal surface by placing and gluing normal disks.
    The normal surface has at most two disks for each normal disk type for each tetrahedron.
    Thus, we can remove components which has no boundaries in time $\mathcal{O}(n)$.

    There are $\mathcal{O}(n)$ vertices in $\mathcal{S}^k$. 
    Thus, we can reduce the vertices of $\mathcal{S}^k$ to one in time $\mathcal{O}(n^3)$.
\end{proof}

From this lemma, it turns out that we can suppose that each component of $\mathcal{S}^k$ has only one vertex.
It is possible to suppress the increase in the number of tetrahedra when Operation \ref{ope:des} is performed based on this assumption.
\begin{lem}\label{lem:desNum}
    Let $\mathcal{T}$ be a triangulation of the canonical exterior $M$ of a virtual link diagram $D$ and $A \subset M$ be a vertical essential vertex annulus with respect to $\mathcal{T}$.
    There are two triangulations of $\mathcal{S}$ in the boundary of $\mathcal{T}$, and $\mathcal{S}^0$ and $\mathcal{S}^1$ denote them.
    Assume that each $\mathcal{S}^k$ has only one vertex.
    Then, the number of tetrahedra increased by Operation \ref{ope:des} is $\mathcal{O}(c)$, where $c$ is the number of the real crossings of $D$.
\end{lem}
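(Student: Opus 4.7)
The plan is to reduce the statement to a counting inequality on each boundary surface $\mathcal{S}^k$ and then invoke Corollary \ref{cor:des_num} together with Corollary \ref{cor:genus}. By Corollary \ref{cor:des_num}, the number of tetrahedra added by Operation \ref{ope:des} is $\mathcal{O}(\max(|\mathcal{S}^0|,|\mathcal{S}^1|))$, so it suffices to show $|\mathcal{S}^k| = \mathcal{O}(c)$ for $k=0,1$ under the hypothesis that each component of $\mathcal{S}^k$ carries only one vertex.

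First I would compute the triangle count of a one-vertex triangulation of a closed orientable surface $F$ of genus $g \geq 1$ via Euler characteristic. Writing $v,e,f$ for the numbers of vertices, edges and $2$-simplices of such a triangulation, the relations $v=1$, $2e = 3f$, and $v - e + f = 2 - 2g$ give $f = 4g - 2 = \mathcal{O}(g)$. For a spherical component of $\mathcal{S}^k$, a one-vertex triangulation does not exist, but such components cannot carry the shrunken image of $\partial A$ as a singular point: since $A$ is vertical and essential, its boundary circles are essential on $\mathcal{S}^0\cup\mathcal{S}^1$, which forces the components of $\mathcal{S}^k$ meeting $\partial A$ to have positive genus. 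Hence the bound $f = \mathcal{O}(g)$ applies to every component relevant to the tetrahedron increase, and any spherical components that are present contribute only an $\mathcal{O}(1)$ amount per component.

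Summing over the components of $\mathcal{S}^k$, I obtain
\[
    |\mathcal{S}^k| \;\leq\; \sum_i (4 g_i - 2) + \mathcal{O}(s(D)) \;\leq\; 4\,sg(D) + \mathcal{O}(s(D)),
\]
where the $g_i$ are the genera of the components of the supporting surface $\mathcal{S}$. Both $sg(D)$ and $s(D)$ are $\mathcal{O}(c)$: the supporting genus bound is exactly Corollary \ref{cor:genus}, and the splitting number is bounded by the number of connected components of the canonical surface realization, which is $\mathcal{O}(c)$ since it is constructed by gluing one rectangle per real crossing. Combining these yields $|\mathcal{S}^k| = \mathcal{O}(c)$, and plugging back into Corollary \ref{cor:des_num} completes the argument.

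The main obstacle is ensuring that all ``extra'' tetrahedra produced during Operation \ref{ope:des} — from desingularization at each singular point as well as from the auxiliary moves Operations \ref{ope:eliminate_loop1} and \ref{ope:eliminate_loop2} — are genuinely captured by the $\mathcal{O}(|\mathcal{S}^k|)$ bound of Corollary \ref{cor:des_num}; this is what lets the otherwise delicate accounting collapse to the Euler-characteristic estimate. Once that reduction is in hand, the rest of the proof is a short chain of elementary bounds with no further subtlety.
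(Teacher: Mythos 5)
Your proof takes essentially the same route as the paper's: invoke Corollary \ref{cor:des_num} to reduce the question to bounding $|\mathcal{S}^k|$, derive $|\mathcal{S}^k|=\mathcal{O}(g(\mathcal{S}^k))$ from the Euler-characteristic identity for a one-vertex triangulation, and finish with Corollary \ref{cor:genus}. The extra attention you give to sphere components and disconnectedness is harmless but superfluous, since the hypothesis that each $\mathcal{S}^k$ has only one vertex already rules out spherical components and forces $\mathcal{S}^k$ to be connected of positive genus.
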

\begin{proof}
    By Corollary \ref{cor:genus}, we have $g(\mathcal{S}^k) \in \mathcal{O}(c)$.
    $\mathcal{S}^k$ has only one vertex, and so 
    we see that $|\mathcal{S}^k| \in \mathcal{O}(c)$ from the relationship of the Euler characteristic and the genus of $\mathcal{S}^k$.
    Thus, the number of tetrahedra increased by Operation \ref{ope:des} is $\mathcal{O}(c)$ from Lemma \ref{cor:des_num}.
\end{proof}

Next, we consider the running time of Operation \ref{ope:des}.
\begin{lem}
    Let $\mathcal{T}$ be a triangulation of the canonical exterior $M$ of a virtual link diagram $D$ and $A$ be a vertical essential normal annulus in $M$ with respect to $\mathcal{T}$.
    Suppose that each triangulation of $\mathcal{S}$ in the boundary of $\mathcal{T}$ has exactly one vertex.
    If $A$ is a vertex annulus with respect to $\mathcal{T}$, then Operation \ref{ope:des} is carried out in $\mathcal{O}(n^2 + c)$ time, where $n$ is the number of tetrahedra in $\mathcal{T}$.
\end{lem}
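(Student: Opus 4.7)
The plan is to bound the running time of each of the five steps of Operation~\ref{ope:des} separately, and then add them. The analysis is analogous to the proof of Lemma~\ref{lem:splittingTime} for the splitting operation, with the extra book-keeping coming from steps that manipulate the singular points and circles produced by crushing along an annulus.

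First I would handle the input. Since $A$ is a vertex surface, Theorem~\ref{thm:HLP} gives that each of the $7n$ coordinates of its vector representation $\mathbf{x}$ is at most $2^{7n-1}$, and hence is encoded in $\mathcal{O}(n)$ bits. Reading $\mathbf{x}$ therefore costs $\mathcal{O}(n^{2})$ time, and this is the dominant contribution from any ``input handling'' in the algorithm. Step~1 of Operation~\ref{ope:des} is the crushing procedure along $A$, which by Theorem~\ref{thm:crush} runs in time $\mathcal{O}(n)$; let $\mathcal{T}'$ denote the resulting triangulation, and observe that $|\mathcal{T}'|\le n$.

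Next I would bound the cost of the post-crushing steps (2--4) by combining Lemma~\ref{lem:V'_E'} with Lemma~\ref{lem:desNum}. Lemma~\ref{lem:V'_E'} shows $|V'|\le 2$ and $|E'|\le 4$, so Operations~\ref{ope:eliminate_loop1} and~\ref{ope:eliminate_loop2} are each invoked at most a constant number of times, and desingularization at most a constant number of times. Under the one-vertex hypothesis on $\mathcal{S}^{0},\mathcal{S}^{1}$ we have $|\mathcal{S}^{k}|\in\mathcal{O}(c)$ by Corollary~\ref{cor:genus} together with the Euler-characteristic computation, so by Lemma~\ref{lem:desNum} each of these operations adds only $\mathcal{O}(c)$ tetrahedra, and the local combinatorial work for each operation (locating the target edge/vertex, gluing in a tetrahedron, stretching a singular point to an edge) is linear in the number of tetrahedra adjacent to the affected vertices or edges, which is $\mathcal{O}(c)$. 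Thus steps~2--4 together take $\mathcal{O}(c)$ time and leave a triangulation with $\mathcal{O}(n+c)=\mathcal{O}(n)$ tetrahedra, since $c\le n$ by Corollary~\ref{cor:comp}.

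Step~5 is a connected-component computation on this triangulation followed by a linear pass to delete components lacking the required torus boundaries; both can be done in time linear in the current tetrahedron count, i.e.\ $\mathcal{O}(n)$. Summing the contributions yields $\mathcal{O}(n^{2})+\mathcal{O}(n)+\mathcal{O}(c)+\mathcal{O}(n)=\mathcal{O}(n^{2}+c)$, as required. The step I expect to require the most care is step~4 (or more precisely, step~2/3 which internally calls desingularization): one must verify that the desingularizations triggered by Operations~\ref{ope:eliminate_loop1} and~\ref{ope:eliminate_loop2} can be implemented locally in time $\mathcal{O}(|\mathcal{S}^{k}|)$ rather than scanning the full triangulation, which is where the one-vertex hypothesis combined with the bookkeeping from Lemma~\ref{lem:desing} is essential.
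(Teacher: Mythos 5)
Your proof follows the same route as the paper's: read the vertex-surface vector in $\mathcal{O}(n^2)$ time via Theorem~\ref{thm:HLP} (as in Lemma~\ref{lem:splittingTime}), crush in $\mathcal{O}(n)$ by Theorem~\ref{thm:crush}, then perform a constant number of desingularizations and Operations~\ref{ope:eliminate_loop1}--\ref{ope:eliminate_loop2} (by Lemma~\ref{lem:V'_E'}) each in linear time, and finish with a linear component-removal pass, giving $\mathcal{O}(n^2 + c)$ overall. One correction: Corollary~\ref{cor:comp} gives $|\mathcal{T}_E| \in \mathcal{O}(c)$, i.e.\ $n \in \mathcal{O}(c)$ for the canonical-exterior triangulation, which is the \emph{reverse} of the inequality $c \le n$ you invoke; the simplification $\mathcal{O}(n+c) = \mathcal{O}(n)$ for the post-crushing triangulation is therefore unsupported, but harmlessly so, since the final bound $\mathcal{O}(n^2+c)$ absorbs an $\mathcal{O}(n+c)$ cost for step~5 regardless of which of $n$, $c$ is larger. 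You also assert an $\mathcal{O}(c)$ local running time for the desingularization/loop-elimination steps, which is a stronger claim than the paper's conservative $\mathcal{O}(n)$ per call; the paper's bound is what the argument actually needs, so the extra verification you flag at the end is unnecessary.
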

\begin{proof}
     Let $\mathbf{x} = (x_1, \ldots, x_{7n})$ denote the vector representation of the given vertex surface $A$.
     As shown in Lemma \ref{lem:splittingTime}, we can read $\mathbf{x}$ in time $\mathcal{O}(n^2)$.
     From Theorem \ref{thm:crush}, we can carry out the step 1 in time $\mathcal{O}(n)$.
     Let $\mathcal{T}'$ be the triangulation obtained by the step 1 of Operation \ref{ope:des}.
     We can carry out desingularization in time $\mathcal{O}(n)$, and desingularization is carried out at most twice since the number of the singular points in $\mathcal{T}'$ is at most two by the from Lemma \ref{lem:V'_E'}, therefore, the step 2 is carried out in $\mathcal{O}(n)$ time.
     Operation \ref{ope:eliminate_loop1} and Operation \ref{ope:eliminate_loop2} can be carried out in $\mathcal{O}(n)$ time since we can carry out desingularization in $\mathcal{O}(n)$ time.
     Operation \ref{ope:eliminate_loop1} and Operation \ref{ope:eliminate_loop2} are carried out at most four times by Lemma \ref{lem:V'_E'}, thus, the step 3 and the step 4 take in $\mathcal{O}(n)$ time.
     Let $\mathcal{T}''$ denote the triangulation obtained from $\mathcal{T}$ by running the step (1)--(4) of Operation \ref{ope:des}.
     We can remove the components of which contains no components of $\partial N(\hat{D})$ in $\mathcal{O}(n)$ time since the size of $\mathcal{T}''$ is $\mathcal{O}(n+c)$, where $c$ is the number of the real crossings in $D$.
     Thereby, we can carry out Operation \ref{ope:des} in time $\mathcal{O}(n^2 + c)$.
\end{proof}

\subsection{The determination of a vertex surface}
In this subsection, we give a method of determining 
whether an integer vector $\mathbf{x}$ is the representation of a vertex surface
in a triangulation of the canonical exterior of a virtual link diagram.
This method is used in the proof of Theorem \ref{thm:main}.

First of all, we give a way to calculate the Euler characteristic of a normal surface.
\begin{lem}
    Suppose that $\mathcal{T}$ is an $n$-tetrahedra triangulation of a $3$-manifold $M$ and $F$ is a normal surface in $M$ with respect to $\mathcal{T}$.
    ${\bf v}_F \in \mathbb{R}^{7n}$ denotes the vector representation of $F$.
    Given the vector ${\bf v}_F$, 
    let $\chi: \mathbb{R}^{7n} \rightarrow \mathbb{R}$ be the function that outputs the Euler characteristic of $F$.
    Then $\chi$ is a linear function.
\end{lem}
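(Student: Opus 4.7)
The plan is to exhibit $\chi$ explicitly as a linear combination of the coordinates $x_1,\dots,x_{7n}$ of ${\bf v}_F$ by distributing $\chi(F)=V-E+F$ (computed via the CW-structure that $F$ inherits from $\mathcal{T}$) as a sum of local contributions, one per normal disk. The faces of this CW-structure are precisely the normal disks of $F$; the edges are the arcs in which $F$ meets the interiors of the $2$-faces of $\mathcal{T}$; the vertices are the points where $F$ meets the edges of $\mathcal{T}$. Each of $V$, $E$, and $F$ will be written as a sum of per-disk weights, where the weights depend only on the combinatorics of $\mathcal{T}$.

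First I would handle the face contribution, which is trivial: each normal disk contributes $1$, so $F=\sum_i x_i$. Next, for the edge term, I would assign to each boundary arc of a normal disk $d$ (there are $3$ such arcs for a triangular disk and $4$ for a quadrilateral disk) a weight $1/k$, where $k$ is the number of normal disks sharing that arc. Since an arc of $d$ lies in a $2$-face $\sigma$ of $\mathcal{T}$, $k$ equals $2$ when $\sigma$ is interior and $1$ when $\sigma\subset\partial M$, so $k$ depends only on which face of which tetrahedron carries the arc, not on $F$. Summing the weights over the arcs of $d$ yields a rational constant $\varepsilon_{t,\Delta}$ depending only on the disk type $t$ and the tetrahedron $\Delta$ in which $d$ lies. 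Similarly, for the vertex term I would assign to each corner of $d$ (lying on an edge $e$ of $\Delta$) the weight $1/\deg(e)$, where $\deg(e)$ is the number of tetrahedra of $\mathcal{T}$ meeting $e$; this is again a property of $\mathcal{T}$ alone. Summing gives a constant $\nu_{t,\Delta}$ per disk.

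Putting these together, the local contribution $c_{t,\Delta}:=1-\varepsilon_{t,\Delta}+\nu_{t,\Delta}$ depends only on $(t,\Delta)$, and by construction
\[
\chi(F)\;=\;V-E+F\;=\;\sum_i c_i\, x_i,
\]
where $c_i$ is the constant $c_{t,\Delta}$ attached to the normal disk type indexed by $i$. This is manifestly a linear function of ${\bf v}_F\in\mathbb{R}^{7n}$, proving the lemma.

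The main thing to get right is the bookkeeping that turns global counts into local per-disk weights: one must verify that every vertex and every edge of the CW-structure on $F$ is counted with total weight exactly $1$ after summing the contributions it receives from each incident normal disk, and that this reconciliation holds uniformly in both the interior and boundary cases. Once this is checked, linearity is automatic.
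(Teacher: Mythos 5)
Your proof is correct and takes essentially the same approach as the paper: both distribute $\chi(F) = V - E + F$ into per-disk contributions by weighting each edge of $F$ by one over the number of incident normal disks (i.e.\ $\tfrac12$ for an interior face of $\mathcal{T}$, $1$ for a boundary face) and each vertex by one over the degree of the $1$-simplex of $\mathcal{T}$ it lies on, then verifying the weights sum to $1$ per cell. Your bookkeeping with $\varepsilon_{t,\Delta}$ and $\nu_{t,\Delta}$ is just a renaming of the paper's coefficients $e_i$ and $v_i$ in $a_i = 1 + e_i + v_i$.
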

\begin{proof}
    For any $i = 1, \ldots, 7n$, we define $a_i = 1 + e_i + v_i$, where:
    \begin{itemize}
        \item $e_i$ is defined as a sum over edges of the normal disk corresponding to the $i$-th element of ${\bf v}_F$:
                an edge of the normal disk contributes $-1$ to this sum if the edge lies on the boundary of $\mathcal{T}$, 
                or $-\frac{1}{2}$ if the edge is internal to $\mathcal{T}$,
        \item $v_i$ is defined as a sum over vertices of the normal disk corresponding to the $i$-th element of ${\bf v}_F$:
                a vertex of the normal disk contributes $\frac{1}{d}$, 
                where $d$ is the degree of the $1$-simplex of $\mathcal{T}$ that the vertex lies within.
    \end{itemize}
    Consider the linear function $\chi(\mathbf{v}_F) = (a_1, \ldots, a_{7n}) \cdot \mathbf{v}_F$.
    A vertex of degree $d$ in $F$ contributes a total of $\frac{1}{d} \cdot d = 1$ to $\chi(\mathbf{v}_F)$.
    An internal edge of $F$ contributes a total of $-\frac{1}{2} \cdot 2 = -1$ to $\chi(\mathbf{v}_F)$ and 
    a boundary edge of $F$ contributes $-1$ to $\chi(\mathbf{v}_F)$.
    A face of $F$ contributes $1$ to $\chi(\mathbf{v}_F)$.
    Therefore, $\chi(\mathbf{v}_F)$ gives the Euler characteristic of $F$.
\end{proof}

Next, we introduce basic properties of vertex surfaces.
\begin{defi}
    Let $\mathbf{x} = (x_1, \ldots, x_{d})$ and $\mathbf{y} = (y_1, \ldots, y_{d})$ be vectors in $\mathbb{R}^{d}$.
    We say that $\mathbf{x}$ {\it dominates} $\mathbf{y}$ if for any index $i = 1, \ldots, d$ satisfies that $y_i = 0$ if $x_i = 0$.
    Furthermore, we say that $\mathbf{x}$ {\it strictly dominates} $\mathbf{y}$ 
    if $\mathbf{x}$ dominates $\mathbf{y}$ and there is an index $i$ which satisfies $x_i \neq 0$ and $y_i = 0$.
\end{defi}

\begin{lem}[Burton \cite{B}]\label{lem:vertex_determine}
    Let $\mathcal{T}$ be an $n$-tetrahedra triangulation of a $3$-manifold $M$.
    $\mathcal{P}$ denotes the projective solution space which is defined by the matching equations of $\mathcal{T}$.
    Let $\mathbf{x}_F$ be the vector representation of a normal surface in $M$ with respect to $\mathcal{T}$.
    If there are no vertex surfaces whose vector representations dominate $\mathbf{x}_F$,
    then the projection of $\mathbf{x}_F$ onto the hyperplane $\sum_{i=0}^{7n} x_i = 1$ is a vertex solution in $\mathcal{P}$.
\end{lem}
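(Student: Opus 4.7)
The plan is to prove the contrapositive: assuming the projection $\hat{\mathbf{x}}_F$ of $\mathbf{x}_F$ onto $\sum_i x_i = 1$ is not a vertex solution of $\mathcal{P}$, I will exhibit a vertex surface whose vector representation dominates $\mathbf{x}_F$. Since $\mathcal{P}$ is a bounded polytope cut out of $\mathbb{R}^{7n}_{\geq 0}$ by the linear matching equations together with the affine hyperplane $\sum_i x_i = 1$, any non-vertex point can be written as a convex combination of vertices. So I would first write $\hat{\mathbf{x}}_F = \mu_1 \mathbf{u}_1 + \cdots + \mu_k \mathbf{u}_k$ with $k \geq 2$, all $\mu_i > 0$ summing to $1$, and each $\mathbf{u}_i$ a vertex of $\mathcal{P}$. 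Scaling by $\sum_j (x_F)_j$, I obtain $\mathbf{x}_F = \lambda_1 \mathbf{u}_1 + \cdots + \lambda_k \mathbf{u}_k$ with every $\lambda_i > 0$.

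The next step is a coordinate-wise domination argument. Because every $\mathbf{u}_i$ lies in the non-negative orthant and every coefficient $\lambda_i$ is strictly positive, any index $j$ where $(\mathbf{x}_F)_j = 0$ forces $(\mathbf{u}_i)_j = 0$ for every $i$. In particular, the support of each $\mathbf{u}_i$ is contained in the support of $\mathbf{x}_F$, i.e.\ $\mathbf{x}_F$ dominates each $\mathbf{u}_i$. This allows me to inherit the quadrilateral condition: in any tetrahedron $\Delta$, $\mathbf{x}_F$ uses at most one quadrilateral type, and since $\mathbf{u}_i$ has zero entries wherever $\mathbf{x}_F$ does, $\mathbf{u}_i$ also uses at most one quadrilateral type in $\Delta$. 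Combined with the matching equations (satisfied by every $\mathbf{u}_i \in \mathcal{P}$), each $\mathbf{u}_i$ therefore represents a genuine (possibly non-integer) normal surface class.

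I then pass from the vertex of $\mathcal{P}$ to an actual embedded vertex surface. Rescaling $\mathbf{u}_i$ to the primitive integer vector $\mathbf{w}_i$ on its ray produces an integer solution satisfying both the matching and quadrilateral conditions, hence a normal surface $W_i$ with respect to $\mathcal{T}$. Because the quadrilateral coordinates of $\mathbf{w}_i$ are indexed by a subset of the tetrahedra used by $F$ with the same quadrilateral types, $W_i$ is connected by the standard normal-surface decomposition argument (any connected component would correspond to a smaller integer point on the same ray, contradicting primitivity). If $W_i$ is two-sided, it is a vertex surface. If $W_i$ is one-sided, then the note following the definition of vertex surface says that $2 W_i$ (represented by $2\mathbf{w}_i$) is a vertex surface; and doubling preserves the zero-pattern, so $2 \mathbf{w}_i$ is still dominated by $\mathbf{x}_F$. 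Either way, I obtain a vertex surface whose vector representation is dominated by $\mathbf{x}_F$, contradicting the hypothesis.

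The main obstacle will be the technical passage from vertex of $\mathcal{P}$ to vertex surface: verifying that the primitive integer multiple yields a connected normal surface, and correctly handling the one-sided case through the doubling trick while keeping domination intact. The linear-algebra and dominance parts are routine, but I would be careful to invoke the standard normal-surface facts (as in Burton's treatment or Jaco--Tollefson) for connectedness rather than re-proving them from scratch.
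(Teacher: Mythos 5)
The paper does not actually prove this lemma (it is cited to Burton), so there is only your attempt to examine. The core of your argument is sound, but there are two issues worth flagging.

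First, a direction inconsistency. Your opening sentence promises to exhibit a vertex surface ``whose vector representation \emph{dominates} $\mathbf{x}_F$,'' echoing the wording of the lemma, but your conclusion (correctly) produces a vertex surface \emph{dominated by} $\mathbf{x}_F$; with the paper's definition of domination these are opposite inclusions of support. The lemma as printed appears to have a typo: the way it is invoked in Lemma~\ref{lem:S^2_check} makes clear the intended hypothesis is that there is no vertex surface whose vector representation is \emph{strictly dominated by} $\mathbf{x}_F$. So you are arguing in the right direction in the body, but you should repair the opening sentence and aim explicitly for strict domination.

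Second, the genuine gap: your convex decomposition only yields $\operatorname{supp}(\mathbf{u}_i) \subseteq \operatorname{supp}(\hat{\mathbf{x}}_F)$, which gives (non-strict) domination. To get strictness you need the observation that, since $\mathcal{P}$ is a polytope in standard form $\{\mathbf{x} \geq 0 : A\mathbf{x}=\mathbf{0},\ \sum x_i = 1\}$, a vertex is a basic feasible solution and is the unique point of $\mathcal{P}$ with its particular support. In particular no vertex $\mathbf{u}_i$ in your decomposition can have $\operatorname{supp}(\mathbf{u}_i) = \operatorname{supp}(\hat{\mathbf{x}}_F)$: if one did, uniqueness of the basic feasible solution with that support would force $\mathbf{u}_i = \hat{\mathbf{x}}_F$, contradicting that $\hat{\mathbf{x}}_F$ is not a vertex. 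Hence every $\mathbf{u}_i$ has $\operatorname{supp}(\mathbf{u}_i) \subsetneq \operatorname{supp}(\hat{\mathbf{x}}_F)$, and your rescaling, connectedness, and one-sided/doubling steps then produce a vertex surface \emph{strictly} dominated by $\mathbf{x}_F$. The remaining pieces of your argument are fine: inheritance of the quadrilateral condition from support containment is immediate, and the connectedness argument is correct as you state it in the parenthetical ($W_i = A \sqcup B$ would express $\mathbf{u}_i$ as a nontrivial convex combination of $\hat{\mathbf{a}},\hat{\mathbf{b}} \in \mathcal{P}$, so extremality forces $\hat{\mathbf{a}} = \hat{\mathbf{b}} = \mathbf{u}_i$ and one obtains a smaller integer point on the ray, contradicting primitivity).
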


From this lemma, we can prove the following lemma.
\begin{lem}\label{lem:S^2_check}
    Assume that $\mathcal{T}$ is an $n$-tetrahedra triangulation of the exterior $M$ of a link in a thickened closed orientable surface $\mathcal{S} \times I$ and
    $\mathbf{x} \in \mathbb{Z}^{7n}$ is an integer vector.
    Then, we can determine whether $\mathbf{x}$ is the vector representation of a vertex $2$-sphere with respect to $\mathcal{T}$ in polynomial time of $n$.
\end{lem}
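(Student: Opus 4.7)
The plan is to run a sequence of polynomial-time tests on $\mathbf{x}$. First I would verify that $\mathbf{x}$ encodes a (not necessarily connected) normal surface: check $\mathbf{x} \ge \mathbf{0}$, the matching equations $A\mathbf{x} = \mathbf{0}$, and the quadrilateral condition in every tetrahedron. Since Theorem \ref{thm:HLP} bounds each coordinate by $2^{7n-1}$, the input has bit-length $O(n^2)$, and all three checks reduce to integer matrix--vector arithmetic and local inspection, each in polynomial time.

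Second, I would verify that the surface $F$ represented by $\mathbf{x}$ is a $2$-sphere. Closedness is a linear condition on $\mathbf{x}$: every coordinate whose disk type contributes an edge on $\partial\mathcal{T}$ must vanish. The Euler characteristic $\chi(F)$ is computed in polynomial time by the preceding linear formula and must equal $2$. Connectedness can be decided in polynomial time by the standard normal-disk adjacency procedure, which builds equivalence classes of disk types under face-matchings in $\mathcal{T}$ symbolically, without enumerating the (possibly exponentially many) individual parallel disks. Together these give a polynomial-time check that $F \cong \mathbb{S}^2$.

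Third, I would verify that $\mathbf{x}$ projects to a vertex solution of the projective solution space. By Lemma \ref{lem:vertex_determine}, this reduces to checking that the polyhedral cone
\[
\mathcal{C}(\mathbf{x}) \;=\; \bigl\{\, \mathbf{y} \in \mathbb{R}^{7n}_{\ge 0} \ \big|\ A\mathbf{y} = \mathbf{0},\ y_i = 0 \text{ for } i \notin \operatorname{supp}(\mathbf{x}) \,\bigr\}
\]
is a single ray, necessarily $\mathbb{R}_{\ge 0}\,\mathbf{x}$. This is decided by computing, via Gaussian elimination, the rank of the submatrix of $A$ indexed by the columns in $\operatorname{supp}(\mathbf{x})$, and checking that its kernel has dimension exactly one.

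The main obstacle is controlling the bit-complexity of the rank computation. Because entries of $\mathbf{x}$, and hence of any candidate $\mathbf{y}$, can be as large as $2^{7n-1}$, naive Gaussian elimination over $\mathbb{Q}$ risks exponential blow-up of intermediate numerators and denominators. This is resolved by using Bareiss' fraction-free elimination (or equivalently the Edmonds--Hadamard bound), which guarantees that all intermediate integers have polynomial bit-length. Combined with the earlier linear checks and the $\chi$-evaluation, the entire test runs in time polynomial in $n$.
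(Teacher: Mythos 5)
Your overall structure is sound, and the feasibility checks and the Euler-characteristic computation in Steps 1--2 match the paper's. Your vertex-solution test in Step 3 --- computing the dimension of $\ker A_{\operatorname{supp}(\mathbf{x})}$ and checking that the cone is a single ray --- is a clean linear-algebra reformulation of the paper's domination criterion from Lemma \ref{lem:vertex_determine}, and your appeal to fraction-free elimination to control bit-complexity is fine. So far so good.

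The gap is the connectedness check. The procedure you describe --- ``builds equivalence classes of disk types under face-matchings'' --- does not decide connectedness. Two disks of the same normal type can lie in different components of $F$: for instance, if $F$ is two parallel copies of a connected normal surface $G$, then $\mathbf{x}_F = 2\mathbf{x}_G$ has exactly the same set of occupied disk types as $\mathbf{x}_G$, and the ``disk-type adjacency graph'' is identical in both cases, yet $F$ is disconnected. Deciding connectedness from the coordinate vector in polynomial time \emph{is} possible (Hass--Lagarias--Pippenger give such an algorithm), but it is a genuinely nontrivial result using a weight/trip-matrix construction, not the quotient of disk types you sketch. As written, your Step 2 would accept some disconnected surfaces.

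The paper sidesteps this entirely, and this is worth internalizing: it verifies the vertex-solution property \emph{first}, and then, knowing $\mathbf{x}$ lies on an extreme ray, observes that connectedness (and two-sidedness) follows from the single arithmetic condition $\gcd(x_1,\dots,x_{7n})=1$. The point is that at a vertex, any decomposition $\mathbf{x}=\mathbf{x}_{F_1}+\mathbf{x}_{F_2}$ into normal summands forces both summands to be proportional to $\mathbf{x}$; a proper integer proportional summand would require $\gcd>1$; and $\gcd=2$ is excluded because $\tfrac12\mathbf{x}$ would represent an embedded projective plane, impossible in the orientable exterior $\operatorname{cl}(\mathcal S\times I-N(\hat D))$. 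Thus closed, $\chi=2$, vertex, and $\gcd=1$ together imply $F\cong\mathbb S^2$ connected and two-sided, with no separate connectedness algorithm needed. If you keep your order of steps, you must replace the disk-type-adjacency heuristic with an actual component-counting algorithm and justify its polynomial running time; if you reverse the order and test vertexhood first, the $\gcd$ test is the simpler and cleaner route.
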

\begin{proof}
    By Theorem \ref{thm:HLP}, if there is an index $i$ which does not satisfy $x_i \leq 2^{7n-1}$, 
    then $\mathbf{x}$ is not the vector representation of a vertex surface, 
    and so we assume $x_i \leq 2^{7n-1}$.

    We can verify that $\mathbf{x}$ is the vector representation of a normal surface by 
    checking $\mathbf{x} \geq \mathbf{0}$, the matching equations $A\mathbf{x} = \mathbf{0}$ and the quadrilateral condition for each tetrahedron.
    These conditions can be checked in polynomial time since $x_i \leq 2^{7n-1}$.

    Next, we check that the projection of $\mathbf{x}$ onto the hyperplane $\sum x_i = 1$ is a vertex solution in $\mathcal{P}$, 
    where $\mathcal{P}$ is the projective solution space which is defined by the matching equation of $\mathcal{T}$.
    By Lemma \ref{lem:vertex_determine}, we can check this by checking that 
    there are no normal surfaces whose vector representations are strictly dominated by $\mathbf{x}$.
    Let $NZ(\mathbf{x})$ and $Z(\mathbf{x})$ be the sets as follows:
    \begin{itemize}
        \item $NZ(\mathbf{x}) = \{i \in \{1, \ldots, 7n\}| x_i \neq 0\}$
        \item $Z(\mathbf{x}) = \{i \in \{1, \ldots, 7n\}| x_i = 0\}$
    \end{itemize}
    If an integer vector $\mathbf{y}$ which is dominated by $\mathbf{x}$ is the vector representation of a normal surface in $\mathcal{T}$,
    then $\mathbf{y}$ satisfies the following conditions:
    \begin{itemize}
        \item $\mathbf{y} \geq \mathbf{0}$,
        \item the matching equations $A\mathbf{y} = \mathbf{0}$,
        \item $x_i = 0 \Rightarrow y_i = 0$.
    \end{itemize}
    From the third condition, if $\mathbf{x}$ satisfies the quadrilateral condition for each tetrahedron, then $\mathbf{y}$ satisfies the quadrilateral condition for each tetrahedron.
    Thus, $\mathbf{y}$ is the vector representation of a normal surface if $\mathbf{y}$ satisfies the three conditions.
    Additionally, there is an index $i$ which satisfies $x_i \neq 0 \land y_i = 0$ if $\mathbf{x}$ strictly dominates $\mathbf{y}$.
    We consider to determine whether there is such a vector using linear algebra.
    
    Let $\mathbf{y}$ be an integer vector which is dominated by $\mathbf{x}$ and represents a normal surface in $\mathcal{T}$.
    $\mathbf{y}$ satisfies the following conditions:
    \begin{itemize}
        \item $\mathbf{y} \geq \mathbf{0}$,
        \item $A\mathbf{y} = \mathbf{0}$,
        \item for any $j \in Z(\mathbf{x})$, $y_j = 0$,
    \end{itemize}
    We call the set of these conditions $C$.
    Conversely, if an integer vector satisfies the conditions in $C$, then the vector is dominated by $\mathbf{x}$ and represents a normal surface in $\mathcal{T}$.
    
    Additionally, if $\mathbf{x}$ strictly dominates $\mathbf{y}$, then there is an index $i \in NZ(\mathbf{x})$ which $y_i = 0$.
    For each index $i \in NZ(\mathbf{x})$, $C_i$ denotes the conditions which is obtained by adding the condition $y_i = 0$ to $C$.
    Conversely, if an integer vector satisfies the conditions in $C_i$, then the vector is strictly dominated by $\mathbf{x}$ and represents a normal surface in $\mathcal{T}$.

    We consider how to determine if there is a vector $\mathbf{y}$ which satisfies the conditions of $C_i$.
    $\mathbf{e}_k$ denotes the $7n$-dimensional row vector whose $k$-th coordinate is $1$ and the other coordinates are $0$, and 
    $I_{Z(\mathbf{x})}$ denotes a matrix which is obtained by arranging all row vectors $\mathbf{e}_j (j \in Z(\mathbf{x}))$ vertically.
    For any element $i$ in $NZ(x)$, we define the matrix $M_i$ as follows:
    \[
        M_i = \left( 
            \begin{array}{c}
                A\\
                I_{Z(\mathbf{x})}\\
                \mathbf{e}_i
            \end{array} \right)
    \]
    Then there is a vector $\mathbf{y}$ which satisfies the conditions in $C_i$ if and only if $\text{rank}(M_i) \neq 0$ holds.

    There are no normal surfaces whose vector representations are dominated by $\mathbf{x}$ 
    if $\text{rank}(M_i) = 0$ holds for any element $i$ in $NZ(\mathbf{x})$.
    Therefore, the projection of $\mathbf{x}$ onto the hyperpalne $\sum_{i=1}^{7n} x_i = 1$ is a vertex solution of $\mathcal{P}$ by Lemma \ref{lem:vertex_determine}.
    We can verify that $\text{rank}(M_i) = 0$ holds for each element $i$ in $NZ(\mathbf{x})$ in polynomial time
    because we can calculate $\text{rank}(M_i)$ in polynomial time and $|NZ(x)| < 7n$ holds.

    By the above discussion, we can verify that $\mathbf{x}$ is the vector representation of a normal surface in $\mathcal{T}$ and
    the projection of $\mathbf{x}$ onto the hyperpalne $\sum_{i=1}^{7n} x_i = 1$ is a vertex solution in $\mathcal{P}$ in polynomial time.
    From the definition of a vertex surface, 
    the normal surface $F$ represented by $\mathbf{x}$ is a vertex surface if $F$ is connected and two-sided in $\mathcal{T}$.

    If the Euler characteristic of $F$ is not two, then $F$ is not a $2$-sphere, and
    if $F$ has boundaries, then $F$ is not a $2$-sphere.
    We can check these conditions in polynomial time, and so 
    we suppose that $F$ is a closed surface whose Euler characteristic is two.
    We show that $F$ is a vertex surface if and only if $\text{GCD}(x_1, \ldots, x_{7n}) = 1$, where $\text{GCD}(x_1, \ldots, x_{7n})$ is the greatest common divisor of all elements which satisfies $x_i \neq 0$.
    
    Assume that $F$ is a vertex surface which is a $2$-sphere.
    $\text{GCD}(x_1, \ldots, x_{7n}) \leq 2$ holds because $F$ is a vertex surface.
    If $\text{GCD}(x_1, \ldots, x_{7n}) = 2$, then $\frac{1}{2}\mathbf{x}$ is the vector representation of a normal surface which is a properly embedded projective plane.
    This contradicts that a projective plane can not be embedded in the  exterior of a link in a thickened closed orientable surface.
    Therefore, $\text{GCD}(x_1, \ldots, x_{7n}) = 1$.
    
    Assume that $\text{GCD}(x_1, \ldots, x_{7n}) = 1$.
    $F$ is a connected 2-sphere since $\text{GCD}(x_1, \ldots, x_{7n}) = 1$ and the Euler characteristic of $F$ is two.
    $\mathcal{T}$ and $F$ are orientable, and so $F$ is two-sided in $\mathcal{T}$. Therefore, $F$ is a vertex surface.
\end{proof}

\begin{lem}\label{lem:Ann_check}
    Assume that $\mathcal{T}$ is an $n$-tetrahedra triangulation of the exterior $M$ of a link in a thickened closed orientable surface $\mathcal{S} \times I$ and
    $\mathbf{x} \in \mathbb{Z}^{7n}$ is an integer vector.
    Then we can determine whether $\mathbf{x}$ is the vector representation of a vertex annulus $F$ in $M$ with respect to $\mathcal{T}$ such that $\partial F$ is in $\mathcal{S} \times \{0\} \cup \mathcal{S} \times \{1\}$ in polynomial time of $n$, where $\mathcal{S} \times \{k\}$ is a copy of supporting surface of $D$ in the boundary of $M$.
\end{lem}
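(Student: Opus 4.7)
The plan is to imitate the proof of Lemma \ref{lem:S^2_check}, replacing the sphere conditions by those of an annulus with the prescribed boundary. Starting from $\mathbf{x} \in \mathbb{Z}^{7n}$, I would first reject unless every coordinate satisfies $x_i \leq 2^{7n-1}$ (otherwise Theorem \ref{thm:HLP} already rules out a vertex surface), then verify in polynomial time that $\mathbf{x}$ represents a normal surface by checking non-negativity, the matching equations $A\mathbf{x} = \mathbf{0}$, and the quadrilateral condition. Second, apply exactly the rank-based test of Lemma \ref{lem:S^2_check}: for each $i \in NZ(\mathbf{x})$ form the matrix $M_i$ consisting of $A$, the indicator rows $I_{Z(\mathbf{x})}$, and the row $\mathbf{e}_i$, and verify $\operatorname{rank}(M_i) = 0$. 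By Lemma \ref{lem:vertex_determine} this certifies that the projection of $\mathbf{x}$ onto the hyperplane $\sum x_i = 1$ is a vertex of $\mathcal{P}$, and since $|NZ(\mathbf{x})| < 7n$ the whole test is polynomial.

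Next, I would read off the topological invariants from $\mathbf{x}$ via linear functions. The Euler characteristic $\chi(F)$ is already linear in $\mathbf{x}$, so one checks $\chi(F) = 0$. For the boundary location, tag every boundary edge of every normal disk type according to whether the containing face of its parent tetrahedron lies in $\partial N(\hat{D})$ or in $\mathcal{S} \times \{0\} \cup \mathcal{S} \times \{1\}$; summing these contributions gives two linear functionals of $\mathbf{x}$, and I would require the first to vanish (so $\partial F$ avoids $\partial N(\hat{D})$) and the second to be positive (so $F$ has non-empty boundary, ruling out a torus or other closed surface with $\chi = 0$). Primitivity $\gcd(x_1,\ldots,x_{7n}) = 1$ is then checked in polynomial time, which together with the vertex-solution property excludes the case where $F$ is a proper integer multiple of a smaller normal surface, including — as in the sphere case — the potential one-sided analogue that is forbidden inside the orientable ambient manifold $\mathcal{S} \times I$.

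The main obstacle is connectedness: I must rule out the possibility that $F$ is a disjoint union of several surfaces all of which satisfy the linear constraints above. As in the final step of Lemma \ref{lem:S^2_check}, I would argue that if $F = F_1 \sqcup F_2$ splits as disjoint non-empty normal subsurfaces whose vector representations are not proportional, then the projection of $\mathbf{x}$ would be a non-trivial convex combination of two distinct points of $\mathcal{P}$, contradicting the vertex-solution property; if the two vector representations are proportional, the $\gcd = 1$ condition forces a single component. Once connectedness is established, $F$ is a connected two-sided — hence orientable, since $M$ is orientable — compact surface with $\chi(F) = 0$ and non-empty boundary contained in $\mathcal{S} \times \{0\} \cup \mathcal{S} \times \{1\}$. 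The classification of compact orientable surfaces with boundary then forces $(g,b) = (0,2)$, so $F$ is the desired vertex annulus, and every test above was polynomial in $n$, yielding the claim.
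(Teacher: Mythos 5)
Your proposal follows essentially the same outline as the paper's proof: reuse the normal-surface and vertex-solution checks from Lemma~\ref{lem:S^2_check}, test $\chi(F)=0$ and a boundary condition via linear functionals, then use primitivity $\gcd(\mathbf{x})=1$ to identify $F$ as the desired vertex annulus. You are in fact slightly more careful than the paper in one place: you explicitly add a linear functional certifying that $\partial F$ avoids $\partial N(\hat{D})$, whereas the paper only checks $F\cap(\mathcal{S}\times\{0\}\cup\mathcal{S}\times\{1\})\neq\emptyset$ and then tacitly assumes $\partial F\subset\mathcal{S}\times\{0\}\cup\mathcal{S}\times\{1\}$ in the reverse implication.

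There is, however, one genuine misstep in your final paragraph. You dismiss the one-sided possibility by saying the ``one-sided analogue\ldots is forbidden inside the orientable ambient manifold $\mathcal{S}\times I$,'' and then conclude that the connected $F$ is two-sided and hence orientable. This reasoning is correct for $\mathbb{RP}^2$ in the sphere case (no $\mathbb{RP}^2$ embeds in \emph{any} orientable $3$-manifold), but it does not transfer to the M\"obius band: one-sided M\"obius bands do embed in orientable $3$-manifolds (e.g.\ a spanning M\"obius band in a solid torus). Orientability of $M$ alone does not exclude them. What the paper actually uses is the stronger, structure-specific fact that a M\"obius band cannot be \emph{properly} embedded in $\mathcal{S}\times I$ (an $I$-bundle over an orientable closed surface) with boundary in $\mathcal{S}\times\partial I$; your boundary-location check is exactly what makes this applicable, but you then invoke the wrong reason. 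To close the gap, first establish $\partial F\subset\mathcal{S}\times\{0\}\cup\mathcal{S}\times\{1\}$ (as you do), then cite the non-embeddability of a properly embedded M\"obius band in $\mathcal{S}\times I$ rather than mere orientability; only then does connectedness plus $\chi(F)=0$ and nonempty boundary force $F$ to be an annulus, and two-sidedness (hence vertex surface status) follows from orientability of $F$ and $M$.
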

\begin{proof}
    In the same argument as Lemma \ref{lem:S^2_check},
    we can verify that $\mathbf{x}$ is the vector representation of a normal surface with respect to  $\mathcal{T}$ and
    the projection of $\mathbf{x}$ onto the hyperpalne $\sum_{i=1}^{7n} x_i = 1$ is a vertex solution in $\mathcal{P}$ in polynomial time, where $\mathcal{P}$ is the projective solution space defined by the matching equations of $\mathcal{T}$.
    
    Let $F$ denote the normal surface represented by $\mathbf{x}$.
    If $F$ is an annulus whose boudary is in $\mathcal{S} \times \{0\} \cup \mathcal{S} \times \{1\}$, then the Euler characteristic of $F$ is zero and $F \cap (\mathcal{S} \times \{0\} \cup \mathcal{S} \times \{1\}) \neq \emptyset$.
    These conditions can be verified in polynomial time.
    We show that the normal surface $F$ which satisfies these conditions is a vertex annulus whose boundary is in $\mathcal{S} \times \{0\} \cup \mathcal{S} \times \{1\}$ if and only if $\text{GCD}(x_1, \ldots, x_{7n}) = 1$.
    
    Assume that $F$ is a vertex annulus whose boundary is in $\mathcal{S} \times \{0\} \cup \mathcal{S} \times \{1\}$.
    Since $F$ is a vertex surface, $\text{GCD}(x_1, \ldots, x_{7n}) \leq 2$.
    If $\text{GCD}(x_1, \ldots, x_{7n}) = 2$, then $\frac{1}{2}\mathbf{x}$ is the vector representation of a normal Moebius band $F'$ in $M$.
    However, the Moebius band can not be properly embedded in $\mathcal{S} \times I$.
    This is contradiction, and so $\text{GCD}(x_1, \ldots, x_{7n}) = 1$.
    
    Assume that $\text{GCD}(x_1, \ldots, x_{7n}) = 1$.
    $F$ is an annulus or a Moeibus band since $\text{GCD}(x_1, \ldots, x_{7n}) = 1$, $F$ has the Euler characteristic zero, and $F$ has boundaries.
    Since $\partial F$ is in $\mathcal{S} \times \{0\} \cup \mathcal{S} \times \{1\}$, $F$ is an annulus, for otherwise $F$ would be a properly embedded Moebius band in $\mathcal{S} \times I$.
    Since $M$ and $F$ are orientable, $F$ is two-sided in $M$.
    Therefore, $F$ is a vertex surface.
\end{proof}

\begin{cor}\label{cor:vertical_ann_check}
    Assume that $\mathcal{T}$ is an $n$-tetrahedra triangulation of the exterior $M$ of a link in a thickened closed orientable surface $\mathcal{S} \times I$ and
    $\mathbf{x} \in \mathbb{Z}^{7n}$ is an integer vector.
    Then we can determine whether $\mathbf{x}$ is the vector representation of a vertical vertex annulus $F$ in $M$ with respect to $\mathcal{T}$ in polynomial time of $n$.
\end{cor}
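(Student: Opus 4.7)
The plan is to reduce to Lemma \ref{lem:Ann_check} and then add one easy combinatorial test that detects verticality. First I would feed $\mathbf{x}$ into the polynomial-time procedure of Lemma \ref{lem:Ann_check}; if it reports that $\mathbf{x}$ is not the vector representation of a vertex annulus with boundary in $\mathcal{S} \times \{0\} \cup \mathcal{S} \times \{1\}$, we reject. Otherwise, let $F$ denote the vertex annulus represented by $\mathbf{x}$; it remains to decide whether the two boundary circles of $F$ are split one apiece between $\mathcal{S} \times \{0\}$ and $\mathcal{S} \times \{1\}$, or instead both lie on the same copy.

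The key observation is that each boundary circle of $F$ is connected and lies on $\partial M$, and the surfaces $\mathcal{S} \times \{0\}$, $\mathcal{S} \times \{1\}$, and $\partial N(\hat{D})$ are pairwise disjoint components of $\partial M$, so each boundary circle lies entirely inside one of these three surfaces. For each boundary edge $e$ of $\mathcal{T}$ the number of intersection points $|F \cap e|$ is a fixed linear combination of the coordinates of $\mathbf{x}$ (namely, a sum of the coordinates corresponding to normal disks in the tetrahedra containing $e$, each weighted by how many edges of that disk meet $e$). Summing these over all edges of $\mathcal{S}^{0}$ yields an integer $w_0$, and likewise we obtain $w_1$ from the edges of $\mathcal{S}^{1}$. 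By Theorem \ref{thm:HLP} each coordinate of $\mathbf{x}$ is bounded by $2^{7n-1}$, so has polynomial bit-length, and hence $w_0$ and $w_1$ can be computed in polynomial time in $n$.

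The algorithm then accepts exactly when $w_0 > 0$ and $w_1 > 0$. Correctness follows because $F$ has exactly two boundary circles (being an annulus), each of which lies in $\mathcal{S}^{0}$ or $\mathcal{S}^{1}$ by the previous paragraph; so $w_0 > 0$ and $w_1 > 0$ hold simultaneously iff one circle lies in each copy, which is exactly the condition that $F$ is vertical. There is no real obstacle here: the only nontrivial work is already absorbed by Lemma \ref{lem:Ann_check}, and the verticality test is a straightforward linear read-off from $\mathbf{x}$ restricted to the boundary data of $\mathcal{T}$.
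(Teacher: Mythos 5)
Your proof is correct and follows essentially the same route as the paper: first invoke Lemma \ref{lem:Ann_check} to reduce to the case where $\mathbf{x}$ represents a vertex annulus with boundary in $\mathcal{S} \times \{0\} \cup \mathcal{S} \times \{1\}$, then test whether each copy of $\mathcal{S}$ meets $\partial F$. The paper states the verticality check can be done in polynomial time without elaboration; you have simply filled in the (sound) combinatorial details of reading off boundary intersection counts from the edge weights of $\mathbf{x}$.
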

\begin{proof}
    By Lemma \ref{lem:Ann_check}, we can determine whether $\mathbf{x}$ is the vector representation of a vertex annulus $F$ in $M$ with respect to $\mathcal{T}$ such that $\partial F$ is in $\mathcal{S} \times \{0\} \cup \mathcal{S} \times \{1\}$ in polynomial time of $n$.
    $F$ is vertical if and only if $\partial F \cap \mathcal{S} \times \{0\} \neq \emptyset$ and $\partial F \cap \mathcal{S} \times \{1\} \neq \emptyset$.
    This condition can be checked in polynomial time of $n$.
\end{proof}


Consider the case where $\hat{D}$ is a knot in a thickened closed orientable surface $\mathcal{S} \times I$.
Recall that a properly embedded annulus $A$ in the exterior $M = \text{cl}(\mathcal{S} \times I - N(D))$ is calssicalization annulus if $\partial A \subset \mathcal{S} \times \{k\} (k = 0\text{ or } 1)$ and $A$ separetes $\partial N(D)$ and $\mathcal{S} \times \{1-k\}$.
We give an algorithm determining whether a given annulus is a classicalization annulus.
\begin{alg}\label{alg:classicalizarion_annulus}
    Assume that $\mathcal{T}$ is an $n$-tetrahedra triangulation of the exterior $M$ of a knot $\hat{D}$ in a thickened closed orientable surface $\mathcal{S} \times I$.
    Given a normal annulus $A$ in $M$ with respect to $\mathcal{T}$, do the following steps:
    \begin{enumerate}
        \item if $\partial A \cap \mathcal{S} \times \{0\} \neq \emptyset$ and $\partial A \cap \mathcal{S} \times \{1\} \neq \emptyset$, output ``no'', otherwise, run the following steps,
        \item assume that $\partial A \cap \mathcal{S} \times \{k\} = \emptyset$ $(k = 0 \text{ or } 1)$, and choose a path $P$ in the $1$-skeleton of $\mathcal{T}$ which connects $\mathcal{S} \times \{k\}$ and $\partial N(\hat{D})$,
        \item if $|A \cap P|$ is odd, then output ``yes'', otherwise, output ``no''.
    \end{enumerate}
\end{alg}
A normal surface with respect to $\mathcal{T}$ and a path in the $1$-skeleton intersect transversally.
Therefore, a normal annulus $A$ whose boudary is in $\mathcal{S} \times \{k\}$ separates $\partial N(\hat{D})$ and $\mathcal{S} \times \{1-k\}$ if and only if there is a path $P$ such that $|P \cap A|$ is odd and $P$ connects $\partial N(\hat{D})$ and $\mathcal{S} \times \{1-k\}$.
On the other hand, if $\partial A \subset \mathcal{S} \times \{k\}$, then for each pair of paths $P_1$ and $P_2$ which connect $\partial N(\hat{D})$ and $\mathcal{S} \times \{1-k\}$, $P_1 \cap A \equiv P_2 \cap A \pmod 2$.
Thus, Algorithm \ref{alg:classicalizarion_annulus} outputs ``yes'' if and only if $A$ is a classicalization annulus.

Next, we show that if $A$ is a vertex surface, then Algorithm \ref{alg:classicalizarion_annulus} runs in polynomial time.

\begin{lem}\label{lem:classicalization_alg_time}
    Suppose the same situation of Algorithm \ref{alg:classicalizarion_annulus} and $A$ is a vertex surface with respect to $\mathcal{T}$.
    Then, Algorithm \ref{alg:classicalizarion_annulus} runs in $\mathcal{O}(n^2)$ time, where $n$ is the number of tetrahedra in $\mathcal{T}$.
\end{lem}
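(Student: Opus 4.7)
The plan is to bound the running time of each of the three steps of Algorithm \ref{alg:classicalizarion_annulus} separately, using Theorem \ref{thm:HLP} to control the bit-length of the vector representation of $A$. Since $A$ is a vertex surface, its vector representation $\mathbf{x} = (x_1, \dots, x_{7n})$ satisfies $x_i \le 2^{7n-1}$, so each coordinate has bit-length $\mathcal{O}(n)$ and the total input size for $\mathbf{x}$ is $\mathcal{O}(n^2)$ bits. This gives the baseline $\mathcal{O}(n^2)$ cost just to read $\mathbf{x}$; I will aim to show that every other sub-task is absorbed into this bound.

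First I would analyze Step 1. Determining whether $\partial A \cap \mathcal{S}\times\{k\} \neq \emptyset$ reduces to checking, for each boundary triangle $t$ in the triangulations $\mathcal{S}^0,\mathcal{S}^1$, whether any normal disk of $A$ in the unique tetrahedron containing $t$ meets $t$ in a nontrivial arc. Concretely, it suffices to test whether the sum of the relevant coordinates $x_i$ is nonzero, which only requires inspecting the nonzero pattern of $\mathbf{x}$; this takes $\mathcal{O}(n^2)$ time including the cost of reading $\mathbf{x}$ itself. Then Step 2 is a standard graph search in the dual $1$-skeleton of $\mathcal{T}$, which has $\mathcal{O}(n)$ vertices and $\mathcal{O}(n)$ edges; a BFS from $\mathcal{S}\times\{k\}$ to $\partial N(\hat{D})$ therefore produces a path $P$ of length $\mathcal{O}(n)$ in $\mathcal{O}(n)$ time.

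For Step 3, I would observe that $|A \cap P| = \sum_{e \subset P} wt_e(A)$, where $wt_e(A)$ is the number of intersections of $A$ with the edge $e$. Each $wt_e(A)$ equals a sum of those $x_i$ whose corresponding normal disk type has a vertex on $e$; there are only boundedly many such disk types per edge, so $wt_e(A)$ can be computed by a constant number of additions on $\mathcal{O}(n)$-bit integers. Moreover, since the algorithm only needs the parity $|A\cap P| \bmod 2$, one can first reduce each coordinate modulo $2$ (trivial from its binary expansion), summing parities over the $\mathcal{O}(n)$ edges of $P$ in $\mathcal{O}(n)$ additional time. Even without this optimization, a naive accumulation of $\mathcal{O}(n)$ integers of bit-length $\mathcal{O}(n)$ costs $\mathcal{O}(n^2)$.

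Combining the three bounds, the total running time is $\mathcal{O}(n^2)$, which matches the cost of reading $\mathbf{x}$. The only subtle point — and the place I would be most careful — is correctly identifying which coordinates contribute to $wt_e(A)$ and making sure that a single normal disk is not double-counted across the two tetrahedra sharing the edge $e$; this is a matter of using the fixed local indexing of normal disk types and can be handled by a constant-time per-edge bookkeeping rule. No further combinatorial obstacle arises, and the stated bound $\mathcal{O}(n^2)$ follows.
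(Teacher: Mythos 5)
Your proposal is correct and follows essentially the same route as the paper: bound reading $\mathbf{x}$ at $\mathcal{O}(n^2)$ via Theorem~\ref{thm:HLP}, obtain a path $P$ of length $\mathcal{O}(n)$ in the $1$-skeleton in $\mathcal{O}(n)$ time, and accumulate $|A\cap P|$ as a sum of $\mathcal{O}(n)$ integers each of bit-length $\mathcal{O}(n)$. One small slip: $P$ lives in the $1$-skeleton of $\mathcal{T}$ (not its dual), as your own subsequent discussion of edge weights $wt_e(A)$ already presupposes.
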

\begin{proof}
    Let $\mathbf{x} = (x_1, \ldots, x_{7n})$ denote the vector representation of $A$.
    Since $A$ is a vertex surface, $\mathbf{x}$ can be read in $\mathcal{O}(n^2)$ time.
    The step 1 can be run in time $\mathcal{O}(n)$.
    A path $P$ is obtained by the following operation: choose a spanning tree of the $1$-skeleton of $\mathcal{T}$ and choose a path connecting $\partial N(D)$ and $\mathcal{S} \times\{k\}$ in the spanning tree, where $\mathcal{S} \times\{k\}$ is the supporting surface of $D$ which does not contain $\partial A$ in $\partial M$.
    Since this operation can be carried out in $\mathcal{O}(n)$ time, the step 2 runs in $\mathcal{O}(n)$, and the length of $P$ is $\mathcal{O}(n)$.
    We can calculate $|A \cap P|$ in $\mathcal{O}(n^2)$ time since the length of $P$ is $\mathcal{O}(n)$ and $x_i \leq 2^{7n-1}$ by Theorem \ref{thm:HLP}.
    Therefore, Algorithm \ref{alg:classicalizarion_annulus} runs in $\mathcal{O}(n^2)$ time.
\end{proof}

\begin{cor}\label{cor:classicalization_ann_check}
    Assume that $\mathcal{T}$ is an $n$-tetrahedra triangulation of the exterior $M$ of a knot in a thickened closed orientable surface $\mathcal{S} \times I$ and
    $\mathbf{x} \in \mathbb{Z}^{7n}$ is an integer vector.
    Then we can determine whether $\mathbf{x}$ is the vector representation of a classicalization vertex annulus $F$ in $M$ with respect to $\mathcal{T}$ in polynomial time of $n$.
\end{cor}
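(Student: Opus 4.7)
The plan is to combine Lemma \ref{lem:Ann_check} with Algorithm \ref{alg:classicalizarion_annulus} and its running time bound from Lemma \ref{lem:classicalization_alg_time}. A classicalization vertex annulus is, by definition, a vertex annulus $F$ satisfying the extra condition that $\partial F \subset \mathcal{S} \times \{k\}$ for some fixed $k \in \{0,1\}$ and that $F$ separates $\partial N(\hat{D})$ from $\mathcal{S} \times \{1-k\}$. Hence the verification naturally splits into two phases: confirming that $\mathbf{x}$ encodes a vertex annulus of the appropriate boundary type, and then confirming the separation condition.

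First I would apply Lemma \ref{lem:Ann_check} to $\mathbf{x}$: this already decides in polynomial time in $n$ whether $\mathbf{x}$ is the vector representation of a vertex annulus $F$ whose boundary lies in $\mathcal{S} \times \{0\} \cup \mathcal{S} \times \{1\}$. If this test fails, output ``no''. Otherwise one additionally needs $\partial F$ to lie entirely in one copy $\mathcal{S} \times \{k\}$; since the vector $\mathbf{x}$ has entries bounded by $2^{7n-1}$ (Theorem \ref{thm:HLP}) and can be read in $\mathcal{O}(n^2)$ time, counting $|\partial F \cap \mathcal{S} \times \{0\}|$ and $|\partial F \cap \mathcal{S} \times \{1\}|$ from $\mathbf{x}$ is a polynomial-time arithmetic check on boundary edge weights.

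Once $F$ is known to be a vertex annulus with $\partial F \subset \mathcal{S} \times \{k\}$, I would invoke Algorithm \ref{alg:classicalizarion_annulus} on $F$ to decide whether it is a classicalization annulus. By Lemma \ref{lem:classicalization_alg_time}, this runs in $\mathcal{O}(n^2)$ time when $F$ is a vertex surface, which is precisely our setting. Combining the two phases gives an overall polynomial-time decision procedure, proving the corollary.

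The statement is really an immediate packaging of previously established lemmas, so there is no essential obstacle; the only subtle point is observing that the separation test in Algorithm \ref{alg:classicalizarion_annulus} is accessed via the $\pmod 2$ parity of $|F \cap P|$ for a single path $P$ in the $1$-skeleton, and this parity is well-defined and computable directly from $\mathbf{x}$ without ever materialising the possibly exponentially many normal disks of $F$.
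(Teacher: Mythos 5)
Your proof is correct and follows essentially the same route as the paper: combine Lemma~\ref{lem:Ann_check} (deciding whether $\mathbf{x}$ encodes a vertex annulus with boundary on $\mathcal{S} \times \{0\} \cup \mathcal{S} \times \{1\}$) with Algorithm~\ref{alg:classicalizarion_annulus} via Lemma~\ref{lem:classicalization_alg_time}. The intermediate check that $\partial F$ lies entirely in one copy $\mathcal{S} \times \{k\}$ is slightly redundant, since step~1 of Algorithm~\ref{alg:classicalizarion_annulus} already handles it, but this does not affect correctness.
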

\begin{proof}
    By Lemma \ref{lem:Ann_check}, we can determine whether $\mathbf{x}$ is the vector representation of a vertex annulus $F$ in $M$ with respect to $\mathcal{T}$ such that $\partial F$ is in $\mathcal{S} \times \{0\} \cup \mathcal{S} \times \{1\}$ in polynomial time.
    By Lemma \ref{lem:classicalization_alg_time}, the vertex annulus $F$ can be determined whether $F$ is a classicalization annulus in polynomial time of $n$.
    Therefore, we can determine whether $\mathbf{x}$ is the vector representation of a classicalization vertex annulus $F$ in polynomial time of $n$.
\end{proof}

\subsection{The proof of Theorem \ref{thm:main}}
We say that a decision problem is in NP if there is a non-deterministic Turing machine which solves the problem in polynomial time.
It is possible to prove the main theorem using this definition, but 
we use an alternative definition using a string called a {\it witness}.
\begin{thm}
    The following propositions are equivalent.
    \begin{itemize}
        \item An decision problem is in NP.
        \item There is a non-deterministic Turing machine which solves the problem in polynomial time.
        \item There is a deterministic Turing machine $M$ which satisfies the following conditions:
    \begin{itemize}
        \item for any input $s$ whose answer is ``yes'', there is a witness $w$ whose length is polynomial length of $|s|$, $M$ accepts $(s, w)$ in polynomial time, where $|s|$ denotes the length of $s$.
        \item for any input $s$ whose answer is ``no'' and any string $w$, $M$ rejects $(s, w)$ in polynomial time.
    \end{itemize}
    \end{itemize}

\end{thm}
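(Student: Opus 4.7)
The plan is to treat the equivalence (1) $\Leftrightarrow$ (2) as the standard textbook definition of the class NP, so that the substantive content of the theorem is the equivalence (2) $\Leftrightarrow$ (3), namely the passage between the nondeterministic-machine model and the deterministic verifier-plus-witness model. I would state this explicitly at the start of the proof, fix notation for a nondeterministic Turing machine $N$ (with, say, at most two nondeterministic choices per step) and for a deterministic verifier $M$, and let $p(n)$ denote a generic polynomial bounding the running times.

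First I would prove (2) $\Rightarrow$ (3). Given a nondeterministic Turing machine $N$ deciding the problem in time $p(|s|)$, I construct the deterministic verifier $M$ whose witness $w$ is exactly the sequence of nondeterministic choices made along a computation branch of $N$. Since $N$ runs for at most $p(|s|)$ steps, such a choice sequence has length at most $p(|s|)$, which is polynomial in $|s|$. The machine $M$ on input $(s,w)$ simulates $N$ on $s$, using the $i$-th symbol of $w$ to resolve the $i$-th nondeterministic choice, and accepts iff the simulated branch accepts. If $s$ is a yes-instance, some accepting branch of $N$ exists and its choice sequence is a valid witness; if $s$ is a no-instance, every branch rejects so no $w$ causes $M$ to accept. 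The simulation overhead is polynomial, so $M$ itself runs in polynomial time in $|s|+|w|$, and hence in polynomial time in $|s|$.

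Next I would prove (3) $\Rightarrow$ (2). Given a deterministic verifier $M$ and a polynomial bound $q$ on the witness length, I construct a nondeterministic machine $N$ that, on input $s$, first writes a string $w$ of length at most $q(|s|)$ on a work tape by nondeterministically branching on each symbol, and then deterministically runs $M$ on $(s,w)$, accepting iff $M$ accepts. The guessing phase takes time $O(q(|s|))$ and the verification phase takes time polynomial in $|s|+|w|$, so the overall running time is polynomial in $|s|$. By hypothesis on $M$, some branch accepts iff a valid witness exists iff $s$ is a yes-instance, which is precisely the nondeterministic acceptance condition.

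I do not expect any serious obstacle, since each direction is a routine simulation; the only care needed is bookkeeping to ensure that the polynomial bounds on witness length and running time compose correctly, and that the alphabet and tape-structure conventions used for $N$ and $M$ are compatible (which can be handled by standard encoding arguments). I would close by remarking that (1) $\Leftrightarrow$ (2) is the definition of NP used in this paper, which completes the chain of equivalences.
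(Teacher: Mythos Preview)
Your argument is correct and is the standard textbook proof of the equivalence between the nondeterministic-machine and verifier-with-witness characterizations of NP. Note, however, that the paper does not actually prove this theorem: it is stated without proof as a well-known background fact from computational complexity, and the paper immediately proceeds to use the verifier characterization to establish the main result. So there is nothing to compare against; your write-up simply fills in a standard result the authors chose to quote.
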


$\langle D \rangle$ denotes the oriented Gauss code of a virtual knot diagram $D$.
In order to prove the main theorem, we show that there is a deterministic Turing machine $M$ which satisfies the following conditions:
\begin{itemize}
    \item for any diagram $D$ which represents a classical knot, there is a witness $w$ which is polynomial length of $|\langle D \rangle|$, and $M$ accepts $(\langle D \rangle, w)$ in polynomial time,
    \item for any virtual knot diagram $D$ and any string $w$, if $D$ does not represent any classical knot, then $M$ rejects $(\langle D \rangle, w)$ in polynomial time.
\end{itemize}

\setcounter{section}{1}
\setcounter{thm}{0}
\begin{thm}
    Classical knot recognition is in NP.
\end{thm}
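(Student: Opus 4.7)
My plan is to construct a polynomial-length witness together with a deterministic polynomial-time verifier that mirrors the execution of Algorithm \ref{alg:CKR}. The witness will be a sequence $(t_1, \mathbf{x}_1), \ldots, (t_k, \mathbf{x}_k)$, where each tag $t_i \in \{\text{split}, \text{destab}, \text{class}\}$ indicates which branch of step~3 of the algorithm is to be taken, and $\mathbf{x}_i$ is the integer vector representation of an essential vertex $2$-sphere, a vertical essential vertex annulus, or a vertex classicalization annulus in the current triangulation. By Theorem \ref{thm:HLP}, whenever the current triangulation has $n$ tetrahedra, each $\mathbf{x}_i$ has entries at most $2^{7n-1}$ and so is encoded in $\mathcal{O}(n^2)$ bits.

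I would have the verifier first build the canonical exterior triangulation $\mathcal{T}_0$ from the Gauss code in time $\mathcal{O}(c)$ with $|\mathcal{T}_0| \in \mathcal{O}(c)$ (Corollary \ref{cor:comp}). Its main loop then processes the witness entries one at a time. Before each entry the verifier applies Lemma \ref{lem:one_vertex} to reduce every boundary surface component to a one-vertex triangulation, which preserves the underlying $3$-manifold. For the current entry it checks in polynomial time that $\mathbf{x}_i$ encodes a vertex surface of the declared type, using Lemma \ref{lem:S^2_check}, Corollary \ref{cor:vertical_ann_check}, or Corollary \ref{cor:classicalization_ann_check}, respectively. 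If the check passes, it applies Operation \ref{ope:splitting} or Operation \ref{ope:des}, or halts and accepts if $t_i = \text{class}$. After the witness is exhausted, it accepts if and only if both copies of the supporting surface now have genus zero.

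The plan for the size analysis is as follows. Operation \ref{ope:splitting} does not increase the tetrahedron count (Corollary \ref{cor:splittingNum}), and Operation \ref{ope:des} applied after the one-vertex reduction adds only $\mathcal{O}(c)$ tetrahedra by Lemma \ref{lem:desNum}. The number $k$ of operations can be bounded by $\mathcal{O}(c)$, because each destabilization strictly reduces the supporting genus, whose initial value is $\mathcal{O}(c)$ by Corollary \ref{cor:genus}, while by Corollary \ref{cor:splitting_comp_knot} a splitting on the knot component transfers it into a $3$-ball or $3$-sphere complement and so cannot recur indefinitely. Hence the triangulation size stays in $\mathcal{O}(c^2)$ throughout, each vector takes $\mathcal{O}(c^4)$ bits, the total witness is of length $\mathcal{O}(c^5)$, and every step of the verifier runs in time polynomial in $c$; to guard against malicious inputs the verifier simply rejects any witness exceeding these a~priori polynomial bounds.

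Completeness will rest on Theorem \ref{thm:K} and Lemma \ref{lem:classicalization_annulus}: if $K$ is classical, Algorithm \ref{alg:CKR} is guaranteed to reach acceptance, and at each stage an essential $2$-sphere can be replaced by a vertex $2$-sphere (Theorem \ref{thm:essS2}) and a vertical essential annulus by a vertex vertical annulus or a vertex classicalization annulus (Theorem \ref{thm:essAn}), producing a witness of the required form. Soundness follows because each verified operation preserves the property of representing a classical knot. The main obstacle I anticipate is the polynomial containment of the triangulation size across many operations: without the one-vertex reduction, the per-operation growth in Corollary \ref{cor:des_num} is governed by the boundary size $|\mathcal{S}^k|$ of the \emph{current} triangulation, which would blow up after repeated operations. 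Inserting Lemma \ref{lem:one_vertex} before every step is what keeps the per-step growth at $\mathcal{O}(c)$ and so keeps the entire certificate polynomial.
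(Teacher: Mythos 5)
Your witness-and-verifier scheme is in essence the one used in the paper: the certificate is a sequence of vertex solutions, the verifier rebuilds the canonical exterior, performs Operations~\ref{ope:splitting} and~\ref{ope:des} guided by the certificate, checks types via Lemma~\ref{lem:S^2_check}, Corollary~\ref{cor:vertical_ann_check}, Corollary~\ref{cor:classicalization_ann_check}, and the size analysis via Theorem~\ref{thm:HLP}, Corollary~\ref{cor:genus}, Lemma~\ref{lem:desNum}, and Lemma~\ref{lem:one_vertex} is exactly the one in the paper (your explicit tags $t_i$ are harmless and cosmetic). So the approach is the right one.

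There is, however, one genuine wrinkle in the ordering of operations. You propose to ``apply Lemma~\ref{lem:one_vertex} before each entry,'' i.e.\ unconditionally reduce the boundary to a one-vertex triangulation before processing each witness item, including a splitting step. But Lemma~\ref{lem:one_vertex} is stated and proved only under the hypothesis that the underlying manifold is irreducible and $\partial$-irreducible; the proof appeals to this to argue that $\partial N(\mathcal{E})$ consists of inessential disks and spheres, so that crushing preserves the underlying $3$-manifold. Precisely at a step where the witness provides an essential vertex $2$-sphere, the manifold \emph{is} reducible and the lemma's hypothesis fails, so the reduction is no longer guaranteed to be topology-preserving; worse, the supplied vector $\mathbf{x}_i$ was constructed relative to the pre-reduction triangulation and need not even be a valid normal-coordinate vector for the reduced one. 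The paper avoids this by placing the one-vertex reduction only inside the else-branch, after the verifier has determined that $\mathbf{x}_i$ is \emph{not} a vertex $2$-sphere, and Theorem~\ref{thm:essAn} (which requires irreducibility to invoke Theorem~\ref{thm:sum_incomp}) is only applied there. Your proposal should be adjusted to perform the $2$-sphere test on the current triangulation first and defer the one-vertex reduction to the annulus case, exactly as the paper's machine does; with that fix the rest of your size and correctness argument goes through as intended.
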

\setcounter{section}{5}
\begin{proof}
    Let $K$ be a virtual knot and $D$ be a diagram of $K$ with $c$ real crossings.
    Note that $g(F)$ represents the genus of a surface $F$, and we define $g(\emptyset) = 0$, where $\emptyset$ is the empty set.
    Let $M$ be the deterministic Turing machine defined as follows:
    \begin{enumerate}
        \item Check whether $w$ is a code of a sequence (possibly empty) of integer vectors .
        In that case, let $(\mathbf{x}_0, \ldots, \mathbf{x}_{m-1})$ be the sequence of integer vectors represented by $w$. Otherwise reject.
        
        \item Construct a triangulation $\mathcal{T}_0$ of the canonical exterior of $D$ from $\langle D \rangle$.
        Choose a triangulation of the supporting surface of $D$ in the boundary of $\mathcal{T}_0$, and $\mathcal{S}_0$ denotes it.
        
        \item For any integer $i = 0, \ldots, m-1$, do the following steps:
        \begin{itemize}
            \item If $\mathbf{x_i}$ represents a vertex $2$-sphere with respect to $\mathcal{T}_i$, then perform Operation \ref{ope:splitting}.
            Let $\mathcal{T}_{i+1}$ be the resulting triangulation and $\mathcal{S}_{i+1}$ be the boundary component of $\mathcal{T}_{i+1}$ which is obtained from $\mathcal{S}_i$.
            \item If $\mathbf{x}_i$ does not represents a vertex $2$-sphere, then reduce the vertices of $\mathcal{S}_i$ to one (if $\mathcal{S}_i$ has two or more vertices), and we denote the resulting triangulations obtained from $\mathcal{T}_i$ and $\mathcal{S}_i$ by $\mathcal{T}'_i$ and $\mathcal{S}'_i$, respectively.
            \begin{itemize}
                \item If $\mathbf{x_i}$ represents a clasicalization vertex annulus with respect to $\mathcal{T}'_i$, then accept.
                \item If $\mathbf{x}_i$ represents a vertical vertex annulus $A$ with respect to $\mathcal{T}'_i$, then perform Operation \ref{ope:des} using $A$, and let $\mathcal{T}_{i+1}$ be the resulting triangulation and $\mathcal{S}_{i+1}$ be the boundary component of $\mathcal{T}_{i+1}$ which is obtained from $\mathcal{S}'_i$.
                \item If $\mathbf{x}_i$ does not represent a classicalization vertex annulus or a vertical vertex annulus with respect to $\mathcal{T}'_i$, then reject.
            \end{itemize}
        \end{itemize}
        \item If $g(\mathcal{S}_m) = 0$, then accept.
        Otherwise reject. 
    \end{enumerate}

    \setcounter{claim}{0}
    \begin{claim}
        $K$ is a classical knot if and only if there is a witness $w$ such that $M$ accepts $(\langle D \rangle, w)$ and $w$ can be encoded with polynomial length of $c$.
    \end{claim}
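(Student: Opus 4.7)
My plan is to prove the two directions of the biconditional separately. For the ``if'' direction I will track, step by step, what the machine $M$ does to the underlying (possibly singular) $3$-manifold and invoke Lemmas \ref{lem:splitting} and \ref{lem:des} together with Lemma \ref{lem:classicalization_annulus} to conclude classicality. For the ``only if'' direction I will build the witness explicitly using Algorithm \ref{alg:CKR}, Kuperberg's Theorem \ref{thm:K}, and the existence theorems for vertex surfaces (Theorems \ref{thm:essS2} and \ref{thm:essAn}), then bound its bit-length via Theorem \ref{thm:HLP}.

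For the ``if'' direction, I will set up the invariant that after step $i$ the 3-manifold underlying $\mathcal{T}_i$ (or $\mathcal{T}'_i$) is obtained from the canonical exterior of $D$ by a finite sequence of operations of the types listed in Lemmas \ref{lem:splitting} and \ref{lem:des} (plus the vertex-reduction of Lemma \ref{lem:one_vertex}, which preserves the underlying 3-manifold). Using Corollaries \ref{cor:splitting_comp_knot} and \ref{cor:des_comp_knot}, each component of $\mathcal{T}_i$ that contains a torus boundary from $\partial N(\hat{D})$ is of the form $\mathrm{cl}(\mathcal{S}_i \times I - N(\hat{D}))$, $\mathrm{cl}(\mathbb{B}^3 - N(\hat{D}))$, or $\mathrm{cl}(\mathbb{S}^3 - N(\hat{D}))$ (possibly with a few $3$-balls removed). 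If the machine accepts at step $i$ because $\mathbf{x}_i$ encodes a classicalization vertex annulus, Lemma \ref{lem:classicalization_annulus} immediately yields that $K$ is classical. If it accepts at the final step because $g(\mathcal{S}_m)=0$, then either the surviving component is $\mathrm{cl}(\mathbb{S}^3-N(\hat D))$ (so $\hat D$ sits in $\mathbb S^3$ and $K$ is classical) or everything was discarded, in which case Corollary \ref{cor:des_comp_knot} (or \ref{cor:splitting_comp_knot}) forces $\hat D$ to be the trivial knot.

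For the ``only if'' direction, suppose $K$ is classical. Applying Algorithm \ref{alg:CKR} to $D$ gives a finite alternating sequence of splittings, destabilizations, and a possibly terminal classicalization-annulus detection. I will turn this into a witness $w=(\mathbf{x}_0,\dots,\mathbf{x}_{m-1})$ as follows: whenever the algorithm needs an essential $2$-sphere I use Theorem \ref{thm:essS2} to extract a vertex essential $2$-sphere of the current triangulation; whenever it needs a vertical essential annulus I use Theorem \ref{thm:essAn}, which gives either a vertex vertical essential annulus or a vertex classicalization annulus, both of which the machine handles. At each stage I also apply the vertex-reduction preprocessing of Lemma \ref{lem:one_vertex} when required, so Lemma \ref{lem:desNum} applies. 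Each splitting leaves the tetrahedron count non-increasing (Corollary \ref{cor:splittingNum}) and each destabilization increases it by at most $\mathcal{O}(c)$ (Lemma \ref{lem:desNum}); since by Corollary \ref{cor:genus} we have $sg(D)\in\mathcal O(c)$ and each destabilization strictly decreases the genus sum while splittings do not increase it, the total number $m$ of steps is $\mathcal{O}(c)$, and the triangulation size throughout is bounded by a polynomial in $c$. By Theorem \ref{thm:HLP} each $\mathbf{x}_i$ has entries at most $2^{7n_i-1}$ with $n_i$ polynomial in $c$, so every $\mathbf{x}_i$ encodes in polynomial bits and the total witness length is polynomial in $c$.

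The hard part will be guaranteeing that the successive triangulations produced by the machine actually admit the vertex surfaces I want to prescribe. Two subtleties drive this. First, after a crushing step the triangulation $\mathcal{T}_{i+1}$ may differ from the ``ideal'' triangulation of the next $3$-manifold in Algorithm \ref{alg:CKR}: extra $3$-balls, $3$-spheres, or a replacement of an $\mathcal{S}\times I$ component by $\mathbb{B}^3-N(\hat D)$ or $\mathbb{S}^3-N(\hat D)$ may appear. I will need to verify that the hypotheses of Theorems \ref{thm:essS2} and \ref{thm:essAn} (irreducibility/$\partial$-irreducibility of the ambient component, nonzero genus of its supporting surface) remain available on the component containing $\partial N(\hat D)$, invoking splitting once more when they fail so that the remaining component is of the thickened-surface form needed by \ref{thm:essAn}. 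Second, when the algorithm requires a preliminary vertex-count reduction (Lemma \ref{lem:one_vertex}), I must ensure that the machine's step--by--step semantics, which performs this reduction automatically when $\mathbf{x}_i$ is not a vertex $2$-sphere, matches the order in which the witness supplies its vectors; I will address this by inserting, when necessary, vertex $2$-sphere witnesses (which exist as long as reducibility persists) so that the vertex-reduction happens at the right moments. Once these alignment issues are settled, the bit-length bound from Theorem \ref{thm:HLP} closes the argument.
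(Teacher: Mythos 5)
Your proposal follows essentially the same two-part structure as the paper's proof: for the ``only if'' direction you build the witness sequence by alternately invoking Theorem \ref{thm:essS2} (vertex $2$-spheres when the exterior is reducible) and Theorem \ref{thm:essAn} (vertex vertical/classicalization annuli when it is irreducible), preceded by the vertex-count reduction of Lemma \ref{lem:one_vertex}, and you bound the witness length via Corollary \ref{cor:genus}, Lemma \ref{lem:desNum} and Theorem \ref{thm:HLP}. For the ``if'' direction you track the effect of each machine step on the underlying $3$-manifold via Lemmas \ref{lem:splitting} and \ref{lem:des}, Corollaries \ref{cor:splitting_comp_knot}/\ref{cor:des_comp_knot}, and Lemma \ref{lem:classicalization_annulus}, concluding that acceptance implies classicality. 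This is the paper's argument.

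One piece of your ``hard part'' plan is off. You propose to control \emph{when} the machine performs vertex-reduction ``by inserting, when necessary, vertex $2$-sphere witnesses (which exist as long as reducibility persists).'' This would not work as a general tool: essential vertex $2$-spheres are available only when the current exterior is reducible, which is exactly the regime where you do \emph{not} want to do vertex-reduction-then-annulus. Fortunately no such insertion is needed. The machine $M$ already performs vertex reduction unconditionally whenever the supplied vector $\mathbf{x}_i$ fails to be a vertex $2$-sphere, so the witness-construction stage and the machine's runtime semantics stay synchronized automatically as long as the witness construction defines $\mathcal{T}_{i+1}$ and $\mathcal{S}_{i+1}$ by exactly the same operations (Operation \ref{ope:splitting} in the $2$-sphere case, vertex reduction followed by Operation \ref{ope:des} in the annulus case) that the machine uses. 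The alignment you worry about is secured by this definitional parallelism, not by extra $2$-sphere witnesses. The remainder of your outline (including the observation that the splitting step, by Corollary \ref{cor:splitting_comp_knot}, terminates the process in one move since the input is a knot, and the final case analysis on $M_m$, which should include $\text{cl}(\mathbb{S}^2\times I - N(\hat D))$ and $\mathbb{B}^3 - N(\hat D)$ in addition to the two cases you list) is correct.
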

    \noindent
    {\it Proof of Claim A.}
    Suppose that $K$ is a classical knot, and we show that there is a witness $w$ such that $M$ accepts $(\langle D \rangle, w)$ and $w$ can be encoded with polynomial length of $c$.
    Let $M_0$ denote the canonical exterior of $D$, $\mathcal{T}_0$ be the triangulation of $M_0$ constructed by the argument of Corollary \ref{cor:comp}, and $\mathcal{S}_0$ be a triangulation of the supporting surface of $D$ in the boundary of $\mathcal{T}_0$. 
    
    In the case where $g(\mathcal{S}_0)$ is zero, if $w$ is a code of the empty sequence, then $M$ accepts $(\langle D \rangle, w)$.
    Suppose that $g(\mathcal{S}_0)$ is not zero.
    In this case, we define a witness $w$ as follows:
    \begin{enumerate}
        \item Let $X$ be the empty sequence of integers vectors.
        \item For any integer $i$, add an integers vector $\mathbf{x}_i$ to $X$ inductively as follows:
        \begin{itemize}
            \item In the case where $g(\mathcal{S}_i)$ is zero, go step 3.
            \item In the case  where $g(\mathcal{S}_i)$ is not zero:
            \begin{itemize}
                \item If $M_i$ is reducible, then there is an essential vertex $2$-sphere $F_i$ in $M_i$ with respect to $\mathcal{T}_i$ by Theorem \ref{thm:essS2}.
                Then, add the vector representation $\mathbf{x}_i$ of $F_i$ to $X$, and go to the step 3.
                \item If $M_i$ is irreducible, then reduce the vertices of $\mathcal{S}_i$ to one (if $\mathcal{S}_i$ has two or more vertices), and we denote the resulting triangulations obtained from $\mathcal{T}_i$ and $\mathcal{S}_i$ by $\mathcal{T}'_i$ and $\mathcal{S}'_i$, respectively.
                By Theorem \ref{thm:K}, there is a vertical essential annulus in $M_i$ since $K$ is classical.
                Therefore, there is a vertical essential vertex annulus or a classicalization vertex annulus in $M_i$ with respect to $\mathcal{T}'_i$ by Theorem \ref{thm:essAn}.
                \begin{itemize}
                    \item If there is a classicalization vertex annulus $F_i$, then add the vector representation $\mathbf{x}_i$ of $F_i$ to $X$, and go to the step 3.
                    \item If there are no classicalization vertex annuli and there is a vertical essential vertex annulus $F_i$, then add the vector representation $\mathbf{x}_i$ of $F_i$ to $X$, run Operation \ref{ope:des} on $\mathcal{T}'_i$ using $F_i$, and reduce the vertices of the triangulated surface obtained from $\mathcal{S}_i$ to one by the argument used in Lemma \ref{lem:one_vertex}.
                    Let $\mathcal{T}_{i+1}$ be the resulting triangulation, $M_{i+1}$ be the underlying $3$-manifold, and $\mathcal{S}_{i+1}$ be the boundary surface in $\mathcal{T}_{i+1}$ obtained from $\mathcal{S}'_i$.
                    Then, repeat the step 2 for the integer $i+1$.
                \end{itemize}
            \end{itemize}
        \end{itemize}
        \item Encode $X$.
    \end{enumerate}
    For each $\mathcal{S}_i$, $g(\mathcal{S}_{i+1})$ is less than $g(\mathcal{S}_i)$.
    This implies that the length of $X$ is at most $g(\mathcal{S}_0)$, and so the length of $X$ is $\mathcal{O}(c)$ by Corollary \ref{cor:genus}, in particular, $X$ is a finite sequence.
    
    We show that $M$ accepts $(\langle D \rangle, w)$.
    Suppose that $X = (\mathbf{x}_0, \ldots, \mathbf{x}_{m-1})$.
    By the construction of $X$, the normal surfaces represented by $\mathbf{x}_0, \ldots, \mathbf{x}_{m-2}$ are vertical annuli.
    Let $F_{m-1}$ denote the normal surface represented by $\mathbf{x}_{m-1}$.
    
    First, suppose that $F_{m-1}$ is a $2$-sphere, then the underlying $3$-manifold of $\mathcal{T}_m$ is $\text{cl}(\mathbb{S}^3 - N(\hat{D}))$ or the empty set by Corollary \ref{cor:splitting_comp_knot}.
    In either case, $\mathcal{S}_m$ is the empty set, and so $M$ accepts $(\langle D \rangle, w)$ since we define $g(\emptyset) = 0$.
    Next, suppose that $F_{m-1}$ is a vertical annulus.
    This implies that $g(\mathcal{S}_m) = 0$, thus, $M$ accepts $(\langle D \rangle, w)$.
    Finally, if $F_{m-1}$ is a classicalization annulus, then $M$ accepts $(\langle D \rangle, w)$.

    Consider the length of the witness $w$ obtained by the above operation.
    By Theorem \ref{thm:HLP}, each $\mathbf{x}_i$ can be encoded with $\mathcal{O}(|\mathcal{T}_i|^2)$ length, where $|\mathcal{T}_i|$ is the number of tetrahedra in $\mathcal{T}_i$.
    By the construction of $\mathcal{S}'_i$, each $\mathcal{S}'_i$ has exactly one vertex, thus $|\mathcal{T}_{i+1}|$ is $|\mathcal{T}_{i}| + \mathcal{O}(c)$ by Lemma \ref{lem:desNum}.
    Since $|\mathcal{T}_0|$ is $\mathcal{O}(c)$ and the length of $X$ is $\mathcal{O}(c)$, each $|\mathcal{T}_i|$ is $\mathcal{O}(c^2)$.
    Therefore, each $\mathbf{x}_i$ is encoded with $\mathcal{O}(c^4)$ length, and the witness $w$ obtained by the above operation is encoded with $\mathcal{O}(c^5)$ length.

    Next, assume that $M$ accepts $(\langle D \rangle, w)$, and we show that $K$ is classical.
    The Turing machine $M$ accepts $(\langle D \rangle, w)$ if and only if $g(\mathcal{S}_m) = 0$ or there is a classicalization annulus $F_i$, where $F_i$ is the normal surface represented by $\mathbf{x}_i$.
    First, if there is a classicalization annulus $F_i$, then $K$ is a classical knot by Lemma \ref{lem:classicalization_annulus}.
    Next, suppose that there are no classicalization annuli and $g(\mathcal{S}_m)$ is zero.
    Let $M_m$ denote the underlying $3$-manifold of $\mathcal{T}_m$.
    From Corollary \ref{cor:splitting_comp_knot}, Corollary \ref{cor:des_comp_knot} and $g(\mathcal{S}_m) = 0$, 
    $M_m$ is one of the followings:
    \begin{itemize}
        \item $\text{cl}(\mathbb{S}^2 \times I - N(\hat{D}))$
        \item $\mathbb{B}^3 - N(\hat{D})$,
        \item $\mathbb{S}^3 - N(\hat{D})$,
        \item the empty set.
    \end{itemize}
    If $M_m$ is not the empty set, then $K$ is a classical knot, and if $M_m$ is the empty set, then $K$ is the trivial knot by Corollary \ref{cor:splitting_comp_knot} and Corollary \ref{cor:des_comp_knot}.
    \qed

    By Claim A, if $L$ is classical, there is a witness $w$ such that $M$ accepts $(\langle D \rangle, w)$.
    If $M$ accepts $(\langle D \rangle, w)$, $D$ is a diagram of a classical knot. Considering the contraposition of this proposition, 
    for any virtual knot diagram $D$ and any string $w$, if $D$ dose not represent any classical knot, then $M$ rejects $(\langle D \rangle, w)$.

    Lastly, we analyze the running time of the deterministic Turing machine $M$. 
    \begin{claim}
        $M$ runs in polynomial time of $|(\langle D \rangle, w)|$.
    \end{claim}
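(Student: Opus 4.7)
The plan is to bound the running time of each step of $M$ by a polynomial in the size of the current triangulation $\mathcal{T}_i$, and then to bound $|\mathcal{T}_i|$ uniformly by a polynomial in $|(\langle D \rangle, w)|$. First I would verify step 1, decoding $w$ as a sequence $(\mathbf{x}_0,\dots,\mathbf{x}_{m-1})$ of integer vectors, is linear in $|w|$; and step 2, constructing $\mathcal{T}_0$ from $\langle D \rangle$, is $\mathcal{O}(c)$ with $|\mathcal{T}_0|\in\mathcal{O}(c)$ by Corollary \ref{cor:comp}. For step 4, computing $g(\mathcal{S}_m)$ from the triangulation is linear in $|\mathcal{T}_m|$. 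The nontrivial work is in step 3.

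For each iteration $i$ in step 3, the recognition tests cost polynomial time in $|\mathcal{T}_i|$: checking whether $\mathbf{x}_i$ encodes a vertex $2$-sphere is polynomial by Lemma \ref{lem:S^2_check}; reducing $\mathcal{S}_i$ to one vertex is $\mathcal{O}(|\mathcal{T}_i|^3)$ by Lemma \ref{lem:one_vertex}; testing the classicalization-annulus and vertical-annulus conditions is polynomial by Corollary \ref{cor:classicalization_ann_check} and Corollary \ref{cor:vertical_ann_check}; Operation \ref{ope:splitting} runs in $\mathcal{O}(|\mathcal{T}_i|^2)$ by Lemma \ref{lem:splittingTime}; and Operation \ref{ope:des}, applied after reducing to one boundary vertex, runs in polynomial time in $|\mathcal{T}_i|$. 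Thus iteration $i$ costs $\mathrm{poly}(|\mathcal{T}_i|)$, and if at any point $\mathbf{x}_i$ fails all three recognition tests then $M$ immediately rejects within the same bound.

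The hard part will be controlling the growth of $|\mathcal{T}_i|$ across iterations, since we must tolerate \emph{any} witness $w$, not just the well-behaved one from Claim A. I would argue as follows. The crushing procedure, and hence step 1 of Operation \ref{ope:splitting} and of Operation \ref{ope:des}, never increases the tetrahedron count (Corollary \ref{cor:splittingNum} and Theorem \ref{thm:crush}); the vertex-reduction used inside Operation \ref{ope:des} is also a crushing-based procedure that only removes tetrahedra. The only source of growth is desingularization and the auxiliary Operations \ref{ope:eliminate_loop1}, \ref{ope:eliminate_loop2} in Operation \ref{ope:des}; because we always first reduce the supporting-surface triangulation to a single vertex (using Lemma \ref{lem:one_vertex} on a triangulation of size $\mathcal{O}(|\mathcal{T}_i|)$), Lemma \ref{lem:desNum} gives the per-iteration increase of $\mathcal{O}(c)$ tetrahedra. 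Therefore $|\mathcal{T}_{i+1}|\le|\mathcal{T}_i|+\mathcal{O}(c)$ and by induction $|\mathcal{T}_i|=\mathcal{O}(c)+i\cdot\mathcal{O}(c)=\mathcal{O}((i+1)c)$.

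Finally, since any integer vector in the sequence has strictly positive encoded length, $m\le|w|$, and hence throughout the computation $|\mathcal{T}_i|=\mathcal{O}((|w|+1)c)$, which is polynomial in $|(\langle D\rangle,w)|$. Summing the per-iteration bound over $i=0,\dots,m-1$ yields total running time $\sum_{i=0}^{m-1}\mathrm{poly}(|\mathcal{T}_i|) = \mathrm{poly}(|w|\,c)$, which is polynomial in $|(\langle D\rangle,w)|$. Adding the $\mathcal{O}(c)$ cost of constructing $\mathcal{T}_0$ and the linear cost of the final genus check completes the bound, establishing Claim B.
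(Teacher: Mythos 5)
Your proof is correct and follows the same overall decomposition as the paper: bound the per-iteration cost by a polynomial in the current triangulation size, bound the per-iteration growth of the triangulation, and sum. The specific per-step citations (Corollary~\ref{cor:comp}, Lemma~\ref{lem:S^2_check}, Lemma~\ref{lem:one_vertex}, Corollaries~\ref{cor:classicalization_ann_check} and~\ref{cor:vertical_ann_check}, Lemma~\ref{lem:splittingTime}, Lemma~\ref{lem:desNum}) are exactly those the paper uses.

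Where you depart, you actually improve on the paper's presentation. The paper asserts that ``$|\mathcal{T}_i|$ is $\mathcal{O}(c^2)$ by the discussion in the proof of Claim A'' and that ``step 3 is repeated at most $\mathcal{O}(c)$ times since $g(\mathcal{S}_0)$ is $\mathcal{O}(c)$.'' Both bounds were established in Claim A only for the specific well-behaved witness constructed there (a length-$\mathcal{O}(c)$ sequence of essential surfaces that strictly reduces the genus). Claim B, however, must hold for \emph{every} string $w$, including ones encoding long sequences of vertex surfaces (e.g.\ vertex links or $\partial$-parallel annuli) that need not reduce the genus; the machine as described iterates over all $m$ vectors and has no built-in guard bounding $m$ by $\mathcal{O}(c)$. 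You sidestep this by bounding the iteration count with $m\le|w|$ and deriving $|\mathcal{T}_i|\le|\mathcal{T}_0|+i\cdot\mathcal{O}(c)=\mathcal{O}((|w|+1)c)$ from the per-destabilization increase of Lemma~\ref{lem:desNum}, which is polynomial in the input size regardless of the witness's quality. The conclusion is the same, but your argument is airtight for arbitrary $w$, whereas the paper's stated intermediate bounds are only clearly justified for the canonical witness. (Both you and the paper implicitly rely on the genus of the boundary surface remaining $\mathcal{O}(c)$ throughout, which follows since neither Operation~\ref{ope:splitting} nor Operation~\ref{ope:des} increases it; stating this explicitly would make the invocation of Lemma~\ref{lem:desNum} at each iteration fully rigorous.)
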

    \noindent
    {\it Proof of Claim B.}
    It is clear that the step 1 runs in polynomial time of $|(\langle D \rangle, w)|$.
    By Corollary \ref{cor:comp}, the step 2 runs in $\mathcal{O}(c)$ time, where $c$ is the number of real crossings in $D$.
    For each $\mathcal{T}_i$, $|\mathcal{T}_i|$ is $\mathcal{O}(c^2)$ by the discussion in the proof of Claim A.
    \begin{itemize}
        \item The vector $\mathbf{x}_i$ can be determined whether $\mathbf{x}_i$ represents a vertex $2$-sphere with respect to $\mathcal{T}_i$ in polynomial time of $|\mathcal{T}_i|$ by Lemma \ref{lem:S^2_check}.
        \begin{itemize}
            \item In the case where $\mathbf{x}_i$ represents a vertex $2$-sphere with respect to $\mathcal{T}_i$, Operation \ref{ope:splitting} runs in polynomial time of $|\mathcal{T}_i|$.
            \item In the case where $\mathbf{x}_i$ does not represent a vertex $2$-sphere with respect to $\mathcal{T}_i$, we can reduce the vertices of $\mathcal{S}_i$ to one in $\mathcal{O}(|\mathcal{T}_i|^3) = \mathcal{O}(c^6)$ time by Lemma \ref{lem:one_vertex}.
            Note that $|\mathcal{T}'_i|$ is less than $|\mathcal{T}_i|$.
            \begin{itemize}
                \item The vector $\mathbf{x}_i$ can be determined whether $\mathbf{x}_i$ represents a classicalization vertex annulus with respect to $\mathcal{T}'_i$ in polynomial time of $|\mathcal{T}'_i|$ by Corollary \ref{cor:classicalization_ann_check}.
                \item The vector $\mathbf{x}_i$ can be determined whether $\mathbf{x}_i$ represents a vertical vertex annulus with respect to $\mathcal{T}'_i$ in polynomial time of $|\mathcal{T}'_i|$ by Corollary \ref{cor:vertical_ann_check}, and Operation \ref{ope:des} also runs in polynomial time of $|\mathcal{T}_i|$.
            \end{itemize}
        \end{itemize}

    \end{itemize}
    The step 3 is repeated at most $\mathcal{O}(c)$ times since $g(\mathcal{S}_0)$ is $\mathcal{O}(c)$, therefore, the step 3 runs in polynomial time of $c$.
    The genus of $\mathcal{S}_m$ is calculated in $\mathcal{O}(|\mathcal{T}_m|) = \mathcal{O}(c^2)$ time.
    
    Since the steps 2--4 run in polynomial time of $c$ and $c$ is $\mathcal{O}(|(\langle D \rangle, w)|)$, each step runs in polynomial time of $|(\langle D \rangle, w)|$.
    Therefore, the running time of $M$ is polynomial time of $|(\langle D \rangle, w)|$.
\end{proof}

\section{Algorithm for classical knot recognition}
In this section, we give an algorithm for classical knot recognition, prove the correctness of the algorithm, and analyze the running time of the algorithm.
Let $D$ be a virtual knot diagram and $M = \text{cl}(\mathcal{S} \times I - N(\hat{D}))$ be the canonical exterior of $D$ with a triangulation $\mathcal{T}$.
We can solve classical knot recognition by finding an essential $2$-sphere, a classicalization annulus, or an essential vertical annulus in $M$ and performing Operation \ref{ope:splitting} or Operation \ref{ope:des}.
If $g(\mathcal{S}) = 0$, then $D$ is a classical knot diagram, thus suppose that $g(\mathcal{S}) \neq 0$.
If $M$ is reducible, then there is an essential vertex $2$-sphere in $M$ with respect to $\mathcal{S}$ by Theorem \ref{thm:essS2}, and if $M$ is irreducible and there is an vertical essential annulus, then there is a vertical essential vertex annulus or a classicalization vertex annulus in $M$ with respect to $\mathcal{T}$ by Theorem \ref{thm:essAn}.
Thus, we can find these surfaces by enumerating the vertex surfaces in $M$ with respect to $\mathcal{T}$ if there are these surfaces.

\subsection{Algorithm and its correctness}
Note that $g(\emptyset)$ is defined as $0$, where $\emptyset$ is the empty set.
We describe an algorithm for classical knot recognition.

\begin{alg}\label{alg:classical}
    Let $K$ be a virtual knot and $D$ be a diagram of $K$.
    \begin{enumerate}
        \item Construct a triangulation $\mathcal{T}$ of the canonical exterior of $D$. There are two triangulations of the supporting surface of $D$ in $\partial \mathcal{T}$, and denote one of them by $\mathcal{S}$.
        \item Output ``yes'' if $g(\mathcal{S}) = 0$, otherwise run the following steps.
        \item Enumerate all vertex solutions in the projective solution space defined by the matching equations of $\mathcal{T}$, and $V_{\mathcal{T}}$ denotes the list of all vertex solutions.
        \item For each $\mathbf{x}_i \in V_\mathcal{T}$, calculate the minimum positive integer $k_i$ so that $k_i\mathbf{x}_i$ is an integers vector.
        \item Output ``yes'' if there is a vector $\mathbf{x}_i \in V_\mathcal{T}$ which satisfies the following conditions:
        \begin{itemize}
            \item The vector $k_i\mathbf{x}_i$ represents a vertex $2$-sphere $F$ with respect to $\mathcal{T}$.
            \item The triangulation $\mathcal{S}'$, the triangulation obtained from $\mathcal{S}$ by Operation \ref{ope:splitting} using $F$, is the empty set, i.e., the resulting triangulation of Operation \ref{ope:splitting} is $\text{cl}(\mathbb{S}^3 - N(\hat{D}))$.
        \end{itemize}
        
        \item Reduce the vertices of $\mathcal{S}$ to one, and $\mathcal{T}'$ and $\mathcal{S}'$ denote the triangulation obtained from $\mathcal{T}$ and $\mathcal{S}$, respectively.
        \item Enumerate all vertex solutions in the projective solution space defined by the matching equations of $\mathcal{T}'$, and $V_{\mathcal{T}'}$ denotes the list of all vertex solutions.
        \item For each $\mathbf{x}'_i \in V_{\mathcal{T}'}$, calculate the minimum positive integer $k'_i$ so that $k'_i\mathbf{x}'_i$ is an integers vector.
        
        \item Output ``yes'' if there is a vector $\mathbf{x}'_i$ such that $k'_i\mathbf{x}'_i$ represents a classicalization vertex annulus with respect to $\mathcal{T}'$.
        \item Check whether there is a vector $\mathbf{x}'_i \in V_{\mathcal{T}'}$ which satisfies the following conditions:
        \begin{itemize}
            \item The vector $k'_i\mathbf{x}'_i$ represents a vertical vertex annulus $A$ with respect to $\mathcal{T}'$.
            \item $g(\mathcal{S}'') < g(\mathcal{S}')$, where $\mathcal{S}''$ is the triangulation obtained from $\mathcal{S}'$ by Operation \ref{ope:des} using $A$.
        \end{itemize}
        \begin{enumerate}
            \item If there is a vector $\mathbf{x}'_i$ which satisfies the above conditions, then run Operation \ref{ope:des} using $A$ on $\mathcal{T}'$, where $A$ is the vertical annulus represented by $k'_i\mathbf{x}'_i$.
            Redefine $\mathcal{T}$ as the resulting triangulation of Operation \ref{ope:des} and $\mathcal{S}$ as the boundary of the resulting triangulation $\mathcal{T}$ obtained from $\mathcal{S}'$, and then go to the step 2.
            \item Otherwise, output ``no''.
        \end{enumerate}
    \end{enumerate}
\end{alg}

\begin{thm}\label{thm:correctness}
    Algorithm \ref{alg:classical} outputs ``yes'' if and only if $K$ is a classical knot.
\end{thm}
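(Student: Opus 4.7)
The plan is to prove both directions of the biconditional by relating Algorithm \ref{alg:classical} to Algorithm \ref{alg:CKR}, whose correctness was already established by Theorem \ref{thm:K} together with Lemma \ref{lem:classicalization_annulus}. The key observation is that each iteration of the outer loop of Algorithm \ref{alg:classical} mirrors a single pass of Algorithm \ref{alg:CKR}, and the restriction to vertex surfaces is justified by Theorem \ref{thm:essS2} (for the $2$-sphere case) and Theorem \ref{thm:essAn} (for the annulus case).

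For the forward direction, I would show that each exit point producing ``yes'' witnesses classicality of $K$. If the algorithm halts at step 2, then $g(\mathcal{S}) = 0$, so by Lemma \ref{lem:classical} the current stage is classical; pulling this back through the iterated Operations \ref{ope:splitting} and \ref{ope:des} (whose classicality-preservation follows from Corollaries \ref{cor:splitting_comp_knot} and \ref{cor:des_comp_knot}) shows $K$ itself is classical. If the algorithm halts at step 5, Corollary \ref{cor:splitting_comp_knot} forces the split component to be either $\text{cl}(\mathbb{S}^3 - N(\hat{D}))$ or empty, both of which certify that $\hat{D}$ embeds in $\mathbb{S}^3$. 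If it halts at step 9, Lemma \ref{lem:classicalization_annulus} applies directly.

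For the reverse direction, assume $K$ is classical and consider the state at the top of the outer loop. If $g(\mathcal{S}) = 0$, step 2 terminates correctly. Otherwise, I split into cases on the current exterior $M$. If $M$ is reducible, Theorem \ref{thm:essS2} produces an essential vertex $2$-sphere; the exhaustive enumeration at step 3 will find it, and Corollary \ref{cor:splitting_comp_knot} ensures that the splitting yields an empty supporting surface, triggering step 5. If $M$ is irreducible, then Theorem \ref{thm:K} together with classicality forces the existence of a vertical essential annulus, and Theorem \ref{thm:essAn} then yields either a classicalization vertex annulus (caught at step 9) or a vertical essential vertex annulus (caught at step 10). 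In the latter case, one iteration strictly decreases $g(\mathcal{S})$, so the loop terminates after at most $g(\mathcal{S}_0) \in \mathcal{O}(c)$ rounds by Corollary \ref{cor:genus}.

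The main obstacle will be verifying that step 10 always locates a vertex annulus whose destabilization genuinely lowers $g(\mathcal{S})$. By Lemma \ref{lem:des}, Operation \ref{ope:des} is not a pure destabilization but a composite of destabilization with auxiliary splittings, compressions, boundary fillings, and deletions of $3$-balls or $3$-spheres, so I must check that none of these auxiliary moves can undo the genus reduction produced by the underlying destabilization. I plan to track the supporting-surface component that contains $\partial N(\hat{D})$ through the sequence of moves listed in Lemma \ref{lem:des} case by case, invoking Theorem \ref{thm:essAn} to guarantee that a suitable vertex annulus exists whenever Algorithm \ref{alg:CKR} would destabilize, and confirming that the auxiliary moves only prune components irrelevant to the genus count.
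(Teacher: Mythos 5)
Your proposal takes essentially the same approach as the paper's proof: both directions are handled by the same case split (step~2 exit via $g(\mathcal{S})=0$, step~5 exit via an essential vertex $2$-sphere and Corollary~\ref{cor:splitting_comp_knot}, step~9 exit via a classicalization annulus and Lemma~\ref{lem:classicalization_annulus}), and the reverse direction relies on exactly the chain Theorem~\ref{thm:essS2} $\to$ Theorem~\ref{thm:K} $\to$ Theorem~\ref{thm:essAn} to guarantee that the exhaustive vertex-solution enumeration finds a witness. Your final paragraph about tracking the genus through the composite moves of Operation~\ref{ope:des} is a reasonable additional precaution, but the paper treats this more tersely (asserting that destabilization by an essential vertical annulus reduces the genus and that the auxiliary moves from Lemma~\ref{lem:des} do not increase it), so this is a minor difference in level of detail rather than a different route.
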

\begin{proof}
    If Algorithm \ref{alg:classical} outputs ``yes'', then $g(\mathcal{S}) = 0$, there is an essential $2$-sphere, or there is a classicalization annulus in the underlying $3$-manifold of $\mathcal{T}$.
    In any case, we see that $K$ is a classical knot.
    
    Suppose that $K$ is a classical knot.
    Let $\mathcal{T}$ be the triangulation of the canonical exterior constructed in the step 1 of Algorithm \ref{alg:classical} and $\mathcal{S}$ be the triangulation of the supporting surface of a given diagram $D$ in the boundary of $\mathcal{T}$.
    If $g(\mathcal{S}) = 0$, then Algorithm \ref{alg:classical} outputs ``yes'' in the step 2, and so we consider the case where $g(\mathcal{S}) \neq 0$.
    Let $M$ denote the underlying $3$-manifold of $\mathcal{T}$.
    If $M$ is reducible, then there is an essential vertex $2$-sphere by Theorem \ref{thm:essS2}, and so Algorithm \ref{alg:classical} outputs ``yes'' in the step 5.
    Suppose that $M$ is irreducible.
    Let $\mathcal{T}'$ denotes the triangulation of $M$ constructed in the step 6.
    If $M$ is irreducible, then there is a vertical essential annulus in $M$ by Theorem \ref{thm:K}.
    Therefore, there is a vertex surface which is an essential vertical annulus or a classicalization annulus in $M$ with respect to $\mathcal{T}'$ by Theorem \ref{thm:essAn}. 
    If there is a classicalization vertex annulus, then Algorithm \ref{alg:classical} outputs ``yes'' in the step 6.
    Otherwise, there is a vertical essential vertex annulus $A$.
    In this case, Operation \ref{ope:des} using $A$ on $\mathcal{T}'$ reduces the genus of $\mathcal{S}$.
    Since $K$ is classical, $g(\mathcal{S})$ is reduced to $0$ (or an essential $2$-sphere or a classicalization annulus in $M$ is found in the middle of reducing $g(\mathcal{S})$).
    Thus, Algorithm \ref{alg:classical} outputs ``yes''.
\end{proof}

\subsection{Double description method}
In order to enumerate vertex solutions in a projective solution space, we use {\it double description method} (\cite{B_double}).
In this subsection, assume that $A$ is a $(s,t)$-matrix, where $s$ and $t$ are natural numbers, and 
$\mathcal{P} = \{\mathbf{x} \in \mathbb{R}^t \ |\  \mathbf{x} \geq \mathbf{0}, \sum x_i = 1, A\mathbf{x} = \mathbf{0}\}$ is the projective solution space defined by $A$.
We denote the $i$-th row of $A$ by $\mathbf{m}_i$.
Then $\mathcal{P}$ is the intersection of the following subspaces of $\mathbb{R}^t$:
\begin{itemize}
    \item $O = \{\mathbf{x} \in \mathbb{R}^t \ |\  \mathbf{x} \geq \mathbf{0}\}$
    \item the hyperplane $J = \{\mathbf{x} \in \mathbb{R}^t \ | \ \sum x_i = 1\}$
    \item the hyperpalnes $H_1, H_2,\ldots, H_s$, where $H_i$ is the solution space of the equation $\mathbf{m}_i \cdot \mathbf{x} = 0$.
\end{itemize}

\begin{alg}[Double description method]\label{alg:double}
    Suppose that a $(s,t)$-matrix $A$ is given.
    Let $\mathcal{P}$ denote the projective solution space defined by $A$.
    Then we can obtain all vertex solutions in $\mathcal{P}$ as follows.

    We define $P_0$ as $O \cap J$, and 
    for each $i = 1,\ldots,s$, 
    we define $P_i$ as $O \cap J \cap H_1 \cap \cdots \cap H_i$.
    Let $V_i$ be the set of all vertex solutions of $P_i$ for each $i = 0,\ldots,s$.
    In particular, $P_s$ is $\mathcal{P}$, and $V_s$ is the set of all vertex solutions in $\mathcal{P} = P_s$.
    Then $V_i$ can be obtained from $V_{i-1}$ by the following operations:
    \begin{enumerate}
        \item Let $V_0 = \{\mathbf{e}_i | 1 \leq i \leq t\}$, where $\mathbf{e}_i \in \mathbb{R}^t$ is the vector whose coordinate $x_j$ is one if $j = i$, otherwise $x_j$ is zero.
        \item For each $i = 1, \ldots, s$, $V_i$ is defined as follows:
        \begin{enumerate}
            \item Split $V_{i-1}$ to $S_0$, $S_+$ and $S_-$, where these sets are defined as follows: 
            \begin{itemize}
                \item $S_0 = \{\mathbf{v} \in V_{i-1}\ |\ \mathbf{m}_i\cdot \mathbf{v} = 0\}$
                \item $S_+ = \{\mathbf{v} \in V_{i-1}\ |\ \mathbf{m}_i\cdot \mathbf{v} > 0\}$
                \item $S_- = \{\mathbf{v} \in V_{i-1}\ |\ \mathbf{m}_i\cdot \mathbf{v} < 0\}$
            \end{itemize}
            In other words, $S_0$, $S_+$ and $S_-$ contain the element of $V_{i-1}$ that lies in, above and below the hyperplane $H_i$, respectively.
            \item Add all elements of $S_0$ to $V_i$.
            \item For each pair $(\mathbf{u}, \mathbf{w}) \in S_+ \times S_-$, 
                    if $\mathbf{u}$ and $\mathbf{w}$ are adjacent in $P_{i-1}$, 
                    then add the intersection point of $H_i$ and the line segment connecting $\mathbf{u}$ and $\mathbf{w}$ to $V_i$.
        \end{enumerate}
    \end{enumerate}
\end{alg}

While we do not describe in detail how to determine whether two points $\mathbf{u}$ and $\mathbf{w}$ are adjacent in $P_{i-1}$ in the step 2-(c) of Algorithm \ref{alg:double},
one of the methods is described below.

Fukuda and Prodon proved the following lemma in more general case, and
Burton adapt it to the projective solution space.
Note that $Z(\mathbf{x}) = \{k \in \mathbb{N}\ |\ x_k = 0\}$.
\begin{lem}[Fukuda and Prodon \cite{double}, Burton \cite{B_double}]
    Let $P_{i-1}$, $\mathbf{u}$ and $\mathbf{w}$ be the subspace of $\mathbb{R}^t$ and vertex solutions of $P_{i-1}$ in the step 4-(c) of Algorithm \ref{alg:double}.
    Then $\mathbf{u}$ and $\mathbf{w}$ are adjacent in $P_{i-1}$ if and only if 
    the dimension of the intersection space of $H_1 \cap \cdots \cap H_{i-1}$ and the hyperpalne $ \bigcap_{k \in Z(\mathbf{u}) \cap Z(\mathbf{w})} \{\mathbf{x} \in \mathbb{R}^t | x_k = 0\}$ is two.
\end{lem}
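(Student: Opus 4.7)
The plan is to reduce the adjacency question to a pure dimension calculation on the linear subspace
\[
L := H_1 \cap \cdots \cap H_{i-1} \cap \bigcap_{k \in Z(\mathbf{u}) \cap Z(\mathbf{w})} \{\mathbf{x} \in \mathbb{R}^t : x_k = 0\}.
\]
Since $P_{i-1} = O \cap J \cap H_1 \cap \cdots \cap H_{i-1}$ is cut out by the equalities defining $J$ and the $H_j$ together with the nonnegativity inequalities $x_k \geq 0$, its faces are exactly the sets $F_I = \{\mathbf{x} \in P_{i-1} : x_k = 0 \text{ for } k \in I\}$ for $I \subseteq \{1,\ldots,t\}$. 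By the standard face characterization, two vertices are adjacent if and only if the smallest face containing both is one-dimensional, and this smallest face is $F := F_{Z(\mathbf{u}) \cap Z(\mathbf{w})} = J \cap O \cap L$ (any smaller index set would force further coordinates to vanish, violating $Z(\mathbf{u})$ or $Z(\mathbf{w})$).

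Next I would show that $\dim F = \dim(J \cap L)$, i.e., that the remaining inequalities $x_k \geq 0$ for $k \notin Z(\mathbf{u}) \cap Z(\mathbf{w})$ do not further cut down the affine hull. The key observation is that the midpoint $\mathbf{v} = \tfrac{1}{2}(\mathbf{u}+\mathbf{w})$ lies in $F$ and satisfies $Z(\mathbf{v}) = Z(\mathbf{u}) \cap Z(\mathbf{w})$, so $v_k > 0$ for every $k \notin Z(\mathbf{u}) \cap Z(\mathbf{w})$. A sufficiently small neighbourhood of $\mathbf{v}$ inside the affine subspace $J \cap L$ therefore still satisfies $x_k \geq 0$ everywhere, so that neighbourhood is contained in $F$. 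This forces the affine hull of $F$ to be all of $J \cap L$, and hence $\dim F = \dim(J \cap L)$.

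Finally I would compute $\dim(J \cap L)$ in terms of $\dim L$. The affine hyperplane $J = \{\sum x_k = 1\}$ has direction subspace $\{\sum x_k = 0\}$. Because $\mathbf{u} \in L$ satisfies $\sum u_k = 1 \neq 0$, the linear form $\mathbf{x} \mapsto \sum x_k$ restricts to a nonzero functional on $L$, so $L \not\subseteq \{\sum x_k = 0\}$ and $\dim(J \cap L) = \dim L - 1$. Combining the three steps, $\mathbf{u}$ and $\mathbf{w}$ are adjacent in $P_{i-1}$ if and only if $\dim F = 1$ if and only if $\dim L = 2$, which is the claimed equivalence.

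The main obstacle is the middle step: showing that the face $F$ remains full-dimensional inside $J \cap L$ in the presence of the nonnegativity constraints on the coordinates outside $Z(\mathbf{u}) \cap Z(\mathbf{w})$. Everything hinges on producing an interior point of $F$ at which none of these additional inequalities are tight, and the convex combination $\tfrac{1}{2}(\mathbf{u}+\mathbf{w})$ is the natural candidate; the rest of the argument is linear algebra.
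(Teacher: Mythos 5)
The paper does not supply a proof of this lemma; it is cited directly from Fukuda--Prodon and Burton, so there is no in-paper argument to compare against. Your proposal is a correct, self-contained derivation along the standard lines used for the algebraic adjacency test in the double description method. The three steps are all sound: the face description $F_I = \{\mathbf{x} \in P_{i-1} : x_k = 0 \text{ for } k \in I\}$ is the standard one for a polytope cut out by linear equalities and nonnegativity constraints, and identifying the minimal face through $\mathbf{u}$ and $\mathbf{w}$ with $F_{Z(\mathbf{u}) \cap Z(\mathbf{w})}$ is exactly the right reduction; the midpoint argument correctly shows $\operatorname{aff}(F) = J \cap L$ (the coordinates outside $Z(\mathbf{u}) \cap Z(\mathbf{w})$ stay strictly positive near $\tfrac{1}{2}(\mathbf{u}+\mathbf{w})$, while those inside are already forced to vanish on $L$); and the final step $\dim(J \cap L) = \dim L - 1$ is justified because $\sum_k u_k = 1 \neq 0$ witnesses that the functional $\mathbf{x} \mapsto \sum_k x_k$ is nonzero on $L$. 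One remark worth adding for completeness: since $\mathbf{u} \neq \mathbf{w}$ both lie in $J \cap L$, one automatically has $\dim L \geq 2$, so the dimension arithmetic never degenerates.
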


We describe how to check that 
the dimension of the intersection space of $H_1 \cap \cdots \cap H_{i-1}$ with $ \bigcap_{k \in Z(\mathbf{u}) \cap Z(\mathbf{w})} \{\mathbf{x} \in \mathbb{R}^t | x_k = 0\}$ is two.
$A_{i-1}$ denotes the submatrix obtained by deleting all rows below the $i$-th row from $(s,t)$-matrix $A$.
Let $\mathbf{e}_k \in \mathbb{R}^t$ be the unit vector whose $k$-th coordinate is $1$ and the other coordinates are $0$, and 
let $I_{Z(\mathbf{u}) \cap Z(\mathbf{w})}$ be the matrix obtained by deleting $k$-th rows which satisfies $k \notin Z(\mathbf{u}) \cap Z(\mathbf{w})$ from the $t \times t$ identity matrix.
Then we consider the matrix
\[
M = \left(
\begin{array}{cc}
        A_{i-1}\\
        I_{Z(\mathbf{u}) \cap Z(\mathbf{w})}
    \end{array}
    \right).
\]
The dimension of the intersection of $H_1 \cap \cdots \cap H_{i-1}$ and $\bigcap_{k \in Z(\mathbf{u}) \cap Z(\mathbf{w})} \{\mathbf{x} \in \mathbb{R}^t | x_k = 0\}$
is obtained by calculating the dimension of the kernel of M; $\text{dim}(\text{ker}(M))$ denotes it, where $\text{ker}(M)$ is the kernel of $M$.
Since the equation $\text{dim}(\text{ker}(M)) = t - \text{rank(M)}$ holds,
we can check whether two vectors $\mathbf{u}$ and $\mathbf{w}$ are adjacent in $P_{i-1}$ by calculating the rank of $M$.

Next, we analyze the running time of Algorithm \ref{alg:double}.
The rank of $M$ can be calculated in time $\mathcal{O}((s+t)^3)$ because the number of rows of $M$ is at most $s+t$.
We can analyze the running time of Algorithm \ref{alg:double} if the size of $V_i$ is known.
We can show lemma \ref{lem:ver_sol} in the same way as the discussion of the number of vertex surfaces in \cite{B_vertex}.
\begin{lem}\label{lem:ver_sol}
    Let $A$ be an input matrix of Algorithm \ref{alg:double}, and suppose that $A$ is a $(s,t)$-matrix.
    For each $i = 0, \ldots ,s$, $|V_i| \in \phi^{\mathcal{O}(t)}$ holds, where $\phi = \frac{1 + \sqrt{5}}{2}$.
\end{lem}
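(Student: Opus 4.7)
The plan is to adapt Burton's Fibonacci-type counting argument from \cite{B_vertex} to the general projective-solution-space setting, tracking how the size of $V_i$ grows as $i$ increases from $0$ to $s$.

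First I would establish the base case: by the initialization in Algorithm \ref{alg:double}, $V_0 = \{\mathbf{e}_1, \ldots, \mathbf{e}_t\}$, so $|V_0| = t$. I would then analyze the inductive step from $V_{i-1}$ to $V_i$. By the algorithm, $V_i = S_0 \cup N_i$, where $N_i$ is the set of new vertices obtained from adjacent pairs $(\mathbf{u}, \mathbf{w}) \in S_+ \times S_-$. Since $|S_0| \leq |V_{i-1}|$, the main task is to bound $|N_i|$. The adjacency criterion (via the rank condition on the matrix built from $A_{i-1}$ and $I_{Z(\mathbf{u}) \cap Z(\mathbf{w})}$) shows that for $\mathbf{u}$ and $\mathbf{w}$ to give rise to a new vertex, their zero-patterns $Z(\mathbf{u})$ and $Z(\mathbf{w})$ must share a large intersection; this sharply restricts the admissible pairs.

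The key technical step is to use this support-pattern restriction to derive a Fibonacci-type recurrence. Indexing vertices by their support patterns and grouping new vertices by the shared zero-pattern $Z(\mathbf{u}) \cap Z(\mathbf{w})$, one shows that each newly created vertex in $N_i$ can be charged to a combination of at most two "ancestor" vertices from earlier $V_j$, in a way that yields an inequality of the shape
\[
  |V_i| \;\leq\; |V_{i-1}| + |V_{i-2}|.
\]
This is exactly the recurrence for Fibonacci numbers. Together with the initial bound $|V_0| = t$ (and $|V_{-1}|$ interpreted as $0$), standard manipulations give $|V_i| \leq c \cdot t \cdot F_{i+1}$ for some absolute constant $c$, where $F_k$ is the $k$-th Fibonacci number.

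Finally, invoking the classical asymptotic $F_k \sim \phi^k/\sqrt{5}$, and using that $s$, $i$, and $t$ are all linearly related in the applications (in particular $i \leq s = \mathcal{O}(t)$ for the matching equations of a triangulation), we conclude $|V_i| \in \phi^{\mathcal{O}(t)}$, as required. The main obstacle is the rigorous derivation of the Fibonacci-type recurrence: one must carefully bookkeep the zero-patterns and verify that the adjacency criterion really forbids the sort of quadratic blow-up $|N_i| = \Theta(|S_+| \cdot |S_-|)$ that a naive count would allow. The rank characterization of adjacency is the crucial tool, and translating it into a combinatorial statement about support patterns is where the bulk of the work lies.
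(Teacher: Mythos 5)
Your proposal rests on the claimed Fibonacci recurrence $|V_i| \leq |V_{i-1}| + |V_{i-2}|$, and this is where it fails. The double description method carries no such guarantee: in a single step the set $N_i$ of newly created vertices can be as large as $|S_+|\cdot|S_-|$, a quadratic number of pairs, and the adjacency criterion does not in general reduce this to a linear (let alone Fibonacci) count. Already at $i=1$ your recurrence would assert $|V_1| \leq |V_0| = t$, but slicing a $(t-1)$-simplex $P_0$ by one hyperplane through the origin can create a vertex on each of the up to $\binom{t}{2}$ edges, giving $|V_1| = \Theta(t^2)$. The "charge each new vertex to at most two ancestors" step is never made rigorous precisely because it cannot be: the same pair of ancestors can be used only once, but a single ancestor can serve in many pairs, so the count is multiplicative rather than additive. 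The Burton argument you are trying to adapt bounds the \emph{admissible} vertices of the final polytope $\mathcal{P}$ by exploiting the quadrilateral constraints; the intermediate polytopes $P_i$ are not subject to those constraints, so that machinery does not transfer.

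The intended argument is much simpler. Each vertex of $P_i$ is uniquely determined by its set of tight constraints; since the equality constraints $\sum x_j = 1$ and $A_i\mathbf{x} = \mathbf{0}$ are tight everywhere on $P_i$, a vertex is pinned down by its zero-pattern $Z(\mathbf{x}) \subseteq \{1,\dots,t\}$. Thus $|V_i| \leq 2^t$. Since $\phi^2 = \phi + 1 > 2$, we have $2^t < \phi^{2t}$, and hence $|V_i| \in \phi^{\mathcal{O}(t)}$. No recurrence in $i$ is needed; the bound is uniform over all $i$, which is exactly what the statement claims. (The $\phi$ in the exponent base is cosmetic here --- $\phi^{\mathcal{O}(t)}$ and $2^{\mathcal{O}(t)}$ denote the same asymptotic class --- and is chosen to align with Burton's tighter bound on the \emph{final} admissible vertex count, which this lemma does not attempt to reproduce for intermediate sets.)
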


\begin{lem}
    Let $A$ be an input matrix of Algorithm \ref{alg:double}, and suppose that $A$ is a $(s,t)$-matrix.
    Then, Algorithm \ref{alg:double} runs in time $\mathcal{O}(s(s+t)^3)\phi^{\mathcal{O}(t^2)}$.
\end{lem}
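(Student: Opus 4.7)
The plan is to bound the work done in each of the $s$ iterations of the outer loop and then multiply by $s$. Before entering the loop, the initialization of $V_0$ with the $t$ unit vectors takes $\mathcal{O}(t^2)$ time, which is negligible compared with the rest, so I can focus on iteration $i$ which produces $V_i$ from $V_{i-1}$.

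For the splitting step (2a), computing the inner product $\mathbf{m}_i\cdot\mathbf{v}$ for each $\mathbf{v}\in V_{i-1}$ costs $\mathcal{O}(t)$ arithmetic operations, so partitioning $V_{i-1}$ into $S_0,S_+,S_-$ takes $\mathcal{O}(|V_{i-1}|\cdot t)$ time. Copying $S_0$ into $V_i$ in (2b) takes $\mathcal{O}(|V_{i-1}|)$ time. The dominant cost sits in (2c): for each pair $(\mathbf{u},\mathbf{w})\in S_+\times S_-$ one builds the matrix $M$ of size at most $(s+t)\times t$ described after Lemma 4.9 in the excerpt, decides adjacency via a rank computation, and, when adjacent, computes the intersection of the segment $\overline{\mathbf{u}\mathbf{w}}$ with $H_i$. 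A Gaussian elimination on $M$ runs in $\mathcal{O}((s+t)^3)$ arithmetic operations, and the intersection itself is a single affine combination using the already-computed values $\mathbf{m}_i\cdot\mathbf{u}$ and $\mathbf{m}_i\cdot\mathbf{w}$, which costs $\mathcal{O}(t)$. So step (2c) in iteration $i$ costs $\mathcal{O}\!\left(|S_+|\,|S_-|\,(s+t)^3\right)$.

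Now I apply Lemma \ref{lem:ver_sol}, which gives $|V_j|\in\phi^{\mathcal{O}(t)}$ for every $0\le j\le s$. Since $S_+,S_-\subseteq V_{i-1}$, we have
\[
|S_+|\,|S_-|\ \le\ |V_{i-1}|^{2}\ \in\ \phi^{\mathcal{O}(t)}\cdot\phi^{\mathcal{O}(t)}\ =\ \phi^{\mathcal{O}(t)},
\]
so the cost of a single iteration is $\mathcal{O}((s+t)^3)\,\phi^{\mathcal{O}(t)}$. Summing over the $s$ iterations of the outer loop yields a total arithmetic cost of $\mathcal{O}(s(s+t)^3)\,\phi^{\mathcal{O}(t)}$.

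The last step is to pay for the bit-complexity of the arithmetic itself, which is where the exponent $t^2$ enters and is the only mildly subtle point of the argument. Each intersection point in (2c) is an affine combination with coefficients $\mathbf{m}_i\cdot\mathbf{u}$ and $\mathbf{m}_i\cdot\mathbf{w}$, so the coordinates of the vectors accumulated in $V_i$ are rationals whose numerators and denominators can grow across the $s$ iterations but remain bounded by $2^{\mathcal{O}(s+t)}$ (the standard vertex-solution bound, in the spirit of Theorem \ref{thm:HLP}). Arithmetic on such numbers is polynomial in $s+t$, and taken together with the $\phi^{\mathcal{O}(t)}$ combinatorial blow-up from the pair enumeration, every operation counted above contributes at most a further $\phi^{\mathcal{O}(t)}$ factor per coordinate. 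Multiplying the $\phi^{\mathcal{O}(t)}$ combinatorial factor by this per-operation cost absorbs into $\phi^{\mathcal{O}(t^2)}$, yielding the claimed bound $\mathcal{O}(s(s+t)^3)\,\phi^{\mathcal{O}(t^2)}$. The main obstacle I expect is keeping the bit-complexity accounting honest, i.e.\ verifying that the magnitudes of the accumulated rationals never exceed $\phi^{\mathcal{O}(t)}$ per coordinate so that the loose $\phi^{\mathcal{O}(t^2)}$ envelope really does suffice.
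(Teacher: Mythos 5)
Your proposal follows the same outer structure as the paper's proof — bound the work of a single pass $V_{i-1}\to V_i$ and multiply by $s$ — but it locates the exponent $t^2$ in a different place, and it is worth spelling out what each version is doing.

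The paper's own proof simply asserts that ``there are $\phi^{\mathcal{O}(t^2)}$ pairs in $S_+\times S_-$.'' You correctly observe that this is an overcount: with $|S_+|,|S_-|\le |V_{i-1}|\in\phi^{\mathcal{O}(t)}$ one has $|S_+||S_-|\in\phi^{\mathcal{O}(t)}\cdot\phi^{\mathcal{O}(t)}=\phi^{\mathcal{O}(t)}$, not $\phi^{\mathcal{O}(t^2)}$. Your arithmetic-operation count $\mathcal{O}(s(s+t)^3)\phi^{\mathcal{O}(t)}$ is therefore the honest one. You then reintroduce the $t^2$ in the exponent by budgeting for bit-complexity. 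That is a legitimate concern the paper ignores, but it is also more slack than needed: Theorem~\ref{thm:HLP} bounds the entries of the integer multiple of each vertex solution by $2^{7t/7-1}$, so each coordinate has bit length $\mathcal{O}(t)$, Gaussian elimination on the $(s{+}t)\times t$ matrix $M$ with pivot-free fraction growth contributes only a further polynomial in $s+t$, and a $\mathrm{poly}(s+t)$ factor sits inside the displayed $\mathcal{O}(s(s+t)^3)\,\phi^{\mathcal{O}(t)}$ envelope once the $\mathrm{poly}(t)$ part is absorbed into $\phi^{\mathcal{O}(t)}$. So neither the paper's miscount nor your bit-complexity budget is actually forced on you; both arrive at $\mathcal{O}(s(s+t)^3)\phi^{\mathcal{O}(t^2)}$ only because the statement is a loose upper bound and $\phi^{\mathcal{O}(t)}\subseteq\phi^{\mathcal{O}(t^2)}$.

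One small precision worth tightening in your write-up: you invoke a coordinate-size bound of $2^{\mathcal{O}(s+t)}$; the bound that applies here (for vertex solutions of the normal-surface matching equations) is $2^{\mathcal{O}(t)}$, so the $s$-dependence does not appear at all. With that fix, the entire bit-complexity discussion collapses into a polynomial-in-$t$ multiplicative factor, the ``main obstacle'' you flag disappears, and your proof yields the strictly better bound $\mathcal{O}(s(s+t)^3)\phi^{\mathcal{O}(t)}$, of which the lemma's statement is a weakening.
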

\begin{proof}
    We analyze the time to construct $V_i$ from $V_{i-1}$.
    By Lemma \ref{lem:ver_sol}, $|V_{i-1}| \in \phi^{\mathcal{O}(t)}$ holds, so that $|S_0|, |S_+|$ and $|S_-| \in \phi^{\mathcal{O}(t)}$.
    Therefore, it takes $\phi^{\mathcal{O}(t)}$ time to add $|S_0|$ to $V_i$.
    For each pair $(\mathbf{u}, \mathbf{w}) \in S_+ \times S_-$, 
    we can determine whether two vectors $\mathbf{u}$ and $\mathbf{w}$ are adjacent in $P_{i-1}$ in time $\mathcal{O}((s+t)^3)$.
    Furthermore, we can calculate the intersection point of $H_i$ and the line segment connecting $\mathbf{u}$ and $\mathbf{w}$ in time $\mathcal{O}(t)$.
    There are $\phi^{\mathcal{O}(t^2)}$ pairs in $S_+ \times S_-$, so that 
    we can construct $V_i$ in time $\mathcal{O}((s+t)^3)\phi^{\mathcal{O}(t^2)}$.
    Because $i \leq s$, Algorithm \ref{alg:double} runs in time $\mathcal{O}(s(s+t)^3)\phi^{\mathcal{O}(t^2)}$ .
\end{proof}

To enumerate vertex surfaces in a triangulation $\mathcal{T}$, 
we consider enumerating vertex solutions in the projective solution space defined by the matching equations of $\mathcal{T}$.
Suppose that $\mathcal{T}$ has $n$ tetrahedra and $A$ is the matching equations of $\mathcal{T}$.
Since the number of rows of $A$ is $7n$, and the number of columns is at most $6n$, the following corollary holds.
\begin{cor}\label{cor:ver_sol_time}
    Let $\mathcal{T}$ be an $n$-tetrahedra triangulation, and let $A$ be the matrix defined by the matching equations of $\mathcal{T}$.
    Then, we can enumerate all vertex solutions in the projective solution defined by $A$ in time $\phi^{\mathcal{O}(n^2)}$.
    Futhermore, the number of the vertex solutions is $\phi^{\mathcal{O}(n)}$.
\end{cor}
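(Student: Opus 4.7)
The plan is to reduce the corollary to a direct application of the preceding lemma on the running time of Algorithm \ref{alg:double}, together with Lemma \ref{lem:ver_sol}. The only thing to verify is that the matrix $A$ encoding the matching equations of an $n$-tetrahedra triangulation $\mathcal{T}$ has both $s$ (the number of rows) and $t$ (the number of columns) in $\mathcal{O}(n)$; once this is established, plugging the bounds into the previous lemma immediately yields the claim.

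First I would set up the relevant count. In each tetrahedron of $\mathcal{T}$ there are exactly $7$ normal disk types (four triangles and three quadrilaterals), so the ambient vector space for a normal surface representation is $\mathbb{R}^{7n}$; this gives one of the two dimensions ($7n$). The matching equations come from identifying normal arc counts across shared faces: each face of $\mathcal{T}$ is shared by at most two tetrahedra and contributes at most three matching equations (one per arc type on the face), and since $\mathcal{T}$ has at most $4n/2 = 2n$ internal faces, the total number of matching equations is at most $6n$. Hence, with the conventions of Algorithm \ref{alg:double}, we have $s, t \in \mathcal{O}(n)$.

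Next I would invoke the lemma stating that Algorithm \ref{alg:double} runs in time $\mathcal{O}(s(s+t)^3)\phi^{\mathcal{O}(t^2)}$. Substituting $s, t \in \mathcal{O}(n)$ gives running time $\mathcal{O}(n \cdot n^3) \cdot \phi^{\mathcal{O}(n^2)} = \phi^{\mathcal{O}(n^2)}$, since the polynomial prefactor is absorbed into the exponential bound (note that $n^4 \leq \phi^{cn}$ for any constant $c>0$ and sufficiently large $n$, and in particular $n^4 \in \phi^{\mathcal{O}(n^2)}$). This yields the first half of the corollary.

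For the cardinality bound, I would apply Lemma \ref{lem:ver_sol} directly with $t \in \mathcal{O}(n)$: the set $V_s$ of vertex solutions of the full projective solution space $\mathcal{P} = P_s$ satisfies $|V_s| \in \phi^{\mathcal{O}(t)} = \phi^{\mathcal{O}(n)}$, which is the second half of the statement. There is no real obstacle here, as the work was already done in the preceding lemma and in the combinatorial dimension count; the corollary is essentially a bookkeeping step that specializes the general bound to the particular $(s,t)$ arising from matching equations of a triangulation.
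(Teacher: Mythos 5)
Your proposal is correct and matches the paper's argument exactly: the paper's own justification is the one-line remark that $A$ has $\mathcal{O}(n)$ rows and columns (at most $6n$ matching equations and $7n$ normal coordinates), so the bounds of the preceding lemma and Lemma~\ref{lem:ver_sol} specialize to $\phi^{\mathcal{O}(n^2)}$ and $\phi^{\mathcal{O}(n)}$. One small point in your favor: the paper's sentence appears to transpose ``rows'' and ``columns'' (it writes ``number of rows of $A$ is $7n$, and the number of columns is at most $6n$''), whereas your count correctly identifies $t = 7n$ as the number of variables/columns and $s \le 6n$ as the number of matching equations/rows.
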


\subsection{The running time of Algorithm \ref{alg:classical}}
Next, we give a method to calculate the minimum positive integer $k$ so that $k\mathbf{x}$ is an integer vector, where $\mathbf{x}$ is a vertex solution of the matching equations of a triangulation $\mathcal{T}$ of a $3$-manifold.
We can calculate the integer $k$ in polynomial time of the number of tetrahedra in $\mathcal{T}$.
\begin{lem}\label{lem:GCD}
    Let $\mathcal{T}$ be a triangulation which has $n$ tetrahedra.
    Given a vertex solution $\mathbf{x}$ in the projective solution space defined by the matching equations of $\mathcal{T}$,
    we can calculate the minimum positive integer $k$ so that $k\mathbf{x}$ an integer vector in time $\mathcal{O}(n^3 \log n)$.
\end{lem}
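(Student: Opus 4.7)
The plan is to exploit the fact that the required integer $k$ is exactly the least common multiple of the denominators of the entries of $\mathbf{x}$ when each entry is written in lowest terms. Once the coordinates are reduced, computing $k$ is a standard arithmetic task; the core of the proof is to bound how large the denominators can get so that the bit complexity stays under control.

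First I would control the size of the denominators. Since $\mathbf{x}$ is a vertex of the projective solution space, scaling so that the coordinates are coprime integers produces the primitive vector $\mathbf{v}$ on the corresponding vertex ray of the non-projective normal surface cone, and then $k = \sum_i v_i$. By Theorem \ref{thm:HLP} (and the standard normal surface bounds underlying it), each coordinate $v_i$ is bounded by $2^{\mathcal{O}(n)}$, so $k \leq 7n \cdot 2^{\mathcal{O}(n)} = 2^{\mathcal{O}(n)}$. In particular, every denominator $q_i$ of $x_i = p_i/q_i$ in lowest terms divides $k$ and therefore has bit length $\mathcal{O}(n)$; the same holds for every numerator $p_i$ since $p_i \leq q_i$.

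Next I would reduce each coordinate to lowest terms. For each $i \in \{1,\ldots,7n\}$ with $x_i \neq 0$, I apply the Euclidean algorithm to $(p_i,q_i)$ and divide out $\gcd(p_i,q_i)$. Each Euclidean computation on two $\mathcal{O}(n)$-bit integers runs in $\mathcal{O}(n^2)$ time (or $\mathcal{O}(n^2 \log n)$ with more careful bookkeeping of multiplications and divisions of $\mathcal{O}(n)$-bit numbers), so the whole reduction phase costs $\mathcal{O}(n^3)$ (respectively $\mathcal{O}(n^3 \log n)$). I then compute $k$ iteratively via the identity $\mathrm{lcm}(a,b) = ab/\gcd(a,b)$, folding in one denominator at a time. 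At every step the partial LCM divides the final $k$, so all intermediate operands again have $\mathcal{O}(n)$ bits; the $\mathcal{O}(n)$ iterations contribute another $\mathcal{O}(n^3 \log n)$ in total. Summing these costs yields the claimed $\mathcal{O}(n^3 \log n)$ running time.

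The main obstacle is the bit-size bound in the first step: establishing that $k$ itself is at most $2^{\mathcal{O}(n)}$, rather than the much weaker Hadamard-type bound $2^{\mathcal{O}(n\log n)}$ that one would get from a generic polytope argument on the matching equations. Without this refinement coming from normal surface theory, the arithmetic in the reduction and LCM steps would be a factor $\log n$ (or more) larger and would not fit inside $\mathcal{O}(n^3 \log n)$. Once this bound is in hand, the rest of the argument is a routine bit-complexity count for iterated GCD/LCM computations on rationals with $\mathcal{O}(n)$-bit numerators and denominators.
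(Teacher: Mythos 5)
Your proposal is essentially the same argument the paper gives. Both identify $k$ as the least common multiple of the reduced denominators, both appeal to Theorem \ref{thm:HLP} to bound the primitive integer vector and hence $k = \sum_j k x_j \leq 7n\,2^{7n-1}$ so that all operands have $\mathcal{O}(n)$-bit size, and both total up the iterated GCD/LCM computations to arrive at $\mathcal{O}(n^3 \log n)$. The only cosmetic difference is in how the per-operation costs are aggregated (the paper counts $\mathcal{O}(\log n)$ LCM steps at $\mathcal{O}((\log(7n2^{7n-1}))^3)$ each, while you account separately for the reduction phase and the folding phase), but these give the same bound.
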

\begin{proof}
    Let $NZ(\mathbf{x}) = \{i \in \{1, \ldots, 7n\} | x_i \neq 0\}$, and 
    let $m$ be the size of $NZ(\mathbf{x})$.
    $x_i$ is a non-zero rational number if $i \in NZ(\mathbf{x})$, so that let $x_i = \frac{a_i}{b_i}$, where $a_i, b_i \neq 0$ are natural numbers.
    We can assume that $a_i$ and $b_i$ are relatively prime by reducing $x_i$ when we calculate $\mathbf{x}$.
    Now, $k$ is the least common multiple of $b_{i_j}$, where $i_j \in NZ(\mathbf{x})$. We denote this by $\text{LCM}(b_1, \ldots, b_{7n})$.
    We can calculate $\text{LCM}(b_1, \ldots, b_{7n})$ by calculating the least common multiple of two natural numbers $\mathcal{O}(\log n)$ times.

    Let $\alpha$ and $\beta$ be natural numbers which are not $0$, and we assume that $\alpha \geq \beta$.
    In general, $\text{LCM}(\alpha, \beta)$ can be obtained by dividing the product of $\alpha$ and $\beta$ by the greatest common divisor of $\alpha$ and $\beta$, denoted by $\text{GCD}(\alpha, \beta)$.
    We can calculate $\text{GCD}(\alpha, \beta)$ by modulo operation $\mathcal{O}(\log \alpha)$ times by using Euclidean Algorithm.
    Multiplication, division, and modulo operation of $\alpha$ and $\beta$ can be carried out in time $\mathcal{O}((\log \alpha)^2)$, and so 
    we can calculate $\text{LCM}(\alpha, \beta)$ in time $\mathcal{O}((\log \alpha)^3)$.

    Next, we analyze the size of $b_i$.
    We have $x_i = \frac{k x_i}{\sum k x_j}$ because $\mathbf{x}$ is the projection of $k\mathbf{x}$ onto the hyperpalne $\sum x_i = 1$.
    By Theorem \ref{thm:HLP}, for each integer $j = 1, \ldots , 7n$, $k x_j \leq 2^{7n-1}$ holds, and so we have $\sum k x_j \leq 7n2^{7n-1}$.
    Now, we see that $a_i$ and $b_i$ are relatively prime and $x_i = \frac{a_i}{b_i} = \frac{k x_i}{\sum k x_j}$, and so 
    $b_i \leq 7n2^{7n-1}$ holds.
    On the other hand, for any integers $i_j$ and $i_{j'}$ in $NZ(\mathbf{x})$,   
    we have $\text{LCM}(b_{i_j}, \ldots, b_{i_{j'}}) \leq  \text{LCM}(b_1, \ldots, b_{7n}) = k = \sum kx_j \leq 7n2^{7n-1}$.
    Therefore, we can calculate $k = \text{LCM}(b_1, \ldots, b_{7n})$ in time $\mathcal{O}((\log n)(\log(7n2^{7n-1}))^3) = \mathcal{O}(n^3 \log n)$.
\end{proof}

\begin{thm}\label{thm:time}
    Algorithm \ref{alg:classical} runs in time $\phi^{\mathcal{O}(c^4)}$, 
    where $c$ is the number of real crossings of a given diagram and $\phi = \frac{1+\sqrt{5}}{2}$.
\end{thm}
\begin{proof}
    The step 1 runs in time $\mathcal{O}(c)$ by Lemma \ref{cor:comp}.
    Furthermore, $|\mathcal{T}|$ is $\mathcal{O}(c)$, where $\mathcal{T}$ is the triangulation obtained by the argument of Lemma \ref{cor:comp}.
    
    We consider the number of tetrahedra in $\mathcal{T}$.
    Recall that the genus of a virtual knot diagram $D$, denoted by $sg(D)$, is the genus of the supporting surface of $D$.
    The steps 2--10 are repeated at most $sg(D)$ times since the step 10-(a) must reduce the genus of $\mathcal{S}$.
    Since $sg(D)$ is $\mathcal{O}(c)$ by Corollary \ref{cor:genus}, the steps 2--10 run $\mathcal{O}(c)$ times.
    The triangulation $\mathcal{T}$ may be changed in the step 6 and the step 10-(a).
    The step 6 does not increase the number of tetrahedra, and the step 10-(a) increases the number of tetrahedra by $\mathcal{O}(c)$ by Lemma \ref{lem:desNum}.
    Thus, the number of tetrahedra of $\mathcal{T}$ is $\mathcal{O}(c^2)$.
    
    The step 2 runs in time $\mathcal{O}(c^2)$ since $g(\mathcal{S})$ is calculated in time $\mathcal{O}(|\mathcal{S}|) = \mathcal{O}(|\mathcal{T}|) = \mathcal{O}(c^2)$.
    By Corollary \ref{cor:ver_sol_time}, the list of all vertex solutions is calculated in time  $\phi^{\mathcal{O}(|\mathcal{T}|^2)} = \phi^{\mathcal{O}(c^4)}$, and so the step 3 and the step 7 run in time $\phi^{\mathcal{O}(c^4)}$.
    Let $V_\mathcal{T}$ denotes the list of all vertex solutions in the projective solution space defined by the matching equation of $\mathcal{T}$.
    For each $\mathbf{x} \in V_\mathcal{T}$, we can calculate the minimum positive integer $k$ so that $k\mathbf{x}$ is an integers vector in time $\mathcal{O}(|\mathcal{T}|^3 \log |\mathcal{T}|) = \mathcal{O}(c^6 \log c^2)$ time by Lemma \ref{lem:GCD}.
    Thus, the step 4 and the step 8 run in time $\phi^{\mathcal{O}(|\mathcal{T}|)} = \phi^{\mathcal{O}(c^2)}$ time.
    
    Consider the running time of the step 5.
    For each $k\mathbf{x}$, we can determine whether $k\mathbf{x}$ represents a vertex $2$-sphere with respect to $\mathcal{T}$ in polynomial time by Lemma \ref{lem:S^2_check}, and Operation \ref{ope:splitting} runs in time $\mathcal{O}(|\mathcal{T}|^2) = \mathcal{O}(c^4)$.
    Since the length of $V_\mathcal{T}$ is $\phi^{\mathcal{O}(c^2)}$, the step 5 runs in time $\phi^{\mathcal{O}(c^2)}$.
    In the same argument, the step 9 and the step 10 run in time $\phi^{\mathcal{O}(c^2)}$ by Lemma \ref{lem:Ann_check} and Lemma \ref{lem:classicalization_alg_time}.
    
    We see that the running time of each step of Algorithm \ref{alg:classical} is less than $\phi^{\mathcal{O}(c^4)}$, and the steps 2--10 runs $\mathcal{O}(c)$ times.
    Therefore, Algorithm \ref{alg:classical} runs in time $\phi^{\mathcal{O}(c^4)}$.
\end{proof}

\section{Experimental performance}
By Theorem \ref{thm:time}, Algorithm \ref{alg:classical} runs in time $\phi^{\mathcal{O}(c^4)}$.
However, Burton verified that Lemma \ref{lem:ver_sol} is not a sharp bound of the number of vertex solutions by the computational experiment in \cite{B_vertex}.
Thus, it is expected that we can get a better bound of the running time of Algorithm \ref{thm:time}.
We describe the result of testing of Algorithm \ref{alg:classical} for some virtual knot diagrams and estimate a better bound of the running time of the algorithm.

\subsection{Implementation}
Algorithm \ref{alg:classical} has been implemented in C++ with {\it Regina}.
Regina is a software package to calculate for low-dimensional topology and is developed by Burton et al.(\cite{B_regina, regina}).
Regina specializes in calculation regarding triangulations and normal surfaces, and so 
it is suitable for implementation of algorithms which uses essential surfaces in triangulations.

Let $\mathcal{T}$ be the triangulation of the canonical exterior of an input diagram which is constructed by the argument of Corollary \ref{cor:comp}.
If Algorithm \ref{alg:classical} is implemented simply, it takes too much time to enumerate vertex solutions because $\mathcal{T}$ has many tetrahedra.
In the experiment, the number of tetrahedra in $\mathcal{T}$ is reduced heuristically when $\mathcal{T}$ is constructed and when $\mathcal{T}$ is changed.
The detail of the operation to reduce the number of tetrahedra is written in \cite{burton2013computational}.

\subsection{Input data}
The input data are the diagrams in the table of virtual knots made by Green (\cite{G}).
We use the oriented Gauss codes of virtual knot diagrams which have five or less real crossings in the table of virtual knots.
The diagrams in the table are encoded by a different way from oriented Gauss code.
However, it is easy to convert them.

\subsection{Experimental method}
In Algorithm \ref{alg:classical}, the part which takes the longest time is the enumeration of the vertex solutions.
The running time of this part increases exponentially with increasing the number of tetrahedra.
Thus, it is not possible to run the algorithm in realistic time if a triangulation of the canonical exterior of an input diagram has many tetrahedra.
Therefore, we conduct the experiment as follows:
\begin{enumerate}
    \item construct all triangulations of the canonical exteriors of input diagrams by the argument of Corollary \ref{cor:comp},
    \item reduce the number of tetrahedra in the triangulations heuristically, and 
    \item run the program of Algorithm \ref{alg:classical} in ascending order of the size of the triangulations.
\end{enumerate}

In this experiment, we measure the running time for the diagrams whose canonical exterior triangulations have 27 or less tetrahedra.

\subsection{Experimental result}
The experiment result is shown in Figure \ref{fig:result}.
In Figure \ref{fig:result}\subref{fig:result1}, the horizontal axis shows the number of real crossings in the diagrams, and the vertical axis shows the running time.
In Figure \ref{fig:result}\subref{fig:result2}, the horizontal axis shows the number of tetrahedra in the trinagulations obtained by the diagrams, and the vertical axis shows the running time.

We estimate a linear relationship of the pairs $\{(c, \log_2 t)\}$ using the least squares method, where $c$ is the number of real crossings of an input diagram $D$ and $t$ is the running time when $D$ is an input diagram.
As a result, the linear function $\log_2 t = 2.250c - 0.887$ is obtained.
The function $t = 2^{2.250c - 0.887}$ is shown in Figure \ref{fig:result}\subref{fig:result1}.
We also estimate a linear relationship of the pairs $\{(n, \log_2 t)\}$, where $n$ is the number of tetrahedra in a triangulation, and the linear function $\log_2 t = 1.674n - 31.696$ is obtained.
The function $t = 2^{1.674n - 31.696}$ is shown in Figure \ref{fig:result}\subref{fig:result2}.

\begin{figure}[htbp]
    \begin{minipage}{0.49\textwidth}
        \centering
        \includegraphics[width=\textwidth]{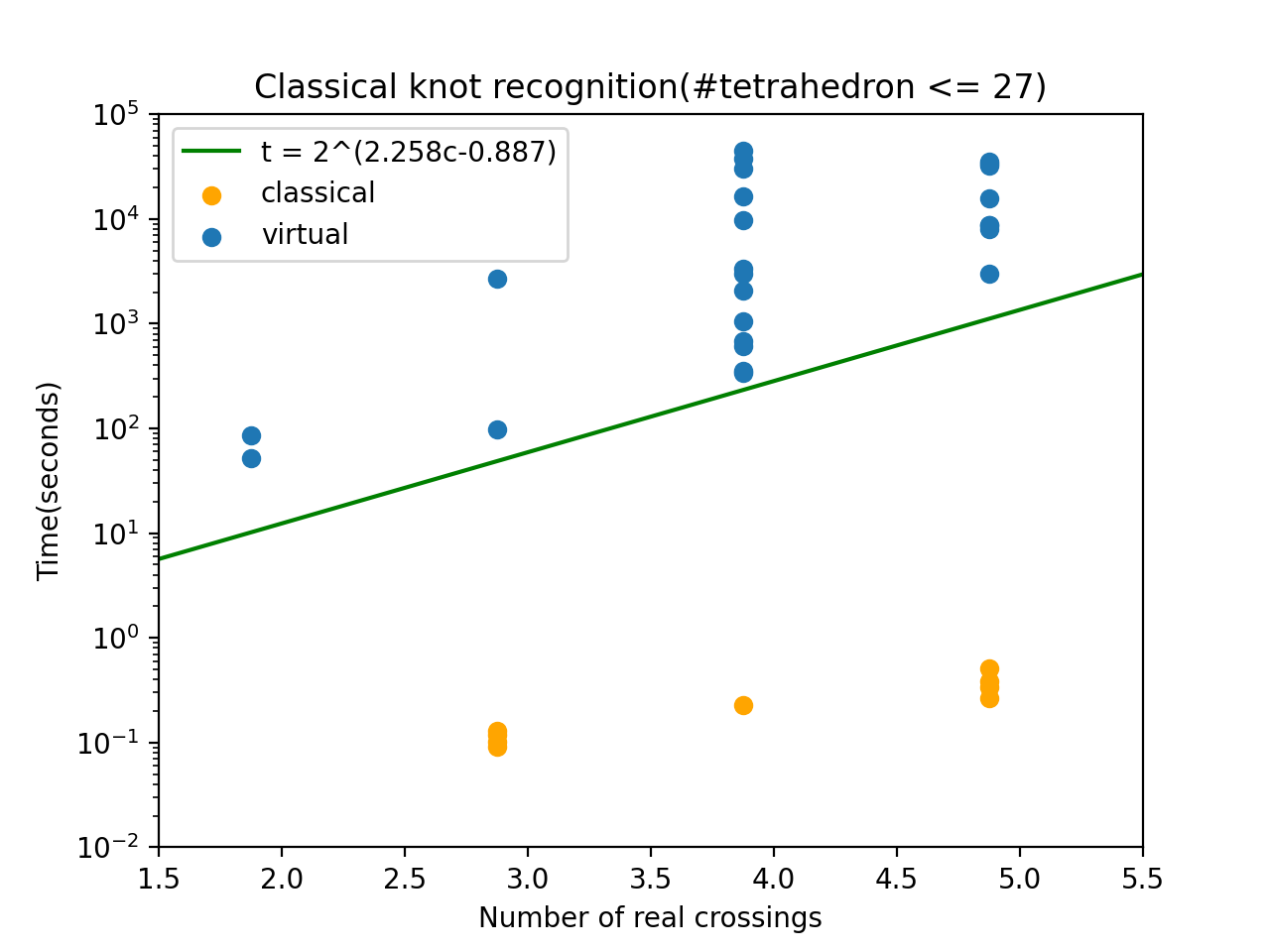}
        \subcaption{The relationship of the number of real crossings and the running time}
        \label{fig:result1}
    \end{minipage}
    \begin{minipage}{0.49\textwidth}
        \centering
        \includegraphics[width=\textwidth]{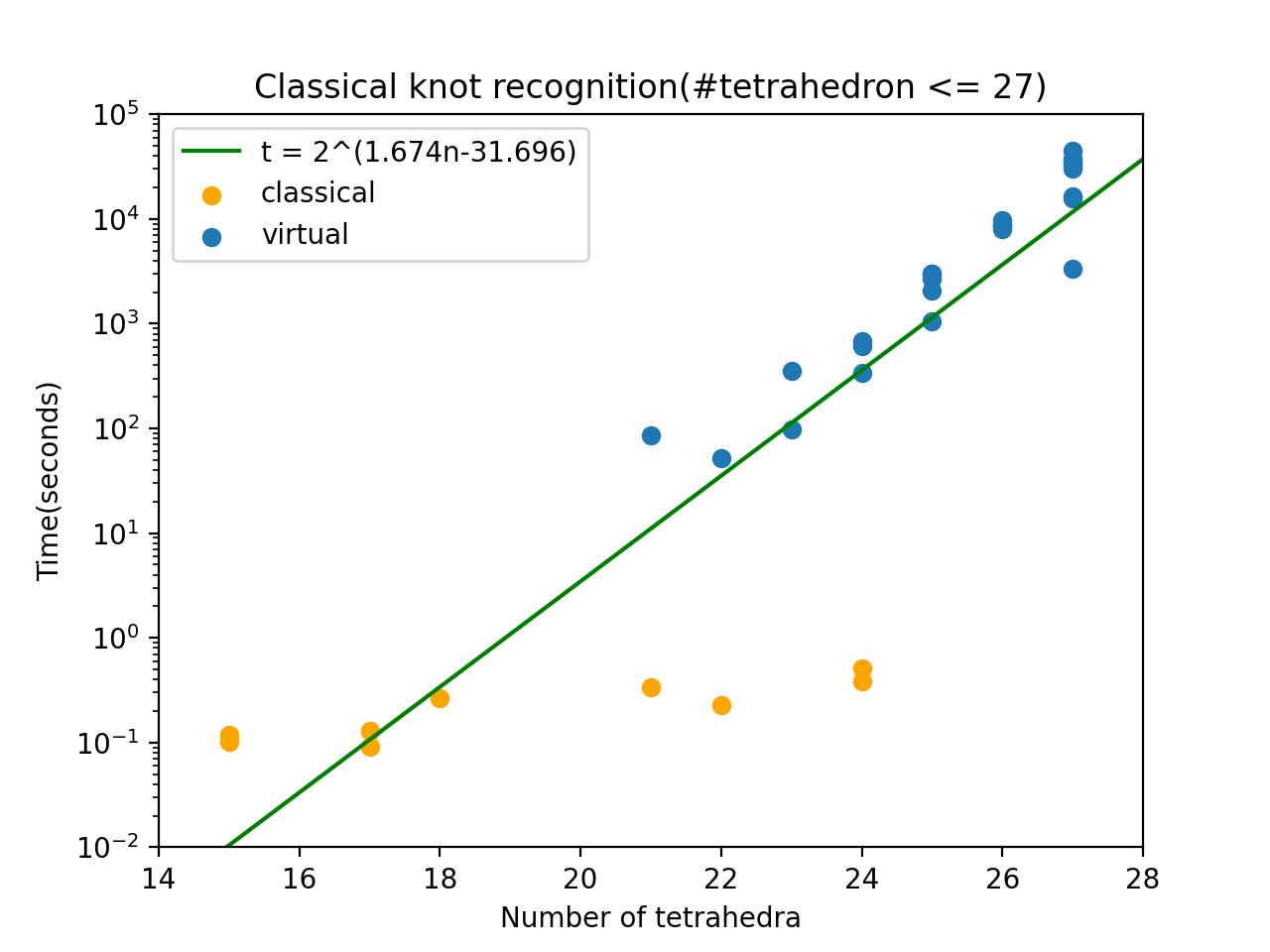}
        \subcaption{The relationship of the number of tetrahedra in a triangulation and the running time}
        \label{fig:result2}
    \end{minipage}
    \caption{The running time of Algorithm \ref{alg:classical}}
    \label{fig:result}
\end{figure}

Furthermore, Figure \ref{fig:tetNum} shows the relationship of the number of real crossings in an input diagram and the number of tetrahedra in the triangulation obtained from the diagram in the second step.
In Figure \ref{fig:tetNum}, the horizontal axis shows the number of real crossings, and the vertical axis shows the number of tetrahedra.
In addition, the results of diagrams whose supporting genera are zero, one, two, and tree are shown in violet, blue, green, and yellow, respectively.
\begin{figure}[htbp]
    \centering
    \includegraphics[width=0.6\textwidth]{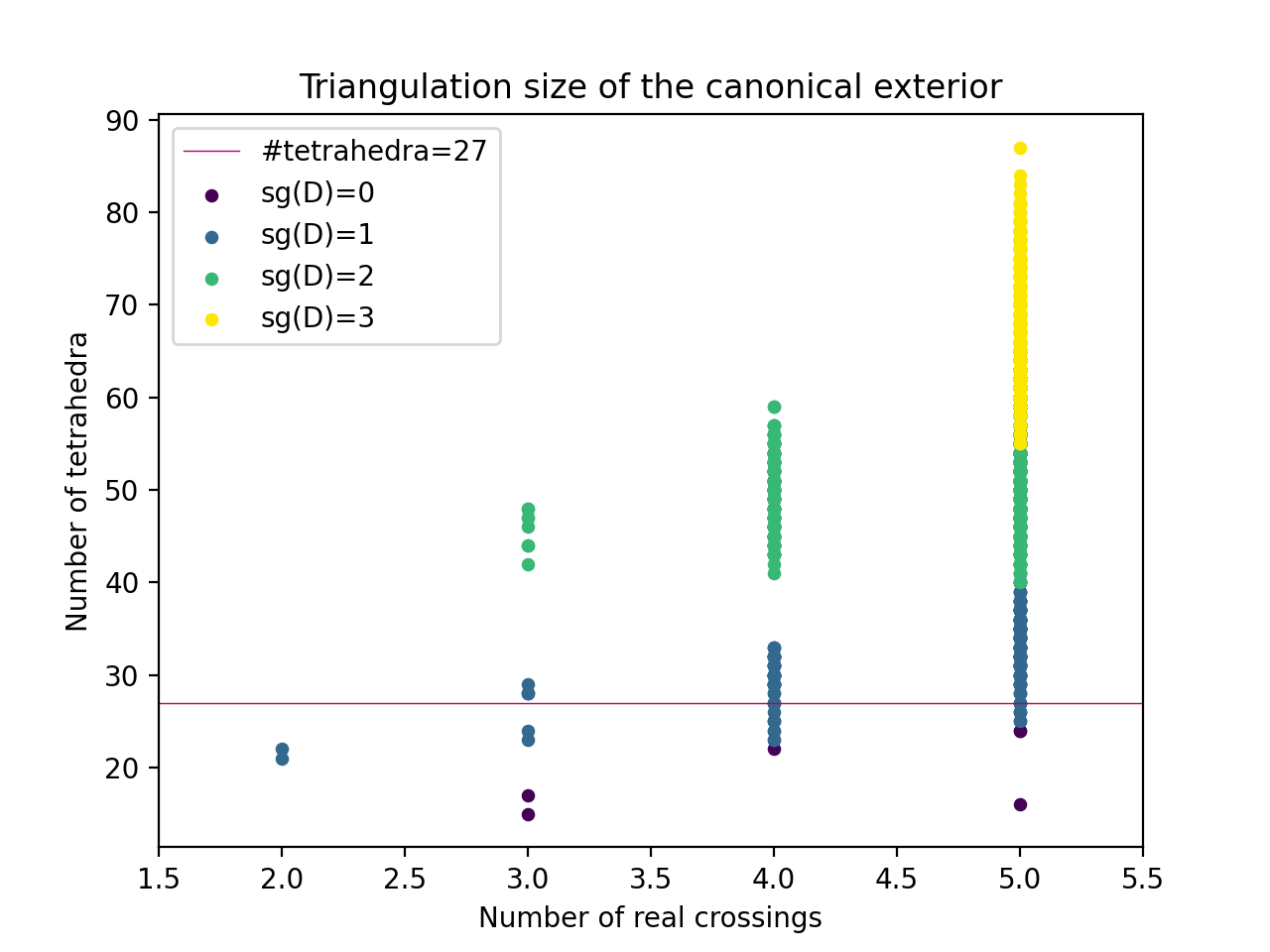}
    \caption{The relationship of the number of tetrahedra and the number of real crossings}
    \label{fig:tetNum}
\end{figure}

\subsection{Consideration}
As shown in Figure \ref{fig:result}\subref{fig:result2}, it seems that the average of the running time is bounded by $2^{\mathcal{O}(n)}$, where $n$ is the number of tetrahedra in the triangulation constructed in the second step of the experiment.
From Figure \ref{fig:tetNum}, we see that the number of tetrahedra in the triangulation in the second step increases linearly with increasing the number of real crossings.
Therefore, it is expected that the average of the running times is bounded by $2^{\mathcal{O}(c)}$, where $c$ is the number of real crossings in an input diagram.
This result implies that the time complexity of Algorithm \ref{alg:classical} is also bounded by $2^{\mathcal{O}(c)}$.

Figure \ref{fig:tetNum} shows that a triangulation which has many tetrahedra is obtained from a diagram whose supporting genus is large.
In this experiment, the running time is measured for diagrams whose canonical exterior triangulations constructed in the second step have 27 or less tetrahedra.
As shown in Figure \ref{fig:tetNum}, each of such diagrams has small supporting genus.
Therefore, in order to solve classical knot recognition for a diagram with larger supporting genus, we need to develop a faster algorithm.


\section{Discussion}
In this study, we showed that classical knot recognition is in NP, gave the exponential time algorithm for classical knot recognition, and tested Algorithm \ref{alg:classical} for the diagrams whose triangulations of the canonical exteriors have few tetrahedra.

We have two future tasks.
The first one is to show that classical \textit{link} recognition is in NP.
In the case where an input is a virtual knot diagram, we showed that if there is a vertical essential annulus in the canonical exterior of the input diagram, then there is a vertex surface which is a vertical essential annulus or a classicalization annulus with respect to a triangulation of the exterior.
Note that a classicalization annulus is defined for only the exterior of a \textit{knot} in a thickened closed orientable surface.
Since a vertex surface is encoded with polynomial length, a vertical essential annulus or a classicalization annulus is used for a polynomial length witness if and only if the input is a diagram of a classical knot.
On the other hand, in the case where an input is a diagram of a virtual link which has two or more components, it is not known whether there is a vertical essential annulus which is a vertex surface with respect to a triangulation of the canonical exterior of the virtual link diagram even though there is a vertical essential annulus in the exterior.
For this reason, it is not known that there is a polynomial length witness for an input of classical link recognition.

The other future task is to give a faster algorithm for classical knot recognition.
The experimental result shows that the running time increases with increasing the number of tetrahedra exponentially.
The reason why it takes much time to run the algorithm is that the enumeration of vertex surfaces takes much time.
However, the experiment in \cite{B_vertex} suggests that the number of vertex surfaces increases exponentially with increasing the size of a triangulation.
Thus, as long as we enumerate vertex surfaces, it seems that a faster algorithm can not be constructed.
Therefore, we consider to find a vertex surface by solving an integer programming problem as with the algorithm for unknot recognition in \cite{B_unknot}.
In \cite{B_unknot}, Burton and Ozlen proposed a fast algorithm for unknot recognition using an integer programming problem and the branch and bound method.
It is expected that we can solve classical knot recognition quickly by using this method.
\section*{Acknowledgement}
The authors thanks to Shin Satoh for useful conversations.
\bibliographystyle{plain}
\bibliography{cite}
\end{document}